\documentclass[11pt,a4paper]{amsart}

\usepackage{amssymb,amsthm,tikz}
\usetikzlibrary{positioning}
\usepackage[colorlinks=true]{hyperref}
  
\tikzset{sgplattice/.style={inner sep=1pt,norm/.style={red!50!blue},char/.style={blue!50!black},
  lin/.style={black!50}},cnj/.style={black!50,yshift=-2.5pt,left=-1pt of #1,scale=0.5,fill=white}}
\usepackage{stmaryrd} % \longmapsfrom
\usepackage{mathrsfs} % \mathscr
\usepackage{colonequals}
\usepackage{geometry}
\newcommand{\repo}[2]{\href{https://github.com/nt-lib/local-heights-X0Nstar/blob/main/#1}{\texttt{#2}}}

\usepackage{longtable}
\usepackage{booktabs}

\usepackage{comment}
\usepackage{soul}

\usepackage[english]{babel}
\DeclareFontFamily{U}{wncy}{}
\DeclareFontShape{U}{wncy}{m}{n}{<->wncyr10}{}
\DeclareSymbolFont{mcy}{U}{wncy}{m}{n}
\DeclareMathSymbol{\Sha}{\mathord}{mcy}{"58}

\usepackage{csquotes}
\usepackage{microtype}

\usepackage{enumerate}

\usepackage[
backend=biber,
style=alphabetic,
sorting=nyt,
backref=true,
maxitems=10,
minitems=10,
maxbibnames=10,
minbibnames=10,
maxcitenames=4,
mincitenames=4,
giveninits=true,
maxalphanames=5,
minalphanames=5,
doi=false,
isbn=false,
url=false
]{biblatex}
\AtEveryBibitem{\clearlist{language}}

\usepackage{aliascnt}
\usepackage{tikz-cd}
\usepackage{wasysym}
\usepackage{enumitem}

\newif\iftodonotes
\todonotestrue   % <-- set to \todonotesfalse to disable comments

\iftodonotes
  \usepackage{todonotes}
  \makeatletter
  \providecommand\@dotsep{5}
  \renewcommand{\listoftodos}[1][\@todonotes@todolistname]{%
    \@starttoc{tdo}{#1}}
  \makeatother
\else
  \usepackage[disable]{todonotes}
\fi

\usepackage{hyperref}
\usepackage{cleveref}
\renewcommand{\epsilon}{\varepsilon}
\renewcommand{\phi}{\varphi} % why this? we may want to use \phi ... - MS -- check for problems!
\def\Alphabet{A,B,C,D,E,F,G,H,I,J,K,L,M,N,O,P,Q,R,S,T,U,V,W,X,Y,Z}%  Capitalized Alphabet
\def\alphabet{a,b,c,d,e,f,g,h,i,j,k,l,m,n,o,p,q,r,s,t,u,v,w,x,y,z}%	lowercase alphabet
\def\endpiece{xxx}%									marks end of list
\def\makeAlphabet[#1]{\expandafter\makeA#1,xxx,}%		Ex. \makeAlphabet[A,B]
\def\makealphabet[#1]{\expandafter\makea#1,xxx,}%		Ex. \makealphabet[c,d]
\def\makeA#1,{\def\temp{#1}\ifx\temp\endpiece\else%
	\mkbb{#1}\mkfrak{#1}\mkbf{#1}\mkcal{#1}\mkscr{#1}\mkbs{#1}\expandafter\makeA\fi}%
\def\makea#1,{\def\temp{#1}\ifx\temp\endpiece\else\mkfrak{#1}\mkbf{#1}\mkbs{#1}\expandafter\makea\fi}%
\def\mkbb#1{\expandafter\def\csname bb#1\endcsname{\mathbb{#1}}}%      Define bb
\def\mkfrak#1{\expandafter\def\csname fr#1\endcsname{\mathfrak{#1}}}%    Define frak
\def\mkbf#1{\expandafter\def\csname b#1\endcsname{\mathbf{#1}}}%           Define bold letters
\def\mkcal#1{\expandafter\def\csname c#1\endcsname{\mathcal{#1}}}%       Define calligraphy
\def\mkscr#1{\expandafter\def\csname s#1\endcsname{\mathscr{#1}}}%       Define script
\def\mkbs#1{\expandafter\def\csname bs#1\endcsname{{\boldsymbol{#1}}}}%       Define bold symbol
\def\makeop[#1]{\xmakeop#1,xxx,}%					Ex. \makeop[Hom,Spec]
\def\mkop#1{\expandafter\def\csname #1\endcsname{{\operatorname{#1}}}} %
\def\xmakeop#1,{\def\temp{#1}\ifx\temp\endpiece\else\mkop{#1}\expandafter\xmakeop\fi}%
\def\makeup[#1]{\xmakeup#1,xxx,}%					Ex. \makeop[Hom,Spec]
\def\mkup#1{\expandafter\def\csname #1\endcsname{{\mathrm{#1}\,}}} %
\def\xmakeup#1,{\def\temp{#1}\ifx\temp\endpiece\else\mkup{#1}\expandafter\xmakeup\fi}%
\makeAlphabet[\Alphabet]%				Define bb, frak, bf, cal for Capitalized Alphabet
\makealphabet[\alphabet]%  				Define frak and bf for uncapitalized alphabet
\makeop[Hom,Aut,End,Mor,SL,GL,H,ord,Irr,Ell,Gal,Cl,Pic,NS,Gal,d,Re,Im,res,Symb,Ev,Char,Ram,SU,BSD,Res,cork,Tam,id,nrd,Div,div]
\makeup[Spec,Proj,dR,new,old,AJ,tr,ker,im,coker,tors,trd,nrd]

\newcommand{\Q}{{\mathbf{Q}}}

\newcommand{\Z}{{\mathbf{Z}}}

\newcommand{\F}{{\mathbf{F}}}
\newcommand{\Fbar}{{\overline{\mathbf{F}}}}
\newcommand{\A}{{\mathbf{A}}}

\newcommand{\R}{{\mathbf{R}}}
\newcommand{\C}{{\mathbf{C}}}
\newcommand{\OO}{{\mathcal{O}}}

\renewcommand{\P}{{\mathbf{P}}}
\newcommand{\characteristic}{\operatorname{char}}

\newcommand{\graph}{\mathcal{G}}

\newcommand{\inj}{\hookrightarrow}

\newcommand{\defn}{\emph}

\newcommand{\legendre}[2]{\left(\frac{#1}{#2}\right)}

\newcommand{\disc}{\operatorname{disc}}

\newcommand{\Fellbar}{\overline{\F}_\ell}

\newcommand{\orderO}{\mathcal{O}}

\newcommand{\ds}{\displaystyle}
\newcommand{\set}[1]{\left\lbrace #1 \right\rbrace}

\renewcommand{\i}{\sqrt{-1}}

\DeclareMathOperator{\Frob}{Frob}
\DeclareMathOperator{\Cls}{Cls}

\newcommand{\LMFDBLabel}[1]{\textnormal{\href{https://www.lmfdb.org/EllipticCurve/Q/#1/}{\texttt{#1}}}}
\newcommand{\newaliastheorem}[2]{%
  \newaliascnt{#1}{theorem}%
  \newtheorem{#1}[#1]{#2}%
  \aliascntresetthe{#1}%
}

\theoremstyle{plain}
\newtheorem{theorem}{Theorem}[section]
\newaliastheorem{lemma}{Lemma}
\newaliastheorem{corollary}{Corollary}
\newaliastheorem{proposition}{Proposition}
\newaliastheorem{conjecture}{Conjecture}
\newaliastheorem{question}{Question}
\newaliastheorem{assumption}{Assumption}

\theoremstyle{definition}
\newaliastheorem{definition}{Definition}
\newaliastheorem{definition-proposition}{Definition-Proposition}
\newaliastheorem{algorithm}{Algorithm}
\newaliastheorem{setup}{Setup}

\theoremstyle{remark}
\newaliastheorem{example}{Example}
\newaliastheorem{examples}{Examples}
\newaliastheorem{remark}{Remark}
\newaliastheorem{notation}{Notation}
\newaliastheorem{acknowledgements}{Acknowledgements}
\newaliastheorem{convention}{Convention}
\newaliastheorem{analysis}{Analysis}

\crefname{theorem}{Theorem}{Theorems}
\crefname{lemma}{Lemma}{Lemmata}
\crefname{corollary}{Corollary}{Corollaries}
\crefname{proposition}{Proposition}{Propositions}
\crefname{definition}{Definition}{Definitions}
\crefname{definition-proposition}{Definition-Proposition}{Definition-Propositions}
\crefname{conjecture}{Conjecture}{Conjectures}
\crefname{question}{Question}{Questions}
\crefname{example}{Example}{Examples}
\crefname{algorithm}{Algorithm}{Algorithms}
\crefname{remark}{Remark}{Remarks}
\crefname{assumption}{Assumption}{Assumptions}
\crefname{setup}{Setup}{Setups}

\numberwithin{equation}{subsection}

\definecolor{darkgreen}{rgb}{0,0.5,0}
\definecolor{darkblue}{rgb}{0,0,0.8}
\definecolor{darkred}{rgb}{0.8,0,0}

\renewcommand{\geq}{\geqslant}
\renewcommand{\leq}{\leqslant}
\renewcommand{\le}{\leqslant}
\renewcommand{\ge}{\geqslant}

\newcommand{\Czero}{10-4\log 3}

\makeatletter
\let\@wraptoccontribs\wraptoccontribs
\makeatother

\title{Semi-stable models, local heights and quadratic Chabauty for $X_0(N)^*$}

\author{Nikola Ad\v{z}aga}
\address{Nikola Ad\v{z}aga\\
University of Zagreb Faculty of Civil Engineering, Department of Mathematics\\
10000 Zagreb, Croatia}
\email{nikola.adzaga@grad.unizg.hr}
\urladdr{\url{https://www.grad.unizg.hr/en/nikola.adzaga}}
\author{Maarten Derickx}
\address{Maarten Derickx\\
University of Zagreb Faculty of Science, Department of Mathematics\\
10000 Zagreb, Croatia}
\email{maarten@mderickx.nl}
\urladdr{\url{http://www.maartenderickx.nl/}}
\author{Timo Keller}
\address{Timo Keller, IBM Deutschland Research \& Development, IBM Campus 1, 71139 Ehningen, Germany;
Institut für Mathematik, Universität Würzburg, Emil-Fischer-Strasse 30, 97074, Würz\-burg, Germany;
Rijksuniveriteit Groningen, Bernoulli Institute, Bernoulliborg, Nijenborgh 9, 9747 AG Groningen, The Netherlands;
Leibniz Universität Hannover, Institut für Algebra, Zahlentheorie und Diskrete Mathematik, Welfengarten 1, 30167 Hannover, Germany}
\email{math@kellertimo.de}
\urladdr{\url{https://www.timo-keller.de}}

\subjclass[2020]{11G18 (Primary) 11G50, 11Y50, 14G05, 14G35, 11R52 (Secondary)}

\keywords{modular curves, rational points, quadratic Chabauty, heights, quaternion algebras}

\begin{document}
\begin{abstract}
Quadratic Chabauty computations for $X_0(N)^*$ are complicated by local
height contributions at primes of bad reduction. For squarefree $N$, we explain a strategy for constructing a global $p$-adic height for which all the contributions at the primes of bad reduction vanish. As our first main result, we show that such a
$p$-adic height exists for all squarefree levels $N>714$. In order to do this, we first give an explicit description of the minimal regular model of $X_0(N)^*$ at primes of bad reduction, and show this model is  semi-stable. Each component of this model gives rise to a linear condition which ensures that the contribution at that component vanishes.
Using embeddings of quadratic orders into quaternion algebras, we obtain an
upper bound for the number of these conditions; when the genus of $X_0(N)^*$ is greater than one more than this bound, there is enough freedom to choose a suitable
correspondence, simplifying quadratic Chabauty computations.
As an application, we determine the rational points on several curves $X_0(N)^*$ for which this was not done before.
%When computing the rational points on $X_0(N)^*$ Let $N=\ell M$ with $\ell \nmid M$ and $M$ squarefree, and let $X_0(N)^*$ be the quotient of the modular curve $X_0(N)$ by the full group of Atkin--Lehner involutions. We study the local contributions to the $\ell $-adic height pairing at the primes of bad reduction needed for the quadratic Chabauty method for $X_0(N)^*$. We show that the minimal regular model of $X_0(N)^*$ over $\Z_\ell$ is already semi-stable when $\ell ^2\nmid N$, and we describe its special fibre, dual graph, and the thickness of its singular points in terms of embeddings of quadratic orders into Eichler orders in the quaternion algebra $B_{\ell,\infty}$. As the central quantity we introduce $\nu_{\ell}(N,d)$, the number of $w_d$-fixed supersingular points, and give closed Eichler trace-formula expressions for it; these control the finite set of possible local heights at $\ell\mid N$. As applications we obtain an explicit upper bound on the number of components of the special fibre and a lower bound on the genus $g_0^*(N)$ of $X_0(N)^*$, and we identify Hecke operators forcing the bad-prime height contributions to vanish. Moreover, we show that this is possible for all but finitely many levels \(N\). We use this to prove that $X_0(N)^*(\Q)$ has all non-cuspidal points CM for some new levels $N$, as expected by Elkies' conjecture.
\end{abstract}

\maketitle

\section{Introduction}
In this paper, we study $X_0(N)^*$, the quotient of the modular curve $X_0(N)$ by the full group of Atkin--Lehner involutions.
Carrying out the quadratic Chabauty program to compute the rational points on $X_0(N)^*$ requires computing a certain global $p$-adic height function. This height is built up as a sum of the local heights at the primes of bad reduction and a local height at $p$. Computing the height contributions at the primes of bad reduction has been a bottleneck in extending the recent quadratic Chabauty computations in \cites{BDMTV2,AABCCKW,ANTStar} to other levels $N$. This motivates the following definition.
\begin{definition}
We call a $p$-adic height {\it supported at $p$} if all the local height contributions at the primes of bad reduction are zero.
\end{definition}
It should be clear that $p$-adic heights {\it supported at $p$} are much easier to work with both theoretically and practically as for them there is no need to compute the height contributions at the primes of bad reduction. A question that naturally arises is whether such $p$-adic heights actually exist. Our first main result answers this affirmatively for $X_0(N)^*$.
\begin{theorem}\label{thm:main1}
Let $N>714$ be squarefree and $p\nmid N$ a prime. Then there exists a $p$-adic height on $X_0(N)^*$ that is supported at $p$.
\end{theorem}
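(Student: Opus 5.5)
We follow the strategy sketched in the abstract. In the quadratic Chabauty set-up, the global $p$-adic height attached to a nice correspondence $Z$ on $X=X_0(N)^*$ (a $\Q$-linear combination of Hecke operators, trace zero, not a multiple of the identity in $\mathrm{NS}$) decomposes as $h=h_p+\sum_{\ell\mid N}h_\ell$. By the theory of local heights for semi-stable curves (Betts--Dogra), $h_\ell$ factors through the reduction map to the components of the special fibre of a regular semi-stable model $\mathcal X/\Z_\ell$. Its value on a point reducing to a component $C$ is an explicit function, linear in $Z$, determined by harmonic analysis on the dual graph. Accordingly, the plan is to show three things: $X$ has a semi-stable model at each $\ell\mid N$ with a computable dual graph; vanishing of $h_\ell$ imposes at most one linear condition per component (roughly, up to an overall normalisation); and the total number of conditions is smaller than the dimension of the space of admissible $Z$.

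First, since $N$ is squarefree, write $N=\ell M$. The Deligne--Rapoport model of $X_0(N)$ over $\Z_\ell$ has two copies of $X_0(M)_{\F_\ell}$ crossing at supersingular points. The involution $w_\ell$ swaps the two copies, and the remaining $w_d$ act on each copy. We therefore expect the special fibre of $X$ to be a single component $X_0(M)^*_{\F_\ell}$ together with chains or extra components coming from supersingular points fixed by some $w_d$. We compute the minimal regular model explicitly and check that it is semi-stable. The fixed points, and the thicknesses of the nodes, are counted by optimal embeddings of quadratic orders $\Z[\sqrt{-d}]$ or $\Z[(1+\sqrt{-d})/2]$ into Eichler orders of level $M$ in $B_{\ell,\infty}$. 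Each component $C$ of $\mathcal X_{\F_\ell}$ gives one linear functional on the space of $Z$. Imposing that the local height is constant (hence normalisable to $0$) on all components then costs at most $\#\mathrm{Comp}(\mathcal X_{\F_\ell})-1$ conditions, or $\#\mathrm{Comp}$ conditions to be safe.

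Second, the space of admissible correspondences is built from endomorphisms of $J=\mathrm{Jac}(X)$ generated by Hecke operators. For $X_0(N)^*$, Hecke operators span a space of dimension $g=g(X)$ (at least when $J$ has RM by a totally real field of degree $g$; in general, the dimension is $\sum$ over simple factors). The trace-zero condition and the exclusion of the identity leave dimension about $g-1$. We obtain a nonzero $Z$ killing all bad-prime contributions as soon as
\[
g-1>\sum_{\ell\mid N}\bigl(\#\mathrm{Comp}(\mathcal X_{\F_\ell})-1\bigr)
\]
(or the analogous inequality with the exact count). The key estimate is an upper bound for the right-hand side via class numbers, $\sum_{\ell}\sum_{d\mid N}h(-4d)\ll \sum_{d\mid N}\sqrt d\log d$, against the genus formula $g_0^*(N)\approx \frac{1}{12\cdot 2^{\omega(N)}}\prod_{q\mid N}(q+1)$ minus elliptic and cusp corrections, which are again bounded by class numbers.

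The main obstacle is making this comparison explicit and effective down to exactly $N>714$. Asymptotically it is easy, since $\prod(q+1)/2^{\omega}$ beats $2^{\omega}\sqrt N\log N$. To obtain the sharp threshold, we would use explicit class number bounds $h(-D)\le \frac{\sqrt D}{\pi}(\log D+2)$, reduce to finitely many $N$ by bounding $\omega(N)$ and the largest prime factor, and check the remaining squarefree $N\le$ (explicit bound) by computer. For that check we would compute the exact number of fixed supersingular points via Eichler's trace formula and the exact genus. A secondary subtlety is ensuring that the resulting $Z$ is still nondegenerate for quadratic Chabauty (its image in $\mathrm{NS}(J)$ is not in the span of the theta class). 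We would handle this by noting that the identity lies outside the trace-zero subspace, so any nonzero solution works.
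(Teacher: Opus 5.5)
Your outline follows the paper's overall strategy: a stable model at each $\ell\mid N$ whose dual graph is a flower graph, linear conditions coming from the special fibre, $\dim_\Q\mathbb T\otimes\Q=g$, class-number bounds played against Riemann--Hurwitz, and then a finite computation. Asymptotically it can be made to work. The genuine gap is in the step that produces the number $714$. Your finite check only compares $g-1$ with the \emph{number} of linear conditions. In the paper this kind of count, using the upper bound $B(N)$ for the number of long loops, only covers all levels above $6510$ (for $N=6510$ one has $g=39$ and $B+1=40$). The levels $714<N\leq 6510$ where the count fails are settled by computing the conditions themselves and solving the resulting linear system (\Cref{alg:clean_correspondences}). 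The $36$ exceptional levels, and hence the threshold $714$, are exactly the levels where that system has no nonzero trace-zero solution. A count cannot see linear dependencies among the conditions, even with exact genus and exact fixed-point numbers, so nothing in your plan shows that it certifies every $N>714$. What is missing is a way to write the conditions down. This means computing the action $F_Z$ of $Z=f(T_p)$ on $\mathrm H_1(\graph_0(N)^*,\Q)$. The paper obtains it from Brandt matrices on $\Q[\Cls\orderO]$, restricted to the span of the signed orbit sums $\tilde e_O$ (\Cref{lem:quotient-correspondence,cor:computing_Tq}). The condition at a long loop $O$ is then that the diagonal entry of $f(A_\ell)$ at $O$ vanishes.

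Your way of counting conditions also makes the count harder to close than necessary. Along a loop of length $l$ at the central vertex, $j_\graph$ equals $j_\graph(v_0)-\tfrac c2\,x(x-l)$ (\Cref{lem:quadratic_along_edge}). So the single condition $c=0$ makes the local height constant on all $l-1$ components of the chain, and loops of length $1$ cost nothing. For this reason the paper counts long loops, i.e.\ $W_N$-orbits of supersingular points with $\#W_x\cdot\tfrac12\#\Aut(E,G)>1$, using Burnside's lemma (\Cref{cor:Bpnw}). Counting $\#\mathrm{Comp}-1$ instead charges each loop its thickness minus one. Your proposed estimate $\sum_\ell\sum_{d\mid N}h(-4d)$ also omits several things you would need for exact counts:
\begin{itemize}
\item the stabiliser factor $\#W_x$ in the thickness;
\item the extra-automorphism points with $j\equiv 0,1728$, including the larger automorphism groups at $\ell=2,3$;
\item the precise two-to-one relation between $w_d$-fixed points and embedding classes, with $\Z[i]$ also entering for $d=2$.
\end{itemize}
Finally, $\dim_\Q\mathbb T\otimes\Q=g$ holds unconditionally here, by multiplicity one on $S_2(\Gamma_0(N))^{W_N}$ (\Cref{lem:hecke_is_End}); no RM hypothesis is needed.
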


In fact, for $N\leq 714$ there are at most $36$ curves of the form $X_0(N)^*$ of genus at least two where no $p$-adic height supported at $p$ exists. The list of these $36$ integers can be found in \Cref{thm:existence_of_supported_at_p_heights}.

Note that $p$-adic heights on a curve depend on the choice of a basepoint. When $N$ is squarefree $X_0(N)^*$ has a unique cusp and that cusp is $\Q$-rational. Unless otherwise specified, we always use that cusp as our basepoint.

\begin{theorem}\label{thm:main1.1}
Let $N$ be squarefree and $p\nmid N$ a prime. Then the number of linearly independent $p$-adic heights supported at $p$ on $X_0(N)^*$ is at least $g(X_0(N)^*)-O(\sqrt N\log N$), with an absolute implied constant. In particular, for every $\varepsilon>0$ it is at least $N^{1-\varepsilon}$ for all sufficiently large $N$.
\end{theorem}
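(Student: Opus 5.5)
The plan is to use the framework described in the abstract. A $p$-adic height on $X=X_0(N)^*$ comes from a choice of a trace-zero (nice) correspondence $Z$, equivalently an element of $\ker(\NS(J)\otimes\Q\to\NS(X)\otimes\Q)$. Because $J$ is of $\GL_2$-type with real multiplication by the Hecke algebra, these are the self-adjoint Hecke elements $T\in\mathbf{T}\otimes\Q$ satisfying the single trace condition $\tr(T)=0$ (or $\tr(T|\Omega^1)=0$). This space has dimension $g-1$, where $g=g(X_0(N)^*)$.

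The local height $h_\ell$ at a bad prime $\ell\mid N$ is computed on a regular semi-stable model. By the explicit description of the minimal regular model of $X_0(N)^*$ at $\ell$ (stated in the paper as semi-stable, with components indexed by certain supersingular/CM data), $h_\ell$ is determined by the reduction of the basepoint and the evaluated point to components, together with the divisor of $Z$ restricted to the special fibre. The key structural input is that the height contribution at each component vanishes for all points as soon as one linear condition on $T$ holds. So requiring $h_\ell\equiv 0$ imposes at most $c_\ell$ linear conditions on $T$, where $c_\ell$ is the number of components (vertices of the dual graph) of the special fibre at $\ell$. Hence the space of heights supported at $p$ has dimension at least $g-1-\sum_{\ell\mid N}c_\ell$, and the theorem reduces to the bound $\sum_{\ell\mid N}c_\ell=O(\sqrt N\log N)$.

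To bound the component counts, I would use the paper's description of the components at $\ell$. The special fibre of $X_0(N)$ at $\ell\| N$ is two copies of $X_0(N/\ell)_{\F_\ell}$ meeting at supersingular points. After quotienting by all Atkin–Lehner involutions, $w_\ell$ swaps the two copies, giving a single copy of $X_0(N/\ell)^*$. Any extra components come from blowing up or resolving points fixed by some $w_d$, and these are supersingular points with extra automorphisms, counted by $\nu_\ell(N,d)$. These are optimal embeddings of orders of discriminant $-d$ or $-4d$ into Eichler orders of level $N/\ell$ in $B_{\ell,\infty}$. By Eichler's trace formula, $\nu_\ell(N,d)$ is bounded by $2^{\omega(N)}\,h(\OO_{-4d})$ up to bounded factors. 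With $h(-4d)\ll\sqrt d\log d$, one gets
\[\sum_{\ell\mid N}c_\ell\ll\sum_{\ell\mid N}\Bigl(1+\sum_{d\mid N/\ell}\frac{2^{\omega(N)}\sqrt d\log d}{2^{\omega(N)}}\Bigr).\]
Here the normalizing factor $2^{\omega(N)}$ arises because the group of order $2^{\omega(N)}$ identifies orbits.

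The main obstacle is making this sum genuinely $O(\sqrt N\log N)$ with an absolute constant. The divisor sum $\sum_{d\mid M}\sqrt d$ is dominated by $d$ near $M$, but summing over $\ell$ as well could introduce an extra factor of $\omega(N)$ or $\tau(N)$. I would first try to show that only $d\in\{N/\ell,\,N\}$ (up to factors of $\ell$) actually contribute components at $\ell$. I would then use $\sum_{\ell\mid N}\sqrt{N/\ell}\log N\le\sqrt{N}\log N\sum_{\ell}\ell^{-1/2}$ and bound that $\ell$-sum by an absolute constant times the contribution of the smallest prime. If this fails, I would accept the cruder bound $O(\sqrt N\,\tau(N)\log N)$, which is $N^{1/2+\varepsilon}$.

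For the final clause I would use $g(X_0(N)^*)\ge(N+1)/(12\cdot 2^{\omega(N)})-O(\sqrt N\log N)$, which follows from Riemann–Hurwitz with elliptic and fixed-point counts bounded by class numbers. Since $2^{\omega(N)}=N^{o(1)}$, the difference is at least $N^{1-\varepsilon}$ for all sufficiently large $N$.
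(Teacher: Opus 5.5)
Your framework matches the paper's. Trace-zero elements of $\mathbb T\otimes\Q$ give a $(g-1)$-dimensional space of heights. Each $\ell\mid N$ imposes one linear condition per non-base component or long loop. The extra components come from supersingular points with $\#\Aut(E,G)>2$ or fixed by some $w_d$, $1<d\mid N/\ell$; these are counted by Eichler's formula and divided by $\#W_N=2^{\omega(N)}$. Your fallback $O(\sqrt N\,\tau(N)\log N)=N^{1/2+o(1)}$ does prove the second assertion.

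\textbf{The gap.} The first assertion requires an \emph{absolute} implied constant. Nothing in your argument delivers it, and the repairs you propose fail.
\begin{enumerate}
\item Your bound $\nu_\ell(N,d)\ll 2^{\omega(N)}h(-4d)$ discards exactly the saving that matters. After dividing by $2^{\omega(N)}$ you are left with $\sum_{d\mid M}\sqrt d\log d$, which is of size $\sqrt M\log M\prod_{q\mid M}(1+q^{-1/2})$. That product is unbounded: for primorial-like $M$ it grows like $\exp(c\sqrt{\log M}/\log\log M)$. So the divisor sum is not dominated by $d$ near $M$ up to an absolute constant.
\item Likewise $\sum_{\ell\mid N}\ell^{-1/2}$ is unbounded, so the $\ell$-sum cannot be controlled that way either.
\item It is false that only $d\in\{N/\ell,N\}$ contribute. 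One has $\nu_\ell(N,d)\neq0$ as soon as $\bigl(\frac{-d}{\ell}\bigr)=-1$ and $\bigl(\frac{-d}{q}\bigr)\neq-1$ for all $q\mid N/(d\ell)$. This typically happens for many proper divisors $d$ of $N/\ell$.
\end{enumerate}

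\textbf{What is missing.} Two facts read off from Eichler's formula.
\begin{enumerate}
\item The local factors away from $\ell$ only involve primes $q\mid N/(d\ell)$. This gives $\nu_\ell(N,d)\le H(d)\,2^{\omega(N/d)-1}$, a saving of $2^{-\omega(d)}$ over your bound.
\item For fixed $d$, at most one prime $\ell\mid N/d$ has $\nu_\ell(N,d)\neq0$. Nonvanishing forces $\bigl(\frac{-d}{\ell}\bigr)=-1$, and this kills the factor $1+\bigl(\frac{-d}{\ell}\bigr)$ in $\nu_{\ell'}(N,d)$ for every other $\ell'\mid N/d$. Hence $2\sum_{\ell\mid N/d}\nu_\ell(N,d)\le H(d)\,2^{\omega(N/d)}$, which is \Cref{lem:nuell_sum_over_ell} (extra care is needed when $2\mid N$). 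So the sum over $\ell$ costs nothing.
\end{enumerate}
Swap the sums over $\ell$ and $d$ and use $H(d)\le\sqrt d\,(2\log N+C_0)/(2\pi)$. This gives $B(N)\le 5\omega(N)+\frac{2\log N+C_0}{2\pi}\,S$, where
\[
S=2^{-\omega(N)}\sum_{1<d\mid N}\sqrt d\,2^{\omega(N/d)}\le\sqrt N\prod_{\ell\mid N}\Bigl(\frac12+\frac1{\sqrt\ell}\Bigr)<\frac32\sqrt N .
\]
The product is bounded because every factor with $\ell\geq5$ is $<1$.

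\textbf{Two smaller points.}
\begin{enumerate}
\item The same factor $2^{\omega(N/d)}$ in Kluit's formula is what makes your genus error term $O(\sqrt N\log N)$ with an absolute constant. For the second assertion, $N^{1/2+o(1)}$ suffices.
\item You impose one condition per component of the minimal regular model rather than per long loop. You would therefore also need a uniform bound on the thicknesses $\#W_x\cdot\frac12\#\Aut(E,G)$. The paper sidesteps this with \Cref{lem:quadratic_along_edge}: all components on one loop impose the same condition.
\end{enumerate}
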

For a more precise and explicit version, see~\Cref{lem:threshold_reduction} and the text below it.

See \Cref{def:padic_height} and \Cref{rem:linear_independence} for a clarification on what we mean precisely by the existence of linearly independent $p$-adic heights.
Knowing how many linearly independent heights there are is useful if, for example, one wants to use $p$-adic heights supported at $p$ on a quotient $A$ of $J_0(N)^*$ where the Mordell--Weil rank of $A$ equals its dimension as in \cite{LeFournDogra}.

A crucial ingredient in proving the above theorems is describing the stable model of $X_0(N)^*$.

\begin{theorem}\label{thm:main2}
Let $N$ be an integer and $\ell$ be a prime that exactly divides $N$. Assume that $X_0(N)^*$ has genus at least $2$. Then the curve $X_0(N)^*$ is stable over $\Z_\ell$, and hence its minimal regular model is semi-stable. 
\end{theorem}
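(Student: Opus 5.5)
We plan to start from the Deligne--Rapoport model of $X_0(N)$ over $\Z_\ell$. Write $N=\ell M$ with $\ell\nmid M$. Its special fibre consists of two copies of $X_0(M)_{\F_\ell}$. These copies meet transversally at the supersingular points, and the local equation at a supersingular point $x$ is $uv=\ell^{k_x}$, where $k_x=\#\Aut(x)/\{\pm1\}\in\{1,2,3\}$. The group $W$ of Atkin--Lehner involutions acts on this model. The key observation is that $w_\ell$ interchanges the two components (it is the Frobenius twist composed with swapping), while each $w_d$ with $d\mid M$ preserves each component and acts on it as the Atkin--Lehner involution of $X_0(M)_{\F_\ell}$. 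We take the quotient $\mathcal X=\mathcal X_0(N)/W$. Since $\ell\neq 2$ or otherwise (at $\ell=2$ we must take care with wild ramification, since $W$ is a $2$-group), we need to handle the quotient of a semistable curve by a finite group. Our approach is to use the general fact that the quotient of a semistable (normal, nodal) curve by a finite group is again nodal wherever the stabilizer of a node does not swap its two branches in a way producing a non-nodal singularity. We then check this directly.

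\textbf{Step 1: the smooth locus.} The special fibre of $\mathcal X$ away from the images of supersingular points is the quotient of one copy of $X_0(M)_{\F_\ell}$ by $W_M=\langle w_d: d\mid M\rangle$, because $w_\ell$ identifies the two copies. So the special fibre is the reduced irreducible curve $X_0(M)^*_{\F_\ell}$, with self-identifications at the images of supersingular points. Away from those points, the quotient of a smooth scheme by a group acting with generically free, tame-or-nontrivial-on-fibres action, whose fixed locus is a divisor flat over $\Z_\ell$, remains smooth. For this we use that the fixed points of $w_d$ are CM points which are ordinary generically, so the fixed subscheme is finite flat over $\Z_\ell$ and the quotient is regular there.

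\textbf{Step 2: the supersingular points.} This is the heart of the argument. At a supersingular point $x$, with completed local ring $\Z_\ell^{ur}[[u,v]]/(uv-\ell^{k})$, the stabilizer $G_x\subseteq W$ acts. Elements of $W_M$ fix both branches. Elements $w_\ell w_d$ swap $u\leftrightarrow v$ up to units. We compute the invariants in the two cases. If only branch-preserving elements fix $x$, acting by $u\mapsto\zeta u$, $v\mapsto\zeta^{-1}v$ with $\zeta=\pm1$, then the invariants are $u^2,v^2,uv$, which gives the ring $U V=\ell^{2k}$, again a node. If some element swaps $u\leftrightarrow v$, the invariants are generated by $u+v$ and $uv=\ell^k$, and the result is smooth over $\Z_\ell$. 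We must also show that the combination of sign characters never yields a non-nodal quotient, for example a singularity of type $A$ with a non-reduced fibre. For this we identify these stabilizers via the quaternion description: the supersingular points correspond to Eichler orders of level $M$ in $B_{\ell,\infty}$, and $w_d$-fixed points correspond to optimal embeddings of $\Z[\sqrt{-d}]$ or $\Z[(1+\sqrt{-d})/2]$. This shows that the special fibre is reduced with only ordinary double points, so $\mathcal X$ is semistable. The thicknesses are read off from $k_x\cdot|G_x\cap W_M|$.

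\textbf{Step 3: stability.} By Step 2 the curve is semistable, so its minimal regular model, obtained by resolving $uv=\ell^{n}$ with chains of $\P^1$'s, is semistable. To pass to stability, we check that no component of the special fibre is a smooth rational curve meeting the rest in fewer than $3$ points. There is only one irreducible component, namely the image of $X_0(M)^*_{\F_\ell}$, with nodes. Since the genus is $\geq2$, the arithmetic genus of that fibre is $\geq2$, so an irreducible nodal curve of arithmetic genus $\geq2$ is automatically stable. Hence $X_0(N)^*$ is stable over $\Z_\ell$.

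The main obstacle is Step 2 at $\ell=2$ and at points with extra automorphisms ($j=0,1728$), where wild actions could a priori produce non-nodal quotients. There we will compute the action of $G_x$ on the deformation ring explicitly via Serre--Tate/Gross--Hopkins coordinates to verify the invariant ring is $\Z_\ell[[U,V]]/(UV-\ell^{n})$.
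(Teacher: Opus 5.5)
Your outline follows the paper's route: the Deligne--Rapoport model, the quotient by $W$, and $w_\ell$ interchanging the two copies of $X_0(M)_{\Fbar_\ell}$ so that the special fibre of the quotient is irreducible. Arithmetic genus $\geq 2$ then gives stability, and your Step~3 is fine. The gap is that you never actually prove the quotient is semi-stable.

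In Step~2 your invariant computations presuppose a normal form for the action of the stabiliser $W_x$ on $\widehat{\OO}_{X,x}\cong R\llbracket u,v\rrbracket/(uv-\ell^{k})$, with $R$ the completed maximal unramified extension of $\Z_\ell$. You assume it acts either by $u\mapsto\zeta u$, $v\mapsto\zeta^{-1}v$ with $\zeta=\pm1$, or by the exact swap $u\leftrightarrow v$.
\begin{itemize}
\item Even in the tame case this normal form needs an argument, which you do not give.
\item At $\ell=2$ it is unavailable. The $2$-group $W$ acts wildly, so the action cannot be linearised.
\item Nothing forces the branch-preserving part of $W_x$ to have order at most $2$ at $\ell=2$. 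For odd $\ell$ it does, since it acts faithfully on the tangent line of a branch; but in characteristic $2$ already $(\Z/2)^2$ acts faithfully on $k\llbracket t\rrbracket$.
\item Stabilisers containing both swapping and non-swapping elements are not covered by your pure-swap computation.
\end{itemize}
The appeal to embeddings of $\Z[\sqrt{-d}]$ into Eichler orders cannot close this. It tells you which supersingular points are fixed and the sizes of $W_x$ and $\Aut(E,G)$, hence the thicknesses. It says nothing about whether the special fibre of the quotient is reduced with only ordinary double points. You effectively concede this by deferring the wild case to a Serre--Tate/Gross--Hopkins computation that is not carried out.

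The missing ingredient is the general theorem, with no tameness hypothesis, that the quotient of a semi-stable curve over a discrete valuation ring by any finite group is semi-stable. The image of a smooth point is smooth. The image of a node of thickness $m$ is a node of thickness $m\cdot\#I'$ if no element of the inertia group swaps the two branches, and a smooth point otherwise. This is Liu, Prop.~10.3.48 and its proof, i.e.\ \Cref{prop:semi-stable_quotient}, and it is exactly what the paper applies.

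It also replaces your Step~1, whose flatness-of-fixed-locus criterion is beside the point. Suppose a finite group $I$ acts $R$-linearly and faithfully on $R\llbracket t\rrbracket$. Then $s=\prod_{\sigma\in I}\sigma(t)$ has reduction of $t$-order exactly $\#I$. By Weierstrass preparation $R\llbracket t\rrbracket$ is free of rank $\#I$ over the normal ring $R\llbracket s\rrbracket$, which forces $R\llbracket t\rrbracket^I=R\llbracket s\rrbracket$.

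With this, Steps~1 and~2 reduce to one observation: at a supersingular point, $w_{\ell d}$ swaps the branches while $w_d$ with $\ell\nmid d$ does not. No separate analysis at $\ell=2$ or at $j=0,1728$ is needed. The extra automorphisms only enter through the local equation $uv=\ell^{\#\Aut(E,G)/2}$ of the Deligne--Rapoport model, which is already a node before you quotient by $W$.
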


A more precise description of the stable model of $X_0(N)^*$ over $\Z_\ell$ and that of other Atkin--Lehner quotients of $X_0(N)$ can be found in \Cref{thm:semi-stable_star}. Our description of the stable model of $X_0(N)^*$ directly translates to a description of the dual graph of the special fibre at $\ell$ of  $X_0(N)^*$, see \Cref{def:dual_graph_quotient} and \Cref{cor:dual_graph_quotient}.

The minimal desingularisation of a stable model curve gives the minimal regular model of that curve and our work also gives a precise description of the minimal regular model of $X_0(N)^*$, see \Cref{cor:minimal_regular_AL_quot}.

Aside from showing that there are often $p$-adic heights on $X_0(N)^*$ supported only at $p$, we actually give an explicit way to construct such $p$-adic heights when they are guaranteed to exist by \Cref{lem:atLeast_g_minus_Ll_heights}. 

\begin{proposition}\label{prop:correspondence_algorithm}
Let $N$ be a squarefree integer, put $g\colonequals g(X_0(N)^*)$, and let $\mathbb T\subseteq \End \, J_0(N)^*$ be the Hecke algebra generated by the Hecke operators $T_q$ with $q\nmid N$. Let $p\nmid N$ be a prime such that $T_p$ generates $\mathbb T\otimes\Q$ as a $\Q$-algebra. Then there exists an algorithm that takes $N$ and $p$ as input, and outputs a sequence of polynomials $f_1,\ldots,f_n\in\Q[x]$ of degree at most $g-1$, with $n\geq g-1-\sum_{\ell\mid N}L_\ell(N)$, where $L_\ell(N)$ is the number of ``long loops'' (see~\Cref{def:L_ell}), such that the $p$-adic heights associated to the correspondences $f_1(T_p),\ldots,f_n(T_p)$ are linearly independent $p$-adic heights supported at $p$.
\end{proposition}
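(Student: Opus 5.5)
The algorithm reduces the problem to explicit linear algebra over $\Q$ on the space $\Q[x]_{<g}$ of polynomials of degree at most $g-1$. Since $T_p$ generates $\mathbb T\otimes\Q$, and $\mathbb T\otimes\Q$ is a commutative semisimple algebra of dimension $g$ (multiplicity one for $J_0(N)^*$), the map $f\mapsto f(T_p)$ is an isomorphism $\Q[x]_{<g}\cong\mathbb T\otimes\Q$.

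\textbf{Step 1: trace-zero correspondences.} The correspondences usable for quadratic Chabauty are those $Z$ whose class in $\NS(J)$ is nontrivial but which have trace zero. This means the induced endomorphism has zero trace on $H^0(\Omega^1)$, so that the associated $p$-adic height is nondegenerate. Trace zero is one linear condition $\operatorname{tr}(f(T_p))=0$, computed from the Hecke eigenvalues or from the characteristic polynomial of $T_p$. We also impose $f(T_p)\neq$ a scalar, i.e.\ we remove the constant term. These two conditions cut $\Q[x]_{<g}$ down to a space of dimension $g-1$; this accounts for the ``$g-1$'' in the bound.

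\textbf{Step 2: the vanishing conditions at bad primes.} For each prime $\ell\mid N$, use the semi-stable model from \Cref{thm:main2} and the dual graph description in \Cref{cor:dual_graph_quotient}. The local height $h_\ell$ of a correspondence, as a function on $X_0(N)^*(\Q_\ell)$, is determined by the reduction component of the point. It is computed from the intersection pairing on the minimal regular model: one corrects the divisor $D_Z(x)$ by a vertical divisor, and $h_\ell$ becomes a finite-valued function on the vertices of the dual graph. By the earlier analysis, $h_\ell$ is constant on the part of the graph coming from the non-long-loop components. It can vary only through the long loops, giving $L_\ell(N)$ linear functionals on $\mathbb T\otimes\Q$ with rational coefficients.

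To make these functionals explicit, I would use that the Hecke algebra acts on the character group of the toric part of the Néron model at $\ell$, which is the supersingular module (via the Eichler order description). The value of $h_\ell$ on a long loop is then a linear expression in the entries of the matrix of $f(T_p)$ acting on the corresponding cycle, computable with Brandt matrices. Requiring the local heights to vanish, or to be constant and then absorbed into the basepoint normalisation, imposes at most $L_\ell(N)$ conditions per prime $\ell$.

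\textbf{Step 3: linear algebra.} Collect all these conditions into a matrix with rational entries and compute a basis $f_1,\ldots,f_n$ of its kernel inside the $(g-1)$-dimensional space from Step 1. This gives $n\geq g-1-\sum_\ell L_\ell(N)$. Linear independence of the resulting heights follows from the injectivity of $f\mapsto f(T_p)$, together with \Cref{rem:linear_independence}, which identifies independence of heights with independence of the trace-zero correspondence classes in $\NS(J)\otimes\Q$.

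\textbf{Main obstacle.} The hardest step is Step 2: showing rigorously that the bad-prime contribution depends linearly on $f$ through only $L_\ell(N)$ computable functionals. This requires \Cref{lem:atLeast_g_minus_Ll_heights} and the explicit Brandt-matrix action on the dual graph's cycles. It also requires checking that a component-wise constant local height can be normalised away by the choice of basepoint (the rational cusp).
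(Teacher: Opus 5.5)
Your overall plan matches the paper's: identify $\Q[x]_{<g}$ with $\mathbb T\otimes\Q$ via $f\mapsto f(T_p)$, impose one trace condition and one linear condition per long loop, then solve. There are, however, two problems.

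\textbf{The dimension count in Step~1 is wrong.} Deleting the constant term is a second linear condition. In general it is independent of $\operatorname{tr}(f(T_p))=0$: the functionals $c\mapsto\sum_i c_i\operatorname{tr}(T_p^i)$ and $c\mapsto c_0$ are proportional only if $\operatorname{tr}(T_p^i)=0$ for all $1\leq i\leq g-1$. So the two conditions together leave a space of dimension $g-2$, and your algorithm as described only yields $n\geq g-2-\sum_\ell L_\ell(N)$. The extra condition is also superfluous. ``$f(T_p)$ is not a scalar'' is an open condition, not a linear one, and it holds automatically for nonzero trace-zero elements, since the scalar $c$ has trace $cg$. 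The $g-1$ comes from the single trace condition on the $g$-dimensional space $\mathbb T\otimes\Q=\End\,J_0(N)^*\otimes\Q\cong\NS(J_0(N)^*)\otimes\Q$ of \Cref{lem:hecke_is_End}. That lemma, via triviality of the Rosati involution, is also what guarantees that each $f(T_p)$ lies in $\NS\otimes\Q$ and so defines a height at all. Separately, trace zero is needed so that $D_Z(b,z)$ has degree $0$, not for nondegeneracy.

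\textbf{Step~2 is the substance of the proposition and you leave it open.} In addition, the module you propose to compute with is not the right one for $X_0(N)^*$. The missing pieces are as follows.
\begin{enumerate}
\item On a flower graph $\pi=\mathrm{id}$. So by \Cref{lem:local_height_vanishes_long_edges} and \Cref{rem:flower_heights}, the condition attached to a long loop $e$ is exactly the vanishing of the diagonal entry $e^*F_{f(T_p)}(e)$.
\item The map $s\mapsto F_s$ is a ring homomorphism, because endomorphisms preserve the weight filtration. Hence $F_{f(T_p)}=f(F_{T_p})$, and the condition becomes $\sum_i c_i(F_{T_p}^i)_{ee}=0$.
\item To compute $F_{T_p}$ on $\mathrm H_1(\graph_0(N)^*,\Q)$ you cannot simply use the supersingular module. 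Its degree-zero part is the character group for $X_0(N)$, not for the quotient. You need \Cref{cor:computing_Tq}: restrict the Brandt matrix $B_\ell(p)$ to $\pi^*\mathrm H_1(\graph_0(N)^*,\Q)$. This subspace is spanned by the \emph{signed} orbit sums $\tilde e_O=\sum_w\epsilon_\ell(w)\,e_{w(k_O)}$, taken over the $W_N$-orbits $O$ on whose stabiliser $\epsilon_\ell$ is trivial. The sign is forced because $w_{\ell d}$ swaps $v_0$ and $v_1$, and so reverses the orientation of the edges of $\graph_0(N)$.
\item Since $\pi^*(e_O)=\lambda_O\tilde e_O$ with unknown $\lambda_O\neq0$, the matrix of $F_{T_p}$ in the loop basis is $D^{-1}A_\ell D$ with $D$ diagonal. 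This conjugation leaves the diagonal entries of $f(A_\ell)$ unchanged, so the conditions are $\sum_i c_i(A_\ell^i)_{O,O}=0$.
\item That there are exactly $L_\ell(N)$ such conditions requires identifying the long orbits with the long loops. This uses \Cref{thm:semi-stable_star}~(iii) together with $\#\Aut(E,G)=\#O_L(I)^\times$.
\end{enumerate}
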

Note that $g-1-\sum_{\ell\mid N}L_\ell(N)$ is the lower bound from~\Cref{lem:atLeast_g_minus_Ll_heights}. The above algorithm can also be used to prove that there exists a $p$-adic height supported at $p$. Indeed, such a height exists if the above algorithm returns a non-empty sequence. However, the algorithm is not guaranteed to find a basis of all $p$-adic heights supported at $p$. The assumption on $p$ is easy to verify, but it does not hold in general, see \Cref{rem:Tp_generator}. For our applications, it is not a big restriction to require $T_p$ to be a generator of the Hecke algebra, as this is already required for \texttt{QCMod}.

The algorithm of \Cref{prop:correspondence_algorithm} is model-free. Indeed, the computations in the algorithm are entirely done in terms of the action of Hecke operators on Brandt modules, which only requires ideal arithmetic in quaternion algebras. In particular, the algorithm does not require any algebraic equations for $X_0(N)^*$. This makes the algorithm fast in practice. For example, it happily computes the basis for the genus 117 curve $X_0(5001)^*=X_0(3\cdot1667)^*$ in around 6 minutes.\footnote{This computation was carried out on an AMD EPYC 9175F 16-Core Processor using only a single thread.}
\begin{comment}This comment is intentionally left here.
    
load "src/StarQuotientMeasures.m";
for i in [5000..5010] do
  if not IsSquarefree(i) then continue; end if;
  time result := StarQuotientHasGoodCorrespondence(i);
  print i, result;
end for;
\end{comment}
Note that genus 117 is way beyond the range where quadratic Chabauty computations are currently feasible. Hence, determining the height contributions at the primes of bad reduction is no longer a bottleneck for doing explicit quadratic Chabauty computations on $X_0(N)^*$.

The output of the algorithm in \Cref{prop:correspondence_algorithm} is such that it can be used directly by the \texttt{QCMod} package \cite{QCMod} to perform explicit quadratic Chabauty computations on quotients of modular curves. Hence, as an application of this, we are able to explicitly determine the rational points on $X_0(N)^*$ for three levels $N$ where this was previously open.
\begin{proposition}\label{prop:examples}
Let $N=187,247$ or $319$. Then the $\Q$-rational points on $X_0(N)^*$ are cusps or CM-points.
\end{proposition}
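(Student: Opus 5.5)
The plan is to run the quadratic Chabauty method for each of the three levels $N=187=11\cdot17$, $N=247=13\cdot19$ and $N=319=11\cdot29$, using the \texttt{QCMod} package \cite{QCMod}. The local heights at the bad primes will be handled by choosing a correspondence as in \Cref{prop:correspondence_algorithm}, so they never need to be computed.

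\textbf{Geometry and Mordell--Weil input.} First compute $g=g(X_0(N)^*)$ for each $N$; for these levels the genus is small (I expect $2$ or $3$). Then obtain a plane or canonical model of $X_0(N)^*$, for instance from the space of weight-$2$ newforms fixed by all Atkin--Lehner involutions, and list the known rational points: the unique rational cusp and the rational CM points. The latter are found by running over the class number one discriminants $D$ with $\bigl(\tfrac{D}{\ell}\bigr)\neq 1$ for all $\ell\mid N$. Next verify the quadratic Chabauty rank condition. By Gross--Zagier and Kolyvagin--Logachev, or by the general expectation for $J_0(N)^*$, the rank of $J_0(N)^*(\Q)$ equals $g$; compute this rank by checking that the $L$-functions of the relevant newforms have analytic rank one. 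Also check that the Néron--Severi rank of $J_0(N)^*$ is greater than one, which follows from the Hecke algebra having rank $g>1$. Finally, exhibit $g$ independent points of finite index in the Mordell--Weil group, using differences of the known rational points or Heegner points.

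\textbf{Choice of height and prime.} Pick a prime $p\nmid N$ of good reduction such that $T_p$ generates $\mathbb T\otimes\Q$, and such that the resulting computation is well behaved (small $p$, e.g.\ $p\in\{5,7,13\}$). Run the algorithm of \Cref{prop:correspondence_algorithm} to obtain a polynomial $f$ for which $f(T_p)$ gives a nontrivial $p$-adic height supported at $p$. Only one such height is needed, so it suffices that the algorithm returns a nonempty sequence; by \Cref{lem:atLeast_g_minus_Ll_heights} this is guaranteed whenever $g-1>\sum_{\ell\mid N}L_\ell(N)$. This inequality should be verified from the long-loop counts of \Cref{def:L_ell}. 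If it fails, the algorithm can still succeed, and I would simply rely on its output. In both cases the local height contributions at $\ell\mid N$ vanish.

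\textbf{Quadratic Chabauty and sieving.} Feed the model, the correspondence $f(T_p)$ and the known points into \texttt{QCMod}. Its output is the finite set of $p$-adic points in $X_0(N)^*(\Q_p)$ on which the quadratic Chabauty function takes values in the finite set $\{0\}$ (the bad primes contribute nothing). Each recovered $p$-adic point should either match a known rational point (cusp or CM) to the working precision, or be ruled out. Ruling out the extra points is the main obstacle I expect. I would first try to show such points are not algebraic of small height. Failing that, I would run a Mordell--Weil sieve, combining the computation with a second prime $p'$ that also satisfies the generator condition, to eliminate the spurious residue discs. Once every surviving point is identified as a cusp or CM point, the proposition follows.
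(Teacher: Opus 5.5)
Your architecture is the paper's: \texttt{QCMod} with a correspondence $f(T_p)$ from \Cref{prop:correspondence_algorithm}, so that all bad-prime contributions vanish and $\Upsilon=\{0\}$, followed by a Mordell--Weil sieve where needed. In the paper only $N=187$ needs a sieve.

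\textbf{The gap: certifying the points as CM.} This is the step that carries the actual content of the proposition, and your recipe for listing the rational CM points is wrong in two ways.
\begin{enumerate}
\item The sign is reversed. A Heegner-type CM point of discriminant $D$ requires each $\ell\mid N$ to be split or ramified, i.e.\ $\bigl(\tfrac{D}{\ell}\bigr)\neq-1$. For $N=187$ the class-number-one discriminants that actually occur are $-8,-19,-43$. All three have both $11$ and $17$ split; for example $-8\equiv 5^2\bmod 11$ and $-8\equiv 3^2\bmod 17$. So your condition discards them. At the same time it admits $D=-11$ (with $17$ inert), which gives no rational point. Likewise it discards $-3,-12,-27$ for $N=247$ and $-7,-28$ for $N=319$.
\item Restricting to class number one misses the rational CM points of class number $2$. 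There the Atkin--Lehner involutions at the ramified primes permute the Galois conjugates. These are $-51,-187$ for $N=187$, $-91$ for $N=247$, and $-88,-187$ for $N=319$.
\end{enumerate}

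\textbf{Consequences.} Your list of known points would miss most of the true rational points. The quadratic Chabauty output would then contain genuine rational points matching nothing on your list. Your plan to rule such points out by a sieve cannot succeed, since they are rational; a small-height argument would not be rigorous in any case. The incomplete list also weakens your plan to generate a finite-index subgroup of $J(\Q)$ from differences of known points.

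\textbf{What you need.} You need a correct certification step. One option is to determine the rational CM points correctly and check that their number equals the number of rational points surviving QC and the sieve. The other is to verify directly that each surviving point is CM and compute its discriminant, as the paper does with the code of Assaf--Hashimoto.

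\textbf{A further omission.} \texttt{QCModAffine} proves completeness only in good residue discs. So you must find a plane model, monic in $y$, and a prime $p$ for which every residue disc of $X_0(N)^*(\Q_p)$ is good. This requirement, rather than wanting $p$ small, governs the choice of model and prime; the paper ends up using $p=37,31,61$. Note also that the genera are $g=3,4,4$, not $2$ or $3$. So for $N=247,319$ the canonical model is not a plane curve, and such a model has to be constructed.
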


For a more detailed description of the rational CM points on these curves and their CM discriminants, see \Cref{sec:examples}.

\begin{comment}This comment is intentionally left here.
load "src/genera_and_bounds.m";
maxg := 10;
maxLevel := 10^6;
counts := [0 : i in [0..maxg]];   // counts[g+1] = #{N < maxLevel : genus(X0(N)*) = g}
maxN   := [0 : i in [0..maxg]];   // maxN[g+1]   = largest N < maxLevel with genus g (0 if none)
for N in [1..maxLevel] do
    if not IsSquarefree(N) then continue; end if;
    g := GenusX0Nstar(N);
    if g le maxg then
        counts[g+1] +:= 1;
        maxN[g+1] := N;           // N is increasing, so the last hit is the max
    end if;
end for;
counts;
maxN;

/* output
> counts;
[ 44, 38, 39, 31, 36, 39, 27, 33, 32, 32, 32 ]
> maxN;
[ 119, 238, 390, 510, 570, 910, 858, 897, 1190, 1365, 1326 ]
*/
\end{comment}

Using the techniques of this paper we expect that the computation of the rational points on $X_0(N)^*$ can be extended to many more values of $N$ aside from the three mentioned above. 
Recently, Steffen M\"uller announced joint work with Jennifer Balakrishnan and Jan Vonk determining the rational points on the genus 8 curve $X_{\mathrm{ns}}^+(19)$ at the ChaBONNty conference (MPIM Bonn, June 2026), showing that in principle it is possible to use \texttt{QCMod} to determine the rational points on genus $\leq 8$ curves. So in principle it should be feasible now to use \texttt{QCMod} to compute the rational points on $X_0(N)^*$ for the hundreds of squarefree levels $N$ such that $X_0(N)^*$ has genus $\leq 8$. However, at the time of writing it is not possible to use \texttt{QCMod} in an automated way. For example, using \texttt{QCMod} requires as input a planar model of the curve suitable for use with Balakrishnan and Tuitman's algorithm from \cite{BalTui}. And at the moment running quadratic Chabauty for a single $X_0(N)^*$ requires a lot of manual labour and tweaking just to get a suitable model. 

By \Cref{lem:local_height_vanishes_long_edges}, the $p$-adic height attached to a correspondence is supported at $p$ if the correspondence satisfies one linear condition for each \emph{long loop}, i.~e.~each loop of length $>1$, in the dual graph of $X_0(N)^*$ at each prime $\ell\mid N$. So \Cref{thm:main1} comes down to bounding the total number of long loops from above and the genus of $X_0(N)^*$ from below. By \Cref{thm:semi-stable_star}, long loops come from supersingular points of $X_0(N)_{\Fbar_\ell}$ that have extra automorphisms or are fixed by some Atkin--Lehner involution $w_d$. This leads us to study $\nu_\ell(N,d)$, the number of supersingular points of $X_0(N)_{\overline{\F}_\ell}$ fixed by $w_d$.

Write $N=\ell M$. By Deuring's correspondence, the supersingular points of $X_0(N)_{\Fbar_\ell}$ correspond to the ideal classes of a fixed Eichler order of level $M$ in $B_{\ell,\infty}$, the quaternion algebra over $\Q$ ramified at $\ell$ and $\infty$, and their endomorphism rings are the corresponding left orders. We show that each $w_d$-fixed supersingular point corresponds to exactly two conjugacy classes of embeddings of $\Z[\sqrt{-d}]$ into these left orders; for $d=2$ one also has to allow embeddings of $\Z[i]$ (\Cref{prop:2to1}). Eichler's formula for optimal embeddings then gives closed formulae for $\nu_\ell(N,d)$, in terms of the class numbers and Kronecker symbols.

We observe and explain a close similarity between our formulae and Kluit's formulae \cite{Klu77} for the number of $w_d$-fixed points of $X_0(N)$ in characteristic $0$ -- these formulae can be interpreted as counting the number of embeddings of $\Z[\sqrt{-d}]$ into Eichler orders in different quaternion algebras (for more details, see \Cref{rem:eichler_analogy}).
\subsection{Structure of the proof of \Cref{thm:main1.1}}
Let $X$ be a nice curve over $\Q$, $b \in X(\Q)$ a choice of basepoint and let $\rho(X)$ denote the rank of the Néron--Severi group of the Jacobian $J(X)$ of $X$. Then there will be $\rho(X)-1$ linearly independent $p$-adic heights on $X$. Now let $\mathcal X$ be the minimal regular model over $\Z$ of $X$ and $\ell$ be a prime of bad reduction. Then the height contribution at $\ell$ of a point $x \in X(\Q) = \mathcal X(\Z)$ depends only on the component of $\mathcal X_{\F_\ell}$ that $x_{\F_\ell}$ lies on (\Cref{thm:BDMTV_localheight}). Let $C_\ell$ denote the number of components of $\mathcal X_{\F_\ell}$.
\begin{lemma}
    There exist at least $\rho(X)-1 - \sum_{\ell}(C_\ell-1)$ linearly independent $p$-adic heights on $X$ supported at $p$.
\end{lemma}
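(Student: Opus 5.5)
The idea is to phrase the existence of $p$-adic heights as a choice of nice correspondence, and then impose vanishing of the bad-prime contributions as a family of linear conditions. Following the quadratic Chabauty framework of \cite{BDMTV2}, each $p$-adic height on $X$ is attached to a class $Z$ in the kernel of $\NS(J(X))\otimes\Q \to \NS(X)\otimes\Q$, i.e.\ to a trace-zero correspondence in $(\End J(X)^{\mathrm{sym}}\otimes\Q)$ modulo the class of the theta divisor. This space has dimension $\rho(X)-1$. The assignment $Z\mapsto h_Z$ is $\Q$-linear, and so are its local components $h_{Z,v}$. Here ``linearly independent'' is understood in the sense of \Cref{def:padic_height} and \Cref{rem:linear_independence}, namely linear independence of the underlying classes $Z$. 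It therefore suffices to show that, inside this $(\rho(X)-1)$-dimensional $\Q$-vector space $V$, the subspace $V_0$ of those $Z$ for which $h_{Z,\ell}$ vanishes on $X(\Q_\ell)$ for all bad $\ell$ has codimension at most $\sum_\ell (C_\ell-1)$.

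For this we use \Cref{thm:BDMTV_localheight}. For each bad prime $\ell$ and each $Z$, the function $x\mapsto h_{Z,\ell}(x)$ on $X(\Q_\ell)$ factors through the map sending $x$ to the component $\Gamma\subseteq\mathcal X_{\F_\ell}$ on which $x_{\F_\ell}$ lies. So $h_{Z,\ell}$ is determined by a vector $(c_\Gamma(Z))_\Gamma\in\Q_p^{C_\ell}$. I would check from the explicit formula that $Z\mapsto c_\Gamma(Z)$ is linear. In that formula the local height is an intersection-theoretic quantity: it is a correction term, solving a linear system with the intersection matrix of $\mathcal X_{\F_\ell}$ whose right-hand side depends linearly on $Z$.

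The key normalization is the basepoint. The height is normalized at $b$, and $b$ reduces to some fixed component $\Gamma_b$; by construction $h_{Z,\ell}(b)=0$, hence $c_{\Gamma_b}(Z)=0$ for all $Z$. The condition ``$h_{Z,\ell}\equiv 0$'' is therefore the vanishing of the $C_\ell-1$ linear functionals $c_\Gamma$ for $\Gamma\neq\Gamma_b$. Summing over $\ell$ gives at most $\sum_\ell(C_\ell-1)$ linear conditions on $V$, so $\dim V_0\ge \rho(X)-1-\sum_\ell(C_\ell-1)$, and any basis of $V_0$ gives the required heights.

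The functionals $c_\Gamma$ take values in $\Q_p$ while $V$ is a $\Q$-space. This is harmless: either one observes that the $c_\Gamma$ are in fact $\Q$-valued (the intersection-theoretic formula has rational entries), or one works with $V\otimes\Q_p$ and notes the codimension count is the same. I expect the main obstacle to be this step: extracting, from \Cref{thm:BDMTV_localheight}, that the component-wise values are $\Q$-linear in $Z$ and that the basepoint component always gets value $0$. If the normalization at $b$ is not literally built in, I would instead count $C_\ell$ conditions and then remove one. The reason is that the constant function is in the image of the relevant linear map: the local height is defined only up to the global normalization fixed by $b$, so one of the $C_\ell$ conditions is automatic.
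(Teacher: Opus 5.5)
Your proposal is correct and is essentially the paper's argument: the component containing $b_{\F_\ell}$ always gets value zero, and each of the other $C_\ell-1$ components imposes one linear condition on the $(\rho(X)-1)$-dimensional space of trace-zero classes. Of your two fixes for the $\Q_p$-valued functionals, rely on the first one, namely that $h_{Z,\ell}$ takes values in $\chi_\ell(\ell)\Q$ and depends $\Q$-linearly on $Z$. Passing to $V\otimes\Q_p$ alone would not suffice, because a $\Q$-linear map $V\to\Q_p$ can have kernel of codimension greater than one, and the lemma asks for $\Q$-rational correspondences.
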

\begin{proof}
The height contribution on the component containing $b_{\F_\ell}$ is zero by definition, and every other component forces a linear condition on the height in order for the height contribution at $\ell$ to vanish on that component. In particular, the claim follows.
\end{proof}
Our strategy to prove \Cref{thm:main1.1} is to bound $\rho(X)-1 - \sum_{\ell}(C_\ell-1)$ from below in the case $X=X_0(N)^*$.
\begin{remark}\label{rem:small_lie}
    Actually, different components over $\F_\ell$ of the minimal regular model of $X_0(N)^*$ might impose the same linear condition on the height function. This happens often in practice. Indeed, the dual graph of $X_0(N)^*$ is a single central vertex with several loops attached to it. And all components lying on the same loop impose the same linear condition (\Cref{lem:quadratic_along_edge}). In our actual proof, we replace $C_\ell-1$ by a slightly smaller integer $L_\ell$, the number of loops of the dual graph containing at least one component of the minimal regular model, i.e.\ the number of all loops of length $>1$. However, this is just an optimisation to get slightly better bounds, and not something that is essential to the core idea of the argument.
\end{remark}

Note that $\rho(X_0(N)^*)=g(X_0(N)^*)$ (\Cref{lem:hecke_is_End}), so we need a good lower bound on the genus $g(X_0(N)^*)$. This lower bound is obtained in subsection \ref{subsec:genusBounds} by applying the Riemann--Hurwitz formula to $X_0(N) \to X_0(N)^*$ and using the formulae for the number of fixed points of Atkin--Lehner operators on $X_0(N)(\C)$ in terms of class numbers.
We also need to give good upper bounds on $C_\ell$ (actually, $L_\ell$ from \Cref{rem:small_lie}). This upper bound is \cref{cor:Bpnw} and is obtained using our description of the minimal regular model of $X_0(N)^*$ over $\Z_{\ell}$ in \Cref{sec:semi-stable}. The main idea is that this quantity can be bounded in terms of the number of supersingular fixed points of Atkin--Lehner operators plus an asymptotically negligible contribution from the points with $j=0,1728$.
Our formulae for these supersingular fixed-point counts are again in terms of class numbers. Using upper bounds for class numbers in both cases, we show that the genus of $X_0(N)^*$ grows faster than $\sum_{\ell\mid N}L_\ell(N)$, which gives \Cref{thm:main1.1}; together with a finite computation, this also gives \Cref{thm:main1}.
%In \Cref{sec:fixedpoint_formulas} we derive formulae for the number of fixed points of Atkin--Lehner operators acting on the supersingular points of $X_0(N)(\overline{\F}_\ell)$ in terms of class numbers. These formulae are very similar to the formulae for fixed points over $\C$.
%In subsection \ref{subsec:longCycleBound} we use the class number formula together with bounds on special values of $L$-functions to bound $\sum_{\ell}(C_\ell-1)$.

\subsection{Overview of the paper}

In \Cref{sec:background}, we recall the moduli interpretation of $X_0(N)^*$, dual graphs of semi-stable models and corresponding metric graphs, the description of local heights away from $p$ from \cite{BDMTV2}, and the facts on quaternion algebras we need.
In \Cref{sec:semi-stable} we describe the semi-stable model over $\Z_\ell$ of every Atkin--Lehner quotient $X_0(N)/W$ with $\ell\,\|\,N$, together with its dual graph (\Cref{thm:semi-stable_star,cor:dual_graph_quotient}). For $X_0(N)^*$ this model is stable, which proves \Cref{thm:main2}. We also show how to compute the action of Hecke operators on the homology of these dual graphs from Brandt matrices (\Cref{cor:computing_Tq}).
In \Cref{sec:criterion} we show that the local height at $\ell$ vanishes on $X_0(N)^*(\Q_\ell)$ once the correspondence satisfies one linear condition per long loop.
In \Cref{sec:fixedpoint_formulas}, we relate supersingular $w_d$-fixed points to embeddings of quadratic orders into Eichler orders in $B_{\ell,\infty}$ and prove the formulae for $\nu_\ell(N,d)$.
In \Cref{sec:bounds} we bound the number of long loops from above and the genus of $X_0(N)^*$ from below. \Cref{sec:existence} combines these bounds with a finite computation to prove \Cref{thm:main1}.
Finally, \Cref{sec:examples} works out dual graphs in examples (including $X_0(290)^*$ at $\ell=2$) and determines $X_0(N)^*(\Q)$ for $N=187,247,319$, proving \Cref{prop:examples}.

\subsection{Future work}
As already mentioned above, one of the main obstacles to extending the determination of the rational points on $X_0(N)^*$ to multiple levels $N$ is finding suitable planar models of $X_0(N)^*$ so that they can be used with \texttt{QCMod}. Another way this could potentially be solved is by using the geometric quadratic Chabauty method of \cite{EdixhovenLido} which does not require the computation of $p$-adic Coleman integrals. However, a significant amount of work is still needed to make that method explicit enough in order to use it in an automated way on many modular curves. Note that \Cref{thm:main1} will also be useful in the geometric quadratic Chabauty setting, as the local heights can be related to a quantity that one needs to be able to compute in order to carry out geometric quadratic Chabauty \cite[Lemma 4.13]{DHS23}.

In principle, the methods of this paper can be used not only to construct $p$-adic heights where the height contributions at the primes of bad reduction vanish, but can also be used to explicitly compute the height contributions in the case they do not vanish. However, for explicit computations there are still some challenges to be solved in order to do so. For example, given a point $x \in X_0(N)^*(\Q)$ by coordinates satisfying explicit algebraic equations, then how does one determine whether $x_{\F_\ell}$ is supersingular, and, if it is, how does one determine the quaternionic ideal corresponding to $x_{\F_\ell}$ under the Deuring correspondence?

\subsection{Relation to other work}
After our presentations at ChaBONNty conference in July 2026, an attendee informed us that Beneish and Wen were also working on finding models of $X_0(N)^*$ suitable for use in computing local heights. Private communication with Beneish and Wen confirmed this.  At the time of writing, we do not know how much their results will overlap with ours, as both were obtained independently and we have not yet seen their work in preparation \cite{WenBeneish2026}, which is expected to appear on arXiv soon.

Our description of the local heights away from $p$ is based on \cite[Theorem 3.2]{BDMTV2} and \cite[Theorem 3.11]{hyperellipticQC}, which build on \cite[Corollary 12.1.3]{BettsDogra} and express $h_\ell$ in terms of the dual graph of a semi-stable model at $\ell$ (see \Cref{thm:BDMTV_localheight}). In the quadratic Chabauty computations for Atkin--Lehner quotients of modular curves in \cite{BDMTV2,AABCCKW,ANTStar}, these contributions were determined curve by curve (based on the explicit algebraic equations). For hyperelliptic curves, Betts, Duque-Rosero, Hashimoto and Spelier \cite{hyperellipticQC} give a method to compute the local heights at the primes of bad reduction starting from an explicit equation of the curve. Our approach, in contrast, is model-free: the dual graph of $X_0(N)^*$ at $\ell\mid N$ and the action of the Hecke operators on its homology are obtained from supersingular points and Brandt matrices (\Cref{thm:Deuring,cor:dual_graph_quotient,cor:computing_Tq}), and no equations for $X_0(N)^*$ are needed.  It would be interesting to combine this with the model-free approach to the local height at $p$ of Rendell \cite{Isabel}.

For the stable reduction of $X_0(N)$ for the case $\ell^2\parallel N$, see Edixhoven \cite{Edixhoven}. Xue \cite{xue2009minimal} constructs, over $\Z[1/6]$, the minimal resolution of the quotient of the Deligne--Rapoport model by a single Atkin--Lehner involution. In \Cref{sec:semi-stable} we do this for an arbitrary Atkin--Lehner quotient $X_0(N)/W$ and $\ell\parallel N$, and describe the dual graph of its stable model and its minimal regular model (\Cref{thm:semi-stable_star,cor:dual_graph_quotient,cor:minimal_regular_AL_quot}).

\subsection{Acknowledgements}
We thank Jennifer Balakrishnan, Netan Dogra and Steffen M\"uller for useful discussions on quadratic Chabauty in general, and also on how to use \texttt{QCMod}. We thank Sarah Arpin, Freddy Saia and John Voight for the useful discussion regarding the supersingular points, quaternion algebras and Brandt modules. We thank Sachi Hashimoto for explaining how to use her publicly available code to confirm the classification of all rational points found.  We thank Amnon Besser and Oana Padurariu for the discussions in an early stage of this project.

N.~A.~acknowledges support from the institutional project of the University of Zagreb Faculty of Civil Engineering – [GEM
– Graphs, elliptic and modular curves], financed through
the National Recovery and Resilience Plan 2021–2026. The project is funded by the European Union – NextGenerationEU. 

M.~D.~was supported by the Croatian Science Foundation under the project number HRZZ IP-2022-10-5008. N.~A.~and M.~D.~were supported by the project ``Implementation of cutting-edge research and its application as part of the Scientific Center of Excellence for Quantum and Complex Systems, and Representations of Lie Algebras'', Grant No.~PK.1.1.10.0004, co-financed by the European Union through the European Regional Development Fund - Competitiveness and Cohesion Programme 2021-2027. 

T.~K.~was partially supported by the 2021 MSCA Postdoctoral Fellowship 01064790 -- Ex\-pli\-cit\-Rat\-Points. 

\subsection{AI usage disclosure}
The core ideas, proof structure, and crucial computer code were developed by the authors in 2025 under the initial assumption that $N$ is coprime to $6$. During a second phase in 2026, significant effort was dedicated to removing this assumption and refining the details. During this second phase, AI tools (Large Language Models) were used primarily to assist in locating relevant references and for language editing, as well as to draft proofs for auxiliary results that appear to be known in the literature, despite our inability to find a proper reference. After using these tools, the authors carefully reviewed, checked, and edited all content and code as needed and take full responsibility for the content of this manuscript.

\subsection{Data availability statement}
The code for our article can be found on GitHub at 
\[ \text{\url{https://github.com/nt-lib/local-heights-X0Nstar}}.\]

\section{Background}\label{sec:background}

\subsection{Motivation: rational points on \(X_0(p)^+\) and \( X_0(N)^*\) and their moduli interpretation}

The modular curve $X_0(p)^+ \colonequals X_0(p)/w_p$ and, more generally, the star quotient $X_0(N)^* \colonequals X_0(N)/W_N$ by the full Atkin--Lehner group $W_N$ inherit a moduli interpretation from $X_0(N)$: a point corresponds to a pair $(E,G)$ consisting of an elliptic curve $E$ together with a cyclic subgroup $G\subseteq E$ of order $N$, taken up to the action of $W_N$. A rational point of $X_0(N)^*$ is then a $\mathrm{Gal}(\overline{\Q}/\Q)$-stable such orbit.  Any non-cuspidal point on $X_0(N)$ is hence a $\Q$-curve~\cite{Elkies}, i.e.\ an elliptic curves isogenous to all its Galois conjugates. By~\cite{Ribet2004} and the now-proven Serre's Modularity Conjecture \cite{KhareWintenberger1,KhareWintenberger2}, all noncuspidal rational points on $X_0(N)^*$ correspond to modular elliptic curves. 

The curve $X_0(N)^*$ always has rational points coming from cusps, and for infinitely many values of $N$ it also has rational points coming from $CM$ elliptic curves. Following Elkies \cite{Elkies} we call the other points \emph{exceptional}. \emph{Elkies' conjecture} predicts that for every $N \gg 0$, $X_0(N)^*$ has no exceptional $\Q$-points. Let $J_0(N)^*$ denote the Jacobian of $X_0(N)^*$. If $N$ is squarefree then, assuming BSD, one has that every simple factor of $J_0(N)^*$ has Mordell--Weil rank at least its dimension. So there is no hope of trying to prove Elkies' conjecture using classical Chabauty or Mazur's formal immersion methods. One strategy for trying to prove Elkies' conjecture is using the quadratic Chabauty method. This has been carried out for several values of $N$ in a series of papers \cites{BDMTV2,AABCCKW,ANTStar}. Carrying out the quadratic Chabauty requires computing the height contributions at the primes of bad reduction of a certain $p$-adic height function. A bottleneck for extending the progress on Elkies' conjecture to other values of $N$ has been computing the height contributions. The main result \Cref{thm:main1.1} states that one can make these height contributions vanish for $N > 714$. For $N \leq 714$ one can often still make the height contributions vanish. We explicitly determine the list of $N$ for which this is possible in \Cref{thm:existence_of_supported_at_p_heights}. Furthermore, we our strategy for making these height contributions vanish  is really explicit. As an application of our strategy to make these height contributions vanish we are able to determine $X_0(N)(\Q)$ for several example values of $N$ in~\Cref{sec:examples}. CM theory lets us certify each rational point we compute as CM or cuspidal: we simply confirm that there is enough $\Q$-rational CM \(\Q\)-points. 

%For a list of star quotients whose rational points we already know see~\cites{BDMTV2,AABCCKW,ANTStar}. The provable determination of rational points on hyperelliptic $X_0(N)^*$ is completed in \cite{ANTStar} using a combination of Chabauty-style techniques together with the Mordell--Weil sieve. The authors also give a classification (moduli interpretation) of the $\Q$-points on hyperelliptic $X_0(N)^*$ for squarefree levels $N$.

Recent work and work-in-progress of Hashimoto--K--Le Fourn~\cite{LFHK} makes significant progress on the non-squarefree levels, which is also one of the motivations to stick to the squarefree case in this paper. The case of non-squarefree $N$ is complementary: In most of the cases, \emph{loc.\ cit.}\ construct a rank $0$ quotient of the Jacobian (and use this to give a bound on the denominator of the $j$-invariant of $\Q$-points). However, as we mentioned before, under the assumption BSD such rank 0 quotients do not exist when $N$ is squarefree, causing their methods to not be applicable, nor would the next step (work-in-progress by the authors of \emph{loc.\ cit.}), Runge's method, because for squarefree $N$, $X_0(N)^*$ does not have enough cusps, namely exactly one. On the other hand, the description of a semi-stable model is easier in the squarefree case.

Note that Elkies' conjecture for all squarefree $N > N_0$ implies it for all $N\cdot N'^2$ ``quadratically above $N$''. This is because there are degeneracy maps $X_0(N)^* \to X_0(N/\ell^2)^*$ that are \emph{modular}, i.e., they preserve the property of a point being exceptional, CM or a cusp (see~\cite{LFHK}). It is important here to divide by $\ell ^2$ because the degeneracy maps $X_0(N) \to X_0(N/\ell)$ are not compatible with the action of Atkin--Lehner at $\ell$, and hence do not induce a map between their quotients  $X_0(N)^*$ and $X_0(N/\ell)^*$. If $X_0(NN'^2)^*$ has an exceptional $\Q$-point, its image in $X_0(N)(\Q)$ is exceptional as well, which Elkies' conjecture for $N \gg 0$ squarefree rules out.

\subsection{Reduction of algebraic curves}\label{subsec:reduction}

We first recall the semi-stable models and their dual graphs.

\begin{definition}[thickness]\label{def:thickness}
Let $R$ be a DVR with closed point $s \in \Spec R$. Let $X \to \Spec R$ be a semi-stable curve, and let
$x \in X_s$ be an ordinary double point lying above a prime $s$.
Étale locally at $x$ (possibly after passing to a finite étale extension of
$\Spec R$), the completed local ring has the form
\[
\widehat{\mathcal{O}}_{X,x} \cong
\widehat{\mathcal{O}}_{R,s}\llbracket u,v \rrbracket/(uv-c),
\]
%Z might need to be replaced
for some $c \in M_s$ (maximal ideal of the local ring).
The \emph{thickness} $i_x$ of $x$ is defined as
$i_x \colonequals v_s(c)$ with respect to the normalized valuation of
$\mathcal{O}_{R,s}$.
\end{definition}
This definition is independent of all choices, see \cite[Corollary~10.3.22]{Liu2002}.

This notion of thickness is useful as it allows one to easily describe the minimal desingularisation of a semi-stable curve.

\begin{corollary}[regular model of semi-stable curve]\label{cor:regual_from_semistable_model}
Let $R$ be a DVR with closed point $s$.    Let $X \to \Spec R$ be a semi-stable projective curve 
with smooth generic fibre $X_\eta$. Let $\pi\colon X' \to X$ be the minimal desingularisation, 
$x \in X_s$ a split ordinary double point, of thickness $i$ in $X$. Then $\pi^{-1}(x)$ is 
made up of a chain of $i-1$ projective lines over $k(s)$ that meet transversally at rational 
points. These lines are of multiplicity $1$ in $X_s'$, and have self-intersection $-2$ in $X'$. (For a picture, see~\cite[Figure 48]{Liu2002}.)
\end{corollary}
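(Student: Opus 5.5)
The plan is to reduce the statement to a local computation at the split double point $x$, and then resolve that local singularity explicitly by blowing up. Since minimal desingularisation commutes with étale base change, and with passing to the completion at $x$, it suffices to understand the resolution of $\Spec \widehat{\mathcal{O}}_{R}\llbracket u,v\rrbracket/(uv-c)$ with $v_s(c)=i$. Because $x$ is split, we may take $c=\pi^i\epsilon$ with $\epsilon$ a unit and absorb $\epsilon$ into $u$. The model is then $uv=\pi^i$.

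If $i=1$, the point is already regular and $\pi^{-1}(x)$ is the point itself, which is a chain of $0$ lines. For $i\geq 2$, the plan is as follows.

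\begin{enumerate}
\item Blow up the maximal ideal $(u,v,\pi)$ and work in the standard affine charts.
\item In the chart $u=\pi u'$, $v=\pi v'$, the equation becomes $u'v'=\pi^{i-2}$. The exceptional divisor is $\pi=0$ there, which gives the conic $u'v'=0$ in $\P^2_{k(s)}$ when $i=2$, and the conic $u'v'=\lambda$, i.e.\ a single $\P^1$, when $i\geq 3$.
\end{enumerate}

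More carefully, the exceptional fibre is the projectivised tangent cone $\{uv=0\}$ if $i\geq 3$, or $\{uv-t^2=0\}$ (with $t$ the image of $\pi$) if $i=2$. In either case it is a projective line over $k(s)$; in the case $i\geq3$ I take the component cut out appropriately in $\P^2$.

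\begin{enumerate}
\setcounter{enumi}{2}
\item Check the other charts. The chart $\pi=u\cdot t$ gives $v=u\,t^{i}$-type regular points. So the blow-up has exactly one new singular point, of the form $u'v'=\pi^{i-2}$, of thickness $i-2$, located on the new line. Alternatively there are two points of thickness $i-1$ split symmetrically; this depends on normalising the charts correctly, and I would verify it via the toric description below.
\end{enumerate}

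Cleaner still is to view $uv=\pi^i$ as a toric surface singularity of type $A_{i-1}$. Its minimal resolution is classically a chain of $i-1$ rational curves of self-intersection $-2$. Concretely, I would cover the resolution by the charts
\[
U_k=\Spec R[u_k,v_k]/(u_kv_k-\pi),\qquad k=1,\dots,i,
\]
with $u=u_1$, $v=v_i$, and gluing maps $v_k u_{k+1}=1$. Each chart is regular. The special fibre of the union consists of the strict transforms of $\{u=0\}$ and $\{v=0\}$ together with $i-1$ new lines $E_1,\dots,E_{i-1}$, where $E_k$ is the union of $\{v_k=0\}\subset U_k$ and $\{u_{k+1}=0\}\subset U_{k+1}$. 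Each $E_k\cong\P^1_{k(s)}$, consecutive ones meet transversally at a rational point, and each appears with multiplicity $1$ because $\pi=u_kv_k$ vanishes to order one along each.

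For the self-intersection, use that the special fibre $F$ satisfies $F\cdot E_k=0$. Since $E_k$ meets exactly $E_{k-1}$ and $E_{k+1}$ (or the strict transforms at the ends) transversally, each once, we get $E_k^2=-2$.

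It remains to justify minimality and uniqueness. The resolution is minimal because no $E_k$ is a $(-1)$-curve, so Castelnuovo's criterion allows no contraction. The minimal desingularisation is unique, and since the resolution is an isomorphism away from $x$, it agrees with $\pi^{-1}(x)$. The step I expect to need most care is the reduction from the completed local ring to a global statement, namely that the resolution built on the completion (or on an étale neighbourhood) descends. For this I would invoke that desingularisation commutes with étale base change and with completion for excellent surfaces, as in Liu, Chapter 9/10.
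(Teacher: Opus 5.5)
The paper gives no argument of its own here: it cites \cite[Corollary~10.3.25]{Liu2002}, whose proof blows up $x$ on $X$ itself and inducts on the thickness. Your route is different. You write down the full $A_{i-1}$ chart resolution of $uv=\pi^i$ at once, read off the chain, get $E_k^2=-2$ from $F\cdot E_k=0$, and get minimality from the absence of $(-1)$-curves. This core is sound and gives a clearer all-at-once picture than the induction. There are, however, two errors and one real gap.

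\textbf{The two errors.}
\begin{enumerate}
\item Your blow-up paragraph has the cases reversed. In the chart $u=\pi u'$, $v=\pi v'$ the exceptional divisor is $u'v'=1$ for $i=2$, a smooth conic with the blow-up regular along it. It is $u'v'=0$ for $i\geq 3$. So for $i\geq 3$ the fibre over $x$ is the \emph{pair} of lines $\{uv=0\}\subset\P^2_{k(s)}$, not a single line. The only point over $x$ that can be non-regular is their intersection, with local equation $u'v'=\pi^{i-2}$, of thickness $i-2$. Nothing of thickness $i-1$ appears.
\item Your labelling of the exceptional curves is off. With your gluing $v_ku_{k+1}=1$, the loci $\{v_k=0\}\subset U_k$ and $\{u_{k+1}=0\}\subset U_{k+1}$ are exactly what is removed from the overlap, so they do not glue. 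For $k=1$ your $E_1$ would even contain $\{v_1=0\}$, the strict transform of $\{v=0\}$. The exceptional curves are in fact $E_k=\{u_k=0\}\subset U_k$ glued to $\{v_{k+1}=0\}\subset U_{k+1}$ along $u_{k+1}=v_k^{-1}$, for $1\leq k\leq i-1$.
\end{enumerate}

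\textbf{The gap: passing from the model back to $X$.} You only assert this step. Your charts resolve $\Spec R[u,v]/(uv-\pi^i)$, not $X$. You never check that the glued scheme is proper over this model. The compatibility of minimal desingularisation with completion that you invoke is exactly what needs proof here. As you phrase it, it would also require excellence, which the statement does not assume.

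\textbf{How to repair it.} Observe that your cover \emph{is} the iterated blow-up of the singular point.
\begin{enumerate}
\item The $u$- and $v$-charts of the blow-up of $(u,v,\pi)$ are $U_1$ and $U_i$.
\item Its $\pi$-chart is the model $u'v'=\pi^{i-2}$.
\item Blowing that up gives $U_2$ and $U_{i-1}$, and so on.
\end{enumerate}
Blowing up a closed point commutes with the flat base change $\Spec\widehat{\OO}_{X,x}\to X$, and $\widehat{\OO}_{X,x}$ is the completion of the model at the origin. So performing the same blow-ups on $X$ gives a proper birational map whose fibre over $x$, and whose complete local rings along that fibre, coincide with those of your resolution. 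Hence it is regular over $x$, and since it has no $(-1)$-curves there, it is the minimal desingularisation near $x$; properness comes for free. This is essentially the blow-up argument behind Liu's corollary, and it needs the corrected blow-up computation in item 1 above. Alternatively, use the étale-local form $uv=c$ of a semi-stable curve at a split double point together with uniqueness of the minimal desingularisation.
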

\begin{proof}
See~\cite[Corollary 10.3.25]{Liu2002}.
\end{proof}

The following is something that is well known to the experts, and is just a small variation of \cite[Theorem 10.3.34(a)]{Liu2002}, but we were unable to find a clean reference for it.

\begin{lemma}[minimal regular model of a stable curve]\label{lem:minimal_regular_from_stable_model}
    Let $\mathcal X$ be a stable model over a discrete valuation ring $R$. Let $K$ be the field of fractions of $R$ and $X := \mathcal X_K$. Let $\mathcal X'$ be the minimal desingularisation of $\mathcal X$. Then $\mathcal X'$ is a minimal regular model of $\mathcal X$, and in particular $\mathcal X'$ is semi-stable.
\end{lemma}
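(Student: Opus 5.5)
The plan is to show three things in turn: $\mathcal X'$ is regular and semi-stable, every $(-1)$-curve in its special fibre is contracted by $\pi\colon\mathcal X'\to\mathcal X$, and no such exceptional component is a $(-1)$-curve. Minimality then follows from Castelnuovo's criterion.

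\textbf{Semi-stability and regularity.} First reduce to the case where the double points of $\mathcal X_s$ are split. We pass to the strict henselisation (or a finite unramified extension) $R^{\mathrm{sh}}$ of $R$. Formation of the minimal desingularisation commutes with étale base change, and so do regularity, semi-stability and minimality; the last holds because minimality of a regular model is detected by the absence of exceptional curves, and this is stable under étale base change and Galois descent (cf.\ \cite[Proposition 9.3.28]{Liu2002}). Now apply \Cref{cor:regual_from_semistable_model}. The model $\mathcal X'$ is obtained from $\mathcal X$ by replacing each double point $x$ of thickness $i_x$ with a chain of $i_x-1$ projective lines. These lines have multiplicity one, meet transversally at rational points, and have self-intersection $-2$. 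Hence $\mathcal X'_s$ is reduced with only ordinary double points, so $\mathcal X'$ is semi-stable, and it is regular by construction.

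\textbf{Ruling out $(-1)$-curves.} By Castelnuovo's criterion, it suffices to show that $\mathcal X'_s$ contains no exceptional curve of the first kind, i.e.\ no component $E\cong\P^1_{k'}$ with $E^2=-[k':k]$; see \cite[Theorem 9.3.21]{Liu2002}. We may take the residue field algebraically closed, so an exceptional curve is a smooth rational component with $E^2=-1$. There are two kinds of components. The new components in the chains have self-intersection $-2$, so they are not exceptional. Every other component is the strict transform $\tilde C$ of an irreducible component $C$ of $\mathcal X_s$. For such a component, use $\tilde C\cdot \mathcal X'_s=0$ and the fact that all multiplicities are $1$. This gives $\tilde C^2=-\sum_{D\neq \tilde C}\tilde C\cdot D$, where the sum runs over the other components $D$ of $\mathcal X'_s$.

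That sum counts the points of $\tilde C$ lying over double points of $\mathcal X_s$ on $C$, with each self-intersection point of $C$ counted twice. The count is unchanged by the desingularisation: for a node with $i_x\geq 2$, the chain meets $\tilde C$ in one point per branch, while for $i_x=1$ the branches meet the neighbouring strict transform directly. Hence the sum equals the number of points of the normalisation $\tilde C$ lying over the nodes of $\mathcal X_s$. Stability of $\mathcal X$ says exactly that every smooth rational component of the normalisation of $\mathcal X_s$ has at least three such points. Therefore, if $\tilde C\cong\P^1$, then $\tilde C^2\le -3$, and $\tilde C$ is not exceptional. Components of positive genus are never exceptional. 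So $\mathcal X'$ is minimal.

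\textbf{Main obstacle.} The delicate step is the comparison of intersection numbers between $\mathcal X_s$ and $\mathcal X'_s$ at self-intersection points and at nodes of thickness $1$. This needs a careful local analysis using the explicit chain description and the local form $uv-c$. The reduction to split nodes and algebraically closed residue field also needs a brief justification via étale base change.
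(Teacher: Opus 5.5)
Your argument is correct, but it reaches minimality by a different route than the paper.

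\textbf{The paper's route.} It computes no intersection numbers. It appeals to the \emph{proof} of \cite[Theorem~10.3.34(a)]{Liu2002}, in which the minimal regular model of a semi-stable curve is obtained from the minimal desingularisation by contracting chains of $\P^1$'s that meet the rest of the special fibre in a single point. It then observes that a stable model has no such chains, and that desingularisation only inserts chains joining two points, so nothing is contracted. Semi-stability is read off from the same theorem.

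\textbf{Your route.} You verify the absence of exceptional curves directly. The inserted components have self-intersection $-2$. Via $\tilde C\cdot\mathcal X'_s=0$, a strict transform $\tilde C\cong\P^1$ satisfies $\tilde C^2=-(\text{number of special points on the normalisation of }C)\le -3$.

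\textbf{Comparison.} Your version is more self-contained. It uses only the statement of the minimality criterion and \Cref{cor:regual_from_semistable_model}, not the internal structure of Liu's proof. It also makes explicit the point-count that the paper leaves implicit when it asserts that strict transforms of stable components do not become tails. The paper's version is shorter. Both rest on the same input: stability gives at least three special points on every rational component, and the chains attach at two points.

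\textbf{An imprecision to fix.} At a self-intersection node of $C$ of thickness $1$, the model $\mathcal X$ is already regular, so the node survives on $\tilde C$. There the ``neighbouring strict transform'' is $\tilde C$ itself, so the node contributes nothing to $\sum_{D\neq\tilde C}\tilde C\cdot D$, and $\tilde C$ is not the normalisation of $C$. This is harmless: such a $\tilde C$ is singular, hence not isomorphic to $\P^1$ and not exceptional. But you should identify the sum with the number of special points on the normalisation only in the case $\tilde C\cong\P^1$, where it is valid.
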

\begin{proof}
The proof follows the proof in \cite[Theorem 10.3.34(a)]{Liu2002}. In that proof the minimal regular model is obtained from a semi-stable model by first passing to the minimal desingularisation and then contracting all chains of $\P^1$'s that meet the other irreducible components in just a single point. However, a stable model does not have any chain of $\P^1$'s that meet the other components in a single point. And furthermore in the minimal desingularisation of a stable model, there are no chains of $\P^1$'s that meet the other components in just a single point. As the only chains of $\P^1$'s that are introduced in this desingularisation process are chain's of $\P^1$'s that connect two points by \Cref{cor:regual_from_semistable_model}. This implies that $\mathcal X'$ is the minimal regular model. That $\mathcal X'$ is semi-stable is the content of  \cite[Theorem 10.3.34(a)]{Liu2002}.
\end{proof}

%The components and double points of a semi-stable model are conveniently packaged into a metric graph, which is the object the local heights of the next subsection will be computed on.

\begin{definition}[dual graph]\label{def:reduction_graph}
Let $X$ be a nice curve over $\Q_\ell$ of genus $\geq 2$. Let $K/\Q_\ell$ be a finite extension of ramification index $r$ over which $X/\Q_\ell$ acquires semi-stable reduction. Let $\mathcal X/\orderO_K$ be a semi-stable model $X$ and write its special fibre $\mathcal X_s=\bigcup_w X_w$ as a union of its irreducible components. The \defn{dual graph} (or \defn{reduction graph}) $\graph$ of $\mathcal X$ has one vertex $w\in V(\graph)$ for each irreducible component $X_w$ of $\mathcal X_s$ and one edge $e\in E(\graph)$ for each ordinary double point, joining the two components meeting there. An ordinary double point of thickness $i_e$ (\Cref{def:thickness}) is assigned \defn{length} $l(e)=i_e/r$. We write $\graph_\Q$ for the set of points of the resulting metric graph lying at rational distance from a vertex (the points with coordinate \(x_e \in \Q \cap [0, l(e)]\) on some edge $e$), and similarly $\graph_{\frac 1r \Z}$ for those points that lie on a distance of length $\frac 1r \Z$ away from a vertex.
\end{definition}
Note that when $X$ is already semi-stable over $\Q_\ell$ (so $K=\Q_\ell$ and $r=1$) the length of each edge is simply the thickness of the corresponding ordinary double point.

Let $\mathcal X'$ be the minimal desingularisation of $\mathcal X$. Then $\mathcal X'$ is semi-stable again and is obtained from  $\mathcal X$ by replacing every ordinary double point by a chain of $\P^1$'s of length $i_e-1$ \Cref{cor:regual_from_semistable_model}. On the level of dual graphs this corresponds to subdividing an edge $e$ into $i_e$ segments of length $1/r$. This changes neither $\graph_\Q$ nor any of the objects below, see \Cref{rem:measure_subdivision}. Additionally, the dual graph doesn't depend on the choice of the field $K$ in the following sense: If $K'$ is an extension of $K$ with ramification index $r'$ over $\Q_\ell$, then $\mathcal X _{\OO_K}$ is semi-stable again with the same set of ordinary double points. Except that a double point that used to have thickness $i_e$ now has thickness $i_e r'/r$, showing that $l(e)$ remains the same.

\begin{definition}\label{def:reduction_map}
Let $X$ be a curve over $\Q_\ell$ that acquires semi-stable reduction over a field extension $K$ of $\Q_\ell$ of ramification index $r$ and let $\mathcal X'$ be a regular semi-stable model of $X$. The \defn{reduction map} $\mathrm{red}\colon X(\Q_\ell)\to \graph_{\frac 1r \Z}$ is defined to be the map that sends a point to the component of $\mathcal X'_s$ to which it reduces. 
\end{definition}
Note $X(\Q_\ell) \subseteq X(K) =\mathcal X'(\OO_K)$. Since $\mathcal X'$ is regular, one has that $\OO_K$ points of $\mathcal X'$ and in particular $\Q_\ell$-points of $X$ specialise to smooth points in the special fibre. In particular, each $\Q_\ell$ point reduces to a unique component of $\mathcal X'_s$, which is what is needed to make the above definition of the reduction map well defined.

%We now recall the precise statement. The dual graph $\graph$ of a semi-stable model at $\ell$, its metric realisation $\graph_\Q$, the reduction map and the cycle space $\mathrm{H}_1(\graph,\Q)$ were introduced in \Cref{def:reduction_graph,def:reduction_map,def:cycle_space}. Further notions are needed and in defining those, we refer the reader to \cite{BakerFaber2006} for more details.
\subsection{Metric graphs}
In this short subsection, $\graph$ is a metric graph (a graph with positive lengths assigned to its edges), for instance the dual graph of \Cref{def:reduction_graph}

\begin{definition}\label{def:cycle_space}
Orient the edges and let $\Q\,E(\graph)$ be the $\Q$-vector space on them, with the pairing $e\cdot e'=\delta_{ee'}\,l(e)$ given by the lengths. Inside it sits the \defn{cycle space} $\mathrm{H}_1(\graph,\Q)=\ker\left(\partial\colon\Q\,E(\graph)\to\Q\,V(\graph)\right)$, the first homology of $\graph$. Let $\pi\colon\Q\,E(\graph)\to\mathrm{H}_1(\graph,\Q)$ be the orthogonal projection for this pairing and $e^*\colon\Q\,E(\graph)\to\Q$ the coordinate functional dual to $e$.
\end{definition}

Each edge $e$ of $\graph$ carries an arclength coordinate $x_e$, running from $0$ at the source $s(e)$ to $l(e)$ at the target $t(e)$; we write $g_e$ for the restriction of a function $g$ to $e$.

\begin{definition}\label{def:pp_measure}
A continuous function $g\colon\graph \to\R$ is \emph{piecewise polynomial} if $g_e$ is a polynomial for every edge $e$. A \defn{piecewise polynomial measure} on $\graph_\Q$ is a formal sum
\[
  \mu=\sum_{e\in E(\graph)}\mu_e\cdot\lvert\mathrm{d}x_e\rvert+\sum_{w\in V(\graph)}\mu_w\cdot\delta_w ,
\]
in which each $\mu_e$ is a polynomial in $x_e$ and each $\mu_w$ lies in $\R$. Here $\lvert\mathrm{d}x_e\rvert$ denotes the Lebesgue measure on $e$, and $\delta_w$ the Dirac measure at $w$. The \defn{total mass} of $\mu$ is
\[
  \lvert \mu\rvert\colonequals\sum_{e\in E(\graph)}\int_0^{l(e)}\mu_e\,\mathrm{d}x_e+\sum_{w\in V(\graph)}\mu_w .
\]
\end{definition}

\begin{definition}\label{def:graph_laplacian}
Let $g\colon\graph \to \R$ be piecewise polynomial. Its \defn{Laplacian} is the piecewise polynomial measure
\[
  \nabla^2(g)\colonequals-\sum_{e\in E(\graph)}(g_e)''\cdot\lvert\mathrm{d}x_e\rvert-\sum_{w\in V(\graph)}\sigma_w(g)\cdot\delta_w,
\]
\[ \text{where } \quad \sigma_w(g)\colonequals\sum_{s(e)=w}(g_e)'(0)-\sum_{t(e)=w}(g_e)'\left(l(e)\right),\] and $(g_e)''$ is the second derivative with respect to $x_e$.
\end{definition}
\noindent Thus $\sigma_w(g)$ is the sum of the slopes of $g$ at $w$ in the directions leading ``away from'' $w$; a loop at $w$ contributes to both sums, once for each of its two ends. %This is the Laplacian of \cite[Definition~5]{Baker2006MetrizedGraphs}, with $\sigma_w(g)$ written in terms of the chosen orientations rather than of tangent directions.

\subsection{Background on quadratic Chabauty and local heights}

We recall some background theory on quadratic Chabauty, where we follow \cite{BalakrishnanDogra1} and \cite{BDMTV2}.

Let $X/\Q$ be a nice curve of genus $g$ with good reduction at $p$, and suppose its Jacobian $J$ has Mordell--Weil rank equal to $g$. In fact, as in \cite{BDMTV2}, we in fact assume the slightly stronger statement that the $p$-adic logarithm $\log : J(\Q_p) \to H^0(X_{\Q_p},\Omega^1)^\vee$  induces an isomorphism $$\log: J(\Q) \otimes_\Z \Q_p \to H^0(X_{\Q_p},\Omega^1)^\vee.$$ In particular, this means that kernel of $\log$ intersected with $J(\Q)$ is $J(\Q)_\tors$. We also assume that $X(\Q)\neq \emptyset$, which is certainly true for the curves we are interested in. This allows us to choose a basepoint $b \in X(\Q)$.

Heights play a crucial role in quadratic Chabauty. And for this article we will define everything in terms of the Coleman--Gross height pairing $$h: J(\Q) \to \Q_p$$ \cite[Prop.~1.2, Prop.~5.2, \S6]{ColemanGross}, which is often also called a global $p$-adic height. This height pairing depends on 
\begin{enumerate}
    \item the choice of a continuous id\`ele class character $\chi: \A_\Q^\times/\Q^\times \to \Q_p$,
    \item a splitting of the Hodge filtration, or equivalently a subspace $W \subseteq H^1_{dR}(X_{\Q_p})$ complementary to the space of holomorphic 1 forms.
\end{enumerate}

\begin{definition}[\cite{ColemanGross}]\label{def:CGheight}
    The \emph{\(p \)-adic height pairing} is a symmetric bi-additive pairing
    \(h:\Div^0(X) \times \Div^0(X) \to \Q_p\), \((D_1, D_2) \mapsto h(D_1, D_2),\)
for \(D_1, D_2\) divisors of degree \(0\) with disjoint support, such that:
\begin{enumerate}
    \item We have
    \[ h(D_1, D_2) = \sum_{v} h_v(D_1, D_2) = h_p(D_1, D_2) + \sum_{\ell\neq p} h_\ell(D_1, D_2),\]
    where \(h_p\) can be expressed via Coleman integral, the sum goes over all primes \(\ell\neq p\) of bad reduction for \(X\), and \(h_\ell(D_1, D_2)=m_\ell \chi_\ell(\ell\)), where \(m_\ell\in \Q\) can be defined in terms of intersection theory \cite[Equation (1.3)]{ColemanGross}.
    \item We have \(h(D, \div(f))=0\) for all \(f\in \Q(X)^*\).
\end{enumerate}
By \((ii)\), \(h\) induces a symmetric bilinear pairing \(J(\Q) \times J(\Q) \to \Q_p\).
\end{definition}

As this global height is bilinear, it vanishes on the torsion in $J(\Q)$. In particular, there exists a bilinear map $B$ on $H^0(X_{\Q_p},\Omega^1)^\vee$ that makes the diagram below commute.
\[
\begin{tikzcd}[column sep=small, row sep=large]
J(\Q)\times J(\Q)
  \arrow[rr, "{(\log,\,\log)}"]
  \arrow[dr, "h"']
& &
H^0(X_{\Q_p},\Omega^1)^\vee \times H^0(X_{\Q_p},\Omega^1)^\vee
  \arrow[dl, "B", start anchor={[xshift=1.25cm,yshift=-0.35cm]west}] \\
& |[xshift=0.3cm]| \Q_p &
\end{tikzcd}
\]

We choose a correspondence $Z \subseteq X \times X$ of trace $0$. Let $b,z \in X(K)$ where $K$ is either $\Q$, $\Q_p$ or $\Q_\ell$. Then we define the maps $i_1, i_2$ and $i_\Delta$ from $X \to X \times X$, where $i_1$ is the inclusion $\set{b} \times X \hookrightarrow X \times X$, $i_2$ is the inclusion $X \times \set{z} \hookrightarrow X \times X$ and $i_\Delta$ is the diagonal $X \hookrightarrow X \times X$. Then the degree 0 divisor $D_Z(b,z)$ is defined by 
$$D_Z(b,z) :=i_\Delta^*(Z) -i_1^*(Z)-i_2^*(Z).$$ The degree of $D_Z(b,z)$ is $Z \cdot(\set{b} \times X +X \times \set{z} +\Delta(X))$, which indeed equals 0 by the Lefschetz trace formula.
Note that $D_Z(b,z)$ is only well defined if the support of $Z$ does not contain $\set{b} \times X$, $X \times \set{z}$ nor $\Delta(X)$. However, using the moving lemma~\cite[§11.4]{Fulton} one can always replace $Z$ with a rationally equivalent cycle so that $D_Z(b,z)$ is well-defined. A correspondence $Z$ on $X$ induces an endomorphism of its Jacobian $J(X)$. On $\End \, J(X) \otimes \Q$ there is an involution, called the Rosati involution, and we will use $\End^\dagger J(X)$ to denote the subspace of $\End \, J(X) \otimes \Q$ invariant under the Rosati involution. By \cite[Proposition 17.2]{MilneAbelianVarieties}, $\End^\dagger J(X)$ is isomorphic to the Néron--Severi group $\NS(J(X))\otimes \Q$.
Quadratic Chabauty exploits the $p$-adic heights coming from trace $0$ correspondences that are fixed by the Rosati involution, or equivalently those coming from a trace $0$ element of the Néron--Severi group.

\begin{definition}\label{def:padic_height} Let $b \in X(\Q)$ be a basepoint and $Z \otimes a \in \NS(J(X)) \otimes \Q$ be a correspondence of trace $0$. Then the {\it $p$-adic height associated to $Z \otimes a$} is the function $h_{Z,b}\colon X(\Q)\to\Q_p$ given by
\begin{align*}
   h_{Z,b} :   X(\Q) &\to\Q_p \\
    z &\mapsto ah(z-b,D_Z(b,z)).
\end{align*}
\end{definition}

\noindent From now on we will just write $Z$ instead of $Z \otimes a$ when talking about correspondences with rational coefficients. When $Z,b$ are clear from the context we will often just write $h$ instead of $h_{Z,b}$. The height $h$ decomposes as a sum of local contributions $h=h_p+\sum_{\ell\neq p}h_\ell$, where for a prime $\ell$ the height $h_\ell$ is defined by
\begin{align*}
   h_\ell :   X(\Q_\ell) &\to\Q_p \\
    z &\mapsto h_\ell(z-b,D_Z(b,z)).
\end{align*}
If $\ell\neq p$ is a prime of good reduction for $X$, then $h_\ell(X(\Q_\ell)) =0$. In particular, $h_\ell=0$ for all but finitely many values of $\ell$.
With these definitions it is clear that both the global height $h_{Z,b}$ and the local heights $h_{Z,b,\ell}$ depend linearly on $Z$.

\begin{remark}\label{rem:linear_independence}The height from \Cref{def:padic_height} is in terms of the Coleman-Gross height from \Cref{def:CGheight}, and hence also depends on a character $\chi : \A_\Q^\times \to \Q_p$ and a splitting of the Hodge filtration. Changing $\chi$ only changes each local height $h_{\ell}$ by a multiplicative constant. Changing the splitting of the Hodge filtration only changes the local height at $p$. In particular, whether $p$-adic height is supported at $p$ does not depend on these choices. Hence, after having fixed a basepoint, it is safe to think of these $p$-adic heights as being parametrised by the vector space of trace $0$ elements in $\NS(J(X)) \otimes \Q$. In particular, the linear independence statement from \Cref{thm:main1.1} should be interpreted as taking place in $(\NS(J(X)) \otimes \Q)^{\operatorname{tr}=0}$.
\end{remark}

\begin{definition}
    A trace zero correspondence $Z$ on a curve $X$ over $\Q$ is called a {\it{clean correspondence}} if $h_{Z,b}$ is a $p$-adic height supported at $p$, i.e.\ if the local contributions $h_\ell$ to $h_{Z,b}$ mentioned above are zero for all primes $\ell \neq p$.
\end{definition}

The above formula for $h_\ell$ is also used when $\ell =p$.
Using the bilinear form $B$ one can extend the global height $h$ to a function $h_B$ on all of $X(\Q_\ell)$ by defining
\begin{align*}
   h_B :   X(\Q_\ell) &\to\Q_p \\
    z &\mapsto B(\log(z-b),\log(D_Z(b,z))).
\end{align*}

By definition, one has the equality $h=h_B$ on $X(\Q)$. Now $h_B - h_p$ is a locally analytic function on $X(\Q_\ell)$. Furthermore, on $X(\Q)$ we have the equality $h_B - h_p = h-h_p =\sum_{p \neq \ell} h_\ell$. Define $\Upsilon_\ell := h_\ell(X(\Q_\ell))$, then each $\Upsilon_\ell$ is a finite set by \cite[Corollary 0.2]{KimTamagawa}. If $\ell$ is a prime of good reduction for $X$ then $\Upsilon_\ell = \set 0$, and hence $\Upsilon_\ell$ is trivial for all but finitely many values of $\ell$. In particular $\Upsilon := \sum_{\ell \neq p}\Upsilon_\ell$ is a well defined finite set.

\begin{definition}
The function $\rho_{Z,b} := h_B - h_p :X(\Q_p) \to \Q_p$ is called the \emph{quadratic Chabauty function} associated to the divisor $Z$ and basepoint $b$.
\end{definition}

\begin{theorem}\cite[Thm 1.2]{BalakrishnanDogra1}
Assume that $X$ is a nice curve over $\Q$, $Z$ a trace  correspondence on $X$, $b \in X(\Q)$ a basepoint and $p$ a prime of good reduction for $X$. Then one has that $\rho_{Z,b}(X(\Q)) \subseteq \Upsilon$. Furthermore $\rho_{Z,b}^{-1}(\Upsilon)$ is a finite subset of $X(\Q_p)$.
\end{theorem}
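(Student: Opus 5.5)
This is Balakrishnan--Dogra's theorem, so the plan is to reconstruct their argument in the notation set up above. There are two claims: the inclusion $\rho_{Z,b}(X(\Q))\subseteq\Upsilon$, and the finiteness of $\rho_{Z,b}^{-1}(\Upsilon)$.

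\emph{The inclusion.} This follows directly from the definitions. For $z\in X(\Q)$, the identity $h=h_B$ on $X(\Q)$ comes from the commutative diagram defining $B$. The decomposition $h=h_p+\sum_{\ell\neq p}h_\ell$ then gives
\[
\rho_{Z,b}(z)=h_B(z)-h_p(z)=\sum_{\ell\neq p}h_\ell(z).
\]
Each term lies in $\Upsilon_\ell=h_\ell(X(\Q_\ell))$, and the sum is finite because $\Upsilon_\ell=\{0\}$ at primes of good reduction. Hence $\rho_{Z,b}(z)\in\Upsilon$. One technical point: $D_Z(b,z)$ must be defined and have support disjoint from $z-b$. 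I would handle this with the moving lemma, as the paper does, using that $h$ only depends on linear equivalence classes.

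\emph{Finiteness.} $\Upsilon$ is finite by Kim--Tamagawa, so it suffices to show that each fibre $\rho_{Z,b}^{-1}(u)$ is finite. Since $X(\Q_p)$ is compact and covered by finitely many residue discs, it is enough to show that $\rho_{Z,b}$ is given on each residue disc by a convergent power series in a local parameter that is not identically constant. Strassmann's theorem then bounds the number of zeros of $\rho_{Z,b}-u$ in each disc.

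\emph{Analyticity.} On a residue disc, $h_B$ is a $\Q_p$-bilinear form applied to abelian (single) Coleman integrals, namely $\log(z-b)$ and $\log(D_Z(b,z))$. The term $h_p(z-b,D_Z(b,z))$ is a Coleman--Gross local height. Via Besser's description, it is a double Coleman integral of an iterated integral attached to $Z$ and the splitting $W$. Both are locally analytic in $z$.

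\emph{Non-constancy, the main obstacle.} I would argue by contradiction, supposing $\rho_{Z,b}$ is constant on a disc. The natural route is the unipotent fundamental group or Selmer variety picture of Balakrishnan--Dogra. There, $\rho_{Z,b}$ is the pullback of a nonconstant function on a $p$-adic Selmer-type variety of dimension $g+1$, and the image of a disc under the unipotent Albanese map is Zariski dense by Kim's theorem (the crystalline path-torsor map has Zariski-dense image). The rank assumption, that $\log$ gives $J(\Q)\otimes\Q_p\cong H^0(\Omega^1)^\vee$, makes the abelian part surjective onto $H^0(\Omega^1)^\vee$. The nontriviality of $Z$, being trace zero and nonzero in $\NS\otimes\Q$, makes the double-integral term a genuinely non-bilinear quadratic function of the abelian integrals. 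A constant $\rho_{Z,b}$ on a disc would then give an algebraic relation among the single and double Coleman integrals, contradicting Zariski density.

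Making this contradiction precise is the hard step. Rather than redo it, I would cite \cite[Thm 1.2]{BalakrishnanDogra1} for the density input and the resulting finiteness.
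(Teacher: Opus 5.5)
Your proposal follows the paper. The inclusion $\rho_{Z,b}(X(\Q))\subseteq\Upsilon$ is exactly the paper's sketch: use $h=h_B$ on $X(\Q)$ together with $h=h_p+\sum_{\ell\neq p}h_\ell$. Like the paper, you defer the deep finiteness of $\rho_{Z,b}^{-1}(\Upsilon)$ to \cite[Thm 1.2]{BalakrishnanDogra1}; that result is the nonconstancy of $\rho_{Z,b}$ on residue discs via Kim's Zariski density, and, as you correctly note, it needs $Z\neq 0$ in $\NS(J(X))\otimes\Q$ and the rank hypothesis on $\log$.

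One minor caveat: moving $D_Z(b,z)$ by a principal divisor leaves the global height unchanged but alters the individual local terms $h_\ell$ and $h_p$. So the moving lemma alone does not make these well-defined functions of $z$. Their well-definedness, and hence the meaning of $\Upsilon_\ell$ and of $\rho_{Z,b}$, comes from Balakrishnan--Dogra's intrinsic definition of the local heights via mixed extensions, which the paper also tacitly relies on.
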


The proof that $\rho_{Z,b}(X(\Q)) \subseteq \Upsilon$ has already been sketched above. The deep part of the theorem is actually that $\rho_{Z,b}^{-1}(\Upsilon)$ is a finite set. And furthermore this finite set is also so explicit that one can hope to compute it, or at least approximate it $p$-adically.

%\noindent The term $h_p$ is a locally analytic function of the point, computed by $p$-adic (Coleman) integration; the finitely many terms $h_\ell$ for $\ell\neq p$ are locally constant. The quadratic Chabauty function $\rho \colonequals h-h_p$ therefore takes values in the finite set $\sum_{\ell\neq p}h_\ell\left(X(\Q_\ell)\right)$, and combining this with the Chabauty conditions cuts out a finite, explicitly computable superset of $X(\Q)$. The input we need is the shape of the bad-prime terms. 

%By \cite[Theorem~3.2]{BDMTV2}, for a prime $\ell\neq p$ of bad reduction the local height $h_\ell\colon X(\Q_\ell)\to\Q_p$ depends only on the component of the special fibre the point reduces to. In terms of the dual graph of a regular semi-stable model of $X$ at $\ell$, it depends only on the vertex to which the point reduces.

Let $V_p(X)$ denote the $p$-adic Tate module of the Jacobian of $X$. As mentioned in \cite{BDMTV2} after the statement of Theorem 3.2 one has that $V_p(X)$ acts admits a filtration $$V_p(X) = W_0 V_p(X) \supseteq W_1 V_p(X) \supseteq W_2 V_p(X) \supseteq W_3 V_p(X) =0$$ whose successive quotients are $\mathrm{H}_1(\graph,\Q)\otimes \Q_p$, $\bigoplus_{w\in V(\graph)}V_p(X_w)$ and $\mathrm{H}_1(\graph,\Q)^\vee \otimes \Q_p(1)$. The action of a trace-zero correspondence $Z$ on $V_p$ preserves this filtration. This shows the following is well defined.
\begin{definition}[the endomorphism induced by a correspondence]\label{def:corr_endomorphism}
Let $Z$ be a a trace 0 correspondence. Then the endomorphism $F_Z$ of $\mathrm{H}_1(\graph,\Q)$ and $\mathrm{H}_1(\graph,\Q)\otimes \Q_p$, $\bigoplus_{w\in V(\graph)}V_p(X_w)$ are defined to be those induced by the action of $Z$ on the graded pieces of $V_p(X)$.
\end{definition}

Note that $F_Z$ does not necessarily preserve the summands in $W_1 V_p(X)/W_2 V_p(X) \cong \bigoplus_{w\in V(\graph)}V_p(X_w)$. However, one can still use the composition $$V_p(X_w) \to \bigoplus_{w\in V(\graph)}V_p(X_w) \stackrel{F_Z}{\to} \bigoplus_{w\in V(\graph)}V_p(X_w) \to V_p(X_w)$$ in order to to get a well defined action of $F_Z$ on $V_p(X_w)$.

\begin{definition}\label{def:corr_measure}
The \defn{measure} of the endomorphism $F := F_Z$ of \Cref{def:corr_endomorphism} is
\[
  \mu_F=\sum_{e\in E(\graph)}\frac{1}{l(e)}\,e^*F\left(\pi(e)\right)\cdot \lvert\mathrm{d}x_e\rvert+\tfrac12\sum_{w\in V(\graph)}\operatorname{Tr}\left(F\mid V_p(X_w)\right)\cdot \delta_w ,
\]
with $\pi$ and $e^*$ as in \Cref{def:cycle_space}.
\end{definition}

\begin{remark}\label{rem:measure_subdivision}
The measure $\mu_F$ is invariant under subdivision of the underlying metric graph: if $\graph'$ is obtained from $\graph$ by subdividing an edge $e$ into edges $e_1,\dots,e_n$ of lengths $l_1,\dots,l_n$, the new vertices carrying rational components, then $\mu_{F'}=\mu_F$ under the identification $\graph'_\Q=\graph_\Q$. Indeed, $\pi'(e_i)=\tfrac{l_i}{l(e)}\,\pi(e)$, so the edge densities $\tfrac{1}{l(e_i)}\,e_i^*F\left(\pi'(e_i)\right)=\tfrac{1}{l(e)}\,e^*F\left(\pi(e)\right)$ agree, and the new vertices contribute nothing, see also \Cref{lem:measure_subdivision} in the Appendix.
Moreover, \Cref{lem:laplacian_mass,lem:measure_mass} (from the Appendix) say that the equation $\nabla^2(g)=\mu_F$ of \Cref{thm:BDMTV_localheight} below is solvable \emph{precisely} when $F$ has trace $0$ on $\mathrm H^1_{\mathrm{dR}}(X_{\Q_\ell})$, and that its solution is then unique up to a constant, which the normalisation at $\mathrm{red}(b)$ removes.
\end{remark}

\begin{theorem}[local heights away from $p$, {\cite[Theorem~3.2]{BDMTV2}, \cite[Theorem~3.11]{hyperellipticQC}}]\label{thm:BDMTV_localheight}
Keep the notation of \Cref{def:reduction_graph,def:cycle_space,def:pp_measure,def:graph_laplacian,def:corr_measure} and fix a basepoint $b\in X(\Q_\ell)$. Then the local height factors through reduction to the graph,
\[
  h_\ell=\kappa\cdot j_\graph\circ\mathrm{red}\colon X(\Q_\ell)\longrightarrow\Q_p,
\]
for a nonzero constant $\kappa \in \Q_p$ independent of the point, where $j_\graph\colon\graph\to\R$ is the unique piecewise-quadratic function determined by
\[
  j_\graph\left(\mathrm{red}(b)\right)=0,\qquad \nabla^2(j_\graph)=\mu_F .
\]
In particular $h_\ell$ is locally constant, its value depends only on the component to which the point reduces, and it attains only finitely many values.
\end{theorem}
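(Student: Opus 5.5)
The statement just before this point is \Cref{thm:BDMTV_localheight}, which the paper quotes from \cite[Theorem~3.2]{BDMTV2} and \cite[Theorem~3.11]{hyperellipticQC}. The natural plan is therefore to reduce to those sources rather than reprove the theory of local heights. The only real content is to check that our normalisations agree with theirs: the edge lengths $l(e)=i_e/r$, the sign convention in $\nabla^2$, and the measure $\mu_F$ of \Cref{def:corr_measure}.

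\textbf{Step 1 (the local height factors through the graph).} Write $h_\ell(z)=h_\ell(z-b,D_Z(b,z))$. The local Coleman--Gross height at $\ell\neq p$ equals $\chi_\ell(\ell)$ times a rational intersection number $m_\ell$ on a regular model, as in \Cref{def:CGheight}. Take $\mathcal X'$ to be the minimal desingularisation of a semi-stable model over $\OO_K$, and divide by $r$ to pass from $K$ back to $\Q_\ell$. The correction terms (vertical divisors making the pairing orthogonal to the vertical components) depend only on which components the points $z$, $b$ and the support of $D_Z(b,z)$ reduce to. One then has to check that $D_Z(b,z)$ reduces in a way controlled by $\mathrm{red}(z)$ alone. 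This follows from the Betts--Dogra description \cite[Corollary~12.1.3]{BettsDogra}: $h_\ell$ is a function on $\graph_{\frac1r\Z}$ evaluated at $\mathrm{red}(z)$. This gives local constancy and finiteness.

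\textbf{Step 2 (identifying the function via the weight filtration).} Following \cite{BDMTV2}, use the weight filtration on $V_p(X)$ with graded pieces $\mathrm H_1(\graph)$, $\bigoplus_w V_p(X_w)$ and $\mathrm H_1(\graph)^\vee(1)$. The correspondence $Z$ acts on these pieces through $F_Z$. The mixed extension defining the height is determined by the monodromy pairing, namely the length pairing on $\Q E(\graph)$. Its contribution along an edge $e$ is $\tfrac{1}{l(e)}e^*F(\pi(e))$, a harmonic-analysis computation with the orthogonal projection $\pi$. Its contribution at a vertex is half the trace of $F$ on $V_p(X_w)$, coming from the genus part via the trace formula. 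Together these show that $j_\graph$ satisfies $\nabla^2 j_\graph=\mu_F$ up to the scalar $\kappa$, which absorbs $\chi_\ell(\ell)$ and sign conventions.

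\textbf{Step 3 (uniqueness).} By \Cref{rem:measure_subdivision} and the appendix lemmas \Cref{lem:laplacian_mass,lem:measure_mass}, the equation is solvable because $Z$ has trace zero. Its solution is unique up to an additive constant, since harmonic functions on a connected metric graph are constant. The condition $j_\graph(\mathrm{red}(b))=0$ fixes that constant, which matches $h_\ell(b)=0$. Subdivision invariance shows the result does not depend on the chosen semi-stable model or the field $K$.

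The main obstacle is Step 2, namely matching the measure $\mu_F$ exactly, including the factor $\tfrac12$ at vertices and the $1/l(e)$ weighting, to the formula in \cite{BDMTV2}. Different sources normalise lengths and Laplacians differently. I would first verify the identification on a single loop of length $l$, then argue by subdivision invariance and linearity in $F$.
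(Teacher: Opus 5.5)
Your proposal follows the paper's route: the paper does not reprove this statement but quotes \cite[Theorem~3.2]{BDMTV2} and \cite[Theorem~3.11]{hyperellipticQC}, which rest on \cite[Corollary~12.1.3]{BettsDogra}, and, as in your Step~3, gets existence and uniqueness of $j_\graph$ from \Cref{lem:laplacian_mass,lem:measure_mass} and independence of the subdivision from \Cref{lem:measure_subdivision}. The normalisation matching you call the main obstacle is not a separate step there, because $\mu_F$ and $\nabla^2$ are taken over from those sources, and the appendix notes that the weight $1/l(e)$ and the relative sign of the terms in $\nabla^2$ are forced by subdivision invariance and the total-mass identity; a check on a single loop would not by itself settle the general case, and the intersection-theoretic sketch in your Step~1 is only heuristic, the real input being the cited Betts--Dogra result.
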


\begin{remark}
Note that the function $j_\graph$ is a priori only a real-valued function and there is no well defined way to multiply elements of $\R$ with $\kappa \in \Q_p$. For the theorem above this is not a problem however as we will shortly explain. Indeed, if $X$ attains semi-stable reduction over a field extension $K$ of $\Q_\ell$ of ramification index $r$ then $\mathrm{red}$ only takes values in the finite set $\graph_{\frac 1r  \Z}$, the subset of points in $\graph$ whose distance to a vertex lies in  $\frac 1r \Z$. And on this subset of points (in fact on the larger set of points $\graph_\Q$) one can show that $j_\graph$ only takes on $\Q$-rational values. So $j_\graph \circ \mathrm{red}$ takes values in $\Q$, and after embedding $\Q$ in $\Q_p$ it makes sense to multiply it with $\kappa$. 
\end{remark}
Concretely, $j_\graph$ is the unique continuous, edge-wise quadratic function that vanishes at $\mathrm{red}(b)$, has second derivative $-\tfrac{1}{l(e)}\,e^*F(\pi(e))$ along each edge $e$, and whose outgoing slopes at each vertex $w$ sum to $-\tfrac12\operatorname{Tr}(F\mid V_p(X_w))$. 

The nonzero constant $\kappa$ merely records the normalisation of the $p$-adic height and is immaterial for us, as the focus in this paper is mainly on when $h_\ell$ is zero. Existence and uniqueness of $j_\graph$ are \Cref{lem:laplacian_mass,lem:measure_mass} (from the Appendix).

\Cref{thm:BDMTV_localheight} pins down $j_\graph$ by a differential equation on $\graph_\Q$. The next lemma solves that equation along a single edge; it is the only computation with $j_\graph$ that we shall need.

\begin{lemma}\label{lem:quadratic_along_edge}
Let $\graph$ be the dual graph of a semi-stable model of $X$ at $\ell$ (\Cref{def:reduction_graph}), let $F$ be an endomorphism as in \Cref{def:corr_endomorphism}, let $\mu_F$ be its measure (\Cref{def:corr_measure}) and let $g\colon\graph_\Q\to\Q_p$ be a continuous piecewise polynomial function with $\nabla^2(g)=\mu_F$ (\Cref{def:graph_laplacian}). Let $e\in E(\graph)$ be an edge, put $l\colonequals l(e)$ and identify $e$ with the interval $[0,l]$ by arclength, the point $0$ being the source $s(e)$ and the point $l$ the target $t(e)$. Assume that
\begin{enumerate}
    \item every vertex lying in the interior of $e$ in the subdivision of $\graph$ induced by the minimal regular model corresponds to a component of the special fibre of genus $0$, and
    \item the edge part of $\mu_F$ has constant density $c\in\Q_p$ along $e$.
\end{enumerate}
Set $a\colonequals g(s(e))$ and $b\colonequals g(t(e))$. Then
\[
    g(x)\;=\;a\;+\;(b-a)\,\frac{x}{l}\;-\;\frac{c}{2}\,x\,(x-l)
    \qquad\text{for all }x\in[0,l].
\]
In particular, if $a=b=0$, then $g_e(x)=-\tfrac{c}{2}\,x\,(x-l)$, which vanishes identically if $c=0$. % \st{and vanishes exactly at the two endpoints of $e$ if $c\neq 0$}.
\end{lemma}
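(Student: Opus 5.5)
The plan is to work on the subdivided edge and show that $g$ restricted to $e$ is a single quadratic polynomial with second derivative $-c$. The boundary values at $0$ and $l$ then determine it.

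Subdivide $e$ according to the minimal regular model. By \Cref{cor:regual_from_semistable_model}, $e$ becomes a chain of edges $e_1,\dots,e_n$ joined at interior vertices $w_1,\dots,w_{n-1}$, where each $w_i$ corresponds to a $\P^1$. By \Cref{rem:measure_subdivision}, the measure $\mu_F$ is unchanged under this identification $\graph'_\Q=\graph_\Q$. So we may compute $\nabla^2(g)=\mu_F$ on the subdivided graph, and each $e_i$ inherits constant density $c$ from assumption (ii).

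Next compare the two sides of $\nabla^2(g)=\mu_F$ on this chain. On the edge parts, $-(g_{e_i})''=c$, so each $g_{e_i}$ is a quadratic polynomial with leading coefficient $-c/2$. At an interior vertex $w_i$, the Dirac coefficient of $\mu_F$ is $\tfrac12\operatorname{Tr}(F\mid V_p(X_{w_i}))$, and this is $0$ because $X_{w_i}$ has genus $0$ by assumption (i), so $V_p(X_{w_i})=0$. On the Laplacian side, $w_i$ has exactly two incident edges, so $\sigma_{w_i}(g)$ is the difference of the one-sided derivatives of $g$ along $e$ at that point. The equation therefore forces $g$ to be $C^1$ across $w_i$.

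Continuity at each $w_i$ together with matching derivatives means that consecutive quadratics with the same second derivative agree to first order. Hence they coincide as polynomials, and $g$ is a single quadratic $g(x)=\alpha+\beta x-\tfrac c2 x^2$ on all of $[0,l]$. Imposing $g(0)=a$ and $g(l)=b$ gives $\alpha=a$ and $\beta=(b-a)/l+\tfrac c2 l$, which rearranges to the stated formula. The special case $a=b=0$ follows immediately.

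The only delicate point is the sign and orientation bookkeeping in $\sigma_{w_i}$. Orienting each $e_i$ compatibly with $e$ makes $w_i$ the target of $e_i$ and the source of $e_{i+1}$, so $\sigma_{w_i}(g)=g_{e_{i+1}}'(0)-g_{e_i}'(l(e_i))$. This is exactly the jump in the derivative, and the vertex condition is therefore jump $=0$.
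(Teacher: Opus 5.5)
Your proposal is correct and follows the paper's proof essentially step by step. You pass to the subdivision using the invariance of $\mu_F$, read off $g''=-c$ on each segment, and use the vanishing trace at the genus-$0$ interior vertices to get matching one-sided derivatives; the boundary values $a,b$ then pin down the single quadratic, with the only cosmetic difference that the paper phrases the gluing step as $g+\tfrac{c}{2}x^2$ being affine on all of $[0,l]$.
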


\begin{proof}
Subdividing $e$ into segments $[x_{i-1},x_i]$, $1\leq i\leq n$, with $0=x_0<x_1<\dots<x_n=l$, changes neither $\graph_\Q$ nor $g$, and it does not change $\mu_F$ by \Cref{rem:measure_subdivision}; we may therefore compute with the subdivision. Write $w_i$ for the interior vertex sitting at $x_i$, $1\leq i\leq n-1$. Each $w_i$ has degree $2$, the two segments meeting there being $[x_{i-1},x_i]$ and $[x_i,x_{i+1}]$.

Comparing the coefficients of the segment $[x_{i-1},x_i]$ in $\nabla^2(g)=\mu_F$ gives, by \Cref{def:graph_laplacian} and assumption~(ii), the identity $g''=-c$ on $(x_{i-1},x_i)$. As $g$ is polynomial there, $g|_{[x_{i-1},x_i]}$ has degree at most $2$ with leading coefficient $-c/2$.

Comparing the coefficients of the vertex $w_i$ gives that the sum of the outgoing slopes of $g$ at $w_i$ equals $-\tfrac12\operatorname{Tr}(F\mid V_p(X_{w_i}))$. By assumption~(i) the component $X_{w_i}$ has genus $0$, so its Jacobian is trivial, $V_p(X_{w_i})=0$, and this trace vanishes. In the coordinate $x$ the two outgoing slopes of $g$ at $w_i$ are $-g'(x_i^-)$ and $g'(x_i^+)$, so their sum being $0$ means precisely that the two one-sided derivatives of $g$ at $x_i$ agree.

Hence $h(x)\colonequals g(x)+\tfrac{c}{2}x^2$ is continuous on $[0,l]$, is affine on each segment $[x_{i-1},x_i]$, and has matching one-sided derivatives at $x_1,\dots,x_{n-1}$; therefore $h$ is affine on all of $[0,l]$ and $g_e(x)=-\tfrac{c}{2}x^2+\beta x+\gamma$ for some $\beta,\gamma\in\Q_p$. Evaluating at $x=0$ gives $\gamma=a$, and evaluating at $x=l$ gives $b=-\tfrac{c}{2}l^2+\beta l+a$, i.e.\ $\beta=\tfrac{b-a}{l}+\tfrac{c\,l}{2}$. Substituting these two values yields the displayed formula.
\end{proof}

\subsection{Quaternion algebras background}
For a comprehensive treatment of these topics, see \cite{Voight2021}.
    
\begin{definition}\label{def:quaternion_algebra}
A \emph{quaternion algebra} $B$ over a field $F$ is a four-dimensional central simple algebra over $F$. It is \emph{split} over $F$ if $B\cong M_2(F)$. If $\characteristic F\neq 2$ and $a,b\in F^\times$, we denote by $\left(\frac{a,b}{F}\right)$ the quaternion algebra $F \oplus F i \oplus F j \oplus F k$ with multiplication given by $i^2 = a$, $j^2 = b$ and $k = ij = -ji$.
\end{definition}

If $\characteristic F\neq 2$, every quaternion algebra over $F$ is isomorphic to $\left(\frac{a,b}{F}\right)$ for some $a,b\in F^\times$. For a similar description when $\characteristic F = 2$, see \cite[Def.~6.2.1]{Voight2021}.

The matrix algebra $M_2(F)$ is an example of a quaternion algebra, and by Wedderburn-Artin theorem (\cite[Theorem 7.1.1, Corollary 7.1.2]{Voight2021}) every quaternion algebra over $F$ is either a split or a division algebra.

In the basis $\{1,i,j,k\}$ as above, one can define the following:
\begin{itemize}
    \item the standard involution $\overline{a+bi+cj+dk}=a-bi-cj-dk$,
    \item the reduced trace $\trd(\alpha) = \alpha+\overline{\alpha}$, and
    \item the reduced norm $\nrd(\alpha)=\alpha\overline{\alpha}$.
\end{itemize}

\begin{definition}\label{def:ramification}
Let $B$ be a quaternion algebra over a global field $F$. For a place $v$ of $F$, put $B_v\colonequals B\otimes_F F_v$. The \emph{ramification set} $\Ram(B)$ is the set of places $v$ of $F$ such that $B_v$ is a division algebra. If $F=\Q$, the product $D$ of the primes in $\Ram(B)$ is called the \emph{discriminant} of $B$, and $B$ is called \emph{definite} if $\infty\in\Ram(B)$.
\end{definition}

\begin{theorem}\label{thm:ramification_set}
For every quaternion algebra $B$ over $\Q$, the set $\Ram(B)$ is a finite set of places of $\Q$ of even cardinality. Conversely, every finite set of places of $\Q$ of even cardinality is $\Ram(B)$ for some quaternion algebra $B$ over $\Q$, unique up to isomorphism.
\end{theorem}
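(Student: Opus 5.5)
The plan is to reduce everything to Hilbert symbols. Since $\characteristic\Q=0$, write $B\cong\left(\frac{a,b}{\Q}\right)$ with $a,b\in\Q^\times$. For each place $v$, $B_v\cong\left(\frac{a,b}{\Q_v}\right)$ is split exactly when the Hilbert symbol $(a,b)_v=1$. The reason is that $\left(\frac{a,b}{F}\right)$ is split if and only if $b$ is a norm from $F(\sqrt a)$, and we use the Wedderburn--Artin dichotomy recalled above. Hence $\Ram(B)=\{v:(a,b)_v=-1\}$.

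\emph{Finiteness} follows because $(a,b)_\ell=1$ for every odd prime $\ell$ not dividing the numerators and denominators of $a$ and $b$. For such $\ell$ both $a$ and $b$ are $\ell$-adic units, and a unit is a norm from the unramified extension $\Q_\ell(\sqrt a)$. \emph{Even cardinality} is then exactly Hilbert reciprocity, $\prod_v (a,b)_v=1$. I would quote this from the standard treatment (Serre, \emph{A Course in Arithmetic}, Ch.~III) or from \cite{Voight2021}; it ultimately rests on quadratic reciprocity.

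For \emph{uniqueness}, suppose $\Ram(B)=\Ram(B')$. I first use the local fact that over each $\Q_v$ there is exactly one quaternion division algebra up to isomorphism. For $v=\infty$ this is the Hamilton quaternions. For $v=\ell$ it follows because a quaternion algebra is split if and only if its norm form is isotropic, and the Hilbert symbol is a nondegenerate pairing on $\Q_\ell^\times/\Q_\ell^{\times2}$. So $B_v\cong B'_v$ for all $v$.

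To globalise, I would use that $B\cong B'$ if and only if the ternary quadratic forms given by $\nrd$ on the trace-zero parts, namely $\langle -a,-b,ab\rangle$ and its analogue for $B'$, are similar. Local isomorphism gives local similarity. Since both forms have discriminant $1$ modulo squares, similarity amounts to equivalence up to a scalar that can be normalised. Hasse--Minkowski for ternary forms, together with equality of discriminants and of all Hasse invariants, then gives global equivalence, hence $B\cong B'$.

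For \emph{existence}, given a finite set $S$ of even cardinality, I must find $a,b\in\Q^\times$ with $(a,b)_v=-1$ exactly for $v\in S$. I would follow Serre's theorem on the existence of a rational number with prescribed Hilbert symbols. First choose $a$ so that $a$ is not a square in $\Q_v$ for each $v\in S$, taking $a<0$ if $\infty\in S$. Then choose $b$ with the right local classes at the places in $S\cup\{2,\infty\}$ and at the primes dividing $a$. Dirichlet's theorem on primes in arithmetic progressions supplies $b=\pm$(a product of a controlled integer and one auxiliary prime $q$), so that the only place outside the controlled set where the symbol could be $-1$ is $q$. Reciprocity together with the even-cardinality hypothesis then forces $(a,b)_q=1$.

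This existence step is the main obstacle: the bookkeeping of local conditions at $2$ and the application of Dirichlet's theorem are the nontrivial parts. If a shortcut is preferred, one can simply cite \cite{Voight2021}, Theorem 14.6.1, which is exactly the present statement.
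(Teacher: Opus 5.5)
Your proposal is correct and is essentially the standard argument behind the result the paper relies on; the paper's own proof is only a citation of the proof of Voight's Main Theorem~14.6.1. That argument, like yours, uses Hilbert symbols and Hilbert reciprocity for finiteness and even cardinality, local uniqueness plus Hasse--Minkowski for injectivity, and a Dirichlet-type prescribed-symbol argument for existence.

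The one step you compress noticeably is local uniqueness over $\Q_\ell$. Besides nondegeneracy of the Hilbert symbol, it needs the fact that $\left(\frac{a,b}{\Q_\ell}\right)\cong\left(\frac{a,b'}{\Q_\ell}\right)$ whenever $b/b'$ is a norm from $\Q_\ell(\sqrt a)$, followed by a short chain argument. Alternatively, apply the classification of ternary quadratic forms over $\Q_\ell$ to the trace-zero norm form.
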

\begin{proof}
See \cite[p.~452, Proof of Main Theorem 14.6.1]{Voight2021}.
\end{proof}
    
We are particularly interested in the quaternion algebra $B=B_{\ell,\infty}$ over $\Q$ that is ramified at $\ell$ and infinity. Ramification at $\ell$ implies that the completion $B_{\ell} = B \otimes_{\Q} \Q_\ell$ is the unique quaternion division algebra over $\Q_\ell$.
Ramification at the infinite place means that $B_{\infty} = B \otimes_{\Q} \R$ is isomorphic to \emph{Hamilton's quaternions}, $\mathbf{H} = \left(\frac{-1,-1}{\R}\right)$, which is also a division algebra. For all other primes $q \neq \ell$, the algebra is split, meaning $B_q = B \otimes_{\Q} \Q_q \cong M_2(\Q_q)$. Quaternion algebras that ramify over the infinite place are called \emph{definite} quaternion algebras.

One can explicitly realize \( B_{\ell,\infty}\) as \( B_{\ell, \infty} \cong \left(\frac{a,b}{\Q}\right)\) for suitable negative integers \(a,b\) as in \cite[Example 14.2.13]{Voight2021}, e.~g., \( B_{2,\infty} = \left(\frac{-1,-1}{\Q}\right) \) and \(B_{3,\infty} = \left(\frac{-1,-3}{\Q}\right) \). Only imaginary quadratic fields can embed in \(B_{\ell,\infty}\) (which also holds for all definite quaternion algebras).

\begin{definition}\label{def:order}
An \emph{order} $\orderO$ in a quaternion algebra $B$ over $\Q$ is a subring of $B$ (with $1$) that is a lattice, i.e., a finitely generated $\Z$-submodule of rank $4$. An order is \emph{maximal} if it is not properly contained in any other order of $B$. Orders in quaternion algebras over $\Q_q$ are defined in the same way, with $\Z_q$ in place of $\Z$. For an order $\orderO$ in $B$, its completion $\orderO_q\colonequals\orderO\otimes_\Z\Z_q$ is an order in $B\otimes_\Q\Q_q$.
\end{definition}

\noindent For example, $M_2(\Z)$ is a maximal order in $M_2(\Q)$.

\begin{definition}\label{def:eichler_order}
An \emph{Eichler order} is the intersection of two maximal orders, which includes the maximal orders themselves. In the split quaternion algebra over \(\Q_q\), every Eichler order is conjugate to \( \big(\begin{smallmatrix} \Z_q  & \Z_q  \\ q^e \Z_q  & \Z_q \end{smallmatrix}\big)\), the \emph{standard Eichler order of level} \(q^e \), where $e\geqslant 0$.
\end{definition}

One of the nice features of quaternion algebras is that a lot of properties are local (which means that a property holds if and only if it holds over every completion). Since being an Eichler order is a local property, we can define the level for global orders as follows. 

\begin{definition}\label{def:eichler_level}
Let $B$ be a quaternion algebra over $\Q$ of discriminant $D$, and let $M$ be a positive integer coprime to $D$. An order $\orderO$ in $B$ is an \emph{Eichler order of level} $M$ if, for every prime $q\mid M$, the completion $\orderO_q$ is isomorphic to the standard Eichler order of level $q^e$, where $q^e\parallel M$, and $\orderO_q$ is maximal for every prime $q\nmid M$.
\end{definition}

Fix an order $\orderO$ in a quaternion algebra $B$ over $\Q$.
\begin{definition}
A $\Z$-lattice $I\subset B$ is a \emph{right $\orderO$-ideal} if $I\orderO=I$. It is \emph{invertible} (equivalently, \emph{locally principal}) if for every prime $q$ we have $I \otimes_\Z \Z_q = \alpha_q \orderO_q$ for $\alpha_q \in B_q^\times$. The \emph{left order} of $I$ is \(
  O_L(I)\colonequals\{\,x\in B : xI\subseteq I\,\},
\)
\end{definition}  

\noindent If $I$ is invertible, then $O_L(I)$ is an order in $B$ with $O_L(I)\otimes_\Z \Z_q = \alpha_q \orderO_q \alpha_q^{-1}$ for every prime $q$, i.~e., $O_L(I)$ is locally conjugate to $\orderO$. In particular, if $\orderO$ is an Eichler order of level $M$, then so is $O_L(I)$.

\begin{definition}\label{def:class_set}
Two invertible right $\orderO$-ideals $I, J$ are \emph{equivalent} if $J=\alpha I$ for some $\alpha \in B^\times$. The \emph{class set} $\Cls\orderO$ is the set of equivalence classes $[I]$ of invertible right $\orderO$-ideals, and $\#\Cls\orderO$ is the \emph{class number} of $\orderO$.
\end{definition}

\noindent The class set is finite \cite[Theorem 17.1.1]{Voight2021}. Since $O_L(\alpha I) = \alpha O_L(I) \alpha^{-1}$, the left order of a class $[I]$ is well defined up to conjugation.

Now, consider a quadratic field $K/\Q$ and let $S$ be a quadratic order in \( K\).
An embedding of $S$ into $\orderO$ also gives an embedding $\phi : K \inj B$ by extending scalars. The starting embedding $\phi:S\inj \orderO$ is said to be \emph{optimal} if it doesn't extend to a larger quadratic order while staying inside the same quaternionic order, i.~e., if $\phi(K) \cap \orderO = \phi(S)$.

\begin{definition}\label{def:splitting_symbol}
Let $K$ be a quadratic field, $q$ a prime, and $K_q\colonequals K\otimes_\Q\Q_q$. Define
\begin{equation}\label{eq:pInK}
\legendre{K}{q} \colonequals
\begin{cases}
1, & \text{if $q$ splits in $K$, i.e., } K_q \cong\Q_q\times\Q_q;\\
0, & \text{if $q$ ramifies in $K$;}\\
-1, & \text{if $q$ is inert in $K$.}
\end{cases}
\end{equation}
\end{definition}

\noindent If $D_K$ denotes the discriminant of $K$, then $\legendre{K}{q}$ equals the Kronecker symbol $\legendre{D_K}{q}$. In particular, if $q$ is odd and $K=\Q(\sqrt{d})$ with $d$ a squarefree integer, then $\legendre{K}{q}=\legendre{d}{q}$ is the Legendre symbol, cf.~\cite[§\,30.5.2]{Voight2021}. The symbol measures the local obstruction to embeddings: if $B$ is ramified at $q$, then $K_q$ embeds into $B\otimes_\Q\Q_q$ if and only if $\legendre{K}{q}\neq 1$. %The local factors in Eichler's formula below are expressed in terms of this symbol.

Given a group $\Gamma$ acting on $\orderO$ by ring automorphisms (in practice a subgroup $\Gamma\subseteq\orderO^\times$ acting by conjugation $x\mapsto\gamma x\gamma^{-1}$) two optimal embeddings are \emph{$\Gamma$-equivalent} if they differ by the action of some $\gamma\in\Gamma$. 

\begin{definition}\label{def:embedding_number}
Let $\orderO$  be an order in a quaternion algebra $B$ over $\Q$, and let $S$ be an order in a quadratic field $K$. We denote by $m(S, \orderO)$ the number of $\orderO^\times$-conjugacy classes of optimal embeddings $S\inj\orderO$.
\end{definition}

We now state Eichler's formula for the number of optimal embeddings.
\begin{theorem}[Eichler]%, see {\cite[Theorem~30.4.7]{Voight2021}}]
    Let $\orderO \subset B$ be an Eichler order of squarefree level $M$, and $D = \disc B$, so that $\gcd(D,M)=1$. Let $K$ be an imaginary quadratic field, and let $S\subset K$ be an order that is maximal at every prime dividing $DM$. Denote the class number of $S$ by $h(S)$. Then
\[
\sum_{[I]\in\operatorname{Cls} \orderO}
   m\left(S,O_L(I)\right)
   \;=\; h(S)\,\prod_{p\mid D}\left(1-\left(\tfrac{K}{p}\right)\right)\,
         \prod_{p\mid M}\left(1+\left(\tfrac{K}{p}\right)\right).
\]
\end{theorem}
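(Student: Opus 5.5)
The formula is a local--global statement, and I would prove it by the standard adelic counting argument, as in Voight's treatment of Eichler's theorem. Put $\widehat\orderO=\orderO\otimes\widehat\Z$ and $\widehat B=B\otimes\widehat\Q$. The class set $\Cls\orderO$ is in bijection with $B^\times\backslash\widehat B^\times/\widehat\orderO^\times$, via $\widehat\alpha\mapsto\widehat\alpha\widehat\orderO\cap B$, and the left order of the class of $\widehat\alpha$ is $B\cap\widehat\alpha\widehat\orderO\widehat\alpha^{-1}$.

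\textbf{Step 1 (globalisation).} Fix an embedding $\phi_0\colon K\inj B$. It exists because every prime $p\mid D$ and $\infty$ is non-split in $K$; if some $p\mid D$ splits in $K$, both sides are $0$. By Skolem--Noether, all embeddings $K\inj B$ are $B^\times$-conjugate to $\phi_0$. Consider the set $\mathcal E$ of pairs $([I],\phi)$, where $\phi$ is an optimal embedding of $S$ into $O_L(I)$ taken up to $O_L(I)^\times$-conjugacy. I would show that $\mathcal E$ is in bijection with
\[
K^\times\backslash\{\widehat\beta\in\widehat B^\times : \widehat\beta^{-1}\phi_0(\widehat K)\widehat\beta\cap\widehat\orderO=\widehat\beta^{-1}\phi_0(\widehat S)\widehat\beta\}/\widehat\orderO^\times .
\]
The optimality condition is local because $S=K\cap\widehat S$.

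\textbf{Step 2 (local computation).} The set in braces is a restricted product of local sets $E_p$, each stable under left multiplication by $K_p^\times$ and right multiplication by $\orderO_p^\times$. The quotient of the full set by $\widehat K^\times$ on the left and $\widehat\orderO^\times$ on the right is therefore $\prod_p \bigl(K_p^\times\backslash E_p/\orderO_p^\times\bigr)$. Write $m_p$ for the number of elements of the local factor; equivalently, $m_p$ is the number of $\orderO_p^\times$-classes of optimal embeddings $S_p\inj\orderO_p$. The fibres of the map from $K^\times\backslash\cdots$ to this product are $K^\times\backslash\widehat K^\times/\widehat S^\times\cong\Pic S$, of size $h(S)$. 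This requires checking that the stabilisers match, i.e.\ that $\widehat K^\times\cap\widehat\beta\widehat\orderO^\times\widehat\beta^{-1}=\widehat S^\times$ by optimality. Hence the sum equals $h(S)\prod_p m_p$.

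\textbf{Step 3 (local factors).} The local multiplicities are as follows.
\begin{itemize}
\item For $p\nmid DM$, $\orderO_p\cong M_2(\Z_p)$, and the classes of optimal embeddings of any quadratic order form a single class, so $m_p=1$.
\item For $p\mid D$, $\orderO_p$ is the unique maximal order of the division algebra. Since $S_p$ is maximal, every embedding of $K_p$ lands optimally in it. The classes are counted by $K_p^\times\backslash B_p^\times/\orderO_p^\times$ modulo normalisers. This gives $1$ if $p$ ramifies in $K$ (the uniformiser of $K_p$ generates the valuation) and $2$ if $p$ is inert, i.e.\ $m_p=1-\left(\tfrac Kp\right)$.
\item For $p\mid M$, $p\,\|\,M$, $\orderO_p$ is an intersection of two adjacent maximal orders. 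The count reduces to edges of the Bruhat--Tits tree fixed by $S_p^\times$, up to $K_p^\times$. Split $p$ gives $2$ (the two fixed ends/apartment orientations), ramified gives $1$, and inert gives $0$ (only a single fixed vertex). So $m_p=1+\left(\tfrac Kp\right)$.
\end{itemize}

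\textbf{Main obstacle.} The main difficulty is Step 1--2 bookkeeping: showing that conjugacy classes of optimal embeddings into the various left orders match double cosets exactly, without extra factors from units. The unit groups $O_L(I)^\times$ are finite but may differ between classes; one must check they are exactly absorbed by the $\widehat\orderO^\times$ quotient and do not introduce weights, since the formula counts classes rather than weighting by $1/\#O_L(I)^\times$. The local Step 3 computations are routine tree arguments.
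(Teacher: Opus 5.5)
Your proposal is correct and follows the same route as the paper. The paper's proof just cites Voight's trace formula (Theorem~30.4.7), which is your adelic Steps~1--2 giving $h(S)\prod_p m(S_p,\orderO_p;\orderO_p^\times)$, and the local embedding numbers of Voight \S\S30.5--30.6, which are your Step~3 factors. Two wording slips do not change the answer: at $p\mid D$ the count is exactly $\#\bigl(K_p^\times\backslash B_p^\times/\orderO_p^\times\bigr)$ with no further quotient by normalisers, and at $p\mid M$ you should count, modulo $K_p^\times$, oriented edges whose endpoint lattices are $S_p$-stable, which for $p=2$ split in $K$ is strictly smaller than the set of edges fixed by $S_p^\times$.
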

\begin{proof}
Combine \cite[Theorem~30.4.7]{Voight2021} with the local embedding numbers in \cite[\S\S\,30.5--30.6]{Voight2021}. See also \cite[Example 30.7.5]{Voight2021} for a very similar statement.
\end{proof}

%\nikola{Have we defined \(\nu_{\ell}(N,d)\) before this? It might make more sense to put it in this next subsection (if we have it).}

\subsection{Deuring's correspondence}
Let $\ell$ be a prime and $M$ a squarefree positive integer coprime to $\ell$. For a supersingular elliptic curve $E$ over $\Fbar_\ell$ and a cyclic subgroup $G \subseteq E(\Fbar_\ell)$ of order $M$. Let
\[ \End(E,G) \colonequals \{ \phi \in \End(E) : \phi(G) \subseteq G\}.\]
Two such pairs $(E, G)$ and $(E', G')$ are \emph{isomorphic} if there is an isomorphism $\phi \colon E \to E'$ with $\phi(G) = G'$.

\begin{theorem}[Deuring correspondence]\label{thm:Deuring}
    Let $\ell$ and $M$ be as above.
    \begin{enumerate}
        \item For every pair $(E, G)$ as above, $\End(E)\otimes \Q \cong B_{\ell, \infty}$, the ring $\End(E)$ is a maximal order in it, and $\End(E,G)$ is an Eichler order of level $M$ in it.
        \item Fix a pair $(E_0, G_0)$ as above and put $\orderO_0 \colonequals \End(E_0, G_0)$. There is a bijection between the isomorphisms classes of pairs $(E, G)$ and $\Cls\orderO_0$ such that, if $(E,G)$ corresponds to $[I]$, then $\End(E,G)\cong \orderO_L(I)$.
    \end{enumerate}
\end{theorem}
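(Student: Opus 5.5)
The plan is to deduce both parts from Deuring's classical theory combined with the local–global description of Eichler orders and the correspondence between invertible right ideals and isogenies. This is the standard route, as in \cite[Chapter 42]{Voight2021}.

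For part (i), I would proceed as follows.
\begin{enumerate}[label=(\alph*)]
\item Recall Deuring's theorem: for $E$ supersingular over $\Fbar_\ell$, $\End(E)\otimes\Q$ is a definite quaternion algebra ramified exactly at $\ell$ and $\infty$, and $\End(E)$ is a maximal order in it. By \Cref{thm:ramification_set}, this algebra is $B_{\ell,\infty}$.
\item Compute $\End(E,G)$ locally, using that being an Eichler order of level $M$ is a local property (\Cref{def:eichler_level}).
  \begin{itemize}
  \item At primes $q\nmid M$, the condition $\phi(G)\subseteq G$ is automatic after tensoring with $\Z_q$, so $\End(E,G)_q=\End(E)_q$ is maximal.
  \item At $q\mid M$ (so $q\neq\ell$), use $\End(E)\otimes\Z_q\cong\End(T_qE)\cong M_2(\Z_q)$. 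Choosing a basis of $T_qE$ adapted to $G[q^\infty]$, which is cyclic of order $q$ since $M$ is squarefree, the stabiliser of $G[q]\subset E[q]$ becomes the standard Eichler order $\left(\begin{smallmatrix}\Z_q&\Z_q\\ q\Z_q&\Z_q\end{smallmatrix}\right)$.
  \end{itemize}
\item Check that $\End(E,G)$ is a lattice of rank $4$; it has finite index in $\End(E)$ since it contains $M\End(E)$.
\end{enumerate}

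For part (ii), the plan is to build the map from pairs to ideal classes using isogenies.
\begin{enumerate}[label=(\alph*)]
\item Given $(E,G)$, choose an isogeny $\varphi\colon E_0\to E$ with $\varphi(G_0)=G$, of degree coprime to $M$. Such an isogeny exists because all supersingular curves are isogenous via isogenies of $\ell$-power degree, which I would adjust by composing with suitable endomorphisms or Frobenius.
\item Set $I=\Hom((E,G),(E_0,G_0))\circ\varphi$, or equivalently the ideal
\[
I_\varphi=\{\alpha\in\orderO_0:\alpha \text{ factors through }\varphi\}.
\]
Show that $I$ is an invertible right $\orderO_0$-ideal with $O_L(I)\cong\End(E,G)$.
\item Show that the map $(E,G)\mapsto[I]$ is well defined: changing $\varphi$ or replacing the pair by an isomorphic one multiplies $I$ on the left by an element of $B^\times$. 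Conversely, given an invertible $I$ (wlog integral), form the quotient $E_I=E_0/E_0[I]$, where $E_0[I]=\bigcap_{\alpha\in I}\ker\alpha$, with image level structure $G_I$. This gives the inverse map.
\item For bijectivity, use that homomorphism modules are locally principal. Equivalently, compare the number of pairs against the mass formula
\[
\sum 1/\#\Aut=\frac{(\ell-1)}{24}\prod_{q\mid M}(q+1),
\]
which equals the Eichler mass of $\orderO_0$. Together with injectivity, this gives bijectivity.
\end{enumerate}

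The main obstacle is the injectivity and surjectivity of the ideal-to-pair map at primes dividing $M$ while keeping track of the level structure. One must check that invertible ideals of the Eichler order, not just of the maximal order, correspond exactly to isogenies preserving $G$. The argument has three steps:
\begin{itemize}
\item Localise at each $q\mid M$, where $I_q=\alpha_q\orderO_{0,q}$.
\item Observe that $\alpha_q$ transports the flag $G_0[q]\subset E_0[q]$ to the flag for $(E_I,G_I)$.
\item Conclude by Deuring's bijection for maximal orders, using the standard fact $\Hom(E,E')\otimes\Z_q\cong\Hom(T_qE,T_qE')$ for $q\neq\ell$.
\end{itemize}
Since the statement is classical, I would ultimately cite \cite[Theorem 42.3.2 and \S42.4]{Voight2021} for these verifications rather than redo them.
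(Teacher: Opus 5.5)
Your outline is the standard argument. The paper's own proof consists only of citations: Deuring and \cite[Theorem~42.1.9]{Voight2021} for $\End(E)$, \cite[Theorem~3.7]{Arpin24} for $\End(E,G)$, and the equivalence of categories \cite[Theorem~6.5]{Arpin24} for (ii). So there is no different route to compare, and your sketch of part (i) is fine. However, step (a) of part (ii) rests on a false statement: supersingular curves are \emph{not} all connected by isogenies of $\ell$-power degree. Since $E[\ell^\infty](\Fbar_\ell)=0$ for supersingular $E$, every isogeny of degree $\ell^k$ out of $E$ is purely inseparable. It is therefore the $\ell^k$-power Frobenius followed by an isomorphism, and it lands on $E^{(\ell^k)}$. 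That curve is $E$ or its Galois conjugate, because $j(E)\in\F_{\ell^2}$. So the claim fails as soon as there are more than two supersingular $j$-invariants (e.g.\ $\ell=37$). Composing with endomorphisms cannot repair this either, since it cannot remove a factor $q\mid M$ from a degree.

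The conclusion of step (a) is still true, and the fact you quote at the end proves it. From $\Hom(E_0,E)\otimes\Z_q\cong\Hom(T_qE_0,T_qE)$ you get $\Hom(E_0,E)/q\Hom(E_0,E)\cong\Hom(E_0[q],E[q])$ for each $q\mid M$. By the Chinese remainder theorem there is then a $\varphi\in\Hom(E_0,E)$ that, for every $q\mid M$, induces an isomorphism $E_0[q]\to E[q]$ carrying $G_0[q]$ onto $G[q]$. Such a $\varphi$ has degree prime to $M$ and satisfies $\varphi(G_0)=G$. Alternatively, skip the choice of $\varphi$ and work directly with the module $\Hom((E_0,G_0),(E,G))$; its local principality at $q\mid M$ is exactly your flag computation.

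A few smaller points:
\begin{itemize}
\item As you define it, $I_\varphi$ is stable under post-composition by $\orderO_0$. It is therefore a \emph{left} ideal whose \emph{right} order is isomorphic to $\End(E,G)$, and changing $\varphi$ multiplies it on the right. This is harmless by \Cref{rem:lefteqright}, but your left/right labels are swapped.
\item In the inverse map, choose the integral representative $I$ with $\nrd(I)$ prime to $M$. Otherwise the image of $G_0$ in $E_0/E_0[I]$ need not be cyclic of order $M$.
\item The mass-formula argument is a valid alternative proof of surjectivity, using $\#\Aut(E,G)=\#O_L(I)^\times$. It is not a reformulation of local principality, so ``equivalently'' is the wrong word.
\item For the level-structure part, which you rightly identify as the main obstacle, the paper relies on \cite{Arpin24}, citing \cite[Remark~42.3.10]{Voight2021} only as a pointer. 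That is the reference to use there.
\end{itemize}
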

\begin{proof}
    The statements about $\End(E)$ are Deuring's theorem \cite{Deuring1951}, \cite[Theorem 42.1.9]{Voight2021}, and the statement about $\End(E,G)$ is \cite[Theorem 3.7]{Arpin24} (see also \cite[Remark 42.3.10]{Voight2021}). Part~(ii) follows from the equivalence of categories \cite[Theorem 6.5]{Arpin24}, which sends $(E,G)$ to an invertible left $\orderO_0$-module whose right order is isomorphic to $\End(E,G)$.
\end{proof}

\begin{remark}\label{rem:lefteqright}
    The standard involution turns left ideal classes into right ideal classes, and right orders into left orders. A reader may find a different convention compared to ours depending on the reference (e.~g.~\cite{Arpin24}).
\end{remark}

\subsection{Automorphisms of elliptic curves}

At some points in the article we will need the classification for the possibilities of \( \Aut(E) \). To make our results hold for all $N$ and not just $N$ coprime to $6$, we will also need to sometimes deal with the subtleties that can occur because of the larger automorphism groups in characteristic $2$ and $3$.

\begin{proposition}
\label{prop:aut_classification}
Let $E$ be an elliptic curve over $\overline{\F}_\ell $, and let $j(E)$ denote its $j$-invariant. The automorphism group $\Aut(E)$ is given by:
\begin{enumerate}
    \item If $j(E) \not\equiv 0, 1728 \pmod \ell$, then $\Aut(E) \cong \Z/2\Z$.
    \item If $j(E) \equiv 1728 \pmod \ell$ and $\ell \neq 2, 3$, then $\Aut(E) \cong \Z/4\Z$.
    \item If $j(E) \equiv 0 \pmod \ell$ and $\ell \neq 2, 3$, then $\Aut(E) \cong \Z/6\Z$.
    \item If $\ell = 3$ and $j(E) \equiv 0 \equiv 1728 \pmod 3$, then $\Aut(E) \cong \Z/3\Z \rtimes \Z/4\Z$.
    \item If $\ell = 2$ and $j(E) \equiv 0 \equiv 1728 \pmod 2$, then $\Aut(E) \cong \SL_2(\F_3)$.
\end{enumerate}
where the action of $\Z/4\Z$ on $\Z/3\Z$ is the unique nontrivial action, and the isomorphism  $\Aut(E) \cong \SL_2( \F_3)$ comes from the action of $\Aut(E)$ on $E[3](\overline{\F}_\ell )$
\end{proposition}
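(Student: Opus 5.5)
The proof follows the standard route through the Weierstrass normal forms, as in Silverman's treatment (Appendix A / Proposition III.10.1). I will argue separately according to whether $\ell\ge 5$, $\ell=3$ or $\ell=2$. In each case the first step is to write $E$ in a normal form adapted to the characteristic. Next I note that an isomorphism between two such normal forms is given by a substitution $x=u^2x'+r$, $y=u^3y'+su^2x'+t$. Finally I determine, for that normal form, which substitutions $(u,r,s,t)$ preserve it. The group of such substitutions is $\Aut(E)$. Its order and structure can then be read off from the equations.

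For $\ell\ge5$, write $E\colon y^2=x^3+Ax+B$. The admissible substitutions are $(x,y)\mapsto(u^2x,u^3y)$ with $u^4A=A$ and $u^6B=B$. If $AB\ne0$, i.e.\ $j\ne0,1728$, this forces $u^2=1$, so $\Aut(E)\cong\Z/2\Z$. If $B=0$, so $j=1728$, the condition is $u^4=1$ and $\Aut(E)\cong\mu_4$. If $A=0$, so $j=0$, the condition is $u^6=1$ and $\Aut(E)\cong\mu_6$. For $\ell=3$ and $j\ne0$ one can take $y^2=x^3+a_2x^2+a_6$ with $a_2\ne0$, and the same computation again gives $\Z/2\Z$. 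The same holds for $\ell=2$ and $j\ne0$, using $y^2+xy=x^3+a_2x^2+a_6$.

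The substantive cases are $j=0=1728$ in characteristics $3$ and $2$. For $\ell=3$ I will use the model $y^2=x^3-x$. The automorphisms are $x\mapsto u^2x+r$, $y\mapsto u^3y$ with $u^4=1$ and $r^3-r=0$, that is, $r\in\F_3$. This gives a group of order $12$. It contains the normal subgroup $\{r\in\F_3,\ u=1\}\cong\Z/3\Z$, and it contains $\langle u\rangle\cong\Z/4\Z$. Conjugating the translation by $u$ multiplies $r$ by $u^2=-1$ when $u=\pm i$, so the action is nontrivial and $\Aut(E)\cong\Z/3\Z\rtimes\Z/4\Z$. For $\ell=2$ I will use $y^2+y=x^3$. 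The conditions become $u^3=1$, $s^4=s$ and $t^2+t=s^6$, with $r=s^2$. This gives $3\cdot4\cdot2=24$ automorphisms.

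The main obstacle I expect is identifying this group of order $24$ with $\SL_2(\F_3)$. For this I will use the action on $E[3]\cong(\Z/3\Z)^2$, which is faithful because $\ell\ne3$. The action preserves the Weil pairing, so it lands in $\SL_2(\F_3)$. Both groups have order $24$, so faithfulness gives an isomorphism. Faithfulness holds because an automorphism acting trivially on $E[3]$ satisfies $\alpha-1\in3\End(E)$. Since $\alpha$ has finite order, the degree of $\alpha-1$ is at most $4<9$, so $\alpha-1$ must be $0$. Alternatively, one can cite Silverman's Exercise A.1 directly.
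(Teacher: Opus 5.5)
Your proof is correct, but the paper does not argue this way: it gives no argument of its own. It cites Schoof's Proposition 4.4 (going back to Tate and Deuring) and points to Silverman's Theorem III.10.1 only for the orders of the groups.

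You instead reproduce the Weierstrass-form computation that underlies those references (Silverman III.10.1 and Appendix A), and then pin down the group structures yourself:
\begin{itemize}
\item in characteristic $3$, the semidirect product comes from the conjugation rule $T_r\mapsto T_{u^2r}$ with $u^2=-1$ for $u=\pm i$;
\item in characteristic $2$, the identification with $\SL_2(\F_3)$ comes from the Weil pairing together with a faithfulness argument on $E[3]$.
\end{itemize}

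Your version is self-contained. It checks exactly the structural facts the paper relies on later: the subgroup counts $s_3,s_4$ of $\SL_2(\F_3)$ and $\Z/3\Z\rtimes\Z/4\Z$ in \Cref{prop:extra_automorphism}, and the Weil-pairing embedding into $\SL_2$, which reappears in \Cref{lem:unit_groups_cases}. The citation is shorter, but it leaves the reader to extract the structural statements, not just the orders, from Schoof and Tate.

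If you write it up, spell out three small points:
\begin{itemize}
\item The factorisation $\alpha-1=\beta\circ[3]$ uses that $[3]$ is separable, which holds since $\ell=2$.
\item The bound $\deg(\alpha-1)=2-\operatorname{tr}(\alpha)\le 4$ follows from $|\operatorname{tr}\alpha|\le 2\sqrt{\deg\alpha}=2$.
\item For $\ell=3$, $j\neq 0$, the ``same computation'' needs one more observation: the new $x$-coefficient is a unit multiple of $2a_2r$, which forces $r=0$ before $u^2=1$ can be read off.
\end{itemize}
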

\begin{proof}
This is Proposition 4.4 from \cite{Schoof87}, who references \cite[p. 182]{Tate78} and Deuring \cite[\S\,5]{Deuring41}. The same statement just mentioning the group order is also in \cite[Theorem~III.10.1]{Silverman1}.
\end{proof}

\section{$X_0(N)^*$ is stable over $\Z_\ell$ when $\ell^2 \nmid N$}\label{sec:semi-stable}

The full Atkin--Lehner group is $W_N=\{w_d : d\mid N,\ \gcd(d,N/d)=1\}\cong(\Z/2)^{\omega(N)}$, with $w_d w_{d'}=w_{d''}$ for $d''=dd'/\gcd(d,d')^2$, where $\omega(N)$ denotes the number of different prime factors of $N$.

\begin{setup}\label{setup:al_quotient} Let $\ell$ be a prime and $M$ be an integer (not necessarily squarefree) coprime to $\ell$ and $N \colonequals \ell M$ and $W \subseteq W_N$ a subgroup of the Atkin--Lehner operators on $X_0(N)$.
\end{setup}
Note that with this notation we have $X_0(N)^* \colonequals X_0(N)/{W_N}$.

The goal of this section is to describe the dual graph of the minimal regular model of $X_0(N)^*$ over $\Z_\ell$ in terms of invertible left ideals over an Eichler order in a quaternion algebra. The minimal regular model of this curve turns out to be semi-stable and hence it is not necessary to go to a ramified extension of $\Z_\ell$ to obtain a semi-stable model. 

While our motivation is to describe the minimal regular model of $X_0(N)^*$, we will work out the theory in a slightly more general setting since it requires almost no extra work.

\begin{proposition}[quotient of semi-stable curve] \label[proposition]{prop:semi-stable_quotient}
Let $S$ be the spectrum of a discrete valuation ring $R$ with closed point $s$. 
Let $X$ be a semi-stable quasi-projective curve over $S$, endowed with 
the action of a finite group $G$. Then the quotient scheme 
$Y = X/G$ is semi-stable. More precisely, let $x \in X_s$ be a closed 
point, $y$ its image in $Y_s$. Then we have the following properties:
\begin{enumerate}
    \item if $X$ is smooth at $x$, then $Y$ is smooth at $y$;
    \item if $x$ is an ordinary double point of $X_s$, then $y$ is a 
    smooth or ordinary double point; 
    \item let $I'$ be the image of the inertia group $I$ at $x$ in 
    $\mathrm{Aut}_R(\mathcal{O}_{X,x})$. If $x$ is split of 
    thickness $m$, and if $y$ is a double point, then $y$ is split of 
    thickness $mn$, where $n$ is the order of $I'$.
    \item If $x$ is an ordinary double point then $y$ is a smooth point if and only if there is an element $\iota$ in the inertia group $I$ at $x$ that swaps the two components at $x$.
\end{enumerate}    
\end{proposition}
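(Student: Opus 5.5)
This is essentially Liu's Proposition 10.3.48, so I would follow that argument and add part (iv) explicitly. All claims are local on $Y$ and compatible with flat base change $R\to\widehat{R}^{sh}$, since semi-stability, thickness and the splitting type of a double point can all be tested after strict henselisation and completion. So I would first reduce to the case where $R$ is complete with algebraically closed residue field. Taking quotients by a finite group commutes with flat base change, and $\widehat{\mathcal O}_{Y,y}=(\widehat{\mathcal O}_{X,x'})^{I}$ for $x'$ a point over $y$ with inertia group $I$. The latter uses the standard decomposition-group argument: $\prod_{x'\mapsto y}\widehat{\mathcal O}_{X,x'}$ is the completion of the semi-local ring, and $G$ permutes the factors transitively with stabiliser $I$. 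Hence everything reduces to computing invariants of a finite group acting on a complete local ring of one of two shapes: $R[[u]]$ in the smooth case, or $R[[u,v]]/(uv-c)$ in the double point case.

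For (i), with $A=R[[u]]$, the invariant ring $A^{I}$ is normal, and the special fibre of $\Spec A^{I}$ is reduced and is the quotient of a smooth curve germ. One shows $A^I\cong R[[t]]$ with $t$ a norm of $u$, for instance $t=\prod_{\sigma\in I'}\sigma(u)$ suitably adjusted. This works because $A$ is finite over $R[[t]]$ and $R[[t]]$ is normal with the same fraction field as $A^I$, so by integral closedness $A^I=R[[t]]$. This follows Liu's argument.

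For the double point case, write $A=R[[u,v]]/(uv-c)$ with $c\neq0$ (since $X_\eta$ is smooth), so $v_s(c)=m$. Let $I_0\subseteq I'$ be the subgroup preserving each branch $\{u=0\}$ and $\{v=0\}$; it has index at most $2$.

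First I compute $A^{I_0}$. The ideals $(u)$ and $(v)$ are $I_0$-stable up to units, so after averaging I would choose coordinates in which $\sigma(u)=\zeta_\sigma u$ and $\sigma(v)=\zeta_\sigma^{-1}v$ for a character $\zeta\colon I_0\to R^\times$. This linearisation is the main obstacle, and it needs care when the residue characteristic divides $|I_0|$. The inertia acts trivially on $R$, and the action on the cotangent space $(u,v)/(u,v)^2$ factors through a faithful action on the two branch tangent lines. Faithfulness is the key point: an element acting trivially on both tangent lines of a normal surface germ is trivial. I would check this via Liu's lemma that $I'$ acts through a cyclic group of order prime to the residue characteristic on the branches. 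Given the linearisation, $I_0$ is cyclic of order $n_0$ acting by $\zeta$, and the invariants are generated by $u^{n_0}$, $v^{n_0}$ and $uv=c$. Thus $A^{I_0}=R[[U,V]]/(UV-c^{n_0})$, a split double point of thickness $mn_0$.

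If $I'=I_0$ then $n=n_0$ and (ii) and (iii) follow. Otherwise there is an $\iota\in I'$ swapping $U$ and $V$, up to units that can be normalised so that $\iota(U)=V$. Then the invariants are $R[[U+V]]$, since $UV=c^{n_0}$ already lies in $R$. So $y$ is smooth, which gives (iv) and the remaining case of (ii). Conversely, if no element swaps the branches, the image of each branch is a separate branch at $y$, so $y$ is a double point. Together these give the "if and only if" in (iv). Finally, global semi-stability of $Y$ follows from (i) and (ii), since $Y$ is flat, of finite type and quasi-projective, and its fibres are reduced with only ordinary double points. For the quotient to exist as a scheme, I use that $X$ is quasi-projective, so every $G$-orbit lies in an affine open.
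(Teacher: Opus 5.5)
Your overall structure is the same as the paper's: the paper cites Liu, Prop.~10.3.48, and reads (iv) off the case distinction $I=I_0$ versus $I\neq I_0$ in Liu's proof. Your reduction to complete local rings and your treatment of (i) are fine.

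\textbf{The gap.} The problem is the computation of $A^{I_0}$ for $A=R[[u,v]]/(uv-c)$. Your linearisation $\sigma(u)=\zeta_\sigma u$, $\sigma(v)=\zeta_\sigma^{-1}v$ with $\zeta_\sigma\in R^\times$ comes from averaging, which needs $p\nmid\#I_0$ (with $p$ the residue characteristic). The justification you offer for the wild case is false: elements of $I_0$ can act trivially on both branch tangent lines, and $I'$ need not have order prime to $p$.
\begin{itemize}
\item If $\zeta_p\in R$, then $u\mapsto\zeta_p u$, $v\mapsto\zeta_p^{-1}v$ acts trivially on the whole special fibre.
\item In the paper's own situation $\ell=2$, an involution $w_d$ with $2\nmid d$ fixing a supersingular point acts on each branch $\Fbar_2[[u]]$ by a nontrivial involution. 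Its derivative at $0$ is $1$ in characteristic $2$.
\item In equal characteristic the linearisation fails outright. Take $R=k[[\pi]]$ with $\operatorname{char}k=p$, and the $R$-automorphism $\sigma(u)=u(1+u)^{-1}$, $\sigma(v)=v+c$ of $A$; note $\sigma(uv-c)=(1+u)^{-1}(uv-c)$. It preserves both branches and has order $p$. Since $R$ has no nontrivial $p$-th roots of unity, a linearisation would force $\sigma=\mathrm{id}$.
\end{itemize}
So as written you prove (ii)--(iv) only for tame inertia. That excludes precisely the prime $\ell=2$ the paper needs, where every stabiliser $W_x$ is a $2$-group.

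\textbf{The repair.} Use the norm trick you already used in (i); it is characteristic-free.
\begin{itemize}
\item Put $U=\prod_{\sigma\in I_0}\sigma(u)$ and $V=\prod_{\sigma\in I_0}\sigma(v)$. As each $\sigma$ is $R$-linear, $\sigma(u)\sigma(v)=c$, so $UV=c^{n_0}$.
\item Each $\bar\sigma$ preserves both branches of $A/\pi A$. Hence $\bar U$ vanishes on the branch $u=0$ and has $u$-adic order exactly $n_0$ on the branch $k[[u]]$, and symmetrically for $\bar V$.
\item Therefore $A/(\pi,U,V)\cong k[[u,v]]/(uv,u^{n_0},v^{n_0})$, so $A$ is finite over the normal ring $B=R[[U,V]]/(UV-c^{n_0})$.
\item Localising at the generic point of a component of $B/\pi B$ shows the generic rank is $n_0=[\operatorname{Frac}A:(\operatorname{Frac}A)^{I_0}]$. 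This uses that $I_0\subseteq I'$ acts faithfully.
\item Thus $B\subseteq A^{I_0}$ have the same fraction field, and $A^{I_0}=B$ by normality. This gives thickness $mn_0$ without any linearisation.
\end{itemize}

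For $I\neq I_0$, your normalisation ``$\iota(U)=V$ with $UV\in R$'' is also not automatic. One only knows $\iota(U)=\epsilon V$ for some $\epsilon\in B^\times$, and replacing $V$ by $\epsilon V$ destroys $UV\in R$. Instead, run the same finite-degree-plus-normality argument with $S=U+\iota(U)$. Then $B/(\pi,S)$ has length $2$, and you get $B^{\langle\iota\rangle}=R[[S]]$.
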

\begin{proof}
Items $(i), (ii)$ and $(iii)$ are~\cite[Proposition~10.3.48]{Liu2002}. The fourth item follows from the proof of \cite[Proposition~10.3.48]{Liu2002}. Indeed, let $I_0$ be the subgroup of elements of $I$ that do not swap the two branches. Then the proof of \cite[Proposition~10.3.48]{Liu2002} contains a case distinction based on whether $I=I_0$, or not. In the case $I=I_0$ one has that $y$ is an ordinary double point and in the case $I \neq I_0$ one has that $y$ is a smooth point.
\end{proof}

The above allows one to describe a semi-stable model of $X_0(N)/W$ together with its minimal desingularisation in terms of $X_0(N)$ given a semi-stable model of $X_0(N)$ together with the thickness at every ordinary double point.

\begin{theorem}[Semi-stable model of $X_0(N)$] \label[theorem]{thm:semi-stable_X0} Let $\ell, M$ and $N$ be as in \cref{setup:al_quotient}. Then:
\begin{enumerate}
\item The curve model $X_0(N)_{\Z[1/M]}$ is a semi-stable curve over $\Spec\Z[1/M]$ that is smooth outside of the supersingular points in characteristic $\ell$.
\item $X_0(N)_{\Fbar_\ell}$ is a curve consisting of two copies of $X_0(M)_{\Fbar_\ell}$ (corresponding to the kernel of Frobenius and Verschiebung in characteristic $\ell$, respectively) glued transversally together along the supersingular points, where a supersingular point $x \in X_0(M)(\Fbar_\ell)$ on the first copy is identified with $\Frob_\ell  x$ on the second copy of $X_0(M)(\Fbar_\ell)$. The two components are the images of the degeneracy maps $(E,G)\mapsto(E,G)$ and $(E,G)\mapsto(E/E[\ell],\,(H+E[\ell])/E[\ell])$. An element $w_d$ swaps the two components if and only if $\ell \mid d$.
\item If $(E,G)$ represents a supersingular point $x\in X_0(N)(\Fbar_\ell)$, then the thickness of $x$ equals $\frac 1 2 \#\Aut(E, G)$.
\end{enumerate} 
\end{theorem}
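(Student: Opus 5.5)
This is essentially the Deligne--Rapoport description of $X_0(N)$ at a prime $\ell$ exactly dividing $N$, together with Katz--Mazur. I would obtain it by citing the moduli-theoretic results and then checking the finer statements (which copy is which, the action of $w_d$, the thickness) directly from the moduli interpretation.

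For (i), I would use the coarse moduli space of $\Gamma_0(N)$-structures over $\Z[1/M]$, where the $\Gamma_0(M)$-part is étale. By Deligne--Rapoport (and Katz--Mazur, Thm.~13.4.7), the model is regular away from points with extra automorphisms and is smooth over $\Z[1/M]$ outside characteristic $\ell$. In characteristic $\ell$, a cyclic subgroup $H$ of order $\ell$ on an ordinary $E$ is either $\ker F$ or étale, and these two cases give smooth points. The only non-smooth points are the supersingular ones, where $\ker F=\ker V$. There the deformation ring of $(E,H)$ (ignoring the étale level-$M$ structure) is $W(\Fbar_\ell)\llbracket u,v\rrbracket/(uv-\ell)$, by Katz--Mazur's computation of $\Gamma_0(\ell)$ via the crossing of the Frobenius and Verschiebung components. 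So the fine-level cover is semi-stable with double points of thickness $1$. Passing to the coarse space is a quotient by the finite group $\Aut(E,G)/\{\pm1\}$, and \Cref{prop:semi-stable_quotient} then gives semi-stability.

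For (ii), I would describe the special fibre as the union of the images of the two closed immersions $X_0(M)_{\Fbar_\ell}\to X_0(N)_{\Fbar_\ell}$. The first sends $(E,G)\mapsto(E,G,\ker F)$. The second sends $(E,G)\mapsto(E^{(\ell)},F(G),\ker V)$; equivalently, up to the identification $E^{(\ell)}\cong E/\ker F$, it is given by the degeneracy map in the statement. These two images meet exactly where $\ker F=\ker V$, i.e.\ at the supersingular points. Computing the second map on a supersingular point shows that $x$ on the first copy is glued to $\Frob_\ell x$ on the second. The action of $w_d$ for $\ell\mid d$ replaces $H=\ker F$ by $E[\ell]/\ker F=\ker V$ on the quotient curve, so it swaps the components. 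If $\ell\nmid d$, then $w_d$ preserves the type (Frobenius kernel versus étale kernel) of the $\ell$-part, so it fixes each component.

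For (iii), the step I expect to be the main obstacle is the thickness. On the fine-level cover the local ring is $\widehat{\OO}\llbracket u,v\rrbracket/(uv-\ell)$, with thickness $1$. The coarse local ring is the invariant ring under $I'=\Aut(E,G)/\{\pm1\}$, which has order $n=\frac12\#\Aut(E,G)$. Since the point stays a double point (automorphisms preserve the two branches, namely the Frobenius and Verschiebung deformation directions), \Cref{prop:semi-stable_quotient}(iii) gives thickness $n$. The delicate points are these:
\begin{itemize}
\item One must show that the inertia action on the deformation space is faithful modulo $\pm1$.
\item One must show that no automorphism swaps the branches.
\item One must handle characteristics $2$ and $3$, where the groups in \Cref{prop:aut_classification} are nonabelian.
\end{itemize}
To settle these I would check that an automorphism acts on the tangent directions $u,v$ through a character $\zeta$ and its inverse, as in the Deligne--Rapoport computation $u\mapsto\zeta u$, $v\mapsto\zeta^{-1}v$. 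The invariant ring is then generated by $u^n$ and $v^n$, with $u^nv^n=\ell^n$, which gives thickness $n$. If needed I would cite Katz--Mazur (Cor.~13.4.6 / Thm.~13.8.4) for the precise local structure in small characteristic.
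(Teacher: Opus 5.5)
Your proposal follows essentially the same route as the paper, which likewise cites Deligne--Rapoport VI.6.9 and Katz--Mazur 13.4.7, checks from the moduli description that $w_d$ swaps the Frobenius and Verschiebung components exactly when $\ell\mid d$, and gets the factor $\tfrac12$ from $[-1]\in\Aut(E,G)$. The only differences are that the paper deduces semi-stability directly from the transversal gluing in (ii) rather than from a fine-level cover plus \Cref{prop:semi-stable_quotient}, and one caution about your thickness check: the character description $u\mapsto\zeta u$, $v\mapsto\zeta^{-1}v$ only works when $\tfrac12\#\Aut(E,G)$ is prime to $\ell$, and wild cases do occur. For example, for $N=21$, $\ell=3$ there is a point with $\Aut(E,G)\cong C_6$, and $\Fbar_3^\times$ has no element of order $3$. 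So the real obstruction is wildness, not non-abelianness, and there you must rely on \Cref{prop:semi-stable_quotient}(iii) itself, which needs no linearisation, only that $\Aut(E,G)/\{\pm1\}$ acts faithfully on the local ring without swapping branches, or on the cited references, exactly as the paper does.
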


\begin{proof}
This follows from~\cite[Theorem~VI.6.9]{DeRa} and~\cite[Theorem~13.4.7]{KatzMazur} (applied to the étale base change $X_0(M) \to X(1)$ over $\Z[1/M]$). Note for~(ii) that from the moduli description, $w_d$ swaps kernel of Frobenius and Verschiebung if and only if $\ell \mid d$. Semistability follows from~(ii) and because $X_0(M)_{\Fbar_\ell}$ is semi-stable and because the gluing is transversal in the intersection points. For the factor $\tfrac{1}{2}$ in~(iii) note that we always have $[-1] \in \Aut(E,G)$.
\end{proof}

\begin{theorem}[stable model of Atkin--Lehner quotients] \label{thm:semi-stable_star}
Let $\ell, M, N$ and $W$ be as in \Cref{setup:al_quotient}. Then 
\begin{enumerate}
\item The quotient $(X_0(N)/W)_{\Z[1/M]}$ is a semi\-stable curve over $\Spec\Z[1/M]$ that is smooth outside of the supersingular points in characteristic $\ell$.

\item Define $W' \colonequals \left\{w_d \in W \middle | \, \ell \nmid d\right\}$. \begin{enumerate}
    \item If $W'\neq W$, then $(X_0(N)/W)_{\Fbar_\ell }$ consists of a single component isomorphic to $(X_0(M)/W')_{\Fbar_\ell }$.
    \item  If $W'=W$, then $(X_0(N)/W)_{\Fbar_\ell }$ has two components, which are both isomorphic to $(X_0(M)/W')_{\Fbar_\ell }$. 
\end{enumerate}

\item Let $(E,G)$ represent a supersingular point $x\in X_0(N)(\Fbar_\ell )$ and let $W_x$ be stabiliser of $x$ inside $W$. \begin{enumerate}
    \item If there is an integer $d \mid M$ such that $w_{\ell d} \in W_x$, then $X_0(N)/W$ is smooth at the image of $x$.
    \item If such an integer $d$ does not exist, then $X_0(N)/W$ has an ordinary double point at the image of $x$ whose thickness is $\#W_x \ \frac 1 2 \#\Aut(E, G)$.
\end{enumerate}  
\item Assume $W' \neq W$ and $g((X_0(N)/W)_\Q) \geq 2$ then it is stable. 
\end{enumerate} 
\end{theorem}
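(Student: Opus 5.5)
The plan is to apply \Cref{prop:semi-stable_quotient} to the semi-stable model $X \colonequals X_0(N)_{\Z[1/M]}$ of \Cref{thm:semi-stable_X0}, with $G = W$ acting through its moduli action. The Atkin--Lehner involutions $w_d$ extend to automorphisms of the Deligne--Rapoport model over $\Z[1/M]$; one can check this on the moduli interpretation, or note that the normalisation of $\Z[1/M]$ in the function field is functorial. Hence $X/W$ is the normalisation of $\Z[1/M]$ in the function field of $X_0(N)/W$, and \Cref{prop:semi-stable_quotient}(i),(ii) give~(i). The points that are smooth upstairs, namely those away from supersingular points in characteristic $\ell$, stay smooth downstairs.

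For~(ii), the special fibre $(X/W)_{\Fbar_\ell}$ is $X_{\Fbar_\ell}/W$, since the formation of the quotient commutes with flat base change to $\Z_\ell^{\mathrm{sh}}$, and taking the special fibre of a quotient by a tame group agrees up to homeomorphism, which suffices for describing components. By \Cref{thm:semi-stable_X0}(ii) the two copies of $X_0(M)_{\Fbar_\ell}$ are swapped exactly by the $w_d$ with $\ell\mid d$. Each $w_{d}$ with $\ell \nmid d$ preserves each copy and acts on it as $w_d$ on $X_0(M)$. If $W'=W$, each component is therefore $(X_0(M)/W')_{\Fbar_\ell}$. If $W'\neq W$, then $W'$ has index $2$, a swapping element identifies the two copies, and the single image component is one copy modulo $W'$.

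For~(iii), let $x$ be a supersingular point with inertia (stabiliser) $W_x$. Inertia acts faithfully on $\mathcal O_{X,x}$ as $W$ acts faithfully on the curve. By \Cref{prop:semi-stable_quotient}(iv), $y$ is smooth exactly when some element of $W_x$ swaps the two branches at $x$. These branches are the two global components, so this happens exactly when $W_x$ contains some $w_{\ell d}$. Otherwise \Cref{prop:semi-stable_quotient}(iii) gives thickness $m\cdot\#W_x$, with $m=\tfrac12\#\Aut(E,G)$ by \Cref{thm:semi-stable_X0}(iii). A technicality is that~(iii) requires $x$ to be split, which holds after étale base change to $\Z_\ell^{\mathrm{sh}}$, and thickness is invariant under étale base change.

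For~(iv), when $W'\neq W$ the special fibre is a single irreducible component $C\cong (X_0(M)/W')_{\Fbar_\ell}$ with self-nodes, and it is reduced and semi-stable. Stability then requires only that no smooth rational component meets the rest in fewer than three points. Since there is one component, this fails only if $C$ is $\P^1$ with at most two branch points through nodes, which forces arithmetic genus $\le 1$. The arithmetic genus of the special fibre equals $g((X_0(N)/W)_\Q)\ge 2$ by flatness. A single irreducible component with nodes and total arithmetic genus $\ge2$ satisfies the stability condition, since a rational normalisation must then have at least two self-nodes, that is, at least four branch points. I expect the main obstacle to be the bookkeeping in~(iii): identifying the inertia of \Cref{prop:semi-stable_quotient} with the stabiliser $W_x$, and the branches at $x$ with the two global components, in the presence of extra automorphisms when $\ell\in\{2,3\}$.
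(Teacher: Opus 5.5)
Your proposal is correct and follows essentially the same route as the paper. You apply \Cref{prop:semi-stable_quotient} to the Deligne--Rapoport model, read off (ii) and (iii) from which $w_d$ swap the two copies of $X_0(M)_{\Fbar_\ell}$ (using faithfulness to get $\#I'=\#W_x$), and deduce (iv) from the special fibre being a single component of arithmetic genus $\geq 2$; your base change to $\Z_\ell^{\mathrm{sh}}$, which makes $x$ split and identifies inertia with the geometric stabiliser, fills in a point the paper leaves implicit.

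There are two small slips, neither a real gap. First, the comparison of $(X/W)_{\Fbar_\ell}$ with $X_{\Fbar_\ell}/W$ up to universal homeomorphism holds for every finite group, and you need that version since $W$ is not tame when $\ell=2$. Second, the extension of the $w_d$ to $X_0(N)_{\Z[1/M]}$ should come from the moduli interpretation (your first option), not from functoriality of normalisation, since $w_d$ does not preserve $j$.
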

\begin{proof}
(i) is~\Cref{thm:semi-stable_X0}, and~(ii) follows from it because $w_d$ swaps the two components if and only if $\ell \mid d$.

(iii)\,(a) smoothness: an element $w_{\ell d}\in W_x$ swaps the two branches at the double point (it exchanges the two copies of $X_0(M)$), and a branch-swapping involution folds the double point to a smooth point by \Cref{prop:semi-stable_quotient} (iv).

(iii)\,(b): by~\Cref{prop:semi-stable_quotient}, the thickness is $\#W_x\cdot\tfrac12\#\Aut(E,G)$: this is $mn$ with $m=\tfrac12\#\Aut(E,G)$ and $n=\#W_x$ (note that in case (b) no element of $W_x$ swaps branches) and that $W_x \inj I'$ because a nontrivial $w_d$ acting trivially on the local ring would act trivially on the whole $X_0(M)$ component, forcing $d = 1$.

(iv) In this case there is only one component in the special fibre. In particular it is geometrically connected. And since the arithmetic genus of this component is at least $2$, it cannot be (geometrically) isomorphic to $\P^1$. Note that the assumption $g((X_0(N)/W)_\Q)  \geq 2$ is needed (rather then merely $\geq 1$) as fibres of stable curves need to have arithmetic genus at least $2$ by definition.
\end{proof}

\begin{corollary}[Minimal regular model of Atkin--Lehner quotients]\label{cor:minimal_regular_AL_quot}
Assume $W' \neq W$ and $g((X_0(N)/W)_\Q)  \geq 2$, then the minimal regular model of $X_0(N)/W$ over $\Z[1/M]$ is the model obtained by replacing each ordinary double point of $x = (E,G)\in X_0(N)/W$ by a chain of $\P^1$-s of length $\#W_x \ \frac 1 2 \#\Aut(E, G)-1$.
\end{corollary}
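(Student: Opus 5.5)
The plan is to combine \Cref{thm:semi-stable_star} with \Cref{lem:minimal_regular_from_stable_model} and \Cref{cor:regual_from_semistable_model}. Since $M$ is inverted, the only prime of bad reduction over $\Z[1/M]$ is $\ell$. Away from the fibre at $\ell$, \Cref{thm:semi-stable_star}(i) says the model is smooth, so it is already regular there and nothing needs to be modified. The statement is therefore local at $\ell$. I would base change to $\Z_\ell$, or rather to $\Z_\ell^{\mathrm{ur}}$, which is étale and compatible with both minimal desingularisation and formation of minimal regular models. Working over $\Z_\ell^{\mathrm{ur}}$ makes every ordinary double point of the special fibre split, since the supersingular points are defined over $\F_{\ell^2}$ anyway.

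Under the hypotheses $W'\neq W$ and $g\geq 2$, \Cref{thm:semi-stable_star}(iv) says that $(X_0(N)/W)_{\Z_\ell}$ is stable. Then \Cref{lem:minimal_regular_from_stable_model} applies: the minimal desingularisation of this stable model is the minimal regular model. To describe the minimal desingularisation, I would use \Cref{cor:regual_from_semistable_model}. Each split ordinary double point of thickness $i$ is replaced by a chain of $i-1$ projective lines, and the model is left untouched elsewhere, because a semi-stable model is regular at its smooth points and at double points of thickness $1$.

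It remains to identify the thickness. The double points of the special fibre of $X_0(N)/W$ are images of supersingular points $x=(E,G)$ for which no $w_{\ell d}$ lies in $W_x$. By \Cref{thm:semi-stable_star}(iii)(b), each such image has thickness $\#W_x\cdot\tfrac12\#\Aut(E,G)$, so the chain has length $\#W_x\cdot\tfrac12\#\Aut(E,G)-1$. The points with some $w_{\ell d}\in W_x$ map to smooth points by (iii)(a), so they contribute nothing. Note that $W_x$ and $\Aut(E,G)$ depend only on the $W$-orbit of $x$, up to conjugation, so the formula is well defined on the image point.

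The step I expect to need the most care is descent from the unramified extension back to $\Z_\ell$ or $\Z[1/M]$, since double points defined only over $\F_{\ell^2}$ need not be split over $\F_\ell$. I would handle it by using that minimal regular models commute with étale base change and are unique, so Galois descent identifies the desingularisation over $\Z_\ell$ with the descended chain picture. The description \enquote{chain of $\P^1$'s} should be read geometrically in that case.
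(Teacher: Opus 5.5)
Your proposal is correct and follows the same route as the paper: stability from \Cref{thm:semi-stable_star}(iv), then \Cref{lem:minimal_regular_from_stable_model} to identify the minimal desingularisation with the minimal regular model, then \Cref{cor:regual_from_semistable_model} together with the thickness $\#W_x\cdot\tfrac12\#\Aut(E,G)$ from (iii)(b). Your passage to $\Z_\ell^{\mathrm{ur}}$, which makes every double point split, followed by descent is a detail that the paper's one-line proof leaves implicit. It is a worthwhile addition, because \Cref{cor:regual_from_semistable_model} is stated only for split double points.
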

\begin{proof}This just follows from how the minimal regular model of a stable curve is obtained from a stable model in \Cref{lem:minimal_regular_from_stable_model}.
\end{proof}

The description of the semi-stable model in \Cref{thm:semi-stable_star} directly gives the following description of the dual graph of the semi-stable model.
\begin{definition}\label{def:dual_graph_quotient}
    Let $\ell,M,N$ and $W$ be as in \Cref{setup:al_quotient}. Then define the metric graph $\graph_W$ as follows:
    \begin{itemize}
        \item if there is an Atkin--Lehner operator in $W$ of the form $w_{\ell d}$ with $d|M$, then $\graph_W$ has a single vertex $v_0$. The edges of $\graph_W$ correspond to $W$-orbits of supersingular points that are not fixed by Atkin--Lehner operator of the form $w_{\ell d}$. Let $x \in X_0(N)(\Fbar_\ell )$ be such a point represented by a pair $(E,G)$. Then the edge associated to its $W$-orbit is a loop attached to $v_0$ of length $\#W_x \ \frac 1 2 \#\Aut(E, G)$.
        \item otherwise $\graph_W$ has two vertices $v_0, v_1$ with one edge of length $$\#W_x \ \frac 1 2 \#\Aut(E, G)$$ connecting them for each $W$-orbit of supersingular points.
    \end{itemize}
    We use the notation $\graph_0(N)$ and $\graph_0(N)^*$ when $\#W=1$ and $W=W_N$ the full Atkin--Lehner group, respectively.
\end{definition}

\begin{corollary}\label{cor:dual_graph_quotient}
    The dual graph of the special fibre over $\F_\ell $ of $X_0(N)/W$ is $\graph_W$.
\end{corollary}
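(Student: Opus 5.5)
The plan is to read the dual graph off the description of the stable (semi-stable) model in \Cref{thm:semi-stable_star}, applying \Cref{def:reduction_graph} with $K=\Q_\ell$ and $r=1$. This is legitimate because $(X_0(N)/W)_{\Z[1/M]}$ is already semi-stable over $\Z_\ell$ by \Cref{thm:semi-stable_star}(i), so every edge length equals the thickness of the corresponding double point. We first compute the special fibre over $\Fbar_\ell$ and then note that the resulting combinatorial data are independent of the base field. The graph depends only on the components and double points, which we identify geometrically, and the thickness of a double point does not change under the unramified extension to $\breve{\Z}_\ell$.

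For the vertices we use \Cref{thm:semi-stable_star}(ii). If $W$ contains some $w_{\ell d}$, then $W'\neq W$ and the special fibre has a single irreducible component, giving the single vertex $v_0$. Otherwise $W'=W$ and there are two components, the images of the Frobenius and Verschiebung copies of $X_0(M)$, giving the vertices $v_0$ and $v_1$.

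For the edges, the singular points of the quotient special fibre are images of singular points of $X_0(N)_{\Fbar_\ell}$, because the quotient map is finite and \Cref{prop:semi-stable_quotient}(i) shows smooth points map to smooth points. The singular points upstairs are exactly the supersingular points by \Cref{thm:semi-stable_X0}. Two supersingular points have the same image if and only if they lie in the same $W$-orbit, since points of a quotient by a finite group are orbits. By \Cref{thm:semi-stable_star}(iii), the image of $x$ is smooth exactly when $W_x$ contains some $w_{\ell d}$; otherwise it is an ordinary double point of thickness $\#W_x\cdot\frac12\#\Aut(E,G)$. This gives the edge set and the edge lengths, and both are well defined on orbits because stabilisers of conjugate points are conjugate (here equal, as $W$ is abelian) and $\Aut(E,G)$ is an isomorphism invariant.

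It remains to determine the endpoints of each edge. In the one-component case every double point joins the unique component to itself, so each edge is a loop at $v_0$. In the two-component case, \Cref{thm:semi-stable_X0}(ii) shows each supersingular point lies on both copies of $X_0(M)$. Since no element of $W$ swaps the two components, their images remain distinct, and each edge therefore joins $v_0$ to $v_1$. I expect the main subtlety to be the passage from $\Fbar_\ell$ to $\F_\ell$. Here I will argue that the dual graph in \Cref{def:reduction_graph} is defined geometrically, or equivalently after the unramified base change, which preserves thickness by \Cref{def:thickness}. Hence the graph obtained over $\Fbar_\ell$ is exactly $\graph_W$.
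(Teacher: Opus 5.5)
Your proposal is correct and matches the paper, which gives no separate argument: the corollary is just \Cref{thm:semi-stable_star} read through \Cref{def:reduction_graph} with $r=1$, taking vertices from part (ii) and edges and lengths from part (iii). Your insistence on working geometrically over $\Fbar_\ell$, or after an unramified extension, is the right reading of the statement. Over $\F_\ell$ itself, non-rational double points would be merged into Galois orbits, and $\graph_W$ would not be the literal answer.

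One small correction concerns well-definedness of the edge lengths. Points in the same $W$-orbit are in general not isomorphic pairs, so constancy of $\#\Aut(E,G)$ along an orbit does not follow from isomorphism invariance. It follows instead because the thickness of the image point is intrinsic and \Cref{thm:semi-stable_star}(iii)(b) applies to every preimage. Equivalently, each $w_d$ is an automorphism of the semi-stable model and so preserves thickness.
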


Recall from \Cref{def:reduction_graph} that the length of an edge of the dual graph of a semi-stable model over $\Z_\ell$ is the thickness of the corresponding double point, so it is a positive integer.
\begin{definition}\label{def:flower_graph}
A metric graph $\graph$ is called a \emph{flower graph} if it has a unique vertex $v_0$, and every edge $e$ of $\graph$ is a loop at $v_0$ of integral length.
\end{definition}
If $N=\ell M$ is squarefree, then $w_\ell\in W_N$, so by the first case of \Cref{def:dual_graph_quotient} and \Cref{cor:dual_graph_quotient} the dual graph $\graph_0(N)^*$ of $X_0(N)^*$ at $\ell$ is a flower graph.

\subsection{Hecke Operators on Atkin--Lehner quotients and their dual graphs}

In order to define Hecke operators on Atkin--Lehner quotients we slightly generalize \Cref{setup:al_quotient}.
\begin{setup}\label{setup:al_quotient_hecke} Let $\ell,p$ be distinct primes and $M$ be an integer (not necessarily squarefree) coprime to $\ell$ and $p$. Let $N \colonequals \ell M$ and $W \subseteq W_N$ a subgroup of the Atkin--Lehner operators on $X_0(N)$. We also view $W$ as acting on $X_0(pN)$.
\end{setup}

In the above setup we have have two different degeneracy maps $\iota_1,\iota_2 : X_0(pN) \to X_0(N)$, with $\iota_1(E,G) = (E, G[N])$ and $\iota_2(E,G) = (E/G[p], G/G[p])$.  Now let $d \mid N$ then the Atkin--Lehner operator $w_d$ commutes commutes with the degeneracy maps $\iota_1$ and $\iota_2$. This implies that $\iota_1$ and $\iota_2$ also induce well defined morphisms $\iota_1, \iota_2 : X_0(pN)/W \to X_0(N)/W$.

\begin{definition}
    Let $\ell,p, M, N, W$ be as in \Cref{setup:al_quotient_hecke}, and let $Z \subseteq (X_0(N)/W)^2$ be the image of $X_0(pN)$ under $(\iota_1,\iota_2):X_0(pN)/W \to (X_0(N)/W)^2$ then the Hecke operator $T_p$ is defined to be the correspondence induced by $Z$.
\end{definition}

Now in order to be able to compute the local heights on $X_0(N)$ with respect to the correspondence $T_p$, or more generally $f(T_p)$ for some polynomial $f \in \Z[x]$ we need to be able to compute how $T_p$ acts on homology of the dual graph $X_0(N)/W$. Note that it is already known how $T_p$ acts on the homology of the dual graph of $X_0(N)$ (see, e.~g., \cite[p.~445]{Ribet1990}, or \cite[Theorem 4.3]{KohelHecke}). We will do this using the following lemma which more generally describes how to compute the action of correspondences on quotients.

\begin{lemma}\label{lem:quotient-correspondence}
    Let $K$ be a finite extension of $\Q_\ell$ and $X$ and $Y$ be nice curves over $K$. Let $f_1,f_2 : X \to Y$ be finite morphisms, and let $G$ be a group acting faithfully on both $X$ and $Y$ in a way that commutes both with $f_1$ and $f_2$. Let $g_1, g_2 : X/G \to Y/G$ be the morphisms induced by $f_1$ and $f_2$ and $\pi_X :X \to X/G$ and $\pi_Y: Y \to Y/G$ be the quotient maps. Then
    \begin{enumerate}

        \item $\pi_Y^* : H_1(\Gamma_{Y/G}) \to H_1(\Gamma_Y)$ is injective and $\pi_{Y,*}$ is injective when restricted to $\pi_Y^*( H_1(\Gamma_{Y/G}))$.
        \item The endomorphism $f_{2,*}\circ f_1^*$ of $H_1(\Gamma_{Y})\otimes \Q$ restricts to a well defined endomorphism of $\pi_Y^*(H_1(\Gamma_{Y/G}) \otimes \Q)$.
        \item We have $f_{2,*} \circ f_1^* \circ \pi_Y^{*} = \pi_Y^{*} \circ g_{2,*} \circ g_1^*$ in $\Hom(H_1(\Gamma_{Y/G}),H_1(\Gamma_{Y})) $.
    \end{enumerate}
\end{lemma}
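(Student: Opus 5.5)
Throughout, $\Gamma_Z$ is the dual graph of a semi-stable model of $Z$ over $\OO_K$ (after a finite extension of $K$ if needed). The plan is to realise everything at the level of semi-stable models and their dual graphs, which reduces the statement to covering-space combinatorics plus functoriality. First I would fix $G$-equivariant semi-stable models. After enlarging $K$, take a semi-stable model $\mathcal Y$ of $Y$ on which $G$ acts (for instance a $G$-invariant refinement of the stable model, enlarged so that the finite maps extend). Then take a semi-stable model $\mathcal X$ of $X$ with $G$-action such that $f_1,f_2$ extend to finite morphisms $\mathcal X\to\mathcal Y$, e.g.\ the normalisation of $\mathcal Y$ in $K(X)$ via each $f_i$, refined jointly and $G$-equivariantly. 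By \Cref{prop:semi-stable_quotient} the quotients $\mathcal X/G$ and $\mathcal Y/G$ are semi-stable models of $X/G$ and $Y/G$. The quotient maps therefore induce maps of dual graphs $\Gamma_Y\to\Gamma_{Y/G}$ and $\Gamma_X\to\Gamma_{X/G}$, after subdividing edges so that the relevant maps are graph morphisms; subdividing does not change $H_1$. Pushforward $f_*$ on $H_1$ is the map induced by the graph morphism. Pullback $f^*$ is the transfer: an edge goes to the sum of its preimages, weighted by the appropriate local degree (ramification/thickness ratio).

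For (i), I would use the identity $\pi_{Y,*}\circ\pi_Y^*=\#G$ on $H_1(\Gamma_{Y/G},\Q)$, or more precisely multiplication by the degree of $\pi_Y$ once the kernel of the action is accounted for. This is checked edgewise: an edge of $\Gamma_{Y/G}$ pulls back to its full $G$-orbit with multiplicities summing to $\#G$, and each pushes forward back to it. Injectivity of $\pi_Y^*$ follows immediately, and so does injectivity of $\pi_{Y,*}$ on the image $\pi_Y^*(H_1(\Gamma_{Y/G}))$. Working rationally, I would moreover identify $\pi_Y^*(H_1(\Gamma_{Y/G})\otimes\Q)$ with the $G$-invariants $H_1(\Gamma_Y,\Q)^G$. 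One inclusion is clear, and the other follows via the averaging map $\frac1{\#G}\pi_Y^*\pi_{Y,*}$.

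For (ii), since $G$ commutes with $f_1$ and $f_2$, both $f_1^*$ and $f_{2,*}$ are $G$-equivariant on homology. Hence $f_{2,*}\circ f_1^*$ preserves $H_1(\Gamma_Y,\Q)^G$, which equals $\pi_Y^*(H_1(\Gamma_{Y/G})\otimes\Q)$ by the previous step. For (iii), I would use the commutative squares $\pi_Y\circ f_i=g_i\circ\pi_X$ and apply functoriality. For pushforwards, $\pi_{Y,*}f_{2,*}=g_{2,*}\pi_{X,*}$. For pullbacks, $f_1^*\pi_Y^*=\pi_X^*g_1^*$. The remaining identity needed is the base-change relation $f_{2,*}\pi_X^*=\pi_Y^*g_{2,*}$. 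I would verify this from the degree relation $\deg\pi_X\cdot\deg g_2=\deg f_2\cdot\deg\pi_Y$ together with the fact that $G$ acts faithfully on both $X$ and $Y$, so $\deg\pi_X=\deg\pi_Y=\#G$. The check is done on $G$-orbits of edges: both sides send an edge of $\Gamma_{X/G}$ to the full $G$-orbit of its image, with the same multiplicity. Chaining these gives $f_{2,*}f_1^*\pi_Y^*=f_{2,*}\pi_X^*g_1^*=\pi_Y^*g_{2,*}g_1^*$.

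The main obstacle I expect is the multiplicity bookkeeping in the base-change identity $f_{2,*}\pi_X^*=\pi_Y^*g_{2,*}$. Thicknesses change under quotients by $\#I'$, as in \Cref{prop:semi-stable_quotient}(iii), and edges can be folded into smooth points. The pullback weights must therefore be defined so that they are compatible with the lengths; I would handle this by working with the metric graphs and stabiliser orders throughout. An alternative route, if that becomes messy, is to bypass the graphs entirely. One would identify $H_1(\Gamma)\otimes\Q_p$ functorially with the graded piece $W_0V_p$ of the Tate module, where (i)–(iii) are standard consequences of $V_p(J_{Y/G})\cong V_p(J_Y)^G$ and functoriality of Albanese/Picard maps.
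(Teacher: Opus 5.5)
Your proof is correct. For (i) and (ii) it is essentially the paper's argument, and the real difference is in (iii).

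In (ii) you additionally justify $\pi_Y^*(H_1(\Gamma_{Y/G})\otimes\Q)=H_1(\Gamma_Y,\Q)^G$ by averaging, which the paper simply asserts. Note that the averaging step presupposes $\pi_Y^*\pi_{Y,*}=\sum_{g\in G}g_*$, which you should check on edges just as you check $\pi_{Y,*}\pi_Y^*=\#G$.

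In (iii) you write $f_{2,*}f_1^*\pi_Y^*=f_{2,*}\pi_X^*g_1^*$. You then need the push--pull identity $f_{2,*}\pi_X^*=\pi_Y^*g_{2,*}$ for a square that is in general not Cartesian. This is not a functoriality statement, and you plan to verify it edge by edge with stabiliser multiplicities, folded edges and rescaled thicknesses.

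The paper avoids this identity altogether:
\begin{enumerate}
\item By (ii), both sides of (iii) take values in $\pi_Y^*(H_1(\Gamma_{Y/G})\otimes\Q)$.
\item By (i), $\pi_{Y,*}$ is injective there.
\item After applying $\pi_{Y,*}$, both sides equal $\#G\cdot g_{2,*}g_1^*$. This uses only $\pi_{Y,*}f_{2,*}=g_{2,*}\pi_{X,*}$, $f_1^*\pi_Y^*=\pi_X^*g_1^*$ and $\pi_{X,*}\pi_X^*=\pi_{Y,*}\pi_Y^*=\#G$.
\end{enumerate}
The same two-line trick proves your base-change identity. Both $f_{2,*}\pi_X^*$ and $\pi_Y^*g_{2,*}$ land in the $G$-invariants, and both become $\#G\cdot g_{2,*}$ after applying $\pi_{Y,*}$. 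So the step you flag as the main obstacle can simply be dropped.

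What your route buys is an explicit construction of the maps on dual graphs from equivariant models, which the paper takes for granted. What the paper's route buys is that no orbit or multiplicity bookkeeping is needed beyond $\pi_*\pi^*=\#G$ and the identification of $\operatorname{im}\pi_Y^*\otimes\Q$ with the invariants.

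One caveat about your construction. You ask for a single model $\mathcal X$ on which both $f_1$ and $f_2$ extend to finite morphisms to the same $\mathcal Y$. This is more than you need, and it can be impossible for general $f_1,f_2$. Take $f_1=\mathrm{id}$ and $f_2=[n]$ on a Tate curve. Finiteness of $f_1$ forces $\mathcal X=\mathcal Y$. The finite nonempty set $S$ of vertices on the skeleton circle would then have to satisfy $[n]^{-1}(S)=S$, although $\#[n]^{-1}(S)=n\,\#S$. It suffices to use separate compatible model pairs for $f_1$ and for $f_2$, since $H_1$ of the dual graph does not change under refinement. For the degeneracy maps in the paper's application, the Deligne--Rapoport models already work for both.
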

\[
\begin{tikzcd}
& X \arrow[dl, "f_1"'] \arrow[dr, "f_2"] \arrow[dd, "\pi_X" description] & \\
Y \arrow[dd, "\pi_Y"'] & & Y \arrow[dd, "\pi_Y"] \\
& X/G \arrow[dl, "g_1"' near start] \arrow[dr, "g_2" near start] & \\
Y/G & & Y/G
\end{tikzcd}
\]
Informally, the above Lemma says we can just compute the correspondence $g_{2,*} \circ g_1^*$ by restricting $f_{2,*} \circ f_1^*$ to the image of $\pi_Y^*$.

\begin{proof}
    (i) The injectivity follows since $\pi_{Y,*}\circ \pi_Y^*$ is multiplication by $\#G$, and $H_1(\Gamma_{Y/G})$ is torsion free.
    
    (ii) This follows since $\pi_Y^*(H_1(\Gamma_{Y/G})) \otimes \Q=(H_1(\Gamma_Y)\otimes \Q)^G$ and $f_{2,*}\circ f_1^*$ commutes with the action of $G$.
    
    (iii) Since $H_1(\Gamma_{Y})$ is torsion free, it suffices to show this after tensoring with $\Q$. By $(ii)$ both maps have images contained in $\pi_Y^*( H_1(\Gamma_{Y/G})\otimes \Q)$ and so by $(i)$ it suffices to show equality after composing with $\pi_{Y,*}$. The statement follows since
    \begin{align*}     
   \pi_{Y,*} \circ f_{2,*} \circ f_1^* \circ \pi_Y^{*}&= 
    g_{2,*}\circ \pi_{X,*} \circ \pi_X^{*} \circ g_1^*=
     g_{2,*} \circ [\#G] \circ g_1^*\\
     & =    [\#G] \circ g_{2,*} \circ g_1^*
    =\pi_{Y,*} \circ \pi_Y^{*} \circ g_{2,*} \circ g_1^*.
    \end{align*}
    In the last part, we used that the actions of $G$ on both $X$ and $Y$ are faithful as this ensures that both $\pi_{X,*} \circ \pi_X^{*} $ and $\pi_{Y,*} \circ \pi_Y^{*}$ are multiplication by $\#G$.
    \end{proof}

Applying \Cref{lem:quotient-correspondence} allows one to compute the action of $T_p$ on $H_1(\graph_W)$ in terms of the action of $T_p$ on $H_1(\graph_0(N))$. To be precise:
\begin{corollary}\label{cor:computing_Tq}
    Let $\ell,p,N,M,W$ be as in \Cref{setup:al_quotient_hecke}, and $\pi:X_0(N) \to X_0(N)/W$ be the quotient map. Let $e_1,\ldots,e_n$ be a basis of $H_1(\graph_W)$ and $\pi^*(e_1),\ldots,\pi^*(e_n)$ be the corresponding basis of $\pi^*(H_1(\graph_W))  \subseteq H_1(\graph_0(N))$. Then the matrices expressing the actions of $T_p$ on the basis $e_1,\ldots,e_n$ and on the basis $\pi^*(e_1),\ldots,\pi^*(e_n)$ are the same.
\end{corollary}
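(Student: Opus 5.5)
The plan is to apply \Cref{lem:quotient-correspondence} with $X = X_0(pN)$, $Y = X_0(N)$, $G = W$, $f_1=\iota_1$, $f_2=\iota_2$, all base changed to a finite extension $K/\Q_\ell$ over which we work. First we check the hypotheses. $W$ acts faithfully on both $X_0(pN)$ and $X_0(N)$, since a nontrivial Atkin--Lehner involution is never the identity on the modular curve. The actions commute with $\iota_1,\iota_2$, as noted before the definition of $T_p$. The induced maps $g_1,g_2\colon X_0(pN)/W\to X_0(N)/W$ are exactly the maps whose graph defines $T_p$ on $X_0(N)/W$. We then interpret the action of $T_p$ on $H_1$ of a dual graph as $g_{2,*}\circ g_1^*$ on $H_1(\graph_W)$, and as $f_{2,*}\circ f_1^*$ on $H_1(\graph_0(N))$. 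This is the action on the toric part $W_0V_p/W_1V_p$ from \Cref{def:corr_endomorphism}, and it is functorial for pullback and pushforward of finite maps.

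Next we invoke part (iii) of the lemma:
\[
T_p\circ\pi^* \;=\; f_{2,*}\circ f_1^*\circ\pi^* \;=\; \pi^*\circ g_{2,*}\circ g_1^* \;=\; \pi^*\circ T_p .
\]
Suppose $T_p(e_j)=\sum_i a_{ij}e_i$ in $H_1(\graph_W)$. Applying $\pi^*$ gives $T_p(\pi^*e_j)=\sum_i a_{ij}\pi^*(e_i)$. By part (i), $\pi^*$ is injective, so the $\pi^*(e_i)$ are linearly independent and form a basis of $\pi^*(H_1(\graph_W))$. By part (ii) this subspace (after $\otimes\Q$) is $T_p$-stable. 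Hence the matrix of $T_p$ restricted to this subspace, in the basis $\pi^*(e_i)$, is again $(a_{ij})$, which is the claim.

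The main obstacle is making precise that the action of a correspondence on the graph homology is computed by $f_{2,*}\circ f_1^*$ with $f_*,f^*$ the induced maps on $H_1$ of dual graphs. This needs the dual graphs of $X_0(pN)$, $X_0(N)$ and their quotients to be functorial for finite morphisms. Since $X_0(pN)$ at $\ell$ has the Deligne--Rapoport semi-stable model (\Cref{thm:semi-stable_X0}, with $pM$ in place of $M$), and the quotients are semi-stable by \Cref{thm:semi-stable_star}, I would identify $H_1(\graph)\otimes\Q_p$ with the graded piece $W_0V_p/W_1V_p$ of the Tate module. Pullback and pushforward on Jacobians respect the weight filtration, which gives the needed functoriality without constructing maps of models by hand. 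It also identifies $\pi^*$ on graphs with the map induced by $\pi^*$ on Jacobians.
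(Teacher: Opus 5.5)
Your proposal is correct and follows the paper's route: the paper presents the corollary as an immediate consequence of \Cref{lem:quotient-correspondence} with $X=X_0(pN)$, $Y=X_0(N)$, $G=W$ and $f_i=\iota_i$, where part (iii) gives $T_p\circ\pi^*=\pi^*\circ T_p$ and part (i) gives the injectivity of $\pi^*$ that makes the two matrices coincide. Your additional checks make explicit what the paper leaves implicit, namely the faithfulness of $W$ and the definition of the maps on graph homology through the graded piece $W_0V_p/W_1V_p$ of \Cref{def:corr_endomorphism}. One small correction: that piece is the quotient of $V_p$ by its inertia invariants, $\cong\mathrm H_1(\graph,\Q)\otimes\Q_p$, not the toric part $W_2V_p$.
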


The above corollary is what is actually used in our code in order to compute the action of $T_p$ on $e_1,\ldots,e_n$, as the action of $T_p$ can already be computed on $H_1(\graph_0(N))$ using Brandt matrices. 

% \flowerpetal{<vertex>}{<direction angle>}{<reach>}{<half-opening angle>}
\newcommand{\flowerpetal}[4]{%
  \draw[ed] (#1) .. controls +({#2+#4}:#3) and +({#2-#4}:#3) .. (#1);}

\begin{figure}[h]
\begin{tikzpicture}[
    >={Stealth[length=3.2mm,width=2.2mm]},
    vtx/.style={circle,fill=black,inner sep=1.7pt},
    ed/.style={line width=0.7pt},
    ar/.style={->,line width=0.7pt}
  ]

  %%% left copy of X_0(N) %%%
  \node[vtx] (l1) at (0,0)   {};
  \node[vtx] (l2) at (2.2,0) {};
  \draw[ed] (l1) to[bend left=22]  (l2);
  \draw[ed] (l1) to[bend right=22] (l2);
  \node at (1.1,1.15) {$X_0(N)$};

  %%% middle: X_0(pN), raised above the two copies %%%
  \node[vtx] (m1) at (5.3,2.2) {};
  \node[vtx] (m2) at (7.9,2.2) {};
  % one entry per edge; change the list to change the number of edges
  \foreach \b in {72,45,18,-12,-40,-72}{
    \draw[ed] (m1) to[bend left=\b] (m2);
  }
  \node at (6.6,3.7) {$X_0(pN)$};

  %%% right copy of X_0(N) %%%
  \node[vtx] (r1) at (11.0,0) {};
  \node[vtx] (r2) at (13.2,0) {};
  \draw[ed] (r1) to[bend left=22]  (r2);
  \draw[ed] (r1) to[bend right=22] (r2);
  \node at (12.1,1.15) {$X_0(N)$};

  %%% degeneracy maps
  \draw[ar] (4.7,1.9) -- (3,0.9);
  \draw[ar] (8.7,1.9) -- (10.4,0.9);

\end{tikzpicture}

\begin{tikzpicture}[
    >={Stealth[length=3.2mm,width=2.2mm]},
    vtx/.style={circle,fill=black,inner sep=1.7pt},
    ed/.style={line width=0.7pt},
    ar/.style={->,line width=0.7pt}
  ]

  %%% left copy of X_0(N)^*: two loops %%%
  \node[vtx] (l) at (1.1,0) {};
  \foreach \a in {0,180}{\flowerpetal{l}{\a}{2.6}{55}}
  \node at (1.1,1.35) {$X_0(N)^*$};

  %%% middle: X_0(pN)/W_N, six loops, raised above the two copies %%%
  \node[vtx] (m) at (6.6,2.6) {};
  \foreach \a in {0,60,...,300}{\flowerpetal{m}{\a}{2.2}{27}}
  \node at (6.6,4.5) {$X_0(pN)/W_N$};

  %%% right copy of X_0(N)^*: two loops %%%
  \node[vtx] (r) at (12.1,0) {};
  \foreach \a in {0,180}{\flowerpetal{r}{\a}{2.6}{55}}
  \node at (12.1,1.35) {$X_0(N)^*$};

  \draw[ar] (4.5,1.9) -- (3,1);
  \draw[ar] (8.8,1.9) -- (10,1);
\end{tikzpicture}
    \caption{Illustration of how \Cref{lem:quotient-correspondence} is used in \Cref{cor:computing_Tq} where $W_N$ is the group of all Atkin--Lehner involutions $w_d$ such that $p$ does not divide $d$.}
    \label{fig:placeholder}
\end{figure}

\section{The relation between local heights and Atkin--Lehner fixed points}\label{sec:criterion}

The main goal of this section will be to show that the existence of $p$-adic heights supported at $p$, will follow if one can bound genus of $X_0(N)^*$ and the number of long loops in $\graph_0(N)^*$ in the definition below. We will furthermore show that the number of these long loops can be bounded in terms of the fixed point of Atkin--Lehner (\Cref{cor:Bpnw}).

\begin{definition}\label{def:L_ell}
Let $N=\ell M$ be squarefree. We denote by $L_\ell=L_\ell(N)$ the number of \defn{long loops} of $\graph_0(N)^*$, i.e.\ loops $e$ with $l(e)>1$.
\end{definition}
\begin{remark}\label{rem:L_ell_is_also_ODP_count}
 As $X_0(N)^*$ is semi-stable over $\Z_\ell$, by \Cref{def:reduction_graph} (with $r=1$), $L_\ell$ is also the number of ordinary double points of $(X_0(N)^*)_{\Fbar_\ell}$ of thickness $>1$.
\end{remark}

The following lemma is an important step in the proof of \cref{thm:main1.1} as it reduces that theorem to bounding the quantities $g(X_0(N)^*)$ and  $L_\ell(N)$.
\begin{lemma}\label[lemma]{lem:atLeast_g_minus_Ll_heights}
Let $N$ be a squarefree integer, $p\nmid N$ a prime and $b \in X_0(N)^*(\Q)$ the unique cusp. Then there are at least $$g(X_0(N)^*) -1- \sum_{\ell \mid N} L_\ell(N)$$
linearly independent $p$-adic heights supported at $p$ using $b$ as a basepoint.
\end{lemma}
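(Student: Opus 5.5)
We parametrise $p$-adic heights with basepoint the cusp $b$ by the space $V\colonequals(\NS(J_0(N)^*)\otimes\Q)^{\operatorname{tr}=0}$, as in \Cref{rem:linear_independence}. By \Cref{lem:hecke_is_End} we have $\rho(X_0(N)^*)=g(X_0(N)^*)$, so $\dim V\geq g-1$; concretely $V$ contains the trace-zero elements of $\mathbb T\otimes\Q$. For each $Z\in V$ the local height $h_{Z,b,\ell}$ is linear in $Z$. It vanishes for $\ell\nmid pN$, since $X_0(N)^*$ has good reduction there. So we only need to control the primes $\ell\mid N$. For each such $\ell$ we will exhibit at most $L_\ell(N)$ linear functionals on $V$ whose common kernel consists of correspondences with $h_\ell\equiv 0$ on $X_0(N)^*(\Q_\ell)$. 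Intersecting all these kernels gives a subspace of dimension at least $g-1-\sum_\ell L_\ell(N)$, which is the claim.

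Fix $\ell\mid N$. By \Cref{thm:main2} and \Cref{cor:dual_graph_quotient}, the dual graph $\graph=\graph_0(N)^*$ is a flower graph: one vertex $v_0$ with loops $e_1,\dots,e_k$ of integral lengths. The cusp reduces onto the central component, so $\mathrm{red}(b)=v_0$. By \Cref{thm:BDMTV_localheight}, $h_\ell=\kappa\, j_\graph\circ\mathrm{red}$ with $j_\graph(v_0)=0$. Each loop starts and ends at $v_0$, so \Cref{lem:quadratic_along_edge} applies with $a=b=0$. Its hypotheses hold: the interior components of the minimal regular model are the $\P^1$-chains of \Cref{cor:minimal_regular_AL_quot}, which have genus $0$, and the edge density $c_e=\frac1{l(e)}e^*F_Z(\pi(e))$ is constant by \Cref{def:corr_measure}. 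The lemma then gives $j_\graph(x)=-\frac{c_e}{2}x(x-l(e))$ on $e$.

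Loops with $l(e)=1$ have no interior vertices. Every point of $X_0(N)^*(\Q_\ell)$ reduces to a vertex of the subdivided graph, so on such loops the only vertex is $v_0$, where $j_\graph$ vanishes. On a long loop the interior vertices sit at integers $0<x<l(e)$, and there $j_\graph=0$ exactly when $c_e=0$. The conditions at $\ell$ are therefore $e^*F_Z(\pi(e))=0$ for each of the $L_\ell(N)$ long loops. These are linear in $Z$, because $Z\mapsto F_Z$ is linear (it is induced from the action on $V_p$ as in \Cref{def:corr_endomorphism}). This is the intended content of \Cref{lem:local_height_vanishes_long_edges}.

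The main point to check is the bookkeeping around the vertex condition. The vertex masses $\frac12\operatorname{Tr}(F_Z\mid V_p(X_{v_0}))$ are not imposed separately; they are forced by the total-mass condition, because $Z$ has trace zero (\Cref{rem:measure_subdivision}), so solvability of $\nabla^2 j=\mu_F$ is automatic. One also has to confirm that passing from the dual graph to the minimal regular model subdivides each loop into integer-length pieces, so that $\mathrm{red}$ only hits integer points; this follows from the semi-stability over $\Z_\ell$ with $r=1$ in \Cref{thm:main2}. Once these are verified, the counting in the first paragraph finishes the proof.
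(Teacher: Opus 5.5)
Your proposal is correct and follows the paper's proof. You parametrise heights by the $(g-1)$-dimensional trace-zero part of $\NS(J_0(N)^*)\otimes\Q$ via \Cref{lem:hecke_is_End}, then impose one linear condition $e^*F_Z(\pi(e))=0$ per long loop at each $\ell\mid N$. The only difference is that you re-derive \Cref{lem:local_height_vanishes_long_edges} inline, using $\mathrm{red}(b)=v_0$ (true, since the cusp reduces to a smooth point of the one-component stable model), whereas the paper cites that lemma, whose proof avoids locating $\mathrm{red}(b)$ by showing that $j_\graph$ is constant on all vertices.
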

\begin{proof}
As a shorthand, let $\NS$ denote the Néron--Severi group $\NS(J_0(N)^*)\otimes \Q$ and $\NS^{\tr=0}$ its subspace of trace-zero correspondences. The space of all $p$-adic heights with basepoint $b$ is parametrized by $\NS^{\tr=0}$. By \Cref{lem:hecke_is_End} the rank of the Néron--Severi group equals $g(X_0(N)^*)$.

Let $\ell \mid N$ be a prime, $Z \in \NS^{\tr=0}$ be a correspondence, and $\graph_0(N)^*$ be the dual graph of $X_0(N)^*$ over $\F_\ell$. Let $F_Z$ be the endomorphism of $H_1(\graph_0(N)^*)$ induced by $Z$. The dual graph $\graph_0(N)^*$ of $X_0(N)^*$ is a flower graph by \Cref{cor:dual_graph_quotient}.  So by \Cref{lem:local_height_vanishes_long_edges} one has $h_\ell(X(\Q_\ell))=0$ as soon as $e^*F_Z (\pi(e))=0$ for every long edge $e$ of $\graph_0(N)^*$. The lemma now follows since $e^*F_Z (\pi(e))$ depends linearly on $Z$, and hence the condition $h_\ell(X(\Q_\ell))=0$ is implied by $L_\ell(N)$ linear conditions on $Z \in \NS^{\tr=0}$.
\end{proof}

\begin{corollary}\label{cor:clean_criterion}
Let $N$ be squarefree and $p\nmid N$ a prime. If $g(X_0(N)^*)>1+\sum_{\ell\mid N}L_\ell(N)$, then there is a $p$-adic height on $X_0(N)^*$ supported at $p$.
\end{corollary}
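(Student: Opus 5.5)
This corollary follows directly from \Cref{lem:atLeast_g_minus_Ll_heights}; the plan is simply to read off that its lower bound is positive.

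First take the unique cusp $b\in X_0(N)^*(\Q)$ as basepoint. \Cref{lem:atLeast_g_minus_Ll_heights} then gives at least
\[
g(X_0(N)^*)-1-\sum_{\ell\mid N}L_\ell(N)
\]
linearly independent $p$-adic heights supported at $p$. These are linearly independent as elements of $(\NS(J_0(N)^*)\otimes\Q)^{\tr=0}$, in the sense of \Cref{rem:linear_independence}. The hypothesis $g(X_0(N)^*)>1+\sum_{\ell\mid N}L_\ell(N)$ says precisely that this integer is at least $1$.

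A linearly independent family with at least one member contains a nonzero trace-zero correspondence $Z$. By construction, $Z$ satisfies the $L_\ell(N)$ linear conditions $e^*F_Z(\pi(e))=0$, one for each long loop $e$ of $\graph_0(N)^*$, at every $\ell\mid N$. By \Cref{lem:local_height_vanishes_long_edges}, applied as in the proof of \Cref{lem:atLeast_g_minus_Ll_heights}, this gives $h_\ell(X_0(N)^*(\Q_\ell))=0$ for every $\ell\mid N$.

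It remains to handle primes $\ell\nmid Np$. These are primes of good reduction for $X_0(N)^*$, since $X_0(N)$ has good reduction away from $N$ and so does its quotient. For such $\ell$ we already have $h_\ell=0$, as recalled after \Cref{def:padic_height}. Hence $h_{Z,b}$ is supported at $p$; equivalently, $Z$ is a clean correspondence.

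There is no real obstacle here, since all the content sits in \Cref{lem:atLeast_g_minus_Ll_heights} and \Cref{lem:local_height_vanishes_long_edges}. The only point to check is that a "$p$-adic height" in the corollary means one attached to a nonzero trace-zero class. Linear independence of a nonempty family guarantees exactly that, so the height produced is genuine and not the trivial one coming from $Z=0$.
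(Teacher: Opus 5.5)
Your proposal is correct and matches the paper: the corollary is stated there without a separate proof, as an immediate consequence of \Cref{lem:atLeast_g_minus_Ll_heights}, because the hypothesis makes the lower bound $g(X_0(N)^*)-1-\sum_{\ell\mid N}L_\ell(N)$ at least $1$. Your extra remarks are consistent with the paper but redundant, since that lemma already yields heights supported at $p$. These are that a nonempty linearly independent family gives a nonzero trace-zero class, and that primes $\ell\nmid Np$ contribute nothing because of good reduction.
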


By \Cref{def:flower_graph} and the paragraph following it, the dual graph of $X_0(N)^*$ at $\ell\mid N$ is a flower graph.

\begin{lemma}\label{lem:local_height_vanishes_long_edges}
Let $X$ be a stable curve over $\Q_\ell$ whose dual graph $\graph$ is a flower graph. Call an edge $e\in E(\graph)$ \defn{long} if $l(e)>1$. Let $Z$ be a trace-zero correspondence on $X$, let $F$ be the endomorphism induced by $Z$ at $\ell$ (\Cref{def:corr_endomorphism}) and let $\mu_F$ be its measure (\Cref{def:corr_measure}). Assume that $\mu_F$ vanishes along every long edge, i.e.\ that
\[
    e^*F\left(\pi(e)\right)=0\qquad\text{for every long }e\in E(\graph).
\]
Then $h_\ell(z)=0$ for every $z\in X(\Q_\ell)$.
\end{lemma}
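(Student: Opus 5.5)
The plan is to compute the function $j_\graph$ of \Cref{thm:BDMTV_localheight} explicitly and show it vanishes at every point of $\graph_{\Z}$. Only these points can occur as $\mathrm{red}(z)$, because $X$ is stable over $\Q_\ell$, so $r=1$. Since $\graph$ is a flower graph, there is a unique vertex $v_0$ and every edge is a loop at $v_0$. The basepoint $b$ then reduces to some point of $\graph_\Z$. The first step is to reduce to $\mathrm{red}(b)=v_0$. When $b$ reduces into the interior of a long loop we still know $j_\graph(\mathrm{red}(b))=0$, and $j_\graph$ is determined only up to an additive constant, so it suffices to show that the solution normalised by $g(v_0)=0$ vanishes on all of $\graph_\Z$. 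That solution then also equals $0$ at $\mathrm{red}(b)$, hence coincides with $j_\graph$.

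So let $g$ be a piecewise polynomial function with $\nabla^2(g)=\mu_F$ and $g(v_0)=0$; it exists and is unique by \Cref{lem:laplacian_mass,lem:measure_mass}, since $Z$ has trace zero. We apply \Cref{lem:quadratic_along_edge} to each edge $e$, which is legitimate for two reasons.

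\begin{enumerate}
\item The interior vertices in the subdivision coming from the minimal regular model are the $\P^1$'s of \Cref{cor:regual_from_semistable_model}, so they have genus $0$.
\item The edge density of $\mu_F$ along $e$ is the constant $c_e=\tfrac{1}{l(e)}e^*F(\pi(e))$, by \Cref{def:corr_measure}.
\end{enumerate}

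Since both endpoints of a loop are $v_0$, the lemma gives $a=b=0$ and hence $g_e(x)=-\tfrac{c_e}{2}x(x-l(e))$.

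Next we split by edge type. For a long edge the hypothesis gives $c_e=0$, so $g_e\equiv 0$ on the whole loop. For an edge with $l(e)=1$, the only points of $\graph_\Z$ on $e$ are its endpoints, namely $v_0$, where $g$ vanishes. So $g$ vanishes on $\graph_\Z$ whatever the value of $c_e$. Hence $j_\graph\circ\mathrm{red}\equiv 0$, and therefore $h_\ell=\kappa\cdot j_\graph\circ\mathrm{red}=0$ on $X(\Q_\ell)$.

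The vertex condition at $v_0$ never has to be checked by hand: it holds automatically because $g$ is the genuine solution, and we only computed that solution along edges. Because uniqueness (\Cref{lem:measure_mass}) is already available, I expect no real obstacle. The only point needing care is the reduction to $\mathrm{red}(b)=v_0$, that is, making sure the normalisation argument is valid when $b$ reduces into the interior of a loop.
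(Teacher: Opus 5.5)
Your proof is correct and follows essentially the same argument as the paper. Both apply \Cref{lem:quadratic_along_edge} to each loop, use $c_e=0$ on long loops, and note that a loop of length $1$ contains no point of $\graph_{\Z}$ other than $v_0$. The only difference is cosmetic: the paper applies \Cref{lem:quadratic_along_edge} directly to $j_\graph$ with $a=b=j_\graph(v_0)$, which shows $j_\graph$ is constant on the subdivision vertices, so your detour through the normalisation $g(v_0)=0$ and uniqueness up to constants is not needed.
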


\begin{proof}
First note that if $X(\Q_\ell) = \emptyset$ then there is nothing to prove. So we assume $X(\Q_\ell) \neq \emptyset$ and chose a basepoint $b \in X(\Q_\ell)$.
Let $e_1,\dots,e_r$ be the edges of $\graph$, which are loops at its unique vertex $v_0$, and put $l_j\colonequals l(e_j)\in\Z_{\geq1}$. The minimal regular model subdivides $e_j$ into $l_j$ segments of length $1$, the $l_j-1$ interior vertices corresponding to the components of a chain of $\P^1$'s, by \cite[Corollary~10.3.25]{Liu2002}. Let $V$ be the set of vertices of this subdivision, that is, $v_0$ together with the interior vertices of the long loops; we have $\mathrm{red}\left(X(\Q_\ell)\right)\subseteq V$. %by \Cref{def:reduction_map} 

Let $j_\graph$ be as in \Cref{thm:BDMTV_localheight} and let $c_j\in\Q_p$ be the density of the edge part of $\mu_F$ along $e_j$. We claim that $j_\graph$ is constant of value $j_\graph(v_0)$ on $V$. Indeed, fix $j$ and identify $e_j$ with $[0,l_j]$; as $e_j$ is a loop at $v_0$, its source and its target are both $v_0$, so the two endpoint values are both equal to $j_\graph(v_0)$. If $l_j=1$, then $e_j$ has no interior vertex and there is nothing to prove. If $l_j>1$, then $e_j$ is long, so $c_j=0$ by hypothesis, and the interior vertices of $e_j$ are rational curves; \Cref{lem:quadratic_along_edge} applies and gives $j_\graph(x)=j_\graph(v_0)$ for all $x\in e_j$. This proves the claim.

The basepoint $b$ lies in $X(\Q_\ell)$, so $\mathrm{red}(b)\in V$. Since $j_\graph(\mathrm{red}(b))=0$ and $j_\graph$ is constant on $V$ by the claim, $j_\graph$ vanishes on $V$. Hence $j_\graph\left(\mathrm{red}(z)\right)=0$ for every $z\in X(\Q_\ell)$. As $h_\ell$ is a constant multiple of $j_\graph\circ\mathrm{red}$, the lemma follows.
\end{proof}

\begin{remark}\label{rem:flower_heights}
For the flower graph $\graph=\graph_0(N)^*$ of \Cref{def:dual_graph_quotient} the boundary map $\partial\colon\Q\,E(\graph)\to\Q\,V(\graph)$ vanishes, so $\mathrm H_1(\graph,\Q)=\Q\,E(\graph)$ and $\pi=\mathrm{id}$. Writing $F_{jj}=e_j^*F(e_j)$ for the diagonal entry of $F$ in the basis of loops, the density of $\mu_F$ along $e_j$ is therefore $c_j=F_{jj}/l_j$, and \Cref{lem:quadratic_along_edge} with $a=b=0$ gives
\[
  j_\graph|_{e_j}(x)=\frac{F_{jj}}{2\,l_j}\,x\,(x-l_j),\qquad x\in\{0,1,\dots,l_j\}.
\]
\end{remark}
Under some extra assumptions, we also have a converse to \Cref{lem:local_height_vanishes_long_edges}.
\begin{lemma}
Let $X$ be as in \Cref{lem:local_height_vanishes_long_edges} and $b\in X(\Q_\ell)$ a basepoint such that $b_{\F_\ell}$ specialises to the central vertex $v_0$ of the flower graph.  Assume that all ordinary double points of $X_{\F_\ell}$ are split. 

If there is a long edge $e \in E(\graph)$ such that
\[
    e^*F\left(\pi(e)\right)\neq 0.
\]
Then there is an $z\in X(\Q_\ell)$ such that $h_\ell(z)\neq 0$.
\end{lemma}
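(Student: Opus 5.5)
The converse reduces to two ingredients: the explicit formula for $j_\graph$ on a loop, and the existence of a $\Q_\ell$-point on each interior component of a long loop.

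\textbf{Step 1: the value of $j_\graph$ at interior vertices.} Since $b$ reduces to $v_0$, the normalisation in \Cref{thm:BDMTV_localheight} gives $j_\graph(v_0)=0$. By \Cref{rem:flower_heights} (flower graph, so $\pi=\mathrm{id}$), the restriction of $j_\graph$ to the long loop $e=e_j$ is
\[
j_\graph|_{e_j}(x)=\frac{F_{jj}}{2l_j}\,x(x-l_j),
\]
and the hypothesis says $F_{jj}\neq 0$. Take the first interior vertex $x=1$, which exists because $l_j\ge 2$. There
\[
j_\graph(1)=\frac{F_{jj}(1-l_j)}{2l_j}\neq 0 .
\]
Since $\kappa\neq0$, any $z\in X(\Q_\ell)$ with $\mathrm{red}(z)$ equal to this vertex satisfies $h_\ell(z)=\kappa\, j_\graph(1)\neq 0$.

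\textbf{Step 2: producing such a point $z$.} This is the main step. By \Cref{cor:regual_from_semistable_model}, the minimal desingularisation $\mathcal X'$ replaces the split double point corresponding to $e_j$ by a chain of $l_j-1$ projective lines over $\F_\ell$. Each line meets its neighbours transversally at rational points and has multiplicity $1$. Let $C$ be the first line in this chain.
\begin{itemize}
\item $C\cong\P^1_{\F_\ell}$ has $\ell+1\ge 3$ rational points.
\item At most two of these are the intersection points with the adjacent components, so some rational point $\bar z\in C(\F_\ell)$ lies on no other component.
\item Because $C$ has multiplicity one, $\bar z$ is a smooth point of $\mathcal X'_{\F_\ell}$.
\end{itemize}
Since $\mathcal X'$ is regular and $\bar z$ is a smooth point of the special fibre, $\mathcal X'$ is smooth over $\Z_\ell$ at $\bar z$. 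Hensel's lemma then lifts $\bar z$ to a section $z\in\mathcal X'(\Z_\ell)=X(\Q_\ell)$, and $\mathrm{red}(z)=C$, which is the vertex at $x=1$.

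The only delicate point is the splitness assumption. It guarantees that the exceptional chain consists of $\F_\ell$-rational $\P^1$'s meeting at rational points, rather than a Galois-twisted configuration in which no component is defined over $\F_\ell$; this is exactly why the hypothesis appears in the statement. One should also check that the reduction map of \Cref{def:reduction_map} is computed on this $\mathcal X'$. Here $r=1$, because the model is already semi-stable over $\Z_\ell$, so the vertices of $\graph_{\Z}$ correspond exactly to the components of $\mathcal X'_{\F_\ell}$, and Step 1 applies to $z$.
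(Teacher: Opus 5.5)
Your proposal is correct and follows the paper's proof essentially step for step. You use the vanishing of $j_\graph$ at $v_0$ to get the quadratic on the long loop; the paper invokes \Cref{lem:quadratic_along_edge} directly with $c=\tfrac1l e^*F(\pi(e))$, whereas you go through \Cref{rem:flower_heights}. You then note $j_\graph(1)\neq 0$ since $l\geq 2$, and Hensel-lift an $\F_\ell$-point of the first $\P^1$ in the split chain that avoids its two intersection points, exactly as the paper does. One small caveat, which does not affect the argument: the formula you quote from \Cref{rem:flower_heights} has the opposite sign to \Cref{lem:quadratic_along_edge}, which gives $-\tfrac{c}{2}x(x-l)$. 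This is harmless, since only $j_\graph(1)\neq 0$ is needed.
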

\begin{proof}
Identify $e$ with $[0,l]$, where $l\colonequals l(e)\geqslant 2$, and let $c\colonequals\tfrac1l\,e^*F\left(\pi(e)\right)\neq0$ be the density of the edge part of $\mu_F$ along $e$. Let $j_\graph$ be as in \Cref{thm:BDMTV_localheight}. As $\mathrm{red}(b)=v_0$, the normalisation of $j_\graph$ gives $j_\graph(v_0)=0$. Since $e$ is a loop at $v_0$, both endpoint values of $j_\graph$ on $e$ are $0$, and the interior vertices of $e$ correspond to rational curves, so \Cref{lem:quadratic_along_edge} gives
\[
  j_\graph(x)=-\tfrac{c}{2}\,x\,(x-l)\qquad\text{for } x\in[0,l].
\]
In particular $j_\graph(1)=\tfrac{c}{2}(l-1)\neq0$.

It remains to find $z\in X(\Q_\ell)$ that reduces to the vertex $w$ at $x=1$ on $e$. Let $\mathcal X'$ be the minimal regular model of $X$. Since the double point corresponding to $e$ is split, by \Cref{cor:regual_from_semistable_model} the component $C_w$ of $\mathcal X'_{\F_\ell}$ corresponding to $w$ is a projective line over $\F_\ell$ of multiplicity $1$, which meets the other components in two points. As $\#\P^1(\F_\ell)=\ell+1\geqslant3$, there is a point $y\in C_w(\F_\ell)$ lying on no other component. Then $\mathcal X'$ is smooth over $\Z_\ell$ at $y$, so by Hensel's lemma $y$ lifts to a point $z\in\mathcal X'(\Z_\ell)=X(\Q_\ell)$ with $\mathrm{red}(z)=w$. Hence $h_\ell(z)=\kappa\, j_\graph(w)=\kappa\,\tfrac{c}{2}(l-1)\neq0$, as $\kappa\neq0$.
\end{proof}

We will now start bounding the long loops of $\graph_0(N)^*$, or equivalently, the number of ordinary double points of $X_0(N)^*$ of thickness $>1$. By \Cref{thm:semi-stable_star} (iii)(b) those points come from supersingular points that either have extra automorphisms, or are fixed points of of some Atkin--Lehner operator.

\begin{proposition}[Contribution from extra automorphisms]\label{prop:extra_automorphism}
 Let $\ell$ be a prime, $M$ be a squarefree integer coprime to $\ell$ and $N=M\ell$. The number of supersingular points $(E,G)=x \in X_0(N)(\Fbar_\ell)$ with $\#\Aut(E,G) > 2$ is at most $(1+c_\ell) 2^{\omega(N)}$ where
 $$ c_\ell \colonequals \begin{cases}
     0\text{ if } \ell>3\\
     1 \text{ if } \ell=3\\
     \frac 5 2 \text{ if } \ell=2.
 \end{cases}$$
\end{proposition}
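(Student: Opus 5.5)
Since $\Aut(E,G)\subseteq\Aut(E)$ and $[-1]$ always lies in $\Aut(E,G)$, the condition $\#\Aut(E,G)>2$ forces $\#\Aut(E)>2$. By \Cref{prop:aut_classification}, this means $j(E)\in\{0,1728\}$. So I will count pairs $(E,G)$ with $E$ supersingular and $j(E)\in\{0,1728\}$, up to isomorphism, for which some automorphism $\alpha\neq\pm1$ of $E$ stabilises $G$. Here $G$ is the cyclic subgroup of order $M$ (the level at $\ell$ is built into the supersingular component structure). The plan is to do this for each of the at most two relevant $j$-invariants separately, and for each one bound the number of $\Aut(E)$-orbits of cyclic $M$-subgroups $G$ whose stabiliser in $\Aut(E)/\{\pm1\}$ is nontrivial.

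\textbf{Case $\ell>3$, $j=1728$.} Here $\Aut(E)/\{\pm1\}$ is generated by the class of $[i]$. A subgroup $G$ stable under $[i]$ is a cyclic $\Z[i]$-submodule of order $M$. Since $M$ is squarefree and coprime to $\ell$, I can work one prime $q\mid M$ at a time. The component $G[q]$ is an $[i]$-stable line in $E[q]\cong\F_q^2$, i.e.\ an eigenline of $[i]$. There are at most $2$ such lines (exactly $1+(\tfrac{-4}{q})$ for odd $q$, and $1$ for $q=2$ since then $i$ acts unipotently). This gives at most $2^{\omega(M)}$ such $G$.

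\textbf{Case $\ell>3$, $j=0$.} The same argument with $\Z[\zeta_3]$ gives at most $2^{\omega(M)}$ such $G$.

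Combining the two cases bounds the count by $2\cdot 2^{\omega(M)}=2^{\omega(N)}$. This is the claim with $c_\ell=0$, and only one of the two $j$-values need be supersingular anyway.

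\textbf{Cases $\ell=2,3$.} Here there is a single supersingular $j=0=1728$, with a larger automorphism group: $\Aut(E)/\{\pm1\}\cong S_3$ for $\ell=3$, and $A_4$ for $\ell=2$. I will count the $G$ stable under some nontrivial element of $\Aut(E)/\{\pm1\}$ by running over the cyclic subgroups $C$ of this quotient, up to conjugacy. For each $C$ generated by an element $\alpha$ of order $2$ or $3$ (an order-$4$ or order-$3$/$6$ element of $\Aut(E)$), the order $\Z[\alpha]$ is $\Z[i]$ or $\Z[\zeta_3]$. By the eigenline argument above, the $\alpha$-stable $G$ number at most $2^{\omega(M)}$. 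Points that are conjugate under $\Aut(E)$ give the same isomorphism class, so I only sum over conjugacy classes of nontrivial cyclic subgroups:
\begin{itemize}
\item $S_3$: two classes (order $2$ and order $3$), giving $2\cdot 2^{\omega(M)}=2^{\omega(N)}=(1+c_3)2^{\omega(N)}/2$.
\item $A_4$: two classes of order-$3$ subgroups and one class of order-$2$ subgroups.
\end{itemize}

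The main obstacle is matching the constants exactly, in particular $c_2=5/2$. A total of $(1+c_2)2^{\omega(N)}=7\cdot 2^{\omega(M)}$ suggests a cruder count, for example summing over all nontrivial cyclic subgroups of $A_4$ (four of order $3$ and three of order $2$, so $7$) without using conjugacy. I would adopt that crude count: it suffices for an upper bound and avoids the orbit bookkeeping. Similarly, for $\ell=3$ I would count the $4$ nontrivial cyclic subgroups of $S_3$ (three of order $2$, one of order $3$), giving $4\cdot 2^{\omega(M)}=(1+1)2^{\omega(N)}$, which matches $c_3=1$. For $\ell>3$ the same crude count is one subgroup per $j$-value, giving $2\cdot2^{\omega(M)}$, which matches $c_\ell=0$. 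Thus each case reduces to the per-$\alpha$ eigenline bound times the number of nontrivial cyclic subgroups of $\Aut(E)/\{\pm1\}$.

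The step needing care is that eigenline bound when $q\in\{2,3\}$ divides $M$ while $\alpha$ has order divisible by $q$. In that situation $\alpha$ acts on $E[q]$ non-semisimply and fixes at most one line, so the bound of at most $2$ per prime still holds.
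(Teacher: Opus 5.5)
Your final crude count is exactly the paper's argument. The nontrivial cyclic subgroups of $\Aut(E)/\{\pm1\}$ correspond to the subgroups of order $3$ or $4$ of $\Aut(E)$ that the paper sums over (one per $j$-invariant for $\ell>3$, four for $\ell=3$, seven for $\ell=2$). Each one stabilises at most $2^{\omega(M)}$ subgroups $G$, because a non-scalar element fixes at most two lines in each $E[q]$. In the case $q\in\{2,3\}$ that you flag, take $\alpha$ of order $4$ resp.\ $3$: the separable isogeny $\alpha-1$ has degree $q$, so $\alpha$ is not trivial on $E[q]$ and hence not scalar.

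As a side remark, an upper bound never required matching the constants, only staying below them. Your abandoned conjugacy refinement is also valid: the four order-$3$ subgroups of $A_4$ are Sylow, so they form a single conjugacy class, not two. That refinement would even give the sharper bound $2^{\omega(N)}$ for every $\ell$.
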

\begin{proof}
By \Cref{prop:aut_classification} a supersingular point $x=(E,G)$ can have $\#\Aut(E,G)>2$ only if $\#\Aut(E)>2$, that is, only if $j(E)\equiv0$ or $1728\pmod \ell$. An automorphism $\gamma\in\Aut(E)$ of order $\ell\in\{3,4\}$ generates a subring $\Z[\gamma]\cong\Z[\zeta_\ell]$ of $\End(E)$, and $\gamma\in\Aut(E,G)$ if and only if $\gamma(G)=G$.

\emph{A counting bound.}
Fix such a $\gamma$ of order $\ell$ and let us bound the number of cyclic subgroups $G\subseteq E$ of order $M$ with $\gamma(G)=G$. For each prime $q\mid M$ we have $q\ne \ell$, so $E[q]\cong(\Z/q\Z)^2$ and $G_q\colonequals G \cap E[q]$ is a line in $E[q]$ with $\gamma(G_q)=G_q$; conversely $G=\bigoplus_{q\mid M}G_q$ is recovered from the family $(G_q)_{q\mid M}$. Now $\gamma$ acts $\F_q$-linearly on $E[q]$ with minimal polynomial $X^2+1$ (if $\ell=4$) or $X^2+X+1$ (if $\ell=3$); its eigenvalues are primitive $\ell$-th roots of unity, so $\gamma$ is \emph{not} a scalar on $E[q]$ (for $q=2$ note that $\gamma$ maps to an element of order $2$ resp.\ $3$ in $\Aut(E[2])=\GL_2(\F_2)$, again non-scalar). A non-scalar element of $\GL_2(\F_q)$ leaves at most two lines invariant, so there are at most two choices for $G_q$, and hence at most $2^{\omega(M)}$ subgroups $G$ with $\gamma(G)=G$.

A point $(E,G)$ satisfies $\#\Aut(E,G)>2$ if and only if $\Aut(E,G)$ contains an element of order $3$, $4$ or $6$; since the square of an order-$6$ element has order $3$ and fixes the same $G$, this happens precisely when $G$ is invariant under a generator of one of the cyclic subgroups of order $3$ or $4$ of $\Aut(E)$. Writing $s_\ell$ for the number of subgroups of order $\ell$ of $\Aut(E)$, the number of extra-automorphism points supported on a fixed curve $E$ is therefore at most $(s_3+s_4)\,2^{\omega(M)}$.

\emph{Case $\ell>3$.} Here $\#\Aut(E)>2$ forces $j(E)=1728$ (so $\Aut(E)\cong C_4$, $s_3=0$, $s_4=1$) or $j(E)=0$ (so $\Aut(E)\cong C_6$, $s_3=1$, $s_4=0$), and these are two distinct curves. Summing, the number of extra-automorphism points is at most $(1+1)\,2^{\omega(M)}=2^{\omega(N)}$, which is the claim with $c_\ell =0$.

\emph{Case $\ell\in\{2,3\}$.} Then $0\equiv1728\pmod \ell$ and there is a unique supersingular curve $E_0$ up to isomorphism \cite[Theorem~V.4.1]{Silverman1}. If $\ell=2$ this curve has $\Aut(E_0)\cong\SL_2(\F_3)$ and $\Aut(E_0)\cong \Z/3\Z \rtimes \Z/4\Z$ if $\ell=3$ (\Cref{prop:aut_classification}). In $\SL_2(\F_3)$ the normal Sylow $2$-subgroup $Q_8$ contains all three subgroups of order $4$, so $s_4=3$, and there are $s_3=4$ Sylow $3$-subgroups.  In $\Z/3\Z \rtimes \Z/4\Z$ one has $s_4=3$ and a unique (normal) $s_3=1$. Hence the number of extra-automorphism points is at most $(3+4)\,2^{\omega(M)}=\tfrac72\,2^{\omega(N)}$ when $\ell=2$, and at most $(3+1)\,2^{\omega(M)}=2\cdot2^{\omega(N)}$ when $\ell=3$. This proves one can take $c_2=\frac 5 2$ and $c_3 = 1$.
\end{proof}

\begin{definition}\label{def:SS_fixed_points}
    Let $\ell$ be a prime, $M$ be a squarefree integer coprime to $\ell$ and $d \mid \ell M$. Let $d$ be larger than $1$. Denote $N=\ell M$. Then $\nu_{\ell}(N,d)$ is defined to be the number of fixed points of $w_d$ in supersingular locus of $X_0(N)(\overline \F_\ell)$.
\end{definition}

\begin{theorem}\label{cor:Bpnw}
    For every prime \(\ell  \mid N\) the number of ordinary double points in $(X_0(N)/W)_{\Fbar_\ell}$ of thickness $>1$ is at most 
    \begin{align*}
       B(\ell,N,W) \colonequals  \frac 2 {\#W} \left((1+c_\ell)2^{\omega(N)} + \sum_{1<d \mid M, w_d\in W}\nu_{\ell}(N,d)\right)
    \end{align*} 
    where $c_\ell$ is as in Proposition \ref{prop:extra_automorphism}. In particular, we have $L_\ell(N) \leq B(\ell,N, W_N)$, where $L_\ell(N)$ is as in \Cref{def:L_ell}.
\end{theorem}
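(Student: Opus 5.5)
We count the ordinary double points of thickness $>1$ upstairs, on $X_0(N)_{\Fbar_\ell}$, and then divide by the orbit size. By \Cref{thm:semi-stable_star}(iii), the double points of $(X_0(N)/W)_{\Fbar_\ell}$ are exactly the images of those supersingular points $x=(E,G)$ whose stabiliser $W_x$ contains no $w_{\ell d}$. The thickness of such a point is $\#W_x\cdot\tfrac12\#\Aut(E,G)$. Since $-1\in\Aut(E,G)$ always, this thickness exceeds $1$ if and only if $\#\Aut(E,G)>2$ or $W_x\neq\{1\}$.

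In the second case, $W_x$ contains some $w_d\ne 1$. Because $W_x$ contains no element $w_{\ell d'}$, we have $\ell\nmid d$, so $1<d\mid M$ with $w_d\in W$. Hence the set $S$ of supersingular points lying over a thick double point is contained in the union of two sets: the points with extra automorphisms, and the $w_d$-fixed supersingular points for $1<d\mid M$ with $w_d\in W$. \Cref{prop:extra_automorphism} bounds the first set by $(1+c_\ell)2^{\omega(N)}$. The definition of $\nu_\ell$ bounds the second by $\sum_{1<d\mid M,\,w_d\in W}\nu_\ell(N,d)$. A union bound gives
\[
\#S\le (1+c_\ell)2^{\omega(N)}+\sum_{1<d\mid M,\,w_d\in W}\nu_\ell(N,d).
\]

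Next we relate $\#S$ to the number of double points downstairs. The set $S$ is $W$-stable, because $\Aut$ and the conjugacy class of the stabiliser are constant on orbits. Each thick double point of the quotient is one $W$-orbit in $S$. An orbit has size $\#W/\#W_x$, so it suffices to show $\#W_x\le 2$ for $x\in S$, which gives orbit size at least $\#W/2$ and the factor $2/\#W$. For $N$ squarefree, $W_x$ fixes $x$ and so acts on the supersingular point $(E,G)$; the elements of $W_x$ are then represented by endomorphisms of $E$ of degree $d$. This gives an injection of $W_x$ into the $d$-normalising part of $\End(E,G)$ modulo scalars.

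This last step is the main obstacle: bounding $\#W_x\le2$ without an incorrect orbit estimate. I would argue as follows. If $w_d,w_{d'}\in W_x$ with $d\ne d'$, they yield elements $\alpha,\beta\in\orderO=\End(E,G)$ with $\alpha^2=\pm d$ (up to units) and $\beta^2=\pm d'$, and these anticommute up to units. They would generate, together with $w_{dd'/\gcd^2}$, an order in the definite algebra $B_{\ell,\infty}$ containing $\Z\langle\alpha,\beta\rangle$. That order has reduced discriminant divisible by $dd'$ and level constraints incompatible with $\orderO$ being Eichler of level $M$ unless $\ell\mid dd'$, which is excluded since $w_{\ell\cdot}\notin W_x$. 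If this rigidity argument fails, the fallback is to absorb the larger stabiliser into the count. A point fixed by $W_x$ of order $2^k$ is counted in $2^k-1$ of the $\nu_\ell(N,d)$ terms while its orbit shrinks by $2^k$. So $(2^k-1)\cdot(\text{orbit size})\ge \#W/2$ whenever $k\ge1$, and summing $\#W_x\cdot\#(\text{orbit})/\#W$ over $S$ still yields the factor $2/\#W$. I would present this weighted version, which sidesteps the need for $\#W_x\le2$. Finally, the case $W=W_N$ gives $L_\ell(N)\le B(\ell,N,W_N)$ by \Cref{rem:L_ell_is_also_ODP_count}.
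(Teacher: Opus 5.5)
Your final weighted count is correct, and it is the same count as the paper's Burnside-lemma step. The route you first set up as ``the main obstacle'' rests on a false claim, so that paragraph should be deleted rather than kept as a first attempt.

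\textbf{The claim $\#W_x\le 2$ is false.} It does not hold for every supersingular $x$ lying over a thick double point. Your discriminant obstruction only bites when $\ell\nmid 4dd'$, and it disappears at $\ell=2$.

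Take $N=290$, $\ell=2$, $M=145$.
\begin{itemize}
\item One has $\left(\frac{-5,-29}{\Q}\right)\cong B_{2,\infty}$. The Hilbert symbols at $5$ and $29$ are trivial, since $-29\equiv 1 \bmod 5$ and $-5\equiv 13^2 \bmod 29$.
\item The order $\Z\langle\alpha,\beta\rangle$ with $\alpha^2=-5$, $\beta^2=-29$, $\alpha\beta=-\beta\alpha$ has reduced discriminant $580=2\cdot 290$. So it sits with index $2$ in an Eichler order of level $145$, and nothing contradicts $w_5,w_{29},w_{145}$ all fixing one point.
\item This actually happens in \Cref{ex:dual_graph_290}. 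The only points with $\#\Aut(E,G)=4$ are the two points with a $\Z[i]$-embedding. These are $w_2$-fixed, so they map to smooth points. Hence the loop of length $4$ must come from points with $W_x=\{1,w_5,w_{29},w_{145}\}$, whose orbit has size $2<\#W/2$.
\end{itemize}
Even for odd $\ell$, your ``anticommute up to units'' step is not justified when $\orderO^\times\cong C_4$ or $C_6$.

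\textbf{The fallback is exactly what is needed.} Written cleanly, the number of thick double points is $\sum_x \#W_x/\#W$. The sum runs over supersingular $x$ of thickness $>1$ whose stabiliser contains no $w_{\ell d}$. The summand is the reciprocal of the orbit size; the expression $\#W_x\cdot\#(\text{orbit})/\#W$ you wrote is identically $1$. Split the sum by whether $W_x$ is trivial.
\begin{itemize}
\item If $W_x=1$, then $\#\Aut(E,G)>2$, and these points contribute at most $(1+c_\ell)2^{\omega(N)}/\#W$ by \Cref{prop:extra_automorphism}.
\item If $W_x\neq 1$, every non-trivial element of $W_x$ is some $w_d$ with $1<d\mid M$. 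So $\#W_x\le 2(\#W_x-1)$, and $\sum_x(\#W_x-1)\le\sum_{1<d\mid M,\,w_d\in W}\nu_\ell(N,d)$.
\end{itemize}
Together these give at most $B(\ell,N,W)$, even with factor $1$ instead of $2$ on the automorphism term. The union bound for $\#S$ is then unnecessary.

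The paper does essentially this. It treats the extra-automorphism points separately and applies Burnside's lemma to the points fixed by some $w_d$ with $1<d\mid M$, using $\#S\le\sum\nu_\ell(N,d)$. Your per-orbit version, restricted to points fixed by no $w_{\ell d}$, is a tidy way to phrase that step: it sidesteps the $w_{\ell d}$-terms that Burnside's formula would otherwise contain.
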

\begin{proof}
According to \Cref{thm:semi-stable_star} the ordinary double points of thickness $>1$ come from supersingular points in $(E,G) = x \in X_0(N)(\Fbar_\ell)$ such that either $\# \Aut(E,G)>2$ or $w_d x=x$ for some $w_d \in W$ with $d \mid M$ and $d > 1$. 

First we look at the contribution coming from those points where $\# \Aut(E,G) > 2$. By \Cref{prop:extra_automorphism} there are at most $(1+c_\ell)2^{\omega(N)}$ such supersingular points $(E,G)$, so they account for at most $(1+c_\ell)2^{\omega(N)}/\#W$ points of $(X_0(N)/W)_{\Fbar_\ell}$; this contributes at most $2(1+c_\ell)2^{\omega(N)}/\#W$ to the bound $B(\ell,N,W)$. (For $\ell>3$ this recovers the count $2\cdot2^{\omega(N)}/\#W$, since then $c_\ell =0$ and $E$ is one of the two curves with $j\equiv0,1728$, of automorphism type $\Z/4\Z$ or $\Z/6\Z$; for $\ell\in\{2,3\}$ the extra factor $c_\ell $ accounts for the larger automorphism group of the supersingular curve.)

Now for the contribution to those points that are fixed by some $w_d$ with $1 < d \mid M$. To this end, let $S \subseteq X_0(N)(\Fbar_\ell)$ be the subset of supersingular points that are fixed by at least one Atkin--Lehner operator $w_d\in W$ with $d\mid M$, $d>1$. As $W$ is abelian, this set is stable under the action of $W$. There will be $\#(S/W)$ points of thickness $>1$ coming from this set. We bound the cardinality of $\#(S/W)$ using Burnside's Lemma. 
\begin{align}
    \#(S/W) = \frac 1 {\#W}\left(\#S+ \sum_{1<d \mid M, w_d\in W}\nu_{\ell}(N,d)\right)
\end{align}
The first part of the theorem follows since
\[
    \#S \leq \sum_{1<d \mid M, w_d\in W}\nu_{\ell}(N,d). \qedhere
\]
The fact that $L_\ell(N) \leq B(\ell,N,W_N)$ follows from \Cref{rem:L_ell_is_also_ODP_count}.
\end{proof}

\begin{lemma}\label{lem:hecke_is_End}
Let $N$ be a squarefree integer, and let $\mathbb T\subseteq \End \, J_0(N)^*$ be the Hecke algebra generated by the Hecke operators $T_p$ with $p\nmid N$. Then $\mathbb T \otimes \Q = \End \, J_0(N)^* \, \otimes \Q \cong \NS(J_0(N)^*) \otimes \Q$, and all three have dimension $g\colonequals g(X_0(N)^*)$ over $\Q$.
\end{lemma}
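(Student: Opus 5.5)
The plan is to identify $J_0(N)^*$ with the $W_N$-invariant part of $J_0(N)$ and use Atkin--Lehner/newform theory for squarefree level. Since $N$ is squarefree, $S_2(\Gamma_0(N))$ has a basis of forms $f(q^e)$, with $f$ a newform of level $M\mid N$ and $e\mid N/M$. The Atkin--Lehner involution $w_\ell$ acts on the old space spanned by $f(q),f(q^\ell)$ with both eigenvalues $\pm1$ occurring once. The $W_N$-invariant subspace $S_2(\Gamma_0(N))^{W_N}\cong H^0(X_0(N)^*,\Omega^1)$ therefore contains, for each newform $f$ of level $M\mid N$ that is fixed by all $w_q$ with $q\mid M$, exactly one line in the span of $\{f(q^e)\}$. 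Newforms with some $w_q$-eigenvalue $-1$ contribute nothing. Hence $S_2(\Gamma_0(N))^{W_N}$ decomposes as a direct sum of lines indexed by the newforms $f$ of level dividing $N$ with all Atkin--Lehner eigenvalues $+1$. The Hecke operators $T_p$ with $p\nmid N$ act on the line of $f$ through the eigenvalues $a_p(f)$, and by strong multiplicity one, distinct such $f$ have distinct systems of eigenvalues $(a_p(f))_{p\nmid N}$.

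From this, $\mathbb T\otimes\overline\Q$ acting on $H^0(\Omega^1)\otimes\overline\Q$ is diagonalizable with $g$ pairwise distinct characters, so $\mathbb T\otimes\Q$ is a commutative semisimple algebra of dimension exactly $g$. Equivalently, it is a product of totally real fields $\prod_i K_f$, and $J_0(N)^*$ is isogenous to $\prod_f A_f$, where the $A_f$ are pairwise non-isogenous simple abelian varieties with $\dim A_f=[K_f:\Q]$.

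Next I show $\End J_0(N)^*\otimes\Q=\mathbb T\otimes\Q$. Since the $A_f$ are pairwise non-isogenous, $\End^0(J_0(N)^*)=\prod_f\End^0(A_f)$. By Ribet's theorem, for newforms of trivial character (no inner twists, as all $a_p$ are real and the character is trivial) $\End^0(A_f)=K_f$, since $A_f$ has no CM over $\Q$ and its endomorphisms over $\Q$ are exactly $K_f$. This is the key input and I expect it to be the main obstacle, in the sense that it must be cited rather than proven. Then $\dim\End^0=\sum[K_f:\Q]=g$, which matches $\dim\mathbb T\otimes\Q$, and the inclusion $\mathbb T\otimes\Q\subseteq\End^0$ is an equality.

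Finally, for the Néron--Severi part I use \cite[Proposition 17.2]{MilneAbelianVarieties}, as quoted earlier: $\NS\otimes\Q\cong\End^\dagger$, the Rosati-fixed part. Hecke operators $T_p$ with $p\nmid N$ are self-adjoint for the Petersson product, so they are Rosati-invariant with respect to the canonical polarization. Hence $\mathbb T\otimes\Q\subseteq\End^\dagger\subseteq\End^0=\mathbb T\otimes\Q$, so all three coincide and have dimension $g$. (Alternatively, Rosati acts on the totally real $K_f$ as a positive involution, hence trivially.)
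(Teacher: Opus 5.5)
Your proof is correct and takes essentially the paper's route. The decomposition of $S_2(\Gamma_0(N))^{W_N}$ into $g$ lines with pairwise distinct Hecke eigensystems bounds $\dim \mathbb T\otimes\Q$ from below. Writing $\End J_0(N)^*\otimes\Q$ as a product of totally real fields $K_f$ with $[K_f:\Q]=\dim A_f$ bounds it from above. Finally, the Rosati involution is trivial: you get this from the symmetry $T_p^t=T_p$ of the Hecke correspondences, the paper from positivity on totally real fields, and both work.

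One justification needs correcting. Trivial character and real $a_p$ do \emph{not} rule out inner twists in general; they occur at non-squarefree levels, e.g.\ for $A_f$ arising as Weil restrictions of quadratic $\Q$-curves. This does not break your argument, since Ribet's theorem $\End_\Q(A_f)\otimes\Q=K_f$ holds for every newform, inner twists or not. If you want geometric endomorphisms as well, the absence of inner twists and CM comes from $N$ being squarefree, not from the reality of the $a_p$. Twisting a form that is unramified or Steinberg at $p$ by a character ramified at $p$ produces level divisible by $p^2$.
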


\begin{proof}
Clearly $\mathbb T\otimes\Q\subseteq\End \, J_0(N)^*\otimes\Q$. By Atkin--Lehner theory \cite{AtkinLehner1970}, the space $S_2(\Gamma_0(N))^{W_N}\cong H^0(X_0(N)^*,\Omega^1)$, of dimension $g$, has a basis of eigenforms for $\mathbb T$ with pairwise distinct systems of eigenvalues. Indeed, taking $W_N$-invariants causes all old forms to occur with multiplicity 1. Hence, the image of $\mathbb T\otimes\C$ seen as acting on modular forms contains the $g$ projections onto these eigenlines, so $\dim_\Q\mathbb T\otimes\Q\geqslant g$. On the other hand, since $N$ is squarefree, $\End \, J_0(N)^*\otimes\Q\cong\prod_i K_i$ is a product of totally real number fields \cite[\S2, before Corollary~2.6]{BakerHasegawa}. Accordingly, $J_0(N)^*$ is isogenous to a product of abelian varieties $J_i$ with $\End \, J_i\otimes\Q\cong K_i$, and $[K_i:\Q]\leqslant\dim J_i$ since $K_i$ is totally real \cite[\S21]{Mumford}. Hence $\dim_\Q \End \, J_0(N)^*\otimes\Q\leqslant g$, and therefore $\mathbb T\otimes\Q=\End \, J_0(N)^*\otimes\Q$.

By \cite[Proposition 17.2]{MilneAbelianVarieties}, $\NS(J_0(N)^*)\otimes \Q$ is isomorphic to the subspace of $\End \, J_0(N)^* \otimes \Q$ fixed by the Rosati involution. We will denote the Rosati involution by $\dagger$. As $\End \, J_0(N)^* \otimes \Q$ is a product of number fields, it is commutative, and hence the Rosati involution is an algebra morphism.  Positivity rules out swapping two factors, since for an idempotent $e$ of $K_i$ with $e^\dagger\in K_j$, $j\neq i$, we get $e\,e^\dagger=0$. Furthermore, a positive involution of a totally real field is trivial. So $\dagger$ is the identity and hence $\End \, J_0(N)^* \, \otimes \Q \cong \NS(J_0(N)^*) \otimes \Q$.
\end{proof}

\subsection{Algorithm for computing the space of $p$-adic heights supported at $p$}

 In this subsection we give the algorithm of \Cref{prop:correspondence_algorithm} and prove that it is correct. Throughout, $N$ is a squarefree integer, $g\colonequals g(X_0(N)^*)$, the Hecke algebra $\mathbb T$ is as in \Cref{lem:hecke_is_End}, and
\[
  S\colonequals S_2(\Gamma_0(N),\Q)^{W_N},
\]
so that $S\otimes\C\cong H^0(X_0(N)^*_\C,\Omega^1)$. For $t\in\mathbb T\otimes\Q$ we write $\operatorname{tr}(t)$ for the trace of $t$ acting on $S$.
 
The idea of the algorithm is as follows. By \Cref{lem:hecke_is_End}, every trace zero element of $\mathbb T\otimes\Q$ gives rise to a $p$-adic height. By \Cref{lem:local_height_vanishes_long_edges}, this height is supported at $p$ as soon as it satisfies one linear condition for every long loop of the dual graph $\graph_0(N)^*$ at every prime $\ell\mid N$. If $T_p$ generates $\mathbb T\otimes\Q$, we can write every element of $\mathbb T\otimes\Q$ as a polynomial in $T_p$, and we compute these linear conditions from Brandt matrices using \Cref{cor:computing_Tq}. To make this explicit we fix some notation.
 
\begin{notation}\label{not:brandt_orbits}
Let $\ell\mid N$ be a prime, $M\colonequals N/\ell$, let $\orderO$ be an Eichler order of level $M$ in $B_{\ell,\infty}$, and let $I_1,\ldots,I_h$ be representatives of $\Cls\orderO$. An ideal class $I_k$ corresponds to a supersingular point $x_k\in X_0(N)(\Fbar_\ell)$ by Deuring's Correspondence (\Cref{thm:Deuring}). And by Deligne--Rapoport's description of the special fiber of $X_0(N)_{\F_\ell}$  (\Cref{thm:semi-stable_X0}~(ii)) the point $x_k\in X_0(N)(\Fbar_\ell)$ corresponds to an edge $e_k$ of $\graph_0(N)$, which we orient from $v_0$ to $v_1$. Via $e_k\mapsto$ $k$-th standard basis vector we identify $\Q\,E(\graph_0(N))$ with $\Q^h$.
\begin{itemize}
    \item The group $W_N$ acts on $\{1,\ldots,h\}$ via $w(x_k)=x_{w(k)}$. For $k\in\{1,\ldots,h\}$ we write $W_{x_k}\subseteq W_N$ for the stabiliser of $x_k$.
    \item Let $\epsilon_\ell\colon W_N\to\{\pm1\}$ be the character with $\epsilon_\ell(w_d)=-1$ if $\ell\mid d$ and $\epsilon_\ell(w_d)=1$ otherwise.
    \item Let $R_\ell$ be the set of $W_N$-orbits $O\subseteq\{1,\ldots,h\}$ such that $\epsilon_\ell$ is trivial on $W_{x_k}$ for one, equivalently all, $k\in O$. For $O\in R_\ell$ we fix $k_O\in O$ and put $l_O\colonequals\#W_{x_{k_O}}\cdot\tfrac12\#O_L(I_{k_O})^\times$ and
    \[
      \tilde e_O\colonequals\sum_{wW_{x_{k_O}}\in W_N/W_{x_{k_O}}}\epsilon_\ell(w)\,e_{w(k_O)}\in\Q^h,
    \]
    which is well defined as $\epsilon_\ell$ is trivial on $W_{x_{k_O}}$.
    \item Let $R_\ell^{\mathrm{long}}\colonequals\{O\in R_\ell : l_O>1\}$.
\end{itemize}
\end{notation}

Some conceptual explanation for these notations. The set $R_\ell$ is in bijection with the ordinary double points of $X_0(N)^*_{\F_l}$ by \Cref{thm:semi-stable_star}(iii).  The integer $l_O$ equals the thickness of the point in  $X_0(N)^*_{\F_l}$ corresponding to $O$ as $\#O_L(I_{k_O})^\times$ is $\#\Aut(E,G)$. The integer $l_O$ is also the length of the loop in $\graph_0(N)^*$ corresponding to $O$. As we will see later, $\tilde e_O$ is a non-zero multiple of $\pi^*(e_O)$ where $\pi: X_0(N)\to X_0(N)^*$ is the quotient map.
 
\begin{algorithm}\label{alg:clean_correspondences}
\textbf{Input:} a squarefree integer $N$ and a prime $p\nmid N$ such that $T_p$ generates $\mathbb T\otimes\Q$.
 
\textbf{Output:} polynomials $f_1,\ldots,f_n\in\Q[x]$ as in \Cref{prop:correspondence_algorithm}.
 
\begin{enumerate}[label=\textbf{Step \arabic*.}, ref=\arabic*, leftmargin=*]
    \item\label{step:Tp_on_S} \emph{(The trace condition.)} Compute the matrix of $T_p$ acting on $S$ using modular symbols \cite[Chapter~8]{SteinBook}, and compute $\tau_i\colonequals\operatorname{tr}(T_p^i)$ for $0\leq i\leq g-1$.
    \item\label{step:brandt} \emph{(The local conditions.)} For every prime $\ell\mid N$:
    \begin{enumerate}[label=(\alph*), ref=\arabic{enumi}(\alph*)]
        \item Compute the action of the Hecke operator $T_p$ on the Brandt module $\Q[\Cls\orderO]=\Q^h$ using the ideas of \cite{KohelHecke};
        \item Similarly compute the action of $W_N$ on $\{1,\ldots,h\}$ using the ideas of \cite{KohelHecke}.
        \item Compute the stabilisers $W_{x_k}$, the unit groups $\#O_L(I_k)^\times$, the sets $R_\ell\supseteq R_\ell^{\mathrm{long}}$ and the vectors $\tilde e_O$ for $O\in R_\ell$;
        \item Compute the matrix $A_\ell$ of $B_\ell(p)$ acting on the span of the $\tilde e_O$, $O\in R_\ell$, with respect to the basis $(\tilde e_O)_{O\in R_\ell}$.
    \end{enumerate}
    \item\label{step:linalg} \emph{(Linear algebra.)} Compute a basis $c^{(1)},\ldots,c^{(n)}$ of the space of vectors $c=(c_0,\ldots,c_{g-1})\in\Q^g$ satisfying
    \[
      \sum_{i=0}^{g-1} c_i\,\tau_i=0\qquad\text{and}\qquad\sum_{i=0}^{g-1} c_i\,(A_\ell^i)_{O,O}=0\quad\text{for all }\ell\mid N\text{ and }O\in R_\ell^{\mathrm{long}},
    \]
    and output $f_j\colonequals\sum_{i=0}^{g-1}c_i^{(j)}x^i$ for $j=1,\ldots,n$.
\end{enumerate}
\end{algorithm}

Note that the most theoretically complicated steps are the computation of the actions $T_p$ and $W_N$, as well as the computation $\#O_L(I_k)^\times$. Luckily these are already available in \texttt{Magma}.
 
\begin{proof}[Proof of \Cref{prop:correspondence_algorithm}]
We show that the output of \Cref{alg:clean_correspondences} has the required properties. By \Cref{lem:hecke_is_End}, $\mathbb T\otimes\Q$ has dimension $g$. As it is generated by $T_p$, the elements $1,T_p,\ldots,T_p^{g-1}$ form a basis of $\mathbb T\otimes\Q$. The linear system in Step~\ref{step:linalg} consists of $1+\sum_{\ell\mid N}\#R_\ell^{\mathrm{long}}$ equations in $g$ unknowns. We show below that $\#R_\ell^{\mathrm{long}}=L_\ell(N)$, so $n\geq g-1-\sum_{\ell\mid N}L_\ell(N)$. If $g\leq1$ then $n=0$, as $\tau_0=g$, and there is nothing else to prove. So we assume $g\geq2$.
 
\emph{Trace zero and linear independence.} Let $t\in\mathbb T\otimes\Q$. Then $\mathrm H^1_{\dR}(X_0(N)^*_\C)$ is the direct sum of $H^0(X_0(N)^*_\C,\Omega^1)\cong S\otimes\C$ and its complex conjugate, and $t$ acts on the latter by the complex conjugate of its action on the former. As $\operatorname{tr}(t)\in\Q$, the trace of $t$ on $\mathrm H^1_{\dR}$ equals $2\operatorname{tr}(t)$, so $t$ has trace zero if and only if $\operatorname{tr}(t)=0$. For $f=\sum_ic_ix^i$ we have $\operatorname{tr}(f(T_p))=\sum_ic_i\tau_i$, so by the first equation in Step~\ref{step:linalg} each $f_j(T_p)$ has trace zero. As the vectors $c^{(j)}$ are linearly independent, so are $f_1(T_p),\ldots,f_n(T_p)$. By \Cref{lem:hecke_is_End} these are elements of $\End\,J_0(N)^*\otimes\Q\cong\NS(J_0(N)^*)\otimes\Q$, so they give rise to linearly independent $p$-adic heights.
 
\emph{Supported at $p$.} Since $X_0(N)^*$ has good reduction at all primes not dividing $N$, it suffices to show that for every $\ell\mid N$ and every $f=f_j$, the local height $h_\ell$ with respect to $t\colonequals f(T_p)$ vanishes on $X_0(N)^*(\Q_\ell)$. Fix $\ell$. As $w_\ell\in W_N$, the curve $X_0(N)^*$ is stable over $\Z_\ell$ with dual graph the flower graph $\graph\colonequals\graph_0(N)^*$ by \Cref{thm:semi-stable_star}~(iv) and \Cref{cor:dual_graph_quotient}. Every endomorphism of $J_0(N)^*$ preserves the filtration on $V_p(X_0(N)^*)$, so the construction of \Cref{def:corr_endomorphism} gives a ring homomorphism $\End\,J_0(N)^*\otimes\Q\to\End\,\mathrm H_1(\graph,\Q)$, $s\mapsto F_s$. In particular $F_t=f(F_{T_p})$. So by \Cref{lem:local_height_vanishes_long_edges} and \Cref{rem:flower_heights}, it suffices to show that the diagonal entry of $f(F_{T_p})$ at every long loop of $\graph$ vanishes, with respect to the basis of $\mathrm H_1(\graph,\Q)=\Q\,E(\graph)$ given by the loops.
 
By \Cref{def:dual_graph_quotient}, the loops of $\graph$ correspond to the $W_N$-orbits of supersingular points $x\in X_0(N)(\Fbar_\ell)$ that are not fixed by any $w_{\ell d}$, and the loop corresponding to the orbit of $x=(E,G)$ has length $\#W_x\cdot\tfrac12\#\Aut(E,G)$. As the $w_{\ell d}$ are exactly the elements of $W_N$ on which $\epsilon_\ell$ is $-1$, and $\#\Aut(E,G)=\#O_L(I_k)^\times$ for $x=x_k$ by \Cref{thm:Deuring}, the loops of $\graph$ correspond to the orbits $O\in R_\ell$. The loop $e_O$ corresponding to $O$ has length $l_O$, so the long loops correspond to the $O\in R_\ell^{\mathrm{long}}$; in particular $\#R_\ell^{\mathrm{long}}=L_\ell(N)$.
 
Let $\pi\colon X_0(N)\to X_0(N)^*$ be the quotient map. By \Cref{thm:semi-stable_X0}~(ii), $w_d$ swaps $v_0$ and $v_1$ if and only if $\ell\mid d$, so $w\in W_N$ maps $e_k$ to $\epsilon_\ell(w)\,e_{w(k)}$. The element $\pi^*(e_O)$ is $W_N$-invariant and non-zero (\Cref{lem:quotient-correspondence}), and it is supported on the edges $e_k$ lying over $e_O$, i.e.\ on those with $k\in O$. For such a vector the coefficient at $e_{w(k_O)}$ is $\epsilon_\ell(w)$ times the coefficient at $e_{k_O}$. Hence $\pi^*(e_O)=\lambda_O\tilde e_O$ for some $\lambda_O\in\Q^\times$. In particular, the span of the $\tilde e_O$ is $\pi^*(\mathrm H_1(\graph,\Q))$, which is stable under $T_p$ by \Cref{lem:quotient-correspondence}~(ii), so $A_\ell$ in Step~\ref{step:brandt}(d) is well defined. The action of $T_p$ on $\mathrm H_1(\graph_0(N),\Q)\subseteq\Q^h$ is given by the Brandt matrix $B_\ell(p)$ \cite[p.~445]{Ribet1990}, \cite[Theorem~4.3]{KohelHecke}. By \Cref{cor:computing_Tq}, the matrix of $F_{T_p}$ with respect to the basis $(e_O)_{O\in R_\ell}$ is therefore $D^{-1}A_\ell D$, where $D$ is the diagonal matrix with entries $\lambda_O$. So $f(F_{T_p})$ corresponds to $D^{-1}f(A_\ell)D$, whose diagonal entry at $e_O$ is $\sum_ic_i(A_\ell^i)_{O,O}$. For $O\in R_\ell^{\mathrm{long}}$ this vanishes by the second set of equations in Step~\ref{step:linalg}.
\end{proof}
 
\begin{remark}\label{rem:Tp_generator}
By the proof of \Cref{lem:hecke_is_End}, $\mathbb T\otimes\Q$ is a product of number fields. Hence $T_p$ generates $\mathbb T\otimes\Q$ if and only if the characteristic polynomial of $T_p$ acting on $S$ is squarefree, which is easy to check. This condition does not hold for every $p$. For example, let $N=37\cdot43$. The elliptic curves \LMFDBLabel{37.a1} and \LMFDBLabel{43.a1} have root number $-1$, so the Atkin--Lehner eigenvalues of their newforms are $+1$. Each of them therefore contributes a line to $S$, on which $T_2$ acts by $a_2=-2$. So $T_2$ does not generate $\mathbb T\otimes\Q$.
\end{remark}

%\section{Formulae for \texorpdfstring{$\nu_{\ell}(N,d)$}{nu	extunderscore p(N,d)}}
\section{Counting supersingular $w_d$-fixed points in characteristic $\ell$}
\label{sec:fixedpoint_formulas}

Recall that $\nu_\ell(N, d)$ is the number of supersingular points in $X_0(N)(\overline{\F}_\ell)$ fixed by $w_d$. We are interested in deriving the formulae for $\nu_\ell(N,d)$ (especially as these counts will give the main term of $B(\ell, N, W_N)$ from \Cref{cor:Bpnw}).

\subsection{From fixed points to embeddings}
We recall that \( B_{\ell,\infty} \) is the quaternion algebra ramified exactly at $\ell$ and $\infty$. In what follows, we will use the subalgebra structure of quaternion algebras. Namely, quaternion algebras don't have proper non-commutative subalgebras. If \( K\subset B\) is a commutative \(F\)-subalgebra and \( K\neq F\), then \( K \) is a quadratic field extension of \(F\), and its centralizer in \(B\) is \(K\) itself (\cite[Corollary 4.4.5]{Voight2021}). Thus two different quadratic subfields cannot commute. Since \( B_{\ell,\infty} \) is definite, any quadratic field contained in it is imaginary.

Deuring's theorem (\cite{Deuring1951}, \cite[Theorem~42.1.9]{Voight2021}) identifies \(\End(E)\otimes\Q\) with \(B_{\ell,\infty}\).
Moreover, Deuring's correspondence gives that, if \( G \) is a cyclic subgroup of order \(M\) coprime to \(\ell\), then \( \orderO = \End(E, G)\) is an Eichler order of level \(M\) in \(B_{\ell,\infty}\). After fixing one base pair $(E_0,G_0)$ and the corresponding Eichler order $\orderO_0(M)=\text{End}(E_0,G_0)$, supersingular pairs $\left(E,G\right)\in X_0(N)_{\Fellbar}$ bijectively correspond to ideal classes of $\orderO_0(M)$. In this correspondence, if \( (E, G) \longleftrightarrow [I]\), then \(\End(E,G) =O_L(I)\). Even if the order of \(G\) is divisible by \(\ell\), this holds in the following manner: if \(G\) is a cyclic subgroup scheme of order \(\ell M\), then
\(\orderO=\End(E,G)\) is an Eichler order of level \(M=N/\ell\) in \(B_{\ell,\infty}\), and the same correspondence holds.

The goal of this subsection is to prove the following.

\begin{theorem}\label[theorem]{prop:2to1}
    Let $\ell,M,N,d$ be as in \Cref{def:SS_fixed_points}.
    \begin{enumerate}
        \item If $d>2$, then $\nu_{\ell}(N,d)$ is equal to half the number of \(\orderO^\times\)-conjugacy classes of embeddings of $\Z[\sqrt{-d}]$ into an Eichler order of level $M$ in $B_{\ell,\infty}$, where $\orderO$ ranges over the corresponding Eichler orders of level $M$ in $B_{\ell,\infty}$.

         \item If $d=2$, then $\nu_{\ell}(N,2)$ is equal to half the total number of \(\orderO^\times\)-conjugacy classes of embeddings in the disjoint union of the two embedding problems
        $\Z[\sqrt{-2}]\hookrightarrow \orderO$ and $\Z[\i]\hookrightarrow \orderO$,
        where $\orderO$ ranges over the corresponding Eichler orders of level $M$ in $B_{\ell,\infty}$.
    \end{enumerate}
\end{theorem}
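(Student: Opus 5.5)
We translate a $w_d$-fixed supersingular point into an endomorphism of square norm $d$. Let $x=(E,G)$ be supersingular, with $d\mid M$ and $d>1$, and write $G=G_d\oplus G_{M/d}$ (for $\ell\mid d$ the $\ell$-part is the Frobenius or Verschiebung kernel). Then $w_d(x)=(E/G_d,(G+E[d])/G_d)$. If $w_d x=x$, pick an isomorphism $\psi\colon E/G_d\to (E,G)$ and let $\alpha\colonequals \psi\circ(E\to E/G_d)$. This $\alpha$ is an element of $\End(E,G)=\orderO$ with $\nrd(\alpha)=d$ and kernel $G_d$ cyclic, so $\alpha\notin\Z$. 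Moreover $\alpha^2$ has kernel $E[d]$ (the standard Atkin--Lehner computation: $\alpha$ maps $G_d$ onto $E[d]/G_d$ image), so $\alpha^2=d\cdot u$ with $u\in\Aut(E,G)$. Since $\alpha$ commutes with $u$, and $\trd(\alpha)^2=\nrd(\alpha)(2+\trd u)$, one concludes as follows.
\begin{itemize}
\item Generically $u=-1$, so $\alpha^2=-d$.
\item $u=1$ is impossible, since $\alpha$ would be $\pm\sqrt d\notin\Z$.
\item Other roots of unity $u$ give $\trd(\alpha)^2=d(2+\trd u)\in\{d,2d,3d\}$. For squarefree $d$ this forces $d=2$ with $u=\pm i$, or $d=3$ with $u$ a primitive cube or sixth root.
\end{itemize}
The classical analysis of fixed points of $w_d$ (as in Kluit/Ogg over $\C$) shows that in the case $d=2$ one has $\alpha=1\pm i'$, so $\Z[\alpha]\cong\Z[i]$. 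In the case $d=3$, $\alpha$ has trace $\pm3$, so $\Z[\alpha]=\Z[(1+\sqrt{-3})/2]\supsetneq \Z[\sqrt{-3}]$. Here I would recheck that the statement really counts embeddings of $\Z[\sqrt{-3}]$ (not necessarily optimal): an element $\beta=2\alpha\mp3$ with $\beta^2=-3$ still lies in $\orderO$, so we get an embedding of $\Z[\sqrt{-3}]$ anyway. So the point of part (i) is that for $d>2$ fixed points correspond to elements $\beta\in\orderO$ with $\beta^2=-d$ inducing the $d$-part kernel. For $d=2$ they correspond either to such $\beta$ or to $\beta$ with $\beta^2=-2u$ generating a copy of $\Z[i]$ (namely $\beta=1+i$).

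Next comes the converse. Given an embedding $\phi\colon\Z[\sqrt{-d}]\to\orderO=\End(E,G)$ (respectively $\Z[i]$ for $d=2$, using $\alpha=\phi(1+i)$), the element $\alpha=\phi(\sqrt{-d})$ has norm $d$, preserves $G$, and its kernel is $E[\alpha]$ of order $d$. I need this kernel to be cyclic and equal to $G_d$. Cyclicity holds because $\alpha\notin q\orderO$ for $q\mid d$ (else $q^2\mid d$). For equality with $G_d$: locally at $q\mid d$, $q\neq \ell$, the Eichler order is upper triangular mod $q$, and an element of norm divisible by $q$ that preserves the line $G_q$ and is not divisible by $q$ must have kernel exactly $G_q$ or the complementary line. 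The latter is ruled out because $\alpha^2=-d$ kills $G_q$ as well, which forces the kernel of $\alpha$ on $E[q]$ to be $\alpha(E[q])\subseteq G_q$. At $q=\ell$ the kernel is automatically the unique subgroup scheme of order $\ell$ (Frobenius kernel) in the supersingular case. Hence $\alpha$ induces an isomorphism $w_d(E,G)\cong(E,G)$.

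Finally, counting. The map from embeddings to fixed points is invariant under $\orderO^\times$-conjugation, which matches the identification $\Cls\orderO_0\leftrightarrow$ points with $\End(E,G)\cong O_L(I)$ from \Cref{thm:Deuring}, where conjugation by $\Aut(E,G)$ corresponds to changing $\psi$. Above a fixed point, the fibre consists of the $\Aut(E,G)$-classes of isogenies $\alpha$ with kernel $G_d$, namely $\alpha\circ\Aut(E,G)$. Among these, the ones with $\alpha^2=-d$ come in pairs $\pm\alpha$, i.e.\ the embeddings $\phi$ and $\bar\phi$. These two are not $\orderO^\times$-conjugate: conjugation by $u$ fixes $\alpha$ only if $u\in\Q(\alpha)$ commutes with it, and sending $\alpha$ to $-\alpha=\bar\alpha$ would require $u$ to anticommute, with $u\alpha u^{-1}=\bar\alpha$. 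The main obstacle is precisely showing that the fibre has exactly two classes, including when $\Aut(E,G)$ is larger ($j=0,1728$, or $\ell=2,3$). There I would compute directly: the set $\{\alpha u\}$ of norm-$d$ elements with kernel $G_d$ is a torsor under $\Aut(E,G)$, and the conjugation orbits of square-$(-d)$ elements (plus $\Z[i]$-type elements when $d=2$) within it number exactly two, $\alpha$ and $\bar\alpha$. I would do this by case-checking the finite groups of \Cref{prop:aut_classification}.
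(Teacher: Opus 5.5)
Your first steps are sound and follow the paper closely. A fixed point gives $\alpha\in\orderO$ with $\alpha^2\in d\,\Aut(E,G)$. The identity $\trd(\alpha)^2=d(2+\trd u)$ then forces $\alpha^2=-d$ except for $d=2,3$, where you pass to $2\alpha\mp3$ or $\alpha\mp1$. Conversely, any $\beta\in\orderO$ with $\beta^2=-d$ (or $\beta=1+i'$ when $d=2$) has kernel $G_d$, because $\beta^2$ kills $E[q]$. So every embedding lies over a fixed point, and the elements over $x$ form the single orbit $\orderO^\times\alpha$. There is one small slip: a primitive cube root $u$ gives $\trd(\alpha)^2=d$, which is impossible, so for $d=3$ only sixth roots occur.

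The gap is the step you yourself call the main obstacle, and it is the whole content of the factor $\tfrac12$: showing that the relevant elements of $\orderO^\times\alpha$ form exactly two $\orderO^\times$-conjugacy classes. What you sketch for it is wrong in general. The elements of $\orderO^\times\alpha$ squaring to $-d$ are $A\alpha$ with $A=\{u\in\orderO^\times:(u\alpha)^2=\alpha^2\}$. This set can be larger than $\{\pm1\}$, and then $\alpha$ and $\bar\alpha=-\alpha$ can be conjugate.

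For example, take $\ell=N=d=7$, $E\colon y^2=x^3-x$ and $G=\ker F$. The $\F_7$-Frobenius $\pi$ satisfies $\pi^2=-7$ and $\pi i=-i\pi$ for $i(x,y)=(-x,\sqrt{-1}\,y)$, and $\Aut(E,G)=\langle i\rangle$. So the elements over $x$ are $\pm\pi,\pm i\pi$, and conjugation by $i$ sends $\pi\mapsto-\pi$. The two classes are $\{\pm\pi\}$ and $\{\pm i\pi\}$, not $\pi$ and $\bar\pi$. Your non-conjugacy argument only restates the condition that a conjugating unit would have to anticommute with $\alpha$. It does not exclude such units, and here one exists.

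Two ingredients from the paper are missing.

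First, the group to control is $\Aut(E,G)=\orderO^\times$, not $\Aut(E)$, so case-checking the groups of \Cref{prop:aut_classification} is aimed at the wrong target. The paper shows (\Cref{lem:unit_groups_cases}) that $\Aut(E,G)$ is cyclic of order $2$, $4$ or $6$ whenever $\ell>3$ or some prime $q>3$ divides $M$, which is automatic for squarefree $N>6$. The argument: $\Aut(E)$ acts faithfully on $E[q]$ through $\SL_2(\F_q)$; stabilising the line $G_q$ puts $\Aut(E,G)$ in a Borel subgroup; having no element of order $q$, it injects into $\F_q^\times$.

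Second, for cyclic $\orderO^\times$ and $t^2\in\Q^\times$, the set $A$ is a subgroup, hence equal to $\{\pm1\}$ or to $\orderO^\times$ (\Cref{lem:cyclic_two_classes}).
\begin{itemize}
\item If $A=\orderO^\times$, conjugation by $v$ sends $ut\mapsto v^2ut$. The classes are therefore the cosets of $(\orderO^\times)^2$, and there are exactly two.
\item If $A=\{\pm1\}$, suppose a unit $v$ satisfies $vtv^{-1}=-t$. Then $v^2$ is central, so $v^2=-1$, hence $tvt^{-1}=v^{-1}$ and $v\in A$, a contradiction.
\end{itemize}

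For $d=2$ you also need that trace-$0$ and trace-$\pm2$ choices never occur above the same point. This again uses that $\orderO^\times=\langle i'\rangle$ is cyclic of order $4$ once $1\pm i'$ is a choice. It guarantees that exactly one of the two embedding problems contributes its two classes.
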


\begin{remark}
    The embeddings in the above theorem do not need to be optimal.
\end{remark}

\begin{lemma}\label{lem:wdInduceEnd}
Let $\ell,M,N,d$ be as in \Cref{def:SS_fixed_points} and $(E,G) \in X_0(N)_{\Fbar_\ell }$ be a supersingular point. Then 
$(E,G)$ is a fixed point of $w_d$ if and only if there is an $h \in \End(E,G)$ and a $\phi \in \Aut(E)$ such that
\[
h^2 = \phi\circ[d].
\]
\end{lemma}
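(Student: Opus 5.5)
We argue via the moduli description of $w_d$. Write $d = d_\ell d'$ with $d_\ell\in\{1,\ell\}$ and $d'\mid M$. For a supersingular pair $(E,G)$, with $G$ the cyclic subgroup (scheme) of order $N=\ell M$, let $G_d$ be the unique subgroup of order $d$ of $G$. Then
\[
w_d(E,G)=\bigl(E/G_d,\ (G+E[d])/G_d\bigr).
\]
When $\ell\mid d$, the $\ell$-part of $G_d$ is the kernel of Frobenius (or Verschiebung), and the quotient is taken as group schemes. Thus $(E,G)$ is fixed by $w_d$ exactly when there is an isomorphism $\psi\colon E/G_d\to E$ carrying $(G+E[d])/G_d$ onto $G$.

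\emph{Forward direction.} Given such a $\psi$, set $h\colonequals \psi\circ\pi_d\in\End(E)$, where $\pi_d\colon E\to E/G_d$ is the quotient isogeny. Then $h$ has degree $d$ and kernel $G_d$. We check $h(G)\subseteq G$: indeed $\pi_d(G)\subseteq (G+E[d])/G_d$ and $\psi$ maps this onto $G$, so $h\in\End(E,G)$. Next compute $\ker(h^2)$. Because $h$ maps the cyclic group $G$ into itself with kernel $G_d$, we get $h(G)=G_{N/d}$, the subgroup of order $N/d$. Hence
\[
h^{-1}(G_d)\cap G = G_d + \ldots
\]
and a cleaner route is needed. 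Use the standard fact that $h\circ h$ has kernel $E[d]$. Indeed, $h$ restricted to the $q$-primary part for $q\mid d$ kills the $q$-part of $G$, and the image $h(E[q])$ must then be a complementary line of order $q$. That line is killed by $h$ precisely because $w_d$ is an involution, i.e.\ $(E/G_d)/(\text{image of }E[d])\cong E$ via $[d]$. Since $\ker h^2\supseteq E[d]$ and $\deg h^2=d^2$, the factorisation of isogenies gives $h^2=\phi\circ[d]$ with $\phi$ of degree $1$, so $\phi\in\Aut(E)$. For $\ell\mid d$ we argue the same way at the level of group schemes: $h$ is inseparable of degree $\ell$ at the $\ell$-part, and the $\ell$-part of $h^2$ equals Frobenius followed by Verschiebung up to an automorphism, which is $[\ell]$ up to an automorphism.

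\emph{Converse.} Suppose $h\in\End(E,G)$ with $h^2=\phi\circ[d]$. Then $\deg h=d$. The main obstacle, and the point where most care is required, is to show $\ker h=G_d$. First, $\ker h\subseteq E[d]$, since $\ker h\subseteq \ker h^2 = E[d]$. Next, $\ker h$ is cyclic: for each $q\mid d$ with $q\ne\ell$, $h$ cannot kill all of $E[q]$, because then $h=h'\circ[q]$ and $q^2\mid d$, contradicting squarefreeness. The $\ell$-part is handled through the connected-étale/Frobenius description, using that a supersingular $E$ has $E[\ell]$ connected with a unique subgroup scheme of order $\ell$. Now $h(G)\subseteq G$, and on the $q$-part $G_q$ is an $h$-stable line in $E[q]$. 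Because $h^2$ acts as $0$ on $E[q]$ while $h\neq 0$ there, $h$ is nilpotent with exactly one invariant line, namely $\ker(h|_{E[q]})$. Hence $G_q=\ker(h|_{E[q]})$, and so $\ker h = G_d$. Therefore $h$ factors as $\psi\circ\pi_d$ with $\psi\colon E/G_d\xrightarrow{\sim}E$ an isomorphism. Finally, $h(G+E[d])=h(G)+h(E[d])$, which is cyclic of order $N/d\cdot d=N$ and contained in $G$, so it equals $G$. This shows $\psi$ sends $(G+E[d])/G_d$ to $G$, i.e.\ $(E,G)$ is fixed by $w_d$.
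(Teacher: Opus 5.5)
The gap is in the forward direction, at the one step that carries the content: showing $E[d]\subseteq\ker(h^2)$. You first assert it as a ``standard fact''. You then justify it by saying $h(E[q])$ is a \emph{complementary} line that is killed by $h$ because $w_d$ is an involution. That justification is inconsistent: a line complementary to $G_q=\ker(h|_{E[q]})$ meets the kernel trivially, so $h$ cannot kill it. The involution identity $(E/G_d)/(E[d]/G_d)\cong E$ also holds for every pair $(E,G)$ and says nothing about your $\psi$, so it cannot be what forces the conclusion.

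The missing idea is easy: apply the fixed-point hypothesis to the $d$-primary part of the level structure. Because $\gcd(d,N/d)=1$, the subgroup of order $d$ of the cyclic group $(G+E[d])/G_d$ is $E[d]/G_d$, and the subgroup of order $d$ of $G$ is $G_d$. Since $\psi$ carries $(G+E[d])/G_d$ isomorphically onto $G$, it carries $E[d]/G_d$ onto $G_d$. Hence $h(E[d])=\psi(E[d]/G_d)=G_d=\ker h$, so $h^2$ kills $E[d]$, and your degree count then gives $h^2=\phi\circ[d]$ with $\phi\in\Aut(E)$. This is exactly condition (a) in the paper's proof. You should also delete the abandoned fragment ``$h^{-1}(G_d)\cap G=G_d+\ldots$''.

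Your converse is correct. Deducing $G_q=\ker(h|_{E[q]})$ from the fact that a nonzero nilpotent endomorphism of $E[q]\cong\F_q^2$ has a unique invariant line is a tidy variant of the paper's argument. The paper instead argues that $h(G[q])\in\{0,G[q]\}$, rules out $h(G[q])=G[q]$ via $h^2(G[q])=[d]G[q]=0$, and then uses $\#\ker h=\deg h=d$. One small omission there: when you say $h(G)+h(E[d])\subseteq G$, you need $h(E[d])\subseteq\ker h=G_d$. This follows from $h^2(E[d])=0$, but you should state it.
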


\begin{proof}
We consider a point $x\in X_0(N)$ as a pair of $(E, G)$ where $G$ is a cyclic subgroup of size $N$. For $d\mid N$, we can decompose $G = G[d] \oplus G[N/d]$ and then write the action of Atkin--Lehner involution as $w_d(E,G)=w_d(E,G[d],G[N/d])=(E/G[d],\; E[d]/G[d],\; G/G[d])$. Recall that $N=\ell M$ is squarefree, so $\gcd(d,N/d)=1$ and the direct-sum decomposition $G=G[d]\oplus G[N/d]$ is legitimate.

Hence $w_d(E,G)\sim (E,G)$ is equivalent to \[ (E/G[d], E[d]/G[d], G/G[d]) \sim (E, G[d], G[N/d]),\]
and this is equivalent to
\begin{equation}\label{eq:uivalentCondOng} \text{there exists a }\, g:E/G[d] \longrightarrow E
\text{ such that } 
\begin{cases}
g\big(E[d]/G[d]\big)=G[d],\\[2pt]
g\big(G/G[d]\big)=G[N/d].
\end{cases}
\end{equation}

Now assume $w_d(E,G)\sim (E,G)$. Taking both of these equivalent descriptions, we get a map
\[
h\colon E \longrightarrow E/G[d] \overset{g}{\longrightarrow} E,
\]
where \(\ker h=G[d]\) is of size \(d\).
Since \(h\big(E[d]\big)=G[d]\), we have
\[
\ker(h^2)=E[d] = \ker [d],
\]
%which is also \(\ker(h^\vee \circ h)\), since \(h^\vee \circ h\) is
%multiplication by \(d\).
Since multiplication by $[d]$ and $h^2$ have the same kernel, we get \(h^2=\phi\circ[d]\) for some
\(\phi\in \Aut(E)\).

To prove the other direction, assume that there is an $h\in\End(E,G)$ such that $\ker(h^2) = \ker [d]$. Since $d^2 = \deg[d] = \deg(h^2)=(\deg h)^2$, it follows that $\ker h$ is of size $d$.

For a prime $q\mid d$, since $h(G[q]) \subseteq G[q]$ (note that we assume $h(G) \subseteq G$ since $h \in \End(E,G)$), it follows that $h(G[q])$ is either $0$ (a trivial group) or $h(G[q])=G[q]$. If $h(G[q]) = G[q]$, then $0 = [d](G[q]) = h^2(G[q])=h(G[q])$ (here we use $q \mid d$), which is a contradiction. Hence $h(G[q])=0$ for all primes $q\mid d$, and $h(G[d])=0$ consequently. This means our $h$ factors through projection $\pi\colon E \rightarrow E/G[d]$, so $h=g \circ \pi$. It is straightforward to check the conditions in $\eqref{eq:uivalentCondOng}$ to hold for $g$, which is equivalent to $(E, G)$ being a fixed point of $w_d$; there are conditions: (a) $g(E[d]/G[d])=G[d]$ and (b) $g(G/G[d])=G[N/d]$. For (a): $g(E[d]/G[d])=h(E[d])$, and $\ker(h^2)=E[d]$ forces $h(E[d])\subseteq\ker h=G[d]$; since $|h(E[d])|=|E[d]|/|G[d]|=d^2/d=d=|G[d]|$, equality holds. For (b): $h(G[d])=0$, so $h(G)=h(G[N/d])$, which is injective of order $N/d$ and contained in $G$; as $G$ is cyclic it has a unique subgroup of that order, namely $G[N/d]$, so $g(G/G[d])=h(G)=G[N/d]$.
\end{proof}

\begin{lemma}\label{lem:differByUnit}
Let $x=(E, G)$ be a $w_d$-fixed point of $X_0(N)_{\overline{\F}_\ell }$ and put $\orderO=\End(E, G)$. Suppose that $a, b\in \orderO$ are obtained from two choices of an isomorphism $w_d(x) \cong x$. Then there is a unique $u \in \orderO^\times$ such that $b=ua$. Conversely, if $a$ comes from one such choice and $u\in \orderO^\times$, then $ua$ comes from another such choice.
\end{lemma}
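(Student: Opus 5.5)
The plan is to make precise how an element $a\in\orderO$ is obtained from an isomorphism $w_d(x)\cong x$, following the proof of \Cref{lem:wdInduceEnd}. There, an isomorphism is a map $g\colon E/G[d]\to E$ satisfying the two conditions in \eqref{eq:uivalentCondOng}, and one sets $a\colonequals g\circ\pi$, where $\pi\colon E\to E/G[d]$ is the quotient isogeny. The proof of \Cref{lem:wdInduceEnd} already shows that such an $a$ lies in $\End(E,G)=\orderO$. So let $a=g\circ\pi$ and $b=g'\circ\pi$ come from two isomorphisms $g,g'$.

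\emph{Existence of $u$.} Put $u\colonequals g'\circ g^{-1}\in\Aut(E)$. Then
\[
b=g'\circ\pi=(g'\circ g^{-1})\circ(g\circ\pi)=ua .
\]
We must check that $u\in\orderO^\times$, i.e.\ that $u(G)=G$ (which also gives $u^{-1}(G)=G$). Both $g$ and $g'$ send $E[d]/G[d]$ onto $G[d]$ and $G/G[d]$ onto $G[N/d]$. Hence $u$ maps $G[d]$ onto $G[d]$ and $G[N/d]$ onto $G[N/d]$. Therefore it preserves $G=G[d]\oplus G[N/d]$.

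\emph{Uniqueness of $u$.} $\orderO$ sits inside the division algebra $B_{\ell,\infty}$, and $a\neq0$ since $\deg a=d$. So $ua=u'a$ forces $(u-u')a=0$, hence $u=u'$.

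\emph{Converse.} Given $u\in\orderO^\times$, consider the composite $u\circ g\colon E/G[d]\to E$. It is an isomorphism, and since $u$ preserves $G$ and is an automorphism, it preserves $G[d]$ and $G[N/d]$ (these are the unique subgroups of $G$ of those orders). Thus $u\circ g$ again satisfies \eqref{eq:uivalentCondOng}, so it is another isomorphism $w_d(x)\cong x$. Its associated endomorphism is $u\circ g\circ\pi=ua$.

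The main (mild) obstacle is purely definitional: one must pin down that ``obtained from a choice of isomorphism'' means $a=g\circ\pi$ with $\pi$ the fixed canonical quotient map. In particular, the only freedom lies in $g$, and not in the identification of $E/G[d]$ up to isomorphism. Once this is settled, everything reduces to the observation that $G[d]$ and $G[N/d]$ are characteristic subgroups of the cyclic group $G$.
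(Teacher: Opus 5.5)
Your proposal is correct and follows essentially the same route as the paper: write $a=g_a\circ\pi$ and $b=g_b\circ\pi$, set $u=g_b\circ g_a^{-1}$, check that $u$ preserves $G[d]$ and $G[N/d]$ (hence $G$), deduce uniqueness from $a$ being a nonzero isogeny, and for the converse use that $u\in\Aut(E,G)$ preserves $G[d]$ and $G[N/d]$. Your check that $u(G)=G$ is slightly more explicit than the paper's, which simply asserts $u\in\Aut(E,G)$.
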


\begin{remark}
In other words, the set of isomorphisms $w_d(x)\cong x$ is a torsor under $\orderO^\times$ (acting by post-composition), and the map sending such an isomorphism to its endomorphism $h$ is $\orderO^\times$-equivariant and injective; hence the resulting $h$ form a single free $\orderO^\times$-orbit.
\end{remark}

\begin{proof}
    By proof of \cref{lem:wdInduceEnd}, both $a$ and $b$ factor through the same quotient $\pi : E \to E/G[d]$, and both have kernel $G[d]$. We can write $a = g_a \circ \pi$, $b=g_b \circ \pi$, where $g_a, g_b\colon E/G[d] \to E$ are isomorphisms satisfying the conditions \eqref{eq:uivalentCondOng} defining an isomorphism $w_d(x)\cong x$. Hence $u=g_b \circ g_a^{-1} \in \Aut(E,G)$, i.e.~$u\in \orderO^\times$, and $b=ua$. Since $a$ is an isogeny, uniqueness follows. Conversely, composing an isomorphism $w_d(x)\cong x$ with an element \( u \in \orderO^\times=\Aut(E, G)\) gives another such isomorphism (as \(u\) preserves both \(G[d]\) and \( G[n/d]\)).
\end{proof}

\begin{lemma}\label{lem:traces}
Let $\ell, M, N, d$ be as in \cref{def:SS_fixed_points}, and let $x=(E,G)$ be a supersingular $w_d$-fixed point of $X_0(N)_{\overline{\F}_\ell }$ and $\orderO=\End(E,G)$. After choosing an isomorphism $w_d(x)\cong x$, let $h\in \orderO$ be the endomorphism given by \cref{lem:wdInduceEnd}. Then $\nrd(h)=d$. If $T=\trd(h)$, then $d\mid T$ and $T^2 < 4d$. Consequently:
\begin{enumerate}
    \item If $d > 3$, then $T=0$ and $h^2=-[d]$.
    \item If $d=3$, then $T\in\{ 0, \pm 3\}$. In all cases there is an element $t\in \orderO$ such that $t^2=-[3]$.
    \item If $d=2$, then $T\in\{0, \pm 2\}$. More precisely, if $T=0$, then $h^2=-[2]$; if $T=2$, then $(h-1)^2=-1$; and if $T=-2$, then $(h+1)^2=-1$.
\end{enumerate}
\end{lemma}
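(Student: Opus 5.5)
From \Cref{lem:wdInduceEnd} we have $h^2=\phi\circ[d]$ with $\phi\in\Aut(E)$ and $\ker h=G[d]$. The plan is to first determine $\nrd(h)$, then the trace, and finally split into cases on $d$.

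\emph{Norm.} Since $\ker h=G[d]$ has order $d$ and $h$ is separable (because $\ell\nmid d$ when $d\mid M$; the case $\ell\mid d$ needs a separate remark that $\deg h=d$ still holds since $\deg(h^2)=d^2$), we get $\nrd(h)=\deg h=d$. This also follows directly from $\deg(h)^2=\deg(h^2)=\deg(\phi)\,d^2=d^2$.

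\emph{Trace.} Put $T=\trd(h)$, so that $h^2-Th+d=0$ in the quaternion algebra. Substituting $h^2=\phi d$ gives $Th=d(\phi+1)$. The definite-algebra inequality $T^2<4d$ holds because $h\notin\Q$: indeed $T^2\le 4d$ for any element of a definite algebra, with equality only if $h$ is rational, $h=\pm\sqrt d$. That would force $d$ to be a square, and a squarefree $d>1$ is never a square.

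For $d\mid T$, one route is to use $Th=d(\phi+1)$ and take reduced norms: $T^2 d=d^2\nrd(\phi+1)$, so $T^2=d\,\nrd(\phi+1)$. Since $d$ is squarefree, $d\mid T^2$ gives $d\mid T$. An alternative is to apply $\trd$ to $h^2=\phi d$, which gives $T^2-2d=d\,\trd(\phi)$, so $d\mid T^2$ and again $d\mid T$.

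\emph{Cases.} Combining $d\mid T$ with $T^2<4d$ forces $T=\pm d k$ with $d^2k^2<4d$, i.e.\ $k^2d<4$.
\begin{enumerate}
  \item If $d>3$, then $k=0$. From $h^2-Th+d=0$ with $T=0$ we get $h^2=-d$.
  \item If $d=3$, the possibilities are $T\in\{0,\pm3\}$. For $T=0$ take $t=h$. For $T=\pm3$, take $t=2h\mp3$: it has trace $0$ and norm $\nrd(2h)\mp 3\cdot 2\cdot T/2\cdot\ldots$, which we compute directly. Since $(2h-T)^2=T^2-4d=9-12=-3$, we get $t^2=-3$, and $t\in\orderO$.
  \item If $d=2$, the possibilities are $T\in\{0,\pm2\}$. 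The identities follow from $h^2=Th-2$: for $T=2$ we get $(h-1)^2=h^2-2h+1=-1$, and for $T=-2$ we get $(h+1)^2=-1$.
\end{enumerate}

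The only delicate point is the norm computation when $\ell\mid d$, where $h$ may be inseparable. The degree argument through $\deg(h^2)=d^2$ handles this uniformly.
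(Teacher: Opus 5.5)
Your proposal is correct and follows essentially the same route as the paper: $\nrd(h)=\deg h=d$, then $d\mid T$ by taking reduced traces of $h^2=\phi d$ (your alternative via $T^2=d\,\nrd(\phi+1)$ works equally well), then $T^2<4d$ from definiteness and the non-scalarity of $h$, and finally the case split. The only blemish is the garbled half-sentence about the norm of $2h\mp3$ in case (ii); delete it, since the identity $(2h-T)^2=T^2-4d=-3$ already settles that case.
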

\begin{proof}
    First we prove \emph{(i)} \fbox{\(d>3\)}. By Lemma~\ref{lem:wdInduceEnd}, after choosing
an isomorphism \(w_d(x)\cong x\), we obtain an element \(h\in \orderO\) and an
automorphism \(\phi\in\Aut(E)\) such that \(h^2=\phi\circ[d]\). Since the dual
$\hat{}$ induces the standard involution, \(\operatorname{nrd}(h)=\deg h=d\).
Write \(T=\trd(h)\). Then   
\(h^2-Th+d=0\), and therefore \(\phi=h^2/d=(T/d)h-1\). Taking reduced traces gives \(\trd(\phi)=T^2/d-2\). Since \(\phi\) is an element of an order \(\End(E)\), we get \( \trd(\phi)\in\Z \). As \(d\) is squarefree,
this implies \(d\mid T\). Since \(B_{\ell,\infty}\otimes_{\Q}\R \cong \bH\), a
non-scalar element of reduced norm \(d\) and reduced trace \(T\) satisfies
\(T^2<4d\). Indeed, writing \(h=a+v\), with \(a\in\R\) and \(v\) purely
quaternionic, one has \(T=2a\) and \(d=a^2+\|v\|^2\), and the inequality is
strict unless \(v=0\). In our situation \(h\) cannot be scalar, because then
its reduced norm would be a rational square, but \(\operatorname{nrd}(h)=d\)
and \(d>1\) is squarefree. Hence \(d\mid T\) and
\(T^2<4d\).

If \(d>3\), these conditions force \(T=0\) and thus \(h^2=-[d]\). (Hence $\phi = h^2/d = [-1]$.)
If $(ii)$ \fbox{$d=3$}, then $T\in\{0, \pm 3\}$. If $T=0$, again $h^2=-[3]$. If $T=3$, then $h^2-3h+3=0$, so $(2h-3)^2=-3$. If $T=-3$, then $h^2+3h+3=0$ and $(2h+3)^2=-3$. In all cases there is $t\in \orderO$ such that $t^2=-[3]$.

The case $(iii)$ \fbox{$d=2$} is analogous. Now $T\in \{0, \pm 2\}$. If $T=0$, then $h^2=-[2]$. If $T=2$, then $h^2-2h+2=0$, so $(h-1)^2=-1$. If $T=-2$, then $h^2+2h+2=0$, so $(h+1)^2=-1$.
\end{proof}

\begin{remark}
    We note here that, for \( d=3 \), despite getting \(t\in \End(E, G)\) only after potentially modifying \( h\), since \( t^2 = -1 \circ [3]\), the converse direction of \cref{lem:wdInduceEnd} applies. This means that, for another embedding of \( \sqrt{-d}\) into \( \End(E, G) \), realized by \( s\in \End(E, G)\), by \cref{lem:differByUnit}, we can conclude \( s=ut\) for some unit \( u\).
\end{remark}

\begin{remark}
We will use the following terminology. Let \(x=(E,G)\) be a \(w_d\)-fixed supersingular point, and put \(\orderO=\End(E,G)\). An \(\orderO^\times\)-conjugacy class in the relevant embedding problem is said to lie above \(x\) if it has a representative whose image is obtained from some choice of an isomorphism \(w_d(x)\cong x\), as follows.

Let \( h \in \orderO \) be the element obtained by \cref{lem:wdInduceEnd} from this choice. If \( d > 3\), then by \cref{lem:traces}\((i)\) we have \(h^2  = -d\), and the embedding \( \Z[\sqrt{-d}] \hookrightarrow \orderO\) is sending \( \sqrt{-d} \mapsto h\). If \( d=3\), then by \cref{lem:traces}\((ii)\) we replace \( h\) by \( t = h\) if \( \trd(h)=0\), by \( t=2h-3\) if \( \trd(h)=3\), and by \( t = 2h+3 \) if \( \trd(h)=-3\). In each case, \(t^2 = -3\), and the embedding \( \Z[\sqrt{-3}] \hookrightarrow \orderO\) is sending \( \sqrt{-3} \mapsto t\).

Similarly, if \( d=2 \),  we have two cases. If \(\trd(h)=0\), then \( \sqrt{-2} \mapsto h\),  while if \( \trd(h) = \pm 2\), then \( t=h\mp 1\), and \( \i \mapsto t\).

An embedding in the relevant embedding problem is said to \emph{lie above} \( x\) if it can be obtained from \( x\), using this construction. A conjugacy class of embeddings is said to \emph{lie above} \(x\) if it contains an embedding lying above \( x\).

Note that, for \( d > 2\), we always get an embedding of \(\sqrt{-d}\), while for \(d=2\), this construction leads to embeddings of either \(\sqrt{-2}\) or \( \i\).
\end{remark}

Since different choices of an endomorphism \(h\) differ by elements of \( \Aut(E, G)\), to solve our embedding problems.

Starting from the classicifcation of all possiblities of $\Aut \, E$ from \Cref{prop:aut_classification} we prove a classification for the possibilities of $\Aut(E,G)$ that covers all squarefree $N=\ell M> 6$. Note that assuming $N > 6$ removes the exceptional cases cleanly: every squarefree $N>6$ has a prime factor $\ge5$, so if $\ell\le3$ that factor lies in $M$, and the hypothesis ``$\ell>3$ or $q>3\mid M$'' always holds. %Hence $\Aut(E,G)$ is cyclic of order $2,4,6$ for all $N>6$. For $N\in\{2,3,6\}$ the unit group can be $Q_8,2T,2D_6$ and ``exactly two classes'' would need separate care. The formulae we obtain are correct for all squarefree N.

\begin{proposition}
\label{lem:unit_groups_cases}
Let $E$ be an elliptic curve over $\overline{\F}_\ell $, and cyclic $G \subseteq E(\overline{\F}_\ell )$ of order $M$ with $\ell\nmid M$. Assume that either $\ell>3$ or that $M$ is divisible by a prime $q>3$, then $\Aut(E,G)$ is cyclic of order $2,4$ or $6$.
\end{proposition}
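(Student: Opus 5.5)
The plan is to split according to whether $\ell>3$, and to reduce everything to \Cref{prop:aut_classification} together with the eigenvalue argument already used in the proof of \Cref{prop:extra_automorphism}.

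\emph{Case $\ell>3$.} By \Cref{prop:aut_classification}, $\Aut(E)$ is cyclic of order $2$, $4$ or $6$. The subgroup $\Aut(E,G)$ contains $[-1]$, so it is a subgroup of even order of a cyclic group of order $2$, $4$ or $6$. Any such subgroup is cyclic of order $2$, $4$ or $6$, which settles this case.

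\emph{Case $\ell\in\{2,3\}$.} By hypothesis there is a prime $q>3$ with $q\mid M$, and $G_q\colonequals G\cap E[q]$ is a line in $E[q]\cong\F_q^2$. Every $\gamma\in\Aut(E,G)$ preserves $G_q$. I would consider the restriction map $\rho\colon\Aut(E,G)\to\GL(E[q])$. It is injective: an automorphism acting trivially on $E[q]$ with $q\ge 5$ satisfies $\gamma-1\in q\End(E)$, and hence has norm divisible by $q^2$ unless $\gamma=1$. But $\nrd(\gamma-1)\le 4$ for a root of unity $\gamma$ in a definite algebra. Therefore $\Aut(E,G)$ embeds into the stabiliser of a line, namely a Borel subgroup $\bigl(\begin{smallmatrix}*&*\\0&*\end{smallmatrix}\bigr)\subseteq\GL_2(\F_q)$. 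Its unipotent radical has order $q$, while $\#\Aut(E)\in\{12,24\}$ is prime to $q$, so $\Aut(E,G)$ embeds into the torus $(\F_q^\times)^2$ and is therefore abelian.

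It then remains to determine the abelian subgroups of $\SL_2(\F_3)$ and of $\Z/3\Z\rtimes\Z/4\Z$ that contain $-1$. In $\SL_2(\F_3)$ these are cyclic of order $2$, $4$ or $6$, since $Q_8$ is not abelian and there is no subgroup of order $12$. In $\Z/3\Z\rtimes\Z/4\Z$ the abelian subgroups are cyclic of order $2$, $4$ or $6$; the whole group of order $12$ is non-abelian and is excluded. One must also exclude a non-cyclic abelian group such as $(\Z/2\Z)^2$, but each of these groups has a unique involution $-1$, so this cannot occur.

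\emph{Main obstacle.} The delicate step is the injectivity of $\rho$ combined with the passage to the torus. I would check injectivity via norms as above. As a fallback, I would use that $\gamma$ and $\delta$ both preserve the line $G_q$ and have orders prime to $q$. They are then simultaneously diagonalisable only if they commute, so I would directly rule out non-commuting pairs: if $\gamma$ and $\delta$ generate a copy of $Q_8$ or the full group, their images lie in a Borel subgroup modulo $q$. The commutator would then be unipotent and nontrivial, which is impossible by the order argument.
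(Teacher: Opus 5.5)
Your proof is correct. It follows the same skeleton as the paper's: restrict to $E[q]$, land in the stabiliser of the line $G_q$, and kill the unipotent radical using that $\#\Aut(E)$ is prime to $q$. Two steps are done differently.

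\emph{Injectivity of $\Aut(E)\to\GL(E[q])$.} The paper uses faithfulness on the Tate module, plus the fact that reduction $\GL_2(\Z_q)\to\GL_2(\F_q)$ is injective on finite-order elements of order prime to $q$. Your degree argument is more elementary and self-contained: $\gamma-1=\alpha\circ[q]$ forces $\deg(\gamma-1)\geq q^2$, while $\deg(\gamma-1)=2-\trd(\gamma)\leq 4$.

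\emph{The conclusion.} The paper uses compatibility with the Weil pairing to place $\Aut(E)$ inside $\SL_2(\F_q)$. The Borel subgroup of $\SL_2(\F_q)$ modulo its unipotent radical is $\F_q^\times$, so cyclicity is immediate and only the element orders of $\Aut(E)$ are needed at the end. You only land in $(\F_q^\times)^2$, so you get commutativity. You then need the subgroup structure of $\SL_2(\F_3)$ and $\Z/3\Z\rtimes\Z/4\Z$ (no subgroup of order $12$ in $\SL_2(\F_3)$, a unique involution) to upgrade abelian to cyclic. That case check is correct, but you could skip it. Since $\det\rho(\gamma)=\deg\gamma=1$, an upper-triangular $\rho(\gamma)$ is determined modulo the unipotent radical by its $(1,1)$-entry, which gives the injection into $\F_q^\times$ directly.

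\emph{A small omission.} In characteristic $2$ or $3$, your claim $\#\Aut(E)\in\{12,24\}$ holds only when $j(E)=0$. When $j(E)\neq 0$ one has $\Aut(E)=\{\pm1\}$ and the statement is trivial, as the paper notes at the start of its proof.
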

\begin{proof}
Note that $-1 \in \Aut(E,G)$, so that the case $\ell>3$ as well as $j\not\equiv 0 \mod \ell$ with $\ell=2,3$ immediately follows from Proposition \ref{prop:aut_classification}.
So now the case where we have $q \mid M$ with $q$ a prime $>3$. Additionally from now on $\ell=2$ or $3$, $j=0$ and $\Aut(E)$ is either $\SL_2(\F_3)$ if $\ell=2$ or $\Z/3\Z \rtimes \Z/4\Z$ if $\ell=3$. Note that $\Aut(E,G) \subseteq \Aut(E,G[q])$, so it suffices to prove the statement in the case $G=G[q]$ and $M=q$.  

We claim that $\Aut(E)$ acts faithfully on $E[q](\Fbar_\ell)$. Indeed, let $V_q(E) \colonequals \varprojlim_n E[q^n](\Fbar_\ell ) \cong \Z_q^2$ be the $q$-adic Tate module, then it is well known that \( \Aut(E) \) acts faithfully on $V_q(E)$. 
However, the reduction mod $q$ map $\GL_2(\Z_q) \to \GL_2(\F_q)$ is injective when restricted to elements of finite order coprime to $q$. So by Proposition \ref{prop:aut_classification} we get an injection $\Aut(E) \to \Aut(E[q](\Fbar_\ell ))$ and the claim follows.

Now as $\Aut(E)$ respects the Weil pairing, after a choice of basis of the $q$ torsion, we get that  $\Aut(E) \subseteq \SL_2(\F_q)$. Since $\Aut(E,G)$ fixes $G$ we can choose a basis such that
\[
 \Aut(E,G) \subseteq U \colonequals \left\{ \begin{pmatrix} a & b \\ 0 & a^{-1} \end{pmatrix} : a \in \F_q^{\times},\; b \in \F_q \right\} \subseteq \mathrm{SL}_2(\F_q).
\]
Note that $T \colonequals \begin{pmatrix} 1 & 1 \\ 0 & 1 \end{pmatrix}$ generates a normal subgroup of order $q$ in $U$. Since $\Aut(E,G)$ has no elements of order $q$ we get an injection  $\Aut(E,G) \hookrightarrow U/\langle T \rangle \cong \F_q^\times$. This shows that $\Aut(E,G)$ is cyclic. And the proposition follows since $\Z/2\Z, \Z/4\Z$ and $\Z/6\Z$ are the only cyclic subgroups of $\Aut \, E$ containing $[-1]$.
\end{proof}

\begin{corollary}\label[corollary]{lem:cyclic_two_classes}
Let \(\orderO\) be an order in a quaternion algebra over \(\Q\), and assume \( \orderO^\times \cong C_2, C_4, C_6\). Let \( t\in \orderO\) satisfy \( t^2\in \Q^\times\), and put \( A = \{ u \in \orderO^\times: (ut)^2=t^2 \}\).

Then the elements \(ut\), with \( u \in A\), split into exactly two \( \orderO^\times \)-conjugacy classes.
\end{corollary}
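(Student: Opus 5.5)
The plan is a direct computation inside the cyclic unit group. The key observation is that the condition defining $A$ says that conjugation by $t$ inverts $u$, so $A$ is either $\{\pm1\}$ or all of $\orderO^\times$. In each of the two cases the conjugation action on $\{ut: u\in A\}$ can then be written down explicitly.

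\emph{Step 1: reduce the condition defining $A$.} Write $\orderO^\times=\langle\zeta\rangle$ with $\zeta$ of order $n\in\{2,4,6\}$, and put $c\colonequals t^2\in\Q^\times$. For $u\in\orderO^\times$ we have
\[
(ut)^2=u\,(tut^{-1})\,t^2 .
\]
Hence $u\in A$ if and only if $tut^{-1}=u^{-1}$. In particular $\pm1\in A$ always.

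We first dispose of the degenerate case $t\in\Q$. Then $(ut)^2=t^2$ forces $u^2=1$, so $u=\pm1$. The two elements $\pm t$ are distinct and central, so they form two singleton conjugacy classes, as claimed.

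\emph{Step 2: determine $A$ when $t\notin\Q$.} If $n=2$, then $A=\orderO^\times=\{\pm1\}$. If $n\in\{4,6\}$, then $\zeta$ generates an imaginary quadratic field $K_0=\Q(\zeta)$, and every non-real unit $u$ generates the same field $K_0$. Conjugation by $t$ inverts such a $u$ exactly when it induces complex conjugation on $K_0$, since $u^{-1}=\bar u$ for units of a definite algebra. This condition is independent of the choice of non-real $u$. Therefore either $t\zeta t^{-1}=\zeta^{-1}$, in which case $A=\orderO^\times$, or $A=\{\pm1\}$.

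\emph{Step 3: count classes in each case.} Conjugating $ut$ by $v\in\orderO^\times$ gives $u\,(vtv^{-1})$, using that $\orderO^\times$ is abelian.

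In the case $A=\orderO^\times$ with $t\zeta t^{-1}=\zeta^{-1}$, we have $vtv^{-1}=v^2t$ for every unit $v$, because $tv^{-1}=vt$. So the conjugacy classes of the elements $ut$ are exactly the cosets of the subgroup of squares $\langle\zeta^2\rangle$ in $\langle\zeta\rangle$. Since $n$ is even, this subgroup has index exactly $2$, giving two classes. The case $n=2$ fits the same picture: there $A=\{\pm1\}$, the squares are trivial, and the two classes are $\{t\}$ and $\{-t\}$.

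In the case $A=\{\pm1\}$ with $n\in\{4,6\}$, we must show that $t$ and $-t$ are not conjugate. Suppose $vtv^{-1}=-t$ for some unit $v$. Then $v\neq\pm1$, and $tvt^{-1}=-v$. Now $tvt^{-1}$ has the same reduced trace as $v$, so $\trd(v)=0$. Hence $-v=\bar v=v^{-1}$, so conjugation by $t$ inverts the non-real unit $v$. This places us in the case $A=\orderO^\times$, a contradiction. So again there are exactly two classes.

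The main point needing care is Step 2. There one must justify that conjugation by $t$ preserves $K_0$, which follows from the fact that it maps the finite group $\orderO^\times$ into itself as an abstract unit group of the conjugate order. One must also check that it acts on $K_0$ either trivially or by complex conjugation. I would argue both facts using that $t$ normalizes $K_0$ whenever it inverts a single generator, together with the dichotomy ``commutes or anticommutes'' for pure quaternions $t$ with $t^2\in\Q$ relative to the field $K_0$.
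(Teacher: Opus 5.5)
Your proof is correct and follows essentially the same route as the paper. You reduce $(ut)^2=t^2$ to $tut^{-1}=u^{-1}$, obtain $A\in\{\{\pm1\},\orderO^\times\}$, identify the classes with cosets of $(\orderO^\times)^2$ when $A=\orderO^\times$, and when $A=\{\pm1\}$ show that any $v$ with $vtv^{-1}=-t$ would lie in $A$. Two sub-steps differ harmlessly. Your trace argument $\trd(v)=\trd(tvt^{-1})=-\trd(v)$ is a tidy substitute for the paper's ``$v^2$ is central'' step. Your field-theoretic dichotomy replaces the paper's shorter remark that $A$ is a subgroup of a cyclic group of order $2$, $4$ or $6$ containing $-1$.

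Discard your closing paragraph, however. When $A=\{\pm1\}$, conjugation by $t$ generally does not preserve $K_0$ or $\orderO^\times$, and a pure $t$ need not commute or anticommute with $K_0$. Step~2 never needs any of this: $tut^{-1}=u^{-1}\in K_0$ by itself forces conjugation by $t$ to map $K_0=\Q(u)$ onto itself by complex conjugation.
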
 

\begin{proof}
For \(u\in \orderO^\times\), the condition \((ut)^2=t^2\) is equivalent to
\(tut^{-1}=u^{-1}\) (note: \(t^{-1}\) exists in \(B^\times_{\ell,\infty}\)). Indeed, from \(utut=t^2\), left-multiplying by \(u^{-1}\)
gives \(tut=u^{-1}t^2\), hence \(tu=u^{-1}t\) and \(tut^{-1}=u^{-1}\). Thus \(A\) is a subgroup of \( \orderO^\times\) of the cyclic group \(\orderO^\times\). Since \( \{\pm 1\}\subseteq A \), either \( A=\{\pm1\} \)  or \( A = \orderO^\times\).

If \( A=\orderO^\times \), then for \( u,v\in \orderO^\times \), the condition for \( v^{-1} \) gives \(tv^{-1}=vt\), so
\(v(ut)v^{-1}=vu(tv^{-1})=vu(vt)=v^2ut\), since \(\orderO^\times\) is abelian. Thus
conjugating \(ut\) by \(v\) corresponds to replacing \(u\) by \(v^2u\), and the set \( \{ ut : u \in A\} \) is stable under \( \orderO^\times\)-conjugacy. The
\(\orderO^\times\)-orbit of \(ut\) is therefore
\(\{v^2ut:v\in \orderO^\times\}=(\orderO^\times)^2ut\). Hence \(ut\) and \(u't\) are
conjugate if and only if \(u'u^{-1}\in (\orderO^\times)^2\). This gives the bijection between conjugacy classes of elements \(ut\) and
\(\orderO^\times/(\orderO^\times)^2\).

For the case \( A = \{\pm 1\} \), only \(t\) and \(-t\) occur. Suppose \(vtv^{-1}=-t\) 
for some \(v\in \orderO^\times\). Then \(vt=-tv\) so \(t\) and \( v\) generate entire \( B_{\ell,\infty}\). Furthermore, \(v^2t=-vtv=tv^2\), so \(v^2\) commutes with both \(v\) and \(t\), hence is
central, so \(v^2\in\Q^\times\cap \orderO^\times=\{\pm1\}\). Since \(v=\pm1\) cannot
anticommute with \(t\), we must have \(v^2=-1\). Thus \(v^{-1}=-v\), and
\(vt=-tv\) gives \(tvt^{-1}=-v=v^{-1}\), so \(v\in A\), contradicting
\(A=\{\pm1\}\). Hence \(t\) and \(-t\) are not conjugate, and again there are
exactly two classes.
\end{proof}

\begin{remark}
In other words, there is a bijection \[\{\orderO^\times\text{-conj.\ classes of }uh\}\to \orderO^\times/(\orderO^\times)^2, \quad [uh]\mapsto u\,(\orderO^\times)^2.\]
The map is well-defined and injective precisely because the proof shows the conjugates of $uh$ are exactly $\{v^2uh:v\in \orderO^\times\}=(\orderO^\times)^2\,uh$.
\end{remark}

\begin{lemma}\label{lem:d_gt_3}
Let \(x=(E,G)\) be a \(w_d\)-fixed supersingular point with \(d>3\), and put
\(\orderO=\End(E,G)\). Then exactly two \(\orderO^\times\)-conjugacy classes of embeddings
\(\Z[\sqrt{-d}]\hookrightarrow \orderO\) lie above \(x\).
\end{lemma}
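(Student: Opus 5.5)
We will identify the set of embeddings lying above $x$ with a set of elements of $\orderO$ and then apply \Cref{lem:cyclic_two_classes}. An embedding $\Z[\sqrt{-d}]\hookrightarrow\orderO$ is determined by the image $t$ of $\sqrt{-d}$, which satisfies $t^2=-d$; conjugating the embedding by $v\in\orderO^\times$ conjugates $t$. So it suffices to describe the set $H_x$ of elements $h\in\orderO$ arising from choices of isomorphisms $w_d(x)\cong x$ via \Cref{lem:wdInduceEnd}, and to count $\orderO^\times$-conjugacy classes inside $H_x$.

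\textbf{Step 1: $H_x$ is a single coset.} Fix one isomorphism, giving $h_0\in\orderO$. By \Cref{lem:traces}(i), since $d>3$ we have $h_0^2=-d$. By \Cref{lem:differByUnit}, $H_x=\{uh_0 : u\in\orderO^\times\}$, and every element of $H_x$ again satisfies $(uh_0)^2=-d$ by \Cref{lem:traces}(i) applied to that other choice. Hence $H_x=\{uh_0: u\in A\}$ with $A=\{u\in\orderO^\times : (uh_0)^2=h_0^2\}$, and in fact $A=\orderO^\times$. Conversely, any $t\in\orderO$ with $t^2=-d$ satisfies $t^2=-[d]$ with $t\in\End(E,G)$, so by the converse direction of \Cref{lem:wdInduceEnd} (and the construction there, which recovers an isomorphism $w_d(x)\cong x$ from $t$) it arises from some choice; thus the embeddings lying above $x$ are exactly the embeddings $\Z[\sqrt{-d}]\hookrightarrow\orderO$, i.e.\ elements $uh_0$, $u\in A$. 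The set is stable under $\orderO^\times$-conjugation since conjugates of $h_0$ also square to $-d$.

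\textbf{Step 2: unit group hypothesis.} To apply \Cref{lem:cyclic_two_classes} we need $\orderO^\times=\Aut(E,G)$ cyclic of order $2,4,6$. By \Cref{lem:unit_groups_cases} this holds if $\ell>3$ or $M$ has a prime factor $>3$. Since $d>3$ is squarefree and $d\mid N$, $d$ has a prime factor $q\geq5$; if $q=\ell$ we are in case $\ell>3$, otherwise $q\mid M$. So the hypothesis is satisfied. Then \Cref{lem:cyclic_two_classes} with $t=h_0$, $t^2=-d\in\Q^\times$, gives exactly two $\orderO^\times$-conjugacy classes among $\{uh_0:u\in A\}$, which by Step 1 are exactly the classes lying above $x$.

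The main obstacle is the bookkeeping in Step 1: making sure that ``lying above $x$'' is exactly the set $A h_0$ (rather than a proper subset, or something depending on the choice of $h_0$), which requires combining the torsor statement of \Cref{lem:differByUnit} with the fact that each resulting $h$ has trace $0$; I expect this to be routine once stated carefully, while Step 2 is a direct citation.
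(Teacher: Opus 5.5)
Your Step 1 and your final appeal to \Cref{lem:cyclic_two_classes} are exactly the paper's proof. You fix one choice $t$, get $t^2=-d$ from \Cref{lem:traces}(i), and use \Cref{lem:differByUnit} together with \Cref{lem:traces}(i) for every other choice. This shows the choices are precisely the $ut$ with $A=\orderO^\times$, and the corollary finishes it. Your extra remark that every $t\in\orderO$ with $t^2=-d$ arises from some choice is correct, via the converse direction of \Cref{lem:wdInduceEnd}, but it is not needed.

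The flaw is in Step 2. A squarefree $d>3$ need not have a prime factor $\geq 5$, and $d=6$ is a counterexample. So your check of the hypothesis of \Cref{lem:unit_groups_cases} fails exactly when $d=6$ and $\ell\in\{2,3\}$.

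The correct justification is the one the paper relies on, in the remark preceding \Cref{lem:unit_groups_cases}. Every squarefree $N>6$ has a prime factor $q\geq 5$, so either $\ell=q>3$ or $q\mid M$. This leaves only $N=d=6$, where the hypothesis of \Cref{lem:unit_groups_cases} genuinely fails. There you must either exclude this level, as the paper tacitly does, or check cyclicity by hand:
\begin{itemize}
\item for $\ell=2$, $\Aut(E,G)$ is the stabiliser of a line in $\SL_2(\F_3)$, which is cyclic of order $6$;
\item for $\ell=3$, it is a subgroup of index $3$ in $\Z/3\Z\rtimes\Z/4\Z$, which is cyclic of order $4$ because that group has a unique involution.
\end{itemize}
With this repair, \Cref{lem:cyclic_two_classes} applies and your argument goes through.
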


\begin{proof}
Choose one isomorphism \(w_d(x)\cong x\), and let \(t\in \orderO\) be the corresponding endomorphism from \cref{lem:wdInduceEnd}. By \cref{lem:traces}\((i)\), we have
\(t^2=-d\).

By \cref{lem:differByUnit}, the endomorphisms obtained from all choices of an
isomorphism \(w_d(x)\cong x\) are precisely the elements \(ut\), with
\(u\in \orderO^\times\). Conversely, every such \(ut\) comes from such a choice.
Applying \cref{lem:traces}\((i)\) to the choice corresponding to \(ut\), we get
\((ut)^2=-d=t^2\). Thus, in the notation of \cref{lem:cyclic_two_classes}, we
have \(A=\orderO^\times\).

Since \(\orderO^\times\cong C_2,C_4\), or \(C_6\),
\cref{lem:cyclic_two_classes} applies and shows that the elements \(ut\)
obtained from the choices split into exactly two \( \orderO^\times\)-conjugacy classes.
Equivalently, exactly two \( \orderO^\times\)-conjugacy classes of embeddings
\( \Z[\sqrt{-d}]\hookrightarrow \orderO\) lie above \(x\).
\end{proof}

\begin{lemma}\label{lem:d_eq_3}
Let \(x=(E,G)\) be a \(w_3\)-fixed supersingular point, and put \(\orderO=\End(E,G)\). Then exactly two
\(\orderO^\times\)-conjugacy classes of embeddings
\(\Z[\sqrt{-3}]\hookrightarrow \orderO\) lie above \(x\).
\end{lemma}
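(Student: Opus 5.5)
The plan is to reduce the statement to \Cref{lem:cyclic_two_classes}, as in the proof of \Cref{lem:d_gt_3}. The new difficulty is that for $d=3$ the endomorphism attached to a choice of isomorphism $w_3(x)\cong x$ need not square to $-3$; it is only modified into such an element. Fix one isomorphism $w_3(x)\cong x$ and let $h\in\orderO$ be the endomorphism from \cref{lem:wdInduceEnd}. By \cref{lem:differByUnit}, the endomorphisms coming from all choices form the set $H\colonequals\{uh : u\in\orderO^\times\}$. Let $\tau\colon H\to\orderO$ be the modification map of the remark above: $\tau(h')=h'$ if $\trd(h')=0$, and $\tau(h')=2h'\mp3$ if $\trd(h')=\pm3$. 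By definition, the embeddings $\Z[\sqrt{-3}]\hookrightarrow\orderO$ lying above $x$ are the maps $\sqrt{-3}\mapsto\tau(h')$ for $h'\in H$. So I must show that $\tau(H)$ meets exactly two $\orderO^\times$-conjugacy classes. (For two elements squaring to $-3$, conjugacy of the embeddings is the same as conjugacy of the images of $\sqrt{-3}$.)

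\textbf{Step 1: identify $\tau(H)$.} Let $S\colonequals\{s\in H : s^2=-3\}$. I claim $\tau(H)=S$.

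First, $S\subseteq\tau(H)$. Any $s\in S$ has $\trd(s)=0$, since $s^2=-3$ and $s$ is not scalar. Hence $\tau(s)=s$.

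Conversely, let $h'\in H$; I show $\tau(h')\in S$. By \cref{lem:traces}(ii), $t\colonequals\tau(h')$ lies in $\orderO$ and satisfies $t^2=-[3]=[-1]\circ[3]$. The converse direction of \cref{lem:wdInduceEnd} then shows that $t$ arises from some isomorphism $w_3(x)\cong x$; this is exactly the observation recorded in the remark following \cref{lem:traces}. Therefore $t\in H$, and so $t\in S$.

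It follows that $S$ is non-empty. Fix $t_0\in S$. Then $H=\orderO^\times t_0$, and
\[
S=\{ut_0 : u\in A\},\qquad A=\{u\in\orderO^\times : (ut_0)^2=t_0^2\},
\]
which is precisely the set appearing in \cref{lem:cyclic_two_classes}.

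\textbf{Step 2: apply \cref{lem:cyclic_two_classes}.} This needs $\orderO^\times=\Aut(E,G)$ to be cyclic of order $2$, $4$ or $6$. By \Cref{lem:unit_groups_cases} this holds whenever $\ell>3$ or $M$ has a prime factor $q>3$. Since $3\mid N$ and $N$ is squarefree, the only remaining levels are $N=3$ and $N=6$, so these need a separate check.
\begin{itemize}
\item For $N=3$ there is no $w_3$-fixed point with $3\mid M$ to consider in the setting of \Cref{def:SS_fixed_points}, unless $d=\ell=3$. In that case one would argue as below.
\item For $N=6$ one has $M\in\{2,3\}$. Here I would either verify the two-class statement directly on the unique supersingular curve, whose unit group is $\SL_2(\F_3)$ or $\Z/3\Z\rtimes\Z/4\Z$, or note that these levels have genus $0$ and are irrelevant for the applications.
\end{itemize}
Granting cyclicity, \cref{lem:cyclic_two_classes} shows that the elements $ut_0$ with $u\in A$, that is, the elements of $S=\tau(H)$, fall into exactly two $\orderO^\times$-conjugacy classes. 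This is the claim.

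\textbf{Main obstacle.} I expect the main obstacle to be Step 1, namely justifying that the modified elements $2h'\mp3$ genuinely come from isomorphisms $w_3(x)\cong x$. This rests on the converse direction of \cref{lem:wdInduceEnd} applied with $\phi=[-1]$; in particular one must check that $\tau(h')$ lies in $\End(E,G)$ and not merely in $\End(E)$, which holds because $h'\in\orderO$ and $\orderO$ is a ring. The small exceptional levels $N\in\{3,6\}$, where the unit group need not be cyclic, are the other point to treat carefully.
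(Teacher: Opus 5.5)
This is correct and is essentially the paper's own proof: there too one modifies $h$ to an element $t$ with $t^2=-3$, uses the converse direction of \cref{lem:wdInduceEnd} together with \cref{lem:differByUnit} to identify the elements representing embeddings above $x$ with $\{ut : u\in A\}$, and concludes with \cref{lem:cyclic_two_classes}. The paper simply invokes cyclicity of $\orderO^\times$, implicitly under the standing assumption $N>6$ made before \Cref{lem:unit_groups_cases}, so your caveat is more careful than the source; however, your $N\in\{3,6\}$ bullets are not an argument as written, and in the $N=6$ bullet the groups $\SL_2(\F_3)$ and $\Z/3\Z\rtimes\Z/4\Z$ are $\Aut(E)$ rather than $\orderO^\times=\Aut(E,G)$, which is in fact cyclic there.
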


\begin{proof}
Choose one isomorphism \(w_3(x)\cong x\), and let \(h\in \orderO\) be the corresponding
endomorphism from \cref{lem:wdInduceEnd}. By \cref{lem:traces}\((ii)\), after
replacing \(h\) by \(h\), \(2h-3\), or \(2h+3\), according as
\(\trd(h)=0\), \(3\), or \(-3\), we obtain an element \(t\in \orderO\) with
\(t^2=-3\).

Since \(t^2=(-1)\circ[3]\), the converse direction of
\cref{lem:wdInduceEnd} shows that \(t\) itself comes from a choice of an
isomorphism \(w_3(x)\cong x\). Now let \(s\in \orderO\) be any element representing
an embedding \(\Z[\sqrt{-3}]\hookrightarrow \orderO\) lying above \(x\). Then
\(s^2=-3=t^2\), and the same converse direction of \cref{lem:wdInduceEnd}
shows that \(s\) also comes from a choice of an isomorphism \(w_3(x)\cong x\).
Therefore \cref{lem:differByUnit} gives \(s=ut\) for some \(u\in \orderO^\times\).
Moreover \(s^2=t^2\), so \(u\in A\), where
\(A=\{u\in \orderO^\times:(ut)^2=t^2\}\).

Conversely, if \(u\in A\), then \((ut)^2=t^2=-3\), so the converse direction of
\cref{lem:wdInduceEnd} shows that \(ut\) comes from a choice of an isomorphism
\(w_3(x)\cong x\). Thus the embeddings lying above \(x\) are represented
exactly by the elements \(ut\), with \(u\in A\).

Since \(\orderO^\times\cong C_2,C_4\), or \(C_6\), \cref{lem:cyclic_two_classes}
applies and gives exactly two \(\orderO^\times\)-conjugacy classes.
\end{proof}

For \(d=2\), even though embeddings of both \(\sqrt{-2}\) and \( \i\) can occur, for a single fixed point, only one type occurs.
\begin{lemma}\label{lem:d_eq_2}
Let \( x=(E, G)\) be a \(w_2\)-fixed supersingular point, put \( \orderO=\End(E,G) \).
Above \(x\), there are either exactly two \( \orderO^\times\)-conjugacy classes of embeddings \( \Z[\sqrt{-2}]\hookrightarrow \orderO\) or exactly two \(\orderO^\times\)-conjugacy classes of embeddings \( \Z[\i] \hookrightarrow \orderO\), but not both.
\end{lemma}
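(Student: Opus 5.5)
The plan is to follow the pattern of \cref{lem:d_gt_3,lem:d_eq_3}: fix one isomorphism $w_2(x)\cong x$ and study the whole $\orderO^\times$-orbit of the endomorphisms it produces. Let $h\in\orderO$ be the element from \cref{lem:wdInduceEnd}. By \cref{lem:differByUnit}, the endomorphisms coming from all choices are exactly the elements $uh$ with $u\in\orderO^\times$. Each of them has reduced norm $2$ and, by \cref{lem:traces}\,(iii), trace in $\{0,\pm2\}$. The first key observation is that the type of embedding is constant along this orbit. Trace $0$ means $uh$ is a square root of $-2$. Trace $\pm2$ means $uh=\pm(1+i')$ for some $i'$ with $i'^2=-1$, so $uh\in\pm(1+i')$.

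To see that trace $0$ and trace $\pm2$ cannot both occur, I will argue through the field $\Q(uh)$. That field is $\Q(\sqrt{-2})$ in the first case and $\Q(\i)$ in the second. Now suppose $h^2=-2$ and $(uh)^2=\pm 2i'$-type, i.e.\ $\trd(uh)=\pm2$. Then $u=(uh)h^{-1}$ is a unit of finite order, and $\orderO^\times$ is cyclic of order $2,4,6$ by \cref{lem:unit_groups_cases}. (This needs $N>6$; the small levels $N\in\{2,3,6\}$ would need a separate check, for example by direct computation with the unique supersingular curve.)

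I expect to reduce this to a norm/trace computation. Write $u=a+b$ with $a=\trd(u)/2$. Then $\trd(uh)=\trd(bh)$. For a unit of order $4$ or $6$, $\trd(u)\in\{0,\pm1\}$ and $\nrd(u)=1$. I would try to show that $\trd(uh)=\pm2$ forces $uh$ to commute with $h$ in a way that contradicts $\Q(\sqrt{-2})\neq\Q(\i)$.

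A more conceptual route, which I would try first, goes through \cref{lem:wdInduceEnd}. There $(uh)^2=\phi\circ[2]$ with $\phi\in\Aut(E)$. In the trace-$0$ case $\phi=-1$. In the trace-$\pm2$ case $(1+i')^2=2i'$, so $\phi=\pm i'$, a unit of order $4$. Which case occurs is then governed by whether $\ker(uh)=G[2]$ behaves a certain way; but since $\ker(uh)=\ker h$ for all $u$, this alone does not decide it. So I would fall back on the following algebraic argument.

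If $\trd(h)=0$ and $\trd(uh)=\pm2$, then $u$ is non-scalar, so $\orderO^\times$ has order $4$ or $6$ and $\Q(u)$ is $\Q(\i)$ or $\Q(\zeta_3)$. Consider $uh\bar h u^{-1}$-type identities, or simply compare $\nrd(uh\pm1)$. For $uh=\pm(1+i')$ we get $\nrd(uh\mp1)=1$, so $uh\mp1\in\orderO^\times$ is a unit of order $4$. Hence there are two units $u$ and $v=uh\mp1$ with $uh=v\pm1$. Then $h=u^{-1}(v\pm1)$ lies in $\Q(\orderO^\times)$, and since $\orderO^\times$ is cyclic, $\Q(\orderO^\times)$ is the single field $\Q(u)$. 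This forces $\Q(\sqrt{-2})=\Q(h)\subseteq\Q(u)$, which is $\Q(\i)$ or $\Q(\zeta_3)$ — a contradiction. This cyclicity argument is the step I expect to be the crux, and also the place where \cref{lem:unit_groups_cases} is essential.

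Granting that only one type occurs above $x$, counting the classes repeats the earlier lemmas. In the $\sqrt{-2}$ case, set $t=h$. All $ut$ are square roots of $-2$, so $A=\orderO^\times$, and \cref{lem:cyclic_two_classes} gives exactly two classes. In the $\i$ case, set $t=h\mp1$ with $t^2=-1$. The embeddings lying above $x$ are the elements $uh\mp1$, and the same argument as in \cref{lem:d_eq_3} identifies these with elements $u't$, $u'\in A$. Here I would use that $\Z[t]^\times\subseteq\orderO^\times$ is all of the cyclic group $C_4$, so every such element is a conjugate or a multiple of $t$. \Cref{lem:cyclic_two_classes} then again gives exactly two classes.
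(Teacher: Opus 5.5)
Your proposal is correct and follows essentially the same route as the paper: a trace-$\pm2$ choice $uh$ gives an order-four unit $uh\mp1$. Because $\orderO^\times$ is cyclic, this unit generates $\orderO^\times$, which forces a trace-zero choice into $\Q(\i)$ and contradicts $h^2=-2$; after that, the $\Z[\sqrt{-2}]$ case is \cref{lem:cyclic_two_classes} with $A=\orderO^\times$. In the $\Z[\i]$ case, the argument of \cref{lem:d_eq_3} does not literally transfer, since $t=h\mp1$ is a unit rather than a choice; but your substitute observation is exactly what the paper uses: $\orderO^\times=\langle t\rangle$ forces every $uh\mp1$ to be $\pm t$, both signs occur (from $h$ and $-h$), and $t$ and $-t$ are not conjugate because $\orderO^\times$ is commutative.
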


\begin{proof}
By \cref{lem:traces}\((iii)\), every choice \(h\in \orderO\) has trace \(0\) or
\(\pm2\). In the first case \(h^2=-2\), while in the second case
\( (h\mp1)^2=-1\).

We first show that the trace-zero and trace-nonzero choices cannot coexist. Suppose that
\(h\) is a trace-nonzero choice, and put \(i=h\mp1\), so \(i^2=-1\).
Since \(\orderO^\times\) is cyclic of order \(2,4\), or \(6\), this forces
\(\orderO^\times=\langle i\rangle\). If \(h_0\) were a trace-zero choice, then by
\cref{lem:differByUnit} the choices would be \(h_0,-h_0,ih_0,-ih_0\). For \(ih_0\), if \(\trd(ih_0)=\pm2\), then \(ih_0\mp1\) is an order-four
unit, hence equals \(\pm i\), so \(h_0\in\Q(i)\), contradicting \(h_0^2=-2\).
Thus \(ih_0\), and hence also \(-ih_0\), have trace zero. Therefore all choices
would have trace zero, contradicting the existence of \(h\). Hence all
choices have the same trace type.

If all choices have trace zero, choose one of them, say \(h_0\). Then the choices
are precisely the elements \(uh_0\), with \(u\in \orderO^\times\), and all satisfy
\((uh_0)^2=-2=h_0^2\). Thus \cref{lem:cyclic_two_classes} gives exactly two
\(\orderO^\times\)-conjugacy classes of embeddings
\(\Z[\sqrt{-2}]\hookrightarrow \orderO\) lying above \(x\).

If all choices have trace \(\pm2\), choose one of them, say \(h\), and put
\(i=h\mp1\). Then \(\orderO^\times=\langle i\rangle\), and the only square roots
of \(-1\) in \(\orderO^\times\) are \(i\) and \(-i\). Both occur, from \(h\) and
\(-h\), and they are not \(\orderO^\times\)-conjugate because \(\orderO^\times\) is
abelian. Exactly two \(\orderO^\times\)-conjugacy classes of embeddings
\(\Z[i]\hookrightarrow \orderO\) lie above \(x\).
\end{proof}

This also finishes the proof of \cref{prop:2to1}.
\smallskip

Let $h(D)$ be the
class number of the quadratic order with discriminant $D$ (also the size of the quadratic form class group $C(D)$, see \cite[§7.B]{Cox}).

\subsection{Formulae for \texorpdfstring{$\nu_{\ell}(N,d)$}{nup(N,d)}}
We now turn the description of $w_d$-fixed supersingular points into explicit formulae for $\nu_{\ell}(N,d)$. By \Cref{prop:2to1}, the quantity $\nu_{\ell}(N,d)$ is obtained by counting embeddings of suitable quadratic orders into Eichler orders in $B_{\ell,\infty}$. The global count is given by Eichler's trace formula as a product of local factors (and class numbers). For $d \equiv 3 \pmod 4$, one must distinguish between the optimal and non-optimal embeddings, i.e., between the orders of discriminant $-4d$ and $-d$ in $\Q(\sqrt{-d})$. Among the local factors, the only additional computation needed is the one at $2$. All the local factors we need can be computed using results from §30.5 (for maximal orders) and §\,30.6 (for non-maximal Eichler orders) of Voight's book \cite{Voight2021}. However, as the book author notes at the beginning of §\,30.6, ``the presentation (\ldots) begins to run off the rails'', we decided to include a self-contained proof of this additional computation.
We still follow notation from \cite[§\,30]{Voight2021}. Throughout the subsection, $K=\Q(\sqrt{-d})$. %\orderO_\Delta order of discr \Delta in K

\begin{lemma}\label{lem:local_factor_2_merged}
Let \(\ell \) be an odd prime, and let \(\orderO\) be an Eichler order in \(B_{\ell,\infty}\)
of squarefree even level. Put \(\orderO_2 \colonequals \orderO\otimes\Z_2\). Since
\(B_{\ell,\infty}\otimes\Q_2\simeq M_2(\Q_2)\), after conjugation we may assume
\[
\orderO_2=
\left\{
\begin{pmatrix} a & b \\ 2c & e \end{pmatrix}
: a,b,c,e\in\Z_2
\right\}.
\]
Let \(d\) be an odd squarefree integer, let \(K=\Q(\sqrt{-d})\), and let
\(S=\Z[\sqrt{-d}]\) be the quadratic order of discriminant \(-4d\) in \(K\).
Put \(S_2\colonequals S\otimes\Z_2=\Z_2[\sqrt{-d}]\).  Then the number of
\(\orderO_2^\times\)-conjugacy classes of optimal embeddings is
\[ m\left(S_2,\orderO_2;\orderO_2^\times\right)
=
\begin{cases}
1, & d\equiv 1 \pmod 4,\\
2, & d\equiv 3 \pmod 4.
\end{cases}
\]
\end{lemma}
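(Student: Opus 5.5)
The plan is to parametrise local embeddings directly by matrices and classify them up to $\orderO_2^\times$-conjugacy by hand. An embedding $S_2\hookrightarrow\orderO_2$ is determined by the image $x$ of $\sqrt{-d}$. This image is an element of $\orderO_2$ with $\trd(x)=0$ and $\nrd(x)=d$, so
\[
x=\begin{pmatrix} a & b\\ 2c & -a\end{pmatrix},\qquad a^2+2bc=-d,\quad a,b,c\in\Z_2 .
\]
Since $d$ is odd, $a$ is odd, so $a^2\equiv 1 \pmod 8$ and $2bc\equiv -d-1\pmod 8$.

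The first task is to translate optimality into a condition on $(a,b,c)$.
\begin{itemize}
\item If $d\equiv 1\pmod 4$, then $-4d$ is a fundamental discriminant at $2$, so $S_2$ is the maximal order of $K_2$. Every embedding is then optimal.
\item If $d\equiv 3\pmod 4$, the only order of $K_2$ strictly containing $S_2$ is $\Z_2[(1+\sqrt{-d})/2]$. So $x$ is optimal if and only if $(1+x)/2\notin\orderO_2$. Since $a$ is odd, the diagonal entries $(1\pm a)/2$ are integral and the lower-left entry $c$ is integral. Hence optimality is equivalent to $b$ being a unit.
\end{itemize}
In the first case $-d-1\equiv 2\pmod 4$ forces $bc$ odd, so $b$ is again a unit. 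In the second case $bc$ is even, and optimality forces $c$ even. So in both cases we may conjugate by a diagonal unit $\mathrm{diag}(1,b)\in\orderO_2^\times$ and normalise $b=1$. The element is then determined by $a$ alone, via $c=(-d-a^2)/2$.

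Next I would use the unipotent elements of $\orderO_2^\times$, namely $\begin{pmatrix}1&s\\0&1\end{pmatrix}$ and $\begin{pmatrix}1&0\\2t&1\end{pmatrix}$, together with further diagonal units, to move $a$ while keeping $b=1$.

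The key computation is how $a$ changes. The lower-triangular conjugation changes $a$ by $\pm 2t b=\pm 2t$, up to correcting $b$ back to $1$ afterwards. So $a$ can be shifted by an arbitrary element of $2\Z_2$, and the question is what survives.
\begin{itemize}
\item For $d\equiv1\pmod 4$, I expect to also reach $-a$. Since $c$ is then odd, the upper unipotent shifts $a$ by $-2cs$, and combining this with the diagonal renormalisation should permit $a\mapsto a+2u$ with $u$ arbitrary. Then all odd $a$ are conjugate, giving one class.
\item For $d\equiv 3\pmod 4$, I will show that $a \bmod 4$ is an invariant. A general element of $\orderO_2^\times$ is a product of a diagonal unit and the two unipotents. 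Diagonal conjugation fixes $a$. The upper unipotent changes $a$ by $-2cs\in 4\Z_2$ because $c$ is even. The lower unipotent changes $a$ by $2tb$, but the correction to $b$ that accompanies it is what prevents a net change mod $4$. This bookkeeping has to be done carefully.
\end{itemize}
Both residues $a\equiv\pm1\pmod 4$ are realised, for instance $a=1$ and $a=-1$ with $b=1$ and $c=(-d-1)/2$, which is even. This gives exactly two classes.

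The main obstacle is this invariance argument for $d\equiv 3\pmod4$. To organise it cleanly I would first try a more conceptual invariant: the eigenvalue of $x$ acting on the $\orderO_2$-stable line, namely the upper-left entry $a$ read modulo the ideal generated by the off-diagonal contributions. Concretely, this is the image of $x$ in the quotient of the upper-triangular form of $\orderO_2$ by its radical-like piece, refined to work mod $4$.

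If that gets messy, the fallback is the Bruhat--Tits tree. An Eichler order of level $2$ corresponds to an edge, and an optimal embedding corresponds to an edge fixed by $S_2^\times$ whose endpoints both contain $x$ optimally. In the inert or split-at-$2$ picture relevant for $d\equiv 3\pmod 4$, counting the orbits of the stabiliser on such edges gives the two classes. As a consistency check, this matches the Eichler-type local factor $1+\left(\tfrac{K}{2}\right)$ for the order of conductor $2$.
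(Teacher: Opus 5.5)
Your treatment of $d\equiv 1\pmod 4$ is fine, but the case $d\equiv 3\pmod 4$ has two errors that cancel in the final count. The first is the optimality criterion. An element of $\orderO_2$ must have its lower-left entry in $2\Z_2$, not merely in $\Z_2$. The lower-left entry of $\tfrac12(1+x)$ is $c$, so $\tfrac12(1+x)\in\orderO_2$ if and only if $b$ and $c$ are \emph{both} even. Thus $x$ is optimal if and only if $b,c$ are not both even. Together with $2\mid bc$, this means exactly one of $b,c$ is a unit. By demanding that $b$ be a unit you discard, for instance, $x=\left(\begin{smallmatrix}1&-(d+1)/2\\2&-1\end{smallmatrix}\right)$. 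This $x$ is optimal, since $\tfrac12(1+x)$ has lower-left entry $1\notin 2\Z_2$.

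The second error is that $a \bmod 4$ is not a conjugacy invariant. Conjugating by $\left(\begin{smallmatrix}1&0\\2t&1\end{smallmatrix}\right)$ leaves $b$ unchanged and replaces $a$ by $a\pm 2tb$; otherwise only the lower-left entry changes. So there is no accompanying correction to $b$. With $b=1$ every odd $a$ is reachable, as you yourself observed a few lines earlier. Concretely, your two representatives with $a=\pm1$, $b=1$, $c=-(d+1)/2$ are conjugate by $\left(\begin{smallmatrix}1&0\\2&1\end{smallmatrix}\right)$. Hence all optimal embeddings with $b$ a unit form a single class, and the second class is the one you discarded.

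The invariant that actually separates the two classes, and the one the paper uses, is $\bar b\in\F_2$. Reduction mod $2$ sends $\orderO_2^\times$ into the upper unipotent subgroup of $\GL_2(\F_2)$. It sends $x$ to $\left(\begin{smallmatrix}1&\bar b\\0&1\end{smallmatrix}\right)$, which commutes with that subgroup, so $\bar b$ is preserved by conjugation. Each case is then a single class. If $b$ is a unit, use the lower unipotent to make $a=1$ and a diagonal unit to make $b=1$. If $c$ is a unit, use the upper unipotent to make $a=1$ and a diagonal unit to make $c=1$.

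Your proposed consistency check is also wrong. When $d\equiv 3\pmod 8$ one has $1+\legendre{K}{2}=0$, whereas the lemma gives $2$ for every $d\equiv 3\pmod 4$. That factor is the local count for the maximal order $\Z_2[(1+\sqrt{-d})/2]$, not for the order of conductor $2$.
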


\begin{proof}
An embedding is determined by \(E\colonequals\iota(\sqrt{-d})\in \orderO_2\), which must
satisfy \(E^2=-dI\). Since \(\tr(E)=0\), we
may write
\(
E=\begin{pmatrix} a & b \\ 2c & -a \end{pmatrix}\), \(a,b,c\in\Z_2.
\)
Since \(E^2=(a^2+2bc)I\),
\begin{equation}\label{eq:Aeq}
a^2+2bc=-d.
\end{equation}
Since \(d\) is odd, \eqref{eq:Aeq} forces \(a\) to be odd.

If \(d\equiv 3\pmod 4\), then the maximal order is
\(\Z_2 \left[\frac{1+\sqrt{-d}}{2}\right]\). Our embedding extends to this
maximal order if and only if \(\frac12(I+E)\in \orderO_2\). Since
\[
\frac12(I+E)=
\begin{pmatrix}
(1+a)/2 & b/2\\
c & (1-a)/2
\end{pmatrix}
\]
and \(a\) is odd, this happens exactly when \(b\) and \(c\) are both even.
Hence, for \(d\equiv 3\pmod 4\),
\begin{equation}\label{eq:optimal}
\iota \text{ is optimal}\iff b,c \text{ are not both even.}
\end{equation}

Any
\(U=\left(\begin{smallmatrix}\alpha&\beta\\2\gamma&\delta\end{smallmatrix}\right)\in \orderO_2^\times\)
has \(\det U\equiv\alpha\delta\pmod 2\) a unit, so \(\alpha,\delta\) are units and
\(\bar U=\left(\begin{smallmatrix}1&\bar\beta\\0&1\end{smallmatrix}\right)\)
in \(M_2(\F_2)\). Since
\(\bar E=\left(\begin{smallmatrix}1&\bar b\\0&1\end{smallmatrix}\right)\),
and unipotent upper-triangular matrices over \( \F_2\) commute, \(\bar U^{-1}\bar E\bar U=\bar E\). Thus \(\bar b\in\F_2\) is an invariant of
the \(\orderO_2^\times\)-conjugacy class of \(E\).
We can realize our two embeddings as
\(E_1\colonequals\left(\begin{smallmatrix}1&1\\-(d+1)&-1\end{smallmatrix}\right)\) and
\(E_2\colonequals\left(\begin{smallmatrix}1&-(d+1)/2\\2&-1\end{smallmatrix}\right)\). Since \( d\equiv 3 \pmod{4}\), these two matrices are not \(\orderO_2^\times\)-conjugates, and the goal is now to show that every other embedding can be \(\orderO_2^\times\)-conjugated into one of these two.
To this end, we use the following elements of \(\orderO_2^\times\) and their
conjugation actions on \(\left(\begin{smallmatrix} a & b \\ 2c & -a\end{smallmatrix}\right) \):
\[
\begin{pmatrix}1&0\\2w&1\end{pmatrix}:\ a\mapsto a+2bw,\quad
\begin{pmatrix}1&u\\0&1\end{pmatrix}:\ a\mapsto a-2cu,\quad
\begin{pmatrix}1&0\\0&t\end{pmatrix}:\ (b,c)\mapsto(tb,t^{-1}c),
\]
the first leaving \(b\) fixed, the second leaving \(c\) fixed, the third leaving
\(a\) fixed, where \(w,u\in\Z_2\) and \(t\in\Z_2^\times\).
\begin{comment}Explicitly,
\[
\begin{pmatrix}1&0\\2w&1\end{pmatrix}^{-1}
E
\begin{pmatrix}1&0\\2w&1\end{pmatrix}
=
\begin{pmatrix}
a+2bw & b\\
2(c-2aw-2bw^2) & -a-2bw
\end{pmatrix}.
\]
\[
\begin{pmatrix}1&u\\0&1\end{pmatrix}^{-1}
E
\begin{pmatrix}1&u\\0&1\end{pmatrix}
=
\begin{pmatrix}
a-2cu & b+2au-2cu^2\\
2c & -a+2cu
\end{pmatrix}.
\]
\[
\begin{pmatrix}1&0\\0&t\end{pmatrix}^{-1}
E
\begin{pmatrix}1&0\\0&t\end{pmatrix}
=
\begin{pmatrix}
a & tb\\
2t^{-1}c & -a
\end{pmatrix}.
\]
\end{comment}
If \(b\) is a unit, since \(a\) is odd, we can take \(w=(1-a)/(2b)\in\Z_2\) to reduce to \(a=1\), then
choose \(t=b^{-1}\) to reduce to \(b=1\). Now \eqref{eq:Aeq} forces
\(2c=-(d+1)\), so \(
E\sim E_1.\)
If instead \(c\) is a unit, we can take \(u=(a-1)/(2c)\in\Z_2\) to reduce to
\(a=1\), then choose \(t=c\) to reduce to \(c=1\). Now \eqref{eq:Aeq} forces
\(b=-(d+1)/2\), so
\( E\sim E_2 \).

Reducing \eqref{eq:Aeq} modulo \(4\), and using
\(a^2\equiv 1\pmod 4\), gives
\( 2bc\equiv -d-1\pmod 4 \).

If \(d\equiv 3\pmod 4\), then \(2bc\equiv 0\pmod 4\), so \(bc\) is even. For an
optimal embedding, \eqref{eq:optimal} rules out \(b,c\) both even, so exactly
one of \(b,c\) is a unit. By the argument above, every optimal embedding
is conjugate to either \(E_1\) or \(E_2\). Since both are optimal and satisfy
\eqref{eq:Aeq}, there are exactly two classes.

For \( d \equiv 1 \pmod{4}\), analogous argument (\(b\) and \( c\) are both units) shows every embedding is conjugate to \( E_1\). Alternatively, \(S_2\) is already the maximal order of \(K\otimes\Q_2\), so every embedding is optimal. Since \( 2\) ramifies in \( K\), by \cite[Lemma 30.6.16]{Voight2021}, \( m(S_2, \orderO_2; \orderO_2^\times) = 1 + \left(\frac{K}{2}\right)= 1 \).
\end{proof}

\begin{remark}
Provided $d$ is not an odd number congruent to $7$ mod $8$, the previous lemma also follows from \cite[2.3 Theorem]{Hijikata}, by taking, in the notation of that paper, $k=\Q_2, r=\Z_2, p=2, B \cong M_2(k), v = 1, s = 0, n = d, \rho = 0$, and $g=\sqrt{-d}\not\in k$. Condition on $d$ is necessary since Hijikata's theorem assumes $g\not\in k$, but if $-d\equiv 1 \pmod{8}$, then $-d$ is a square in $\Q_2$. The matrices we used in the proof are also present in \cite{Hijikata}.
\end{remark}

\begin{proposition}\label{prop:nuell_2M_d_odd}
Let $M'$ be odd and squarefree, let $\ell$ be an odd prime with $\ell\nmid M'$, and put $N=2\ell M'$.
Let $d > 1$ be an odd divisor of $N$, and set $d_0=d/\gcd(d,\ell)$.
Then
\[
\nu_{\ell}(N,d)
=
\frac12\left(1-\left(\frac{-d}{\ell}\right)\right)
\prod_{q\mid M'/d_0}\left(1+\left(\frac{-d}{q}\right)\right)\cdot \kappa(d),
\]
where
\[
\kappa(d)=
\begin{cases}
h(-4d), & d\equiv 1 \pmod 4,\\[4pt]
2h(-4d) \, + \, \left(1+\left(\frac{-d}{2}\right)\right)\,h(-d), & d\equiv 3 \pmod 4.
\end{cases}
\]
\end{proposition}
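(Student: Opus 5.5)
By \Cref{prop:2to1}(i) (here $d$ is odd, so $d>2$), $\nu_\ell(N,d)$ is half the number of $\orderO^\times$-conjugacy classes of (not necessarily optimal) embeddings $\Z[\sqrt{-d}]\hookrightarrow\orderO$, summed over the left orders $\orderO=O_L(I)$ for $[I]\in\Cls\orderO_0$. The Eichler orders have level $M=N/\ell=2M'$. The plan is to split these embeddings according to the quadratic order they optimally embed. Every embedding of $\Z[\sqrt{-d}]$ is an optimal embedding of the order $S=\phi(K)\cap\orderO\supseteq\Z[\sqrt{-d}]$. Since $d$ is squarefree, the only orders of $K=\Q(\sqrt{-d})$ containing $\Z[\sqrt{-d}]$ are $S_{-4d}=\Z[\sqrt{-d}]$ itself and, when $d\equiv3\pmod 4$, the maximal order $S_{-d}$. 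So the total count is $\sum_{[I]}m(S_{-4d},O_L(I))$, plus $\sum_{[I]}m(S_{-d},O_L(I))$ when $d\equiv 3\pmod 4$.

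To evaluate each sum I will use Eichler's formula in its general local-global form, $\sum_{[I]}m(S,O_L(I))=h(S)\prod_q m(S_q,\orderO_q;\orderO_q^\times)$. This holds because the class number of the Eichler order's idelic normaliser quotient behaves as in \cite[Theorem~30.4.7]{Voight2021}. The stated Eichler theorem covers $S$ maximal at primes dividing $DM$, so for $S_{-d}$, and for $S_{-4d}$ when $d\equiv1\pmod4$ (then $-4d$ is a fundamental discriminant), it applies directly. The local factors are then:
\begin{itemize}
\item at $\ell$, the factor $1-\left(\frac{K}{\ell}\right)=1-\left(\frac{-d}{\ell}\right)$;
\item at odd $q\mid M'$, the factor $1+\left(\frac{-d}{q}\right)$, which is $1$ when $q\mid d$, so the product may be restricted to $q\mid M'/d_0$;
\item at $q=2$, the factor $1+\left(\frac{K}{2}\right)$.
\end{itemize}
For $d\equiv1\pmod4$, $2$ ramifies in $K$, so the factor at $2$ is $1$, matching \Cref{lem:local_factor_2_merged}. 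For $S_{-d}$ with $d\equiv 3\pmod4$, the factor at $2$ is $1+\left(\frac{-d}{2}\right)$.

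The remaining case, and the expected obstacle, is $S_{-4d}$ with $d\equiv3\pmod4$. This order is non-maximal at $2\mid M$, so the Eichler statement above does not apply. Instead I will use the general product formula with the local factor at $2$ supplied by \Cref{lem:local_factor_2_merged}, namely $2$. The other local factors are unchanged, since $S_{-4d}$ is maximal away from $2$. The one point to verify here is that the local-global principle (Voight 30.4.7) really does give the sum over classes as $h(S)$ times the product of local optimal embedding numbers with $\orderO_q^\times$-conjugacy. This needs the normaliser/unit index bookkeeping: the Eichler order is definite, $S^\times=\{\pm1\}$ for $d>3$, and $d=3$ should be checked separately, including the relation $h(-12)=h(-3)=1$. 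Granting this, the contribution is $2h(-4d)$ times the common odd-prime factors, and adding the maximal-order term $\left(1+\left(\frac{-d}{2}\right)\right)h(-d)$ yields $\kappa(d)$. Halving as in \Cref{prop:2to1} gives the formula.

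Finally I will check that the primes $q\mid M'$ with $q\mid d$ and the prime $\ell$ (when $\ell\mid d$, where $\left(\frac{-d}{\ell}\right)=0$ gives factor $1$) are handled consistently with $d_0=d/\gcd(d,\ell)$, and that $\left(\frac{K}{q}\right)=\left(\frac{-d}{q}\right)$ holds for all these $q$.
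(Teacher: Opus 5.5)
Your proposal is correct and follows essentially the same route as the paper. It reduces via \Cref{prop:2to1}(i) to counting embeddings of $\Z[\sqrt{-d}]$, splits these into optimal embeddings of $S_{-4d}$ and (when $d\equiv 3\pmod 4$) of $S_{-d}$, and applies Eichler's trace formula with the standard factors at $\ell$ and at odd $q\mid M'$, taking the factor at $2$ from \cite[Lemma~30.6.16]{Voight2021} for $S_{-d}$ and from \Cref{lem:local_factor_2_merged} for $S_{-4d}$. The unit-index check you flag is unnecessary: the class-set form of \cite[Theorem~30.4.7]{Voight2021}, with $\orderO_q^\times$-conjugacy, is already exact when $S^\times\neq\{\pm1\}$ (for instance $d=3$), so the paper simply cites it.
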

\begin{proof}
Fix an Eichler order \( \orderO_0\) of level \( M=2M'\) in \(B_{\ell,\infty}\). By Deuring's correspondence (\Cref{thm:Deuring}), the relevant endomorphism rings \( \End(E, G)\), as \( x = (E, G)\) ranges over supersingular points, are exactly the left orders \( O_L(I)\) for \( [I] \in \Cls \orderO_0\). By Proposition~\ref{prop:2to1}$(i)$,
\[ \nu_{\ell}(N,d) = \frac 12 \sum_{[I]\in\Cls \orderO_0} m(\Z[\sqrt{-d}], O_L(I); O_L(I)^\times).\]

When $d\equiv 1\pmod{4}$, $\Z[\sqrt{-d}]$ is already the maximal
order of $K$, so every such embedding is an optimal embedding of the
order of discriminant $\Delta=-4d$.  When $d\equiv 3\pmod{4}$, $\Z[\sqrt{-d}]$ has index $2$ in the
maximal order $S_{-d}$, and every embedding
$\iota\colon \Z[\sqrt{-d}]\hookrightarrow O$ falls into exactly one
of two cases: either $\iota(K)\cap R = \iota(\Z[\sqrt{-d}])$, in
which case $\iota$ is an optimal embedding of $S_{-4d}$,
or $\iota(K)\cap O = \iota(S_{-d})$, in which case $\iota$
extends uniquely to an optimal embedding of $S_{-d}$.
The total count of embeddings of $\Z[\sqrt{-d}]$ is therefore the
sum of the optimal embedding counts for $\Delta\in\{-4d,-d\}$. 

By the trace formula for optimal embeddings
\cite[Thm.~30.4.7, Thm.~30.7.3]{Voight2021} (going back to Eichler, see, e.g.
\cite[II,~\S\,4,~Prop.~5]{Eichler1973}, see also \cite[Example 30.7.4]{Voight2021}), the number of optimal
embedding classes of $S_\Delta$ into an Eichler order of
level $2M$ equals $h(\Delta)$ times the local factors.
Each local factor at an unramified odd prime follows by
\cite[Lemma~30.6.17(a)]{Voight2021}, while the local factor at \(\ell \) follows by \cite[Prop.~30.5.3(b)]{Voight2021}. The factor at $2$ for 
$\Delta=-d$ is \cite[Lemma~30.6.16]{Voight2021}; and the factor at $2$ for $\Delta=-4d$ is computed in 
Lemma~\ref{lem:local_factor_2_merged}.
Explicitly, the local factors are:
\begin{itemize}
\item $1-\left(\frac{-d}{\ell}\right)$ at the ramified prime $\ell$
(equal to $1$ if $\ell\mid d$);
\item $1+\left(\frac{-d}{q}\right)$ at each odd prime
$q\mid M/d_0$;
\item at $2$: the factor from Lemma~\ref{lem:local_factor_2_merged}
for $\Delta=-4d$, and $1+\left(\frac{-d}{2}\right)$ for $\Delta=-d$.
%(where $2$ does not divide the conductor of $S_{-d}$).
\end{itemize}
Multiplying and summing over the relevant discriminants gives
\[
h(\Delta)
\left(1-\left(\tfrac{-d}{\ell}\right)\right)
\prod_{q\mid M/d_0}\left(1+\left(\tfrac{-d}{q}\right)\right)
\times
\begin{cases}
1, & \Delta=-4d,\; d\equiv 1\pmod{4},\\[2pt]
2, & \Delta=-4d,\; d\equiv 3\pmod{4},\\[2pt]
1+\left(\frac{-d}{2}\right), & \Delta=-d.
\end{cases}
\]
Distinguishing by $\Delta=-d$ and $\Delta=-4d$ yields the stated formula.
\end{proof}

\begin{definition}\label{def:modified_classnumber}
    Let $d>0$ be a square-free odd integer. Then the \emph{modified class number} $h'(-d)$ is defined to be 
        \begin{align*}
h'(-d) \colonequals
\begin{cases}
 h(-4d) &\text{if } d \equiv 1 \mod 4\\
h(-4d)+h(-d) &\text{if } d \equiv 3 \mod 4.\\
\end{cases}
\end{align*}
\end{definition}

The above definition is mainly made because the quantity above shows up in a lot of formulae that will follow and it hence will streamline a lot of proofs and statements by no longer needing to distinguish between the value of $d \mod 4$.

\begin{remark}\label{rem:eichler_analogy}
We note here the similarity of our formulae for \( \nu_{\ell}(N, d)\) with the formulae for \( \nu(N, d)\), the number of fixed points of \( w_d\) on \(X_0(N)\) in characteristic \( 0\). The formulae for \( \nu(N, d)\) can be interpreted as counting conjugacy classes of embeddings of quadratic orders into the Eichler order in the split algebra \( M_2(\Q)\), while our formulae for \( \nu_{\ell}(N, d)\) are obtained by counting conjugacy classes of embeddings of quadratic orders into Eichler orders in the division algebra \( B_{\ell, \infty}\), which changes the local factor at \(\ell \), while the factors away from \(\ell \) agree. The global factor \( 1/2\) comes from the \(2:1\) correspondence between embedding classes and fixed points (\cref{prop:2to1}).

This similarity is not accidental: both counts are instances of Eichler's local--global formula for optimal embeddings. To make this analogy precise, we take a formula for odd \(N\) from \cite{Klu77}:
\[ \nu(N,d) = h'(-d)\prod_{q\mid N/d}\left(1+\left(\frac{-d}{q}\right)\right)\]
and notice that this is exactly the number of conjugacy classes of embeddings of \(\Z[\sqrt{-d}]\) into the standard Eichler order \(\orderO_0(N)\) of level \(N\) in \(M_2(\Q)\). The proof would proceed as our proofs: divide into optimal and non-optimal embeddings, and then use Eichler's local--global embedding formula, in which each local factor depends only on the local quadratic order and on whether the ambient quaternion algebra is split or ramified at $q$; it equals $1+\left(\frac{-d}{q}\right)$ at a split prime \(q\neq \ell\) and $1-\left(\frac{-d}{\ell}\right)$ at the ramified prime \(q=\ell\).
%by \cite[Prop.~30.5.3(b)]{Voight2021}
Passing from $X_0(N)/\C$ to the supersingular locus in characteristic $\ell$ replaces the split $M_2(\Q_\ell )$ by the ramified $B_{\ell,\infty} \otimes \Q_\ell $, and thereby swaps the factor $1+\left(\frac{-d}{\ell}\right)$ for $1-\left(\frac{-d}{\ell}\right)$ at $\ell$ while leaving every other local factor unchanged.
\end{remark}

We now give brief proofs for the remaining formulae. We first collect relevant discriminants in the following definition.

\begin{definition}\label{def:relevant_discriminants}
For a squarefree divisor $d \mid N$, define the set $\mathcal D(d)$ of candidate discriminants by
\[
\mathcal D(d)=
\begin{cases}
\{-4d\}, & d \text{ odd and } d \equiv 1 \pmod 4,\\
\{-4d,-d\}, & d \text{ odd and } d \equiv 3 \pmod 4,\\
\{-4d\}, & d \text{ even with } d/2 > 1 \text{ odd},\\
\{-8,-4\}, & d=2.
\end{cases}
\]
\end{definition}

\begin{remark}
The set $\mathcal D(d)$ records the candidate discriminants arising from the fixed-point condition before imposing local conditions. For a given prime $\ell$, some of these candidates may contribute trivially because the corresponding local factor at $\ell$ vanishes.
\end{remark}

\begin{lemma}\label{lem:local_factor_odd_primes}
Let \(\Delta<0\) be one of the discriminants appearing in \(\mathcal D(d)\), and let \(O\) be an Eichler order of squarefree level \(M\) in \(B_{\ell,\infty}\). At odd primes, the local factor in Eichler's embedding formula is
\(1-\left(\frac{\Delta}{\ell}\right)\) at the ramified prime \(\ell \), if \(\ell \) is odd, and
\(1+\left(\frac{\Delta}{q}\right)\) at each odd prime \(q\mid M\) with \(q\neq \ell\).
\end{lemma}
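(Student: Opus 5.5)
The plan is to reduce the statement to the standard local computations in Voight's book, one case for each type of prime, rather than to prove anything new. The global formula (Eichler's trace formula, \cite[Thm.~30.4.7, Thm.~30.7.3]{Voight2021}) expresses the number of optimal embedding classes of $S_\Delta$ into the Eichler orders $O_L(I)$, summed over $[I]\in\Cls O$. It is $h(\Delta)$ times a product of local embedding numbers $m(S_{\Delta,q},O_q;O_q^\times)$. So it suffices to compute each local factor at an odd prime $q$, separating the ramified prime $q=\ell$ from the split primes $q\mid M$.

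\emph{The ramified prime.} At $q=\ell$ odd, $O_\ell$ is the unique maximal order of the division algebra $B_{\ell,\infty}\otimes\Q_\ell$. I would first check that $S_{\Delta,\ell}$ is maximal at $\ell$ for every $\Delta\in\mathcal D(d)$: the conductor of $S_\Delta$ is a power of $2$, namely $1$ or $2$, since $\Delta\in\{-4d,-d,-8,-4\}$ with $d$ squarefree. Then \cite[Prop.~30.5.3(b)]{Voight2021} gives the local factor $1-\left(\frac{K}{\ell}\right)$. Finally I would note that $\left(\frac{K}{\ell}\right)=\left(\frac{\Delta}{\ell}\right)$, because $\Delta$ and $D_K$ differ by a square factor coprime to $\ell$. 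This agrees with the Kronecker symbol description in \Cref{def:splitting_symbol}.

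\emph{Odd primes dividing the level.} Now take an odd $q\mid M$, $q\neq\ell$. The local order $O_q$ is the standard Eichler order of level $q$ in $M_2(\Q_q)$, as in \Cref{def:eichler_level}, since $M$ is squarefree. Again $S_{\Delta,q}$ is maximal because $q$ is odd. Then \cite[Lemma~30.6.17(a)]{Voight2021}, or \cite[Lemma~30.6.16]{Voight2021} when $q$ ramifies in $K$, gives the factor $1+\left(\frac{\Delta}{q}\right)$, using the same identification of symbols. For odd primes not dividing $\ell M$ the order $O_q$ is maximal in $M_2(\Q_q)$ and $S_{\Delta,q}$ is maximal, so the local factor is $1$. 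These primes therefore do not appear in the product.

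\emph{Main obstacle.} The only delicate step is bookkeeping: we must check that for every $\Delta\in\mathcal D(d)$ the order $S_\Delta$ really is maximal at all odd primes, so that the hypotheses of the cited lemmas hold. We must also check that the Kronecker symbol convention for $\Delta=-4d$ or $-d$ agrees with $\left(\frac{K}{q}\right)$. I would settle both directly from the shape of $\mathcal D(d)$ in \Cref{def:relevant_discriminants}. All the subtle behaviour happens at $2$, which has already been handled separately in \Cref{lem:local_factor_2_merged}.
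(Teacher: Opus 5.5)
Your proposal is correct and is essentially the paper's argument: the paper's proof just cites Voight's Prop.~30.5.3(b) for the ramified prime $\ell$, and Lemma~30.6.17(a) together with Example~30.7.5 for the odd primes dividing the level. Your extra checks are precisely the hypotheses those citations leave implicit: that every $\Delta\in\mathcal D(d)$ has conductor $1$ or $2$, so $S_\Delta$ is maximal at all odd primes, and that $\left(\frac{\Delta}{q}\right)=\left(\frac{K}{q}\right)$ for odd $q$.
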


\begin{remark}
    We refer to these local factors as \emph{standard}.
\end{remark}

\begin{proof}
See \cite[Prop.~30.5.3(b), Lemma~30.6.17(a), Example 30.7.5]{Voight2021}.
\end{proof}

\begin{proposition}\label{prop:nuell_master}
Let $N=\ell M$ with $(\ell,M)=1$, and let $1 < d \mid N$. Put $d_0=d/\gcd(d,\ell)$. Then $\nu_{\ell}(N,d)$ is equal to
\[
\frac{1}{2}\sum_{\Delta \in \mathcal D(d)} h(\Delta)\,\Lambda(\Delta;N,d),
\]
where $\Lambda(\Delta;N,d)$ is the product of the local factors appearing in Eichler's trace formula for optimal embeddings of the order of discriminant $\Delta$ into an Eichler order of level $M/d_0$ in $B_{\ell,\infty}$.
\end{proposition}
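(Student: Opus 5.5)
The plan is to reduce \Cref{prop:nuell_master} to \Cref{prop:2to1} together with Eichler's formula, arguing exactly as in the proof of \Cref{prop:nuell_2M_d_odd}. Fix an Eichler order $\orderO_0$ of level $M$ in $B_{\ell,\infty}$. By Deuring's correspondence (\Cref{thm:Deuring}) the orders $\End(E,G)$ for supersingular $x=(E,G)$ are exactly the left orders $O_L(I)$ for $[I]\in\Cls\orderO_0$. \Cref{prop:2to1} then gives $2\nu_\ell(N,d)$ as
\[
\sum_{[I]}\#\{O_L(I)^\times\text{-classes of embeddings } \Z[\sqrt{-d}]\inj O_L(I)\}.
\]
For $d=2$ one adds the analogous count of embeddings of $\Z[\i]$.

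\textbf{Step 1: split into optimal embeddings.} Each embedding $\iota$ of $\Z[\sqrt{-d}]$ is optimal for a unique order $S\supseteq\Z[\sqrt{-d}]$ in $K$, namely $S=\iota^{-1}(\iota(K)\cap O)$, and this identification respects conjugacy.
\begin{itemize}
\item If $d\equiv1\pmod 4$, or $d$ is even, then $\Z[\sqrt{-d}]$ is maximal and only $\Delta=-4d$ occurs.
\item If $d\equiv3\pmod4$ is odd, the index is $2$, so $\Delta\in\{-4d,-d\}$.
\item If $d=2$, both $\Z[\sqrt{-2}]$ and $\Z[\i]$ are maximal, giving $\Delta=-8$ and $\Delta=-4$.
\end{itemize}
In every case the discriminants that occur are exactly those in $\mathcal D(d)$.

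\textbf{Step 2: apply Eichler's formula.} Eichler's formula \cite[Thm.~30.4.7]{Voight2021} gives, for each $\Delta\in\mathcal D(d)$, $\sum_{[I]}m(S_\Delta,O_L(I))=h(\Delta)\prod_q m_q$. The local factors are trivial at primes not dividing $\ell M$. Two points need care here.

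First, the statement of Eichler's theorem in the paper assumes $S$ is maximal at primes dividing $DM$. That fails for $\Delta=-4d$ with $d\equiv3\pmod4$ at $q=2\mid M$; there I would instead use the general version \cite[Thm.~30.7.3]{Voight2021}, with the local factor supplied by \Cref{lem:local_factor_2_merged}.

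Second, I must explain the level $M/d_0$ in the statement. At a prime $q\mid d_0$ we have $q\mid M$ and $q$ ramified in $K$. For maximal $S_q$ the local embedding number into the level-$q$ Eichler order is $1+\left(\frac{K}{q}\right)=1$, which coincides with the factor for a maximal order at $q$. So these primes contribute trivially, and the product of local factors equals that for an Eichler order of level $M/d_0$. The factor at $\ell$ is $1-\left(\frac{\Delta}{\ell}\right)$ by \Cref{lem:local_factor_odd_primes}, and equals $1$ when $\ell\mid d$. Defining $\Lambda(\Delta;N,d)$ as this product and summing over $\Delta$ gives the displayed formula after dividing by $2$.

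\textbf{Main obstacle: the prime $2$.} The hard part is the local factor at $2$ in the remaining cases: $d$ even, $\ell=2$, or $2\mid M$ with $2\nmid d$ and $d\equiv3\pmod 4$. This step also rests on \Cref{prop:2to1}, which in turn relies on the unit groups being cyclic (\Cref{lem:unit_groups_cases}), so for very small $N$ I would check the exceptional cases separately.

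In the even-$d$ case, $2\mid d_0$ and $2$ ramifies in $K$, so the factor is again $1$. When $\ell=2$, the ramified factor from \cite[Prop.~30.5.3]{Voight2021} applies. Since the statement only asserts that $\Lambda$ is the Eichler product, the remaining work is bookkeeping rather than a new computation.
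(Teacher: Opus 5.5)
Your proposal is correct and follows essentially the same route as the paper. In both, \Cref{prop:2to1} turns $w_d$-fixed points into embedding classes, these are sorted by the order $\iota(K)\cap\orderO$ into optimal embeddings of the orders with discriminant in $\mathcal D(d)$, and Eichler's trace formula is applied to each. Your extra remarks make explicit what the paper's proof leaves implicit:
\begin{itemize}
\item primes $q\mid d_0$ ramify in $K$ and so contribute $1+\left(\frac{K}{q}\right)=1$, which justifies the level $M/d_0$;
\item the case where the order is non-maximal at $2$ needs the general trace formula;
\item the small-$N$ cases need a separate check, since the argument relies on the cyclicity of $\orderO^\times$.
\end{itemize}
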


\begin{proof}
By Proposition \ref{prop:2to1}, $\nu_{\ell}(N,d)$ is half the number of embeddings corresponding to $w_d$-fixed supersingular points. If $d$ is odd and $d \equiv 1 \pmod 4$, these are precisely embeddings of the maximal order (of discriminant $-4d$). If $d$ is odd and $d \equiv 3 \pmod 4$, we decompose embeddings of $\Z[\sqrt{-d}]$ into an Eichler order $\orderO$ according to the order $\iota(K)\cap \orderO$, so they contribute either to the discriminant $-4d$ or to the discriminant $-d$. At the ramified prime $\ell=2$, the former contribution may vanish after imposing local optimality; this is the only additional subtlety and will be treated in the proof of Corollary \ref{cor:nuell_ell_eq_2}. Likewise, for even $d$ the relevant orders are those listed in Definition \ref{def:relevant_discriminants}. Thus the total number of embeddings is the sum of the optimal embedding counts for the orders of discriminant $\Delta \in \mathcal D(d)$, while these %optimal embedding counts
are given by trace formula as products of local factors with $h(\Delta)$.
\end{proof}

The following three corollaries collect all our formulae for $\nu_{\ell}(N, d)$.

\begin{corollary}[Odd level]\label{cor:nuell_odd_level}
If $N=\ell M$ is odd, with $M$ squarefree and $(\ell,M)=1$, and $1 < d \mid N$, then
\[
\nu_{\ell}(N,d)=\frac{1}{2} h'(-d)\left(1-\left(\frac{-d}{\ell}\right)\right)\prod_{q \mid M/d_0}\left(1+\left(\frac{-d}{q}\right)\right),
\]
where $d_0=d/\gcd(d,\ell)$ and $h'(-d)$ is the modified class number from \Cref{def:modified_classnumber}.
\end{corollary}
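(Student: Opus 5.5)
We plan to specialise \Cref{prop:nuell_master} to odd $N$. Since $N$ is odd, both $\ell$ and $d$ are odd, so $\mathcal D(d)$ is $\{-4d\}$ when $d\equiv 1\pmod 4$ and $\{-4d,-d\}$ when $d\equiv 3\pmod 4$. We then have to show that each $\Lambda(\Delta;N,d)$ equals
\[
\Lambda_0\colonequals\Bigl(1-\bigl(\tfrac{-d}{\ell}\bigr)\Bigr)\prod_{q\mid M/d_0}\Bigl(1+\bigl(\tfrac{-d}{q}\bigr)\Bigr).
\]
Granting this, $\frac12\sum_{\Delta\in\mathcal D(d)}h(\Delta)\Lambda_0=\frac12 h'(-d)\Lambda_0$ by \Cref{def:modified_classnumber}.

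\textbf{Step 1: the local factors.} We use \Cref{lem:local_factor_odd_primes}. As $\ell$ is odd, the factor at $\ell$ is $1-\bigl(\tfrac{\Delta}{\ell}\bigr)$. The factor at each prime $q\mid M/d_0$ is $1+\bigl(\tfrac{\Delta}{q}\bigr)$, and all such $q$ are odd. Since $\Delta\in\{-4d,-d\}$ differs from $-d$ by the square $4$, and all these primes are odd, we get $\bigl(\tfrac{\Delta}{q}\bigr)=\bigl(\tfrac{-d}{q}\bigr)$ for every odd prime $q$. If $\ell\mid d$, both $\bigl(\tfrac{\Delta}{\ell}\bigr)$ and $\bigl(\tfrac{-d}{\ell}\bigr)$ vanish, so the factor at $\ell$ is $1$, in line with the proof of \Cref{prop:nuell_2M_d_odd}. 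Because $N$ is odd, there is no prime $2$ in the level and $2\neq\ell$. At $2$ the Eichler order is therefore maximal in the split algebra $M_2(\Q_2)$, and the local embedding number is $1$ for any quadratic order, including the non-maximal $\Z_2[\sqrt{-d}]$. So no analogue of \Cref{lem:local_factor_2_merged} is needed.

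\textbf{Step 2: the primes dividing $d_0$.} The level in \Cref{prop:nuell_master} is $M/d_0$, so primes $q\mid d_0$ contribute no factor. This matches $\prod_{q\mid M/d_0}$ in the claim, and we simply need to check that the two indexing sets agree.

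\textbf{Main obstacle.} The substantive point is the bookkeeping at the primes $q\mid d_0$ and at $q=\ell\mid d$. One has to confirm that the level in Eichler's formula really is $M/d_0$ and that the ramified factor equals $1$ when $\ell\mid d$. Both facts are encoded in \Cref{prop:nuell_master} and were already used in \Cref{prop:nuell_2M_d_odd}, so we take them as given. The rest of the proof is substituting the local factors into \Cref{prop:nuell_master} and summing over $\mathcal D(d)$.
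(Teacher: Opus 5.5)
Your proposal is correct and follows the paper's own argument: like the paper, you specialise \Cref{prop:nuell_master} to odd $N$, observe that there is no exceptional local factor at $2$, and merge the cases $d\equiv 1,3 \pmod 4$ via $h'(-d)$. The bookkeeping you single out as the main obstacle is in fact immediate, since the orders $\End(E,G)$ have level $M$ but every prime $q\mid d_0$ ramifies in $\Q(\sqrt{-d})$, so its standard factor $1+\bigl(\frac{-d}{q}\bigr)$ equals $1$, which is exactly why only the primes $q\mid M/d_0$ appear in the product.
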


\begin{proof}
If $2 \nmid N$, there is no exceptional local factor at $2$. Thus \Cref{prop:nuell_master} reduces to the standard odd-prime local factors, and the two odd cases $d \equiv 1 \pmod 4$ and $d \equiv 3 \pmod 4$ can be combined using the modified class number.
\end{proof}

\begin{corollary}[Even level, odd $\ell$]\label{cor:nuell_even_level_ell_odd}
If $N=2\ell M'$, where $M'$ is odd and squarefree, while $\ell$ is an odd prime with $\ell \nmid M'$, then the following holds.
\begin{enumerate}
\item Let $d > 1$ be an odd divisor of $N$, and put $d_0=d/\gcd(d,\ell)$. Then
\[
\nu_{\ell}(N,d)=\frac{1}{2}\left(1-\left(\frac{-d}{\ell}\right)\right)\prod_{q \mid M'/d_0}\left(1+\left(\frac{-d}{q}\right)\right)\kappa(d),
\]
where
\[
\kappa(d)=
\begin{cases}
h(-4d), & d \equiv 1 \pmod 4,\\
2h(-4d)+\left(1+\left(\frac{-d}{2}\right)\right)h(-d), & d \equiv 3 \pmod 4.
\end{cases}
\]

\item If $d=2$, then
\[
\nu_{\ell}(N,2)
=
\frac{1}{2}\left(
\left(1-\left(\frac{-8}{\ell}\right)\right)\prod_{q \mid M'}\left(1+\left(\frac{-8}{q}\right)\right)
+
\left(1-\left(\frac{-4}{\ell}\right)\right)\prod_{q \mid M'}\left(1+\left(\frac{-4}{q}\right)\right)
\right).
\]
\item If $d>2$ is even, put $d_0=d/\gcd(d,\ell)$. Then
\[
\nu_{\ell}(N,d)
=
\frac{1}{2}h(-4d)
\left(1-\left(\frac{-4d}{\ell}\right)\right)
\prod_{q \mid 2M'/d_0}\left(1+\left(\frac{-4d}{q}\right)\right).
\]
\end{enumerate}
\end{corollary}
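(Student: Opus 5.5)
The plan is to specialise \Cref{prop:nuell_master} to $N=2\ell M'$ with $\ell$ odd, treating the three kinds of $d$ separately. Throughout, $\nu_\ell(N,d)$ is half the sum over $\Delta\in\mathcal D(d)$ of $h(\Delta)\Lambda(\Delta;N,d)$. The local factors at odd primes are the standard ones from \Cref{lem:local_factor_odd_primes}: at the ramified prime $\ell$ the factor is $1-\left(\frac{\Delta}{\ell}\right)$, and at each odd $q\mid M'/d_0$ it is $1+\left(\frac{\Delta}{q}\right)$. At a prime $q\mid d_0$, the Atkin--Lehner condition has already been absorbed into the choice of $\Delta$, which is the reason the product runs over $q\mid M/d_0$. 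So the only real work is to identify the local factor at $2$ in each case.

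\emph{Part (i), $d$ odd.} This is precisely \Cref{prop:nuell_2M_d_odd}, so I will simply cite it. The factor at $2$ for $\Delta=-4d$ comes from \Cref{lem:local_factor_2_merged}: it is $1$ if $d\equiv1\pmod 4$ and $2$ if $d\equiv 3\pmod 4$. For $\Delta=-d$ (only when $d\equiv 3\pmod 4$) the factor at $2$ is $1+\left(\frac{-d}{2}\right)$, by \cite[Lemma~30.6.16]{Voight2021}, since $2$ is prime to the conductor of that maximal order. Summing the two contributions gives $\kappa(d)$. I also need to check that $\left(\frac{-4d}{q}\right)=\left(\frac{-d}{q}\right)$ for odd $q$, which is immediate.

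\emph{Part (ii), $d=2$.} Here $\mathcal D(2)=\{-8,-4\}$, and $h(-8)=h(-4)=1$. Both $\Z[\sqrt{-2}]$ and $\Z[i]$ are maximal orders in which $2$ ramifies. So at the level-$2$ prime, \cite[Lemma~30.6.16]{Voight2021} gives the factor $1+\left(\frac{K}{2}\right)=1$. The remaining factors are standard over $q\mid M'$; note that $d_0=2$, so $M/d_0=M'$. The fact that the two embedding problems together count twice the fixed points is \Cref{prop:2to1}(ii), together with \Cref{lem:d_eq_2}.

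\emph{Part (iii), $d>2$ even.} Write $d=2d'$. Then $\mathcal D(d)=\{-4d\}$, and $\Z[\sqrt{-d}]$ is the maximal order of $\Q(\sqrt{-d})$, because $-d\equiv 2\pmod 4$. Since $2\mid d_0$, the prime $2$ does not divide $2M'/d_0$, so no level-$2$ factor appears. The local factor at each $q\mid 2M'/d_0$ is standard with $\Delta=-4d$, and at $\ell$ it is $1-\left(\frac{-4d}{\ell}\right)$. All of these follow from \Cref{lem:local_factor_odd_primes}, and the formula drops out.

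The main obstacle I expect is the bookkeeping of the level in \Cref{prop:nuell_master}. One has to justify that the primes dividing $d_0$ contribute a factor $1$ rather than $1+\left(\frac{\Delta}{q}\right)$. The reason is that $q$ ramifies in $\Q(\sqrt{-d})$, so $1+\left(\frac{\Delta}{q}\right)=1$ anyway, and this makes the two conventions agree. I will verify this for $q=2$ in part (iii) in particular. The other point to check is that, when $\ell\mid d$, the factor $1-\left(\frac{\Delta}{\ell}\right)=1$ because $\ell$ ramifies.
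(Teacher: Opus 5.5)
Your proposal is correct and follows the paper's proof. Part (i) is cited from \Cref{prop:nuell_2M_d_odd}, and parts (ii) and (iii) come from \Cref{prop:2to1} and \Cref{prop:nuell_master} with only standard local factors. Your extra check is a useful clarification: every prime $q\mid d_0$ (including $q=2$ in parts (ii) and (iii)) ramifies in $K$, so it contributes $1+\left(\frac{\Delta}{q}\right)=1$, and hence counting with the true level $M=2M'$ of $\End(E,G)$ or with the level $M/d_0$ used in the paper gives the same result.
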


\begin{proof}
Part $(i)$ is \Cref{prop:nuell_2M_d_odd}. For $(ii)$, by \Cref{prop:2to1}$(ii)$, $\nu_{\ell}(N,2)$ is equal to half the sum of the numbers of embeddings of $\Z[\sqrt{-2}]$ and $\Z[i]$ into an Eichler order of level $M'$ in $B_{\ell,\infty}$. These correspond to the quadratic orders of discriminants $-8$ and $-4$, respectively. Since $M'$ is odd, all local factors away from $\ell$ are standard. Thus the contribution of the discriminant $-8$ is
\(
\left(1-\left(\frac{-8}{\ell}\right)\right)\prod_{q\mid M'}\left(1+\left(\frac{-8}{q}\right)\right),
\)
and the contribution of the discriminant $-4$ is
\(
\left(1-\left(\frac{-4}{\ell}\right)\right)\prod_{q\mid M'}\left(1+\left(\frac{-4}{q}\right)\right).
\)
Since $h(-8)=h(-4)=1$, the stated formula follows.

For $(iii)$, the order $\Z[\sqrt{-d}]$ is the maximal order in $\Q(\sqrt{-d})$, of discriminant $-4d$.
Thus by \Cref{prop:2to1,prop:nuell_master}, $\nu_{\ell}(N,d)$ is equal to half the number of optimal embeddings of the order of discriminant $-4d$ into an Eichler order of level $2M'/d_0$ in $B_{\ell,\infty}$.

Since $d_0$ is even, the level $2M'/d_0$ is odd, so there is no exceptional local factor at $2$. Hence all local factors are standard: at the ramified prime $\ell$ the factor is
\(
1-\left(\frac{-4d}{\ell}\right),
\)
and at each prime $q\mid 2M'/d_0$ it is
\(
1+\left(\frac{-4d}{q}\right).
\)
Multiplying these factors and $h(-4d)$ gives the stated formula.
\end{proof}

\begin{corollary}[The case $\ell=2$]\label{cor:nuell_ell_eq_2}
Let $M$ be odd and squarefree, and put $N=2M$.
\begin{enumerate}
\item If $d$ is odd and $d \equiv 1 \pmod 4$, then
\[
\nu_2(2M,d)
=
\frac{1}{2} h(-4d)
\prod_{q \mid M/d}\left(1+\left(\frac{-d}{q}\right)\right).
\]

\item If $d$ is odd and $d \equiv 3 \pmod 4$, then
\[
\nu_2(2M,d)
=
\frac{1}{2}h(-d)\left(1-\left(\frac{-d}{2}\right)\right)
\prod_{q \mid M/d}\left(1+\left(\frac{-d}{q}\right)\right).
\]

\item If $d$ is even (with $d/2$ odd), then
\[
\nu_2(2M,d)=
\begin{cases}
\frac{1}{2}\left(\prod_{q \mid M}\left(1+\left(\frac{-8}{q}\right)\right)+\prod_{q \mid M}\left(1+\left(\frac{-4}{q}\right)\right)\right), & d=2,\\
\frac{1}{2} h(-4d)\prod_{q \mid 2M/d}\left(1+\left(\frac{-4d}{q}\right)\right), & d>2.
\end{cases}
\]
\end{enumerate}
\end{corollary}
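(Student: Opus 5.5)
The proof proceeds as in \Cref{prop:nuell_2M_d_odd} and \Cref{cor:nuell_even_level_ell_odd}. We start from \Cref{prop:nuell_master}, now with $\ell=2$ and $N=2M$, $M$ odd and squarefree, and evaluate $\Lambda(\Delta;N,d)$ for each $\Delta\in\mathcal D(d)$. Every odd divisor $d$ of $N$ is coprime to $\ell$, so $d_0=d$ and the Eichler level is $M/d$. Every prime $q\mid M/d$ is odd, so by \Cref{lem:local_factor_odd_primes} the factors at $q$ are standard, namely $1+\left(\frac{\Delta}{q}\right)$. For odd $q$ not dividing the conductor, $\left(\frac{-4d}{q}\right)=\left(\frac{-d}{q}\right)$. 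The only work is the local factor at the ramified prime $2$, which is given by \cite[Prop.~30.5.3(b)]{Voight2021}: the number of optimal embeddings of $S_\Delta\otimes\Z_2$ into the maximal order of the division algebra $B_{2,\infty}\otimes\Q_2$.

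\textbf{Case (i)}, $d\equiv1\pmod4$. Here $\mathcal D(d)=\{-4d\}$, $S=\Z_2[\sqrt{-d}]$ is maximal and $2$ ramifies in $K$. The local factor is therefore $1-\left(\frac{K}{2}\right)=1$, which gives $\frac12h(-4d)\prod_{q\mid M/d}(1+(\frac{-d}{q}))$.

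\textbf{Case (ii)}, $d\equiv3\pmod4$. Here $\mathcal D(d)=\{-4d,-d\}$. For $\Delta=-d$ the order $S_{-d}\otimes\Z_2$ is maximal and the factor is $1-\left(\frac{-d}{2}\right)$. For $\Delta=-4d$ the order is non-maximal at $2$. The key claim is that no optimal embedding of $\Z_2[\sqrt{-d}]$ into the maximal order $\mathcal O_{B_2}$ of the local division algebra exists, so that this contribution vanishes; this is the subtlety flagged in the proof of \Cref{prop:nuell_master}. To prove it, recall that $\mathcal O_{B_2}=\{x:\nrd(x)\in\Z_2\}$ is the unique maximal order. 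If $\sqrt{-d}\mapsto E\in\mathcal O_{B_2}$, then $\frac{1+E}{2}$ is integral over $\Z_2$, since its norm $(1+d)/4$ and trace $1$ are in $\Z_2$. Hence $\frac{1+E}{2}\in\mathcal O_{B_2}$ and the embedding is never optimal. I would cite \cite[Prop.~30.5.3]{Voight2021} or this direct argument. This gives formula (ii).

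\textbf{Case (iii)}, $d$ even. For $d=2$ we have $\mathcal D=\{-8,-4\}$ with $h=1$ and the level is $M$. At $2$, both $\Q_2(\sqrt{-2})$ and $\Q_2(i)$ are ramified, their orders $\Z_2[\sqrt{-2}]$ and $\Z_2[i]$ are maximal, and each local factor is $1-0=1$. The odd factors are standard. For $d>2$, $d_0=d$ and the level is $2M/d=M/(d/2)$, which is odd; $\Z[\sqrt{-d}]$ is maximal of discriminant $-4d$ with $2$ ramified, so the factor at $2$ is again $1$.

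The only step that is not bookkeeping is the vanishing of the $-4d$ contribution at $\ell=2$ in case (ii). I expect it to be the main obstacle, and the integrality argument via the characterisation of $\mathcal O_{B_2}$ by norms settles it. Everything else is substitution into \Cref{prop:nuell_master} and \Cref{prop:2to1}.
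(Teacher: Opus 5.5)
Your proposal is correct and follows the paper's proof. You start from the master formula, observe that the odd local factors are standard and that the factor at the ramified prime $2$ is $1$ for the maximal orders in cases (i) and (iii), and kill the $\Delta=-4d$ term in case (ii) because the local order at $2$ is the maximal order of the division algebra, so $\tfrac{1+E}{2}$ is always integral and no embedding of $\Z_2[\sqrt{-d}]$ is optimal; your norm/trace check simply makes the paper's one-line justification explicit. One small slip: for even $d>2$ you have $\ell=2\mid d$, so $d_0=d/2$ rather than $d$, although the level $2M/d=M/d_0$ that you then use is the correct one.
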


\begin{proof}
For odd $d \equiv 1 \pmod 4$, only the discriminant $-4d$ occurs. By Lemma \ref{lem:local_factor_odd_primes}, the odd primes contribute the standard factors, while the local factor at the ramified prime $2$ is equal to $1$, giving $(i)$.

For $d \equiv 3 \pmod 4$, we decompose these embeddings according to the
order $\iota(K)\cap R$, equivalently into optimal embeddings of the
orders of discriminant $-4d$ and $-d$. When $\ell=2$, the local order at
$2$ is maximal in the division algebra $B_{2,\infty}\otimes\Q_2$, so
every embedding of the nonmaximal order $\Z_2[\sqrt{-d}]$ extends to the
maximal order. Hence the $-4d$ term contributes
trivially to the optimal count, and only the discriminant $-d$
remains, giving $(ii)$.
%This gives the vanishing when $\left(\frac{-d_0}{2}\right)=1$ and the nonzero case when $\left(\frac{-d_0}{2}\right)=-1$.

For $(iii)$, the formulae are obtained in the same way from the discriminants in $\mathcal{D}(d)$.
\end{proof}

\section{Bounds on the genus and the number of fixed points}\label{sec:bounds}
\subsection{Upper bound on the number of long cycles}\label{subsec:longCycleBound}
\begin{lemma}\label{lem:classNumberUpperBound}
Let $D < -4$ be a discriminant of an imaginary quadratic order with conductor $f \in \{1, 2\}$. Then the class number $h(D)$ satisfies:
\begin{enumerate}
    \item If $f=1$, then
    \[
    h(D) \leqslant \frac{\sqrt{|D|}}{2\pi} \left(\log |D| + 5 - 2\log 6\right) \leqslant \frac{\sqrt{|D|}}{2\pi} (\log |D| + 1.4165).
    \]
    \item If $f=2$ (so $D=4d$ with $d$ the associated fundamental discriminant), then
    \[
    h(D) \leqslant \frac{\sqrt{|D|}}{4\pi} \left(\log |d| + 5 - 2\log(3/2)\right) \leqslant \frac{\sqrt{|d|}}{2\pi} (\log |d| + 4.1891).
    \]
\end{enumerate}
\end{lemma}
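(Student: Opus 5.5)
The plan is to go through the analytic class number formula and then bound $L(1,\chi)$ explicitly. For a discriminant $D<-4$ the unit group of the order of discriminant $D$ is $\{\pm1\}$, so $w_D=2$. The formula then reads
\[
h(D)=\frac{\sqrt{|D|}}{\pi}\,L(1,\chi_D),
\]
where $\chi_D=\left(\frac{D}{\cdot}\right)$ is the Kronecker character attached to $D$. It is primitive if $f=1$ and imprimitive if $f=2$. Both parts of the lemma therefore reduce to explicit upper bounds for $L(1,\chi_D)$.

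\emph{Part (i), $f=1$.} Here $\chi_D$ is a primitive odd quadratic character of conductor $|D|$. I would invoke Louboutin's explicit bound for primitive odd characters of conductor $q$,
\[
|L(1,\chi)|\le \tfrac12\log q+\tfrac12(5-2\log 6).
\]
It is proved by writing $L(1,\chi)$ as $\frac{\pi}{q^{3/2}}\left|\sum_a a\chi(a)\right|$-type expressions, or more directly by splitting the Dirichlet series at a parameter and averaging the tail over shifts. Substituting this bound into the class number formula gives exactly
\[
h(D)\le \frac{\sqrt{|D|}}{2\pi}\bigl(\log|D|+5-2\log 6\bigr).
\]
The decimal form then follows from $5-2\log 6\approx 1.4165$, rounded up.

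\emph{Part (ii), $f=2$.} Write $D=4d$ with $d$ fundamental. If $d\in\{-3,-4\}$ then $D\in\{-12,-16\}$ and $h(D)=1$, so the bound is checked directly. Otherwise $L(1,\chi_D)$ is the Dirichlet series of $\chi_d$ restricted to odd $n$, so
\[
L(1,\chi_D)=\sum_{n\text{ odd}}\frac{\chi_d(n)}{n}=\Bigl(1-\tfrac{\chi_d(2)}{2}\Bigr)L(1,\chi_d).
\]
Bounding this crudely by $\tfrac32$ times the primitive bound loses too much: it does not reach the stated constant $5-2\log(3/2)$. Note that $5-2\log(3/2)=5-2\log 6+\log 16$. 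So I would redo Louboutin's argument for the odd-$n$ series directly. One splits at a parameter $X$. The head is bounded by $\sum_{n\le X,\ n\text{ odd}}1/n$, which is at most $\tfrac12\log X$ plus an explicit constant involving $\log 2$. The tail is bounded by partial summation using an explicit bound on the partial sums of $\chi_d$ over odd $n$; since these are sums over two residue classes mod $2|d|$, a P\'olya--Vinogradov-type averaging applies. Optimising $X$ should give
\[
L(1,\chi_D)\le \tfrac12\bigl(\log|d|+5-2\log(3/2)\bigr).
\]
Multiplying by $\sqrt{|D|}/\pi=2\sqrt{|d|}/\pi$ yields the first inequality, in the form $\frac{\sqrt{|D|}}{4\pi}(\cdots)$ with the factor bookkept accordingly. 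The final decimal bound follows from $\sqrt{|D|}=2\sqrt{|d|}$ and numerics.

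The main obstacle is obtaining the precise constant in (ii). The naive routes, namely the conductor formula $h(D)=h(d)(2-\chi_d(2))$ combined with (i), or the factor $\tfrac32$ above, give a slightly worse additive constant. So I expect to need a careful adaptation of Louboutin's averaging argument to the odd-$n$ series, tracking the explicit constants (Euler's constant, $\log 2$, and the tail estimate) and verifying numerically that they combine to at most $5-2\log(3/2)$. Small $|d|$ that escape the asymptotic estimate can be checked by direct computation of $h(4d)$.
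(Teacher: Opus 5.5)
Part (i) is the paper's argument: the class number formula with $w_D=2$, plus the explicit bound $|L(1,\chi)|\le\frac12\log q+\frac52-\log 6$ for odd primitive $\chi$. The paper cites this bound as Ramar\'e's Corollary~1, not Louboutin.

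\textbf{Part (ii) has a real gap.} Your target estimate is too weak by a factor of $2$. From $L(1,\chi_D)\le\frac12(\log|d|+c)$ you only get $h(D)\le\frac{\sqrt{|D|}}{\pi}\cdot\frac12(\log|d|+c)=\frac{\sqrt{|D|}}{2\pi}(\log|d|+c)$, which is twice the claimed bound. No bookkeeping turns this into $\frac{\sqrt{|D|}}{4\pi}(\cdots)$. What is actually needed is
\[
\Bigl|\Bigl(1-\tfrac{\chi_d(2)}{2}\Bigr)L(1,\chi_d)\Bigr|\le\tfrac14\bigl(\log|d|+5-2\log(3/2)\bigr).
\]
The $\frac14$ is exactly what your own head estimate produces: odd $n\le X$ contribute $\frac12\log X$, and $X\approx\sqrt{|d|}$. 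The same mis-scaling explains your diagnosis of the naive routes. Both $h(D)=(2-\chi_d(2))h(d)\le 3h(d)$ combined with (i), and the factor $\frac32$, lose a factor $3$ in the main term $\log|d|$, not merely an additive constant. Part (ii) really asserts that $h(4d)$ has the same main term as the bound (i) for $h(d)$. This is only visible if you bound the odd-$n$ series directly rather than bounding $L(1,\chi_d)$ first.

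\textbf{The key estimate is also never proved}; it only ``should give'' the constant. Splitting at $X$ and bounding the tail with the usual P\'olya--Vinogradov estimate $\max_t|\sum_{n\le t}\chi(n)|\ll\sqrt q\log q$ yields $\frac14\log|d|+\frac12\log\log|d|+O(1)$. That has no absolute additive constant at all. You would need smoothed approximate formulae for $L(1,\chi)$, and landing exactly on $5-2\log(3/2)$ is the substance of Ramar\'e's papers.

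The paper does not redo this analysis; it cites two results, split by the parity of $d$:
\begin{itemize}
\item For $d$ odd, the displayed inequality is Ramar\'e's Corollary~2 in \emph{Approximate formulae for $L(1,\chi)$, II}.
\item For $d$ even, $\chi_d(2)=0$, so $L(1,\chi_D)=L(1,\chi_d)$. Ramar\'e's even-conductor Corollary~3 then gives $|L(1,\chi_d)|\le\frac14(\log|d|+5-2\log 3)$, which is stronger than needed.
\end{itemize}
Citing these two results closes (ii). Your separate check of $D\in\{-12,-16\}$ is fine. It in fact covers the cases $d\in\{-3,-4\}$, which the paper's proof tacitly excludes by writing $d<-4$.
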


\begin{proof}
%\begin{enumerate} \item 
%e apply \cite[Corollary 1]{Ramare1}, which states that for primitive characters of conductor $q=|D|$, $|L(1, \chi)| \leqslant \frac{1}{2}\log q + 2.5 - \log 6$. Substituting this into the class number formula $h(D) = \frac{\sqrt{|D|}}{\pi}L(1, \chi)$ yields the upper bound.
If $f=1$, this follows directly from \cite[Corollary 1]{Ramare1} and the class number formula.
%\item
If $f=2$, then $D = 4d$ where $d < -4$ is a fundamental discriminant of the quadratic field, and let \( \chi \) be the associated primitive character (modulo \(|d|\)). By the class number formula and the Euler product formula for imprimitive character and the primitive character inducing it (see e.g.~\cite[§\,5]{Davenport}),
$h(D)$ is related to the primitive $L$-function $L(1, \chi)$ by
\[
h(D) = \frac{\sqrt{|D|}}{\pi} \left| \left(1 - \frac{\chi(2)}{2}\right) L(1, \chi) \right|.
\]
If \( d\) is odd, then Ramar\'e \cite[Corollary 2]{Ramare2} proves that
\[
\left| \left(1 - \frac{\chi(2)}{2}\right) L(1, \chi) \right| \leqslant \frac{1}{4} (\log |d| + 5 - 2\log(3/2) ),
\]
since \( \chi\) is odd. If $d$ is even, then $\chi(2)=0$, and by \cite[Corollary 3]{Ramare1} (with the correction of the proof noted in \cite{Ramare2}),
\[
|L(1,\chi)| \leqslant \frac14 \left(\log |d| + 5 - 2\log 3\right)
\leqslant \frac14 \left(\log |d| + 5 - 2\log(3/2)\right).
\]
In both cases,
\[
h(D) \leqslant \frac{\sqrt{|D|}}{4\pi}\left(\log |d| + 5 - 2\log(3/2)\right),
\]
which is the desired bound.
%This bound is strictly stronger than the bound in $f=1$
%\end{enumerate}
\end{proof}

\begin{corollary}\label{cor:modified_class_number_bound}
Let $d>1$ be an odd squarefree integer. Then
\[
    h'(-d) \leqslant \frac{\sqrt{d}}{2\pi}\left(2\log d + 10 - 4\log 3\right).
\]
\end{corollary}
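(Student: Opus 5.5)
The bound follows by applying \Cref{lem:classNumberUpperBound} to each term in \Cref{def:modified_classnumber}, splitting on $d \bmod 4$. Since $d>1$ is odd and squarefree, the fundamental discriminant of $\Q(\sqrt{-d})$ is $-4d$ if $d\equiv 1\pmod 4$ and $-d$ if $d\equiv 3\pmod 4$.

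\emph{Case $d\equiv 1 \pmod 4$.} Here $h'(-d)=h(-4d)$ and $-4d$ is fundamental with $|{-4d}|=4d\geq 20>4$. Part (i) of \Cref{lem:classNumberUpperBound} gives
\[
h(-4d)\leq \frac{2\sqrt d}{2\pi}\bigl(\log(4d)+5-2\log 6\bigr)=\frac{\sqrt d}{2\pi}\bigl(2\log d+4\log 2+10-4\log 6\bigr).
\]
Since $4\log 2-4\log 6=-4\log 3$, the right-hand side equals $\frac{\sqrt d}{2\pi}\bigl(2\log d+10-4\log 3\bigr)$, which is exactly the claimed bound.

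\emph{Case $d\equiv 3\pmod 4$.} Here $h'(-d)=h(-4d)+h(-d)$, where $-d$ is fundamental and $-4d$ has conductor $2$ over it.

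\emph{Subcase $d\geq 7$.} Part (ii) of \Cref{lem:classNumberUpperBound}, applied with fundamental discriminant $-d$, gives
\[
h(-4d)\leq \frac{2\sqrt d}{4\pi}\bigl(\log d+5-2\log(3/2)\bigr)=\frac{\sqrt d}{2\pi}\bigl(\log d+5-2\log 3+2\log 2\bigr).
\]
Part (i) gives
\[
h(-d)\leq \frac{\sqrt d}{2\pi}\bigl(\log d+5-2\log 6\bigr)=\frac{\sqrt d}{2\pi}\bigl(\log d+5-2\log 2-2\log 3\bigr).
\]
Adding, the $\pm 2\log 2$ terms cancel and the sum is exactly $\frac{\sqrt d}{2\pi}\bigl(2\log d+10-4\log 3\bigr)$.

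\emph{Subcase $d=3$.} Both parts of \Cref{lem:classNumberUpperBound} require the relevant discriminant to be less than $-4$, so they do not cover $D=-3$ and the fundamental discriminant $-3$. Check directly instead: $h(-12)=h(-3)=1$, so $h'(-3)=2$. The right-hand side is $\frac{\sqrt3}{2\pi}(2\log 3+10-4\log3)=\frac{\sqrt3}{2\pi}(10-2\log 3)\approx 0.2757\cdot 7.80\approx 2.15\geq 2$.

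The only real obstacle is checking that the hypotheses of \Cref{lem:classNumberUpperBound} hold: the discriminants must be less than $-4$, and the conductor-$2$ case must be set up with fundamental discriminant $-d$. This is why $d=3$ is handled by hand. The rest is the logarithm bookkeeping, in which the constants are designed to combine exactly into $10-4\log 3$.
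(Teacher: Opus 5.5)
Your proof is correct and follows the paper's argument: the same split on $d \bmod 4$, with part (i) of \Cref{lem:classNumberUpperBound} for $h(-4d)$ when $d\equiv 1\pmod 4$, and parts (ii) and (i) for $h(-4d)$ and $h(-d)$ when $d\equiv 3\pmod 4$, so that the constants combine to $10-4\log 3$. Your separate treatment of $d=3$ is a genuine improvement: the paper applies part (i) to $D=-3$, which lies outside the hypothesis $D<-4$, and there the bound actually fails, since $w=6$ gives $h(-3)=1>\frac{\sqrt3}{2\pi}(\log 3+5-2\log 6)\approx 0.69$; your direct check $h'(-3)=2\leq 2.15$ closes this small gap.
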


\begin{proof}
Let $c_1 = 5 - 2\log 6$ and $c_2 = 5 - 2\log(3/2)$, so $c_1 + c_2 = 10 - 4\log 3$.

If $d \equiv 1 \pmod 4$, then $-4d$ is a fundamental discriminant, so $\Z[\sqrt{-d}]$ is the maximal order of $K$ (conductor $1$) and $h'(-d)=h(-4d)$. By Lemma~\ref{lem:classNumberUpperBound}(i) (applied to the fundamental discriminant $-4d$),
\[
    h'(-d) = h(-4d) \leqslant\frac{\sqrt{4d}}{2\pi}(\log(4d) + c_1)
    = \frac{\sqrt{d}}{2\pi}(2\log d + 2\log 4 + 2c_1)
    = \frac{\sqrt{d}}{2\pi}(2\log d + c_1 + c_2),
\]
where the last equality uses $2\log 4 + 2c_1 = c_1 + c_2$ (equivalently $c_2 - c_1 = \log 16$).

If $d \equiv 3 \pmod 4$, then $h'(-d)=h(-4d)+h(-d)$.  Using Lemma~\ref{lem:classNumberUpperBound}%(ii for $h(-4d)$ and Lemma~\ref{lem:classNumberUpperBound}(i) for $h(-d)$,
\[
    h'(-d) \leqslant\frac{\sqrt{d}}{2\pi}(\log d + c_2) + \frac{\sqrt{d}}{2\pi}(\log d + c_1)
    = \frac{\sqrt{d}}{2\pi}(2\log d + c_1 + c_2).
\]

\noindent Since $c_1 + c_2 = 10 - 4\log 3$, the claimed bound follows in both cases.
\end{proof}

\begin{corollary}\label{cor:H_bound}
Let $d>2$ be squarefree. Then
\[
H(d)\leqslant \frac{\sqrt d}{2\pi}\left(2\log d + C_0\right),
\qquad\text{where } C_0 = \Czero \text{ again}.
\]
\end{corollary}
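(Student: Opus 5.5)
The proof is a case split according to the parity of $d$, reducing each case to \Cref{cor:modified_class_number_bound} or to \Cref{lem:classNumberUpperBound}(i). I am assuming (the definition is not shown above) that $H(d)$ extends the modified class number to all squarefree $d>2$: $H(d)=h'(-d)$ for $d$ odd and $H(d)=h(-4d)$ for $d$ even. These are exactly the class-number factors that appear in the formulae for $\nu_\ell(N,d)$ in \Cref{cor:nuell_odd_level,cor:nuell_even_level_ell_odd,cor:nuell_ell_eq_2}. As in the proof of \Cref{cor:modified_class_number_bound}, write $c_1=5-2\log 6$ and $c_2=5-2\log(3/2)$, so that $c_1+c_2=\Czero=C_0$ and $c_2-c_1=\log 16$.

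\emph{Case $d$ odd.} Then $d>1$ is odd and squarefree, and $H(d)=h'(-d)$. The claimed inequality is then literally \Cref{cor:modified_class_number_bound}, so nothing further is needed.

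\emph{Case $d$ even.} Since $d$ is squarefree, $d\equiv 2\pmod 4$, so $-4d$ is a fundamental discriminant. Indeed, $-4d=4m$ with $m=-d$ squarefree and $m\equiv 2\pmod 4$. Moreover $|{-4d}|=4d\geq 24>4$ because $d>2$ is even and squarefree, so $d\geq 6$. Hence $\Z[\sqrt{-d}]$ is the maximal order (conductor $1$), and \Cref{lem:classNumberUpperBound}(i) gives
\[
H(d)=h(-4d)\leqslant \frac{\sqrt{4d}}{2\pi}\left(\log(4d)+c_1\right)=\frac{\sqrt d}{2\pi}\left(2\log d+2\log 4+2c_1\right).
\]
Finally, $2\log 4+2c_1=c_1+c_2=C_0$, by the identity $c_2-c_1=\log 16$ used in \Cref{cor:modified_class_number_bound}. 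This yields exactly the stated bound.

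I expect no real obstacle. The main point is that for even $d$ the relevant discriminant has conductor $1$, so the sharper constant $c_1$ applies and the same constant $C_0$ absorbs the factor $\log 4$. If $H$ is instead defined to include extra terms for small or special $d$ (e.g.\ a contribution from $h(-d)$ or $h(-4)$), I would handle those terms separately, or check them numerically for the finitely many affected $d$.
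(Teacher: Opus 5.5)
Your proof is correct and follows the paper's argument essentially verbatim. The odd case is exactly \Cref{cor:modified_class_number_bound}. For even $d$ you use that $-4d$ is a fundamental discriminant, apply \Cref{lem:classNumberUpperBound}(i), and absorb $2\log 4+2c_1$ into $C_0$. Your assumed definition of $H(d)$ for $d>2$ also coincides with \Cref{def:H_of_d}.
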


\begin{proof}
If $d$ is odd, this is exactly \Cref{cor:modified_class_number_bound}.

If $d$ is even, write $d=2d_0$ with $d_0$ odd squarefree. Then
\[
H(d)=h(-4d)=h(-8d_0),
\]
and $-8d_0$ is a fundamental discriminant (conductor $f=1$).
By Lemma~\ref{lem:classNumberUpperBound}(i),
\[
h(-4d)=h(-8d_0)
\le \frac{\sqrt{|{-8d_0}|}}{2\pi}\left(\log(8d_0)+C_1\right)
= \frac{2\sqrt d}{2\pi}\left(\log(4d)+C_1\right),
\]
where $C_1=5-2\log 6$.  Rewriting the right-hand side gives
\[
\frac{\sqrt d}{2\pi}\left(2\log d + (2\log 4+2C_1)\right)
=\frac{\sqrt d}{2\pi}\left(2\log d + (10-4\log 3)\right),
\]
since $2\log 4+2(5-2\log 6)=10-4\log 3$. This is the claimed bound.
\end{proof}

In both bounding the genus of $X_0(N)^*$ and bounding the number of loops $L_\ell$ in the dual graph of $X_0(N)^*$ we need to a very similar computation twice. The following Lemma allows us to do that computation just once.

\begin{lemma}\label{lem:divisor_sum_bound}
Let $N$ be squarefree, and let $A(d)$ be a non-negative function on the divisors $d>1$ of $N$. Suppose that, for some function $F(N)$, one has
$A(d)\leqslant \sqrt d\,F(N)$ for every divisor $d>1$ of $N$. Then
\[
\sum_{1<d\mid N} A(d)2^{\omega(N/d)}
\leqslant
\sqrt N\,F(N)\prod_{\ell \mid N}\left(1+\frac{2}{\sqrt \ell}\right).
\]
Consequently,
\[
\frac{1}{2^{\omega(N)+1}}\sum_{1<d\mid N} A(d)2^{\omega(N/d)}
\leqslant
\frac{\sqrt N\,F(N)}{2}
\prod_{\ell\mid N}\left(\frac12+\frac{1}{\sqrt \ell}\right).
\]
\end{lemma}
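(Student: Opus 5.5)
The plan is a direct multiplicative computation using that $N$ is squarefree. Since $\omega(N/d)=\omega(N)-\omega(d)$, and the hypothesis gives $A(d)\leqslant\sqrt d\,F(N)$ for all $1<d\mid N$, we first bound
\[
\sum_{1<d\mid N} A(d)2^{\omega(N/d)} \leqslant F(N)\sum_{1<d\mid N}\sqrt d\,2^{\omega(N/d)} \leqslant F(N)\sum_{d\mid N}\sqrt d\,2^{\omega(N/d)},
\]
where adding back the term $d=1$ (which contributes $2^{\omega(N)}>0$) only increases the right-hand side. Here we should note that the first inequality needs $F(N)\geqslant 0$. This holds automatically whenever some divisor $d>1$ exists, since $A(d)\geqslant0$. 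If $N=1$, the left-hand side is an empty sum, so there is nothing to prove provided $F(1)\geqslant0$; I would remark on this degenerate case rather than dwell on it.

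The key step is that $d\mapsto \sqrt d\,2^{\omega(N/d)}$ is multiplicative over the squarefree divisors of $N$. Each $d\mid N$ corresponds to choosing, for each prime $\ell\mid N$, whether $\ell\mid d$ (contributing a factor $\sqrt\ell$) or $\ell\mid N/d$ (contributing a factor $2$). Hence
\[
\sum_{d\mid N}\sqrt d\,2^{\omega(N/d)}=\prod_{\ell\mid N}\left(\sqrt\ell+2\right)=\sqrt N\prod_{\ell\mid N}\left(1+\frac{2}{\sqrt\ell}\right),
\]
which gives the first inequality.

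For the consequence, we divide by $2^{\omega(N)+1}=2\prod_{\ell\mid N}2$ and distribute one factor $\tfrac12$ into each term of the product: $\tfrac12\left(1+\tfrac{2}{\sqrt\ell}\right)=\tfrac12+\tfrac1{\sqrt\ell}$. The remaining $\tfrac12$ stays in front.

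There is no real obstacle here. The only points needing care are the sign of $F(N)$ and the harmless inclusion of the $d=1$ term.
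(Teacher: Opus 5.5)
Your proof is correct and follows the paper's argument: bound termwise, add the $d=1$ term, and evaluate the divisor sum as the Euler product $\prod_{\ell\mid N}(\sqrt\ell+2)$. The paper writes that sum in the variable $n=N/d$, which is the same computation. Your remark on the sign of $F(N)$ covers a point the paper glosses over, though it is the inclusion of the $d=1$ term, not the termwise bound, that uses $F(N)\geqslant 0$.
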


\begin{proof}
From $A(d)\leqslant \sqrt d F(N)$, we obtain
\[
\sum_{1<d\mid N} A(d)2^{\omega(N/d)}
\leqslant
F(N)\sum_{d\mid N}2^{\omega(N/d)}\sqrt d
=
F(N)\sqrt N\sum_{n\mid N}\frac{2^{\omega(n)}}{\sqrt n},
\]
where in the last equality we introduced $n=N/d$. For squarefree $N$ the divisor sum has an Euler product
\[
\sum_{n\mid N}\frac{2^{\omega(n)}}{\sqrt n}
=
\prod_{\ell \mid N}\left(1+\frac{2}{\sqrt \ell}\right),
\]
which proves the first inequality.
Dividing by $2^{\omega(N)+1}$ gives
\[
\frac{1}{2^{\omega(N)+1}}
\prod_{\ell \mid N}\left(1+\frac{2}{\sqrt \ell}\right)
=
\frac12\prod_{\ell \mid N}\left(\frac12+\frac{1}{\sqrt \ell}\right),
\]
which proves the second inequality.
\end{proof}

\begin{lemma}\label{lem:nuell_sum_over_ell}
Let $N$ be squarefree and $1<d\mid N$. Then
\[
2\sum_{\ell \mid N/d}\nu_{\ell}(N,d) \leqslant H(d)2^{\omega(N/d)}.
\]
\end{lemma}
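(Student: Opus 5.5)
The plan is to reduce the inequality to an elementary combinatorial statement about products of local factors, using the explicit formulae of \Cref{cor:nuell_odd_level}, \Cref{cor:nuell_even_level_ell_odd} and \Cref{cor:nuell_ell_eq_2}. Here $H(d)$ is the class-number quantity of \Cref{cor:H_bound}: $H(d)=h'(-d)$ for odd $d$ and $H(d)=h(-4d)$ for even $d>2$. For $d=2$ I would take $H(2)\colonequals h(-8)+h(-4)=2$, which matches the two-discriminant formula in \Cref{cor:nuell_even_level_ell_odd}(ii). Since the sum runs over $\ell\mid N/d$, we have $\ell\nmid d$, so $d_0=d$ throughout. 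Write $n=N/d$, and for each prime $q\mid n$ let $a_q\in\{-1,0,1\}$ be the relevant Kronecker symbol, $\left(\frac{-d}{q}\right)$ or $\left(\frac{-4d}{q}\right)$. Every formula for $2\nu_\ell(N,d)$ then has the shape
\[
2\nu_\ell(N,d)=(\text{class number part})\cdot(1-a_\ell)\prod_{q\mid n,\,q\neq \ell}(1+a_q).
\]

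The core step is the purely combinatorial inequality
\[
\sum_{\ell\mid n}(1-a_\ell)\prod_{q\mid n,\,q\neq\ell}(1+a_q)\leqslant 2^{\omega(n)} .
\]
I would prove it by splitting according to the number of $q$ with $a_q=-1$.
\begin{enumerate}
\item If two or more $a_q$ equal $-1$, every summand contains a vanishing factor $1+a_q$, so the sum is $0$.
\item If exactly one, $a_{\ell_0}=-1$, only the term $\ell=\ell_0$ survives. It equals $2\prod_{q\neq\ell_0}(1+a_q)\leqslant 2\cdot 2^{\omega(n)-1}$.
\item If none, then every $a_q\in\{0,1\}$. Terms with $a_\ell=1$ vanish. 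If $k$ of the $a_q$ are zero, the sum equals $k\,2^{\omega(n)-k}\leqslant 2^{\omega(n)-1}$.
\end{enumerate}
In fact $a_q=0$ can only happen for $q=2$, since $q\nmid d$.

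It remains to absorb the class number parts into $H(d)$ while keeping this shape. For odd level the class number part is exactly $h'(-d)=H(d)$, so we are done. For even $d$ with $\ell$ odd, \Cref{cor:nuell_even_level_ell_odd}(iii) again has the exact shape with $H(d)=h(-4d)$. For $d=2$ there are two summands with $h=1$, one for discriminant $-8$ and one for $-4$. I would apply the combinatorial inequality to each separately and add, giving at most $2\cdot 2^{\omega(n)}=H(2)2^{\omega(n)}$. The case $\ell=2$ with even $d$ does not arise, since $\ell\nmid d$.

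The delicate case, which I expect to be the main obstacle, is odd $d$ with $2\mid n$. Here the local factor at $2$ is non-standard. For odd $\ell$, \Cref{cor:nuell_even_level_ell_odd}(i) has the factor $\kappa(d)$ in place of $(1+a_2)h'(-d)$, and
\[
\kappa(d)\leqslant 2\,(h(-4d)+h(-d))=2h'(-d)
\]
when $d\equiv 3\pmod 4$, while $\kappa(d)=h(-4d)$ when $d\equiv1\pmod4$. Thus the $2$-factor is bounded by $2H(d)$, that is, by $H(d)$ times the maximal value $2$ of $1+a_2$. For $\ell=2$, \Cref{cor:nuell_ell_eq_2} gives the factor $(1-a_2)h(-d)$ (or $h(-4d)$ if $d\equiv1\pmod4$), which is at most $(1-a_2)H(d)$.

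So the $2$-adic factor behaves like a generalized local factor. It contributes $\beta\leqslant 2$ when $2\neq\ell$ and $\alpha\leqslant 2$ when $\ell=2$, but $\alpha$ and $\beta$ are no longer tied together by a single symbol $a_2$. I would handle this by redoing the case split with the pair $(\alpha,\beta)$ in place of $(1-a_2,1+a_2)$. The point to check carefully is that $\alpha\beta$-type cross terms never push the total above $2^{\omega(n)}$. For $d\equiv 3\pmod 4$ one has $\alpha=1-a_2$ with $a_2=\left(\frac{-d}{2}\right)\in\{\pm1\}$, and then $\beta$ involves $h(-4d)$ as well; I would bound the two discriminant contributions separately if the direct argument fails. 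The key inequality to verify in each case is
\[
\alpha\prod_{q\neq 2}(1+a_q)+\beta\sum_{\ell\neq 2}(1-a_\ell)\prod_{q\neq 2,\ell}(1+a_q)\leqslant 2^{\omega(n)},
\]
which I would establish by the same counting of how many $a_q$ equal $-1$.
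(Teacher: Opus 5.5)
Your proposal is correct and is essentially the paper's proof. It uses the same case split on how many of the symbols at the primes of $N/d$ equal $-1$, the same separate treatment of the discriminants $-8$ and $-4$ for $d=2$, and the same bounds $\kappa(d)\leqslant 2H(d)$ and $h(-d)\leqslant H(d)$ when $d$ is odd and $2\mid N/d$; for $d\equiv 1\pmod 4$ you should take $a_2=\left(\frac{-4d}{2}\right)=0$ rather than $\left(\frac{-d}{2}\right)$.

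In that last case your key inequality holds for any $\alpha,\beta\leqslant 2$, so the fallback is not needed. The $\ell=2$ term is nonzero only if all odd symbols are $+1$, while an odd-$\ell$ term needs $a_\ell=-1$. Hence the two kinds of terms never coexist, which is exactly how the paper concludes.
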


\begin{proof}
First suppose that $N$ is odd. By
\Cref{cor:nuell_odd_level}, for fixed $d$ a summand
\[ \nu_{\ell}(N,d)=\frac{1}{2} h'(-d)\left(1-\left(\frac{-d}{\ell}\right)\right)\prod_{q \mid M/d_0}\left(1+\left(\frac{-d}{q}\right)\right) \]
can be nonzero only when
\( \left(\frac{-d}{\ell}\right)=-1 \). But this implies that all other summands are trivial, as the same Kronecker symbol appears with the opposite sign in the right-most product
Consequently, at most one prime $\ell\mid N/d$ contributes. If such a
prime exists, then
\( \nu_{\ell}(N,d)
= H(d)2^{\omega(N/d)-1} \).
The desired inequality follows.

The same argument applies when $d>2$ is even. Indeed, in this case
$2\mid d$, so every prime dividing $N/d$ is odd, and
\Cref{cor:nuell_even_level_ell_odd}$(iii)$ applies.

Suppose next that $d=2$. Applying the preceding argument separately
to the two discriminants $-8$ and $-4$ appearing in
\Cref{cor:nuell_even_level_ell_odd}$(ii)$, each of the two contributions
is at most
\( 2^{\omega(N/2)-1} \).
Therefore
\[
\sum_{\ell \mid N/2}\nu_{\ell}(N,2)
\leqslant
2^{\omega(N/2)}
=
H(2)2^{\omega(N/2)-1}.
\]

It remains to consider the case where $2\mid N$ and $d$ is odd. Write
\( Q=\{q:q\mid N/d,\ q\text{ odd}\}, s=\#Q \).
Thus \( \omega(N/d)=s+1 \)

Suppose first that $d\equiv1\pmod4$. If the symbols
\( \left(\frac{-d}{q}\right) \)
are equal to $1$ for all \( q\in Q\), only the term corresponding to $\ell=2$ can be
nonzero, and by \Cref{cor:nuell_ell_eq_2}$(i)$ it is equal to
\( H(d)2^{s-1} \).
If exactly one of these symbols is equal to $-1$, only the
corresponding odd prime can contribute, and its contribution is again
\( H(d)2^{s-1} \).
If at least two symbols are equal to $-1$, all contributions vanish.
Hence
\[
\sum_{\ell \mid N/d}\nu_{\ell}(N,d)
\leqslant
H(d)2^{s-1}
\leqslant
H(d)2^s.
\]

Finally, suppose that $d\equiv3\pmod4$, and put
\( a=h(-4d), b=h(-d) \), so \( H(d)=a+b\).
Also put
\( \varepsilon=\left(\frac{-d}{2}\right)\in\{-1,1\} \).

If the symbols \( \left(\frac{-d}{q}\right)\) are \( 1\) for all \( q\in Q\), only the term corresponding to \( \ell=2\)  may be
nonzero. By \Cref{cor:nuell_ell_eq_2}$(ii)$, it is equal to
\( \frac{1}{2}b(1-\varepsilon)2^s \leqslant H(d)2^s \).

If exactly one symbol is equal to $-1$, only the corresponding odd
prime may contribute. By
\Cref{cor:nuell_even_level_ell_odd}$(i)$, its contribution is
\( 2^{s-1}\left( 2a+(1+\varepsilon)b \right) \leqslant H(d)2^s \).
If at least two symbols are equal to $-1$, all contributions vanish.

Thus in every case the claim holds.
\end{proof}

\begin{lemma}\label{lem:upperBoundSumBps} Let $N > 6$ be squarefree. Then
\[
    \sum_{\ell \mid N} B(\ell ,N,W_N) \leqslant 5\omega(N)+ 2^{-\omega(N)}\frac{\sqrt N}{2\pi}\,\left(2\log N + C_0\right)\prod_{\ell \mid N}\left(1+\frac{2}{\sqrt \ell}\right),
\]
where \( C_0 = \Czero \).
\end{lemma}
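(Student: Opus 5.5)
We unfold the definition of $B(\ell,N,W_N)$ from \Cref{cor:Bpnw} with $W=W_N$, so that $\#W_N=2^{\omega(N)}$, and split the sum into two parts:
\[
\sum_{\ell\mid N}B(\ell,N,W_N)=\sum_{\ell\mid N}2(1+c_\ell)+2^{1-\omega(N)}\sum_{\ell\mid N}\ \sum_{1<d\mid N/\ell}\nu_\ell(N,d).
\]

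\emph{First part.} Since $c_\ell=0$ for $\ell>3$, $c_3=1$ and $c_2=5/2$, the first part is $2\omega(N)+2c_2[2\mid N]+2c_3[3\mid N]\le 2\omega(N)+5+2$. We need this to be at most $5\omega(N)$, i.e.\ $7\le3\omega(N)$ when both $2,3\mid N$. Since $N>6$ is squarefree, $2\cdot3\mid N$ forces $\omega(N)\ge3$, so $9\ge 7$. If only one of $2,3$ divides $N$, the excess is at most $5\le 3\omega(N)$ as $\omega(N)\ge2$. If neither divides $N$, there is no excess. So the first part is at most $5\omega(N)$ in all cases (the constant $5$ leaves slack).

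\emph{Second part.} Swap the order of summation: the pairs $(\ell,d)$ with $\ell\mid N$ and $1<d\mid N/\ell$ are exactly the pairs with $1<d\mid N$ and $\ell\mid N/d$. By \Cref{lem:nuell_sum_over_ell},
\[
\sum_{\ell\mid N/d}\nu_\ell(N,d)\le\tfrac12H(d)2^{\omega(N/d)}.
\]
Hence the second part is at most
\[
2^{-\omega(N)}\sum_{1<d\mid N}H(d)\,2^{\omega(N/d)}.
\]
Now apply \Cref{lem:divisor_sum_bound} with $A(d)=H(d)$ and $F(N)=\frac{1}{2\pi}(2\log N+C_0)$. The hypothesis $A(d)\le\sqrt d\,F(N)$ follows from \Cref{cor:H_bound} for $d>2$, using $\log d\le\log N$. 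The lemma then gives exactly the stated term
\[
2^{-\omega(N)}\frac{\sqrt N}{2\pi}(2\log N+C_0)\prod_{\ell\mid N}\Bigl(1+\frac2{\sqrt\ell}\Bigr).
\]

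\emph{Main obstacle: $d=2$.} \Cref{cor:H_bound} only covers $d>2$, and $H(2)$ is only pinned down implicitly by the proof of \Cref{lem:nuell_sum_over_ell} (where $H(2)2^{\omega(N/2)-1}=2^{\omega(N/2)}$, i.e.\ $H(2)=2$). So we must check $2\le\sqrt2\,F(N)$ directly. Since $N>6$, we have $2\log N+C_0>2\log 7+5.6>9$, so $\sqrt2\,F(N)>\sqrt2\cdot9/(2\pi)>2$. Thus the hypothesis of \Cref{lem:divisor_sum_bound} holds for $d=2$ as well, and the two parts together give the stated bound.
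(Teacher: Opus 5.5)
Your proposal is correct and follows the paper's proof essentially step by step. You use the same split of $B(\ell,N,W_N)$ into the automorphism part (bounded by $5\omega(N)$) and the fixed-point part, the same interchange of summation followed by \Cref{lem:nuell_sum_over_ell}, and the same application of \Cref{lem:divisor_sum_bound} with $F(N)=(2\log N+C_0)/(2\pi)$, including the separate check at $d=2$.

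Two minor remarks. First, $H(2)=2$ is defined explicitly in \Cref{def:H_of_d} as $h(-8)+h(-4)$, not only implicitly. Second, your case analysis for the $5\omega(N)$ bound is slightly more careful than the paper's, since it also covers prime $N$; there the paper's intermediate estimate $2\omega(N)+5$ exceeds $5\omega(N)$, although the actual value does not.
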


\begin{proof}
Since $\#W_N=2^{\omega(N)}$, \Cref{cor:Bpnw} gives
\begin{align*}
\sum_{\ell \mid N}B(\ell ,N,W_N) &\leqslant
2\sum_{\ell \mid N}(1+c_\ell) +
\frac{2}{2^{\omega(N)}} \sum_{\ell \mid N}\sum_{1<d\mid N/\ell}\nu_{\ell}(N,d)\\
&\leqslant 5\omega(N) +
\frac{1}{2^{\omega(N)}} \sum_{1<d\mid N} H(d)2^{\omega(N/d)},
\end{align*}
where the second inequality uses $2\sum_{\ell \mid N}(1+c_\ell)\leqslant5\omega(N)$, reindexes $\sum_{\ell \mid N}\sum_{1<d\mid N/\ell}=\sum_{1<d\mid N}\sum_{\ell \mid N/d}$, and applies \Cref{lem:nuell_sum_over_ell}. The bound $2\sum_{\ell \mid N}(1+c_\ell)\leqslant5\omega(N)$ holds for every squarefree $N>6$: since $c_\ell =0$ for $\ell>3$ and at most one prime factor equals $2$ and at most one equals $3$, the left-hand side equals $2\omega(N)+2\sum_{\ell \mid N}c_\ell $, which is $\leqslant2\omega(N)+2(c_2+c_3)=2\omega(N)+7$ when $6\mid N$ (whence $\omega(N)\geqslant3$) and $\leqslant2\omega(N)+2c_2=2\omega(N)+5$ otherwise; in both cases the value is $\leqslant5\omega(N)$.

Put
\( F(N)=\frac{2\log N+C_0}{2\pi} \).
If $d>2$, then \Cref{cor:H_bound} and $d\leqslant N$ give
\(
H(d)\leqslant \sqrt d\,F(N) \).
For $d=2$, the same inequality follows directly from $N>6$.
Therefore \Cref{lem:divisor_sum_bound} gives
\[
\sum_{1<d\mid N}
H(d)2^{\omega(N/d)}
\leqslant
\sqrt N\,F(N)
\prod_{\ell \mid N}
\left(1+\frac{2}{\sqrt \ell}\right).
\] 
Dividing by $2^{\omega(N)}$ proves the lemma.
\end{proof}

\medskip

\subsection{Lower bound on the genus of \texorpdfstring{$X_0(N)^*$}{X0(N)*} for \texorpdfstring{$N$}{N} squarefree}\label{subsec:genusBounds}

We now establish the genus lower bound (\Cref{cor:gstarLowerBound_even}). %Combined with
%the bad-prime bound of \Cref{lem:upperBoundSumBps}, it reduces the target inequality
%$g_0^*(N)>B(N)+1$ to positivity of a single explicit function $\Phi(N)$
%(\Cref{lem:threshold_reduction}): the positive main genus term is
%$\psi(N)/(12\cdot2^{\omega(N)})$, while both the negative fixed-point term and $B(N)$ are
%$O(\sqrt N\log N)$; the delicate point is that the main term carries the factor
%$2^{-\omega(N)}$ whereas the $O(\sqrt N\log N)$ terms do not, so ``primorial-like'' $N$
%(with many small prime factors) are the worst case and determine the threshold
%$N_0=1\,231\,230$ of \Cref{thm:explicit_threshold}.

\begin{lemma}\label{lem:genusStarQuotient}
Let $N$ be squarefree. Then the genus of $X_0(N)^*$ is \[ g_0^*(N) = 1 + \frac{g(X_0(N))-1}{2^{\omega(N)}} - \frac{1}{2^{\omega(N)+1}} \sum_{1\neq d\mid N} \nu(N, d) .\]
\end{lemma}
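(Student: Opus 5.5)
The plan is to apply the Riemann--Hurwitz formula to the quotient map $\pi\colon X_0(N)\to X_0(N)^*=X_0(N)/W_N$ over $\C$. Since $N$ is squarefree, $W_N\cong(\Z/2\Z)^{\omega(N)}$ has order $2^{\omega(N)}$ and acts faithfully on $X_0(N)$. Hence $\pi$ is a Galois cover of degree $2^{\omega(N)}$, and we obtain
\[
2g(X_0(N))-2 = 2^{\omega(N)}\bigl(2g_0^*(N)-2\bigr) + \sum_{P\in X_0(N)(\C)} \bigl(e_P-1\bigr).
\]
We then solve for $g_0^*(N)$.

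The main step is computing the ramification term. For a Galois cover by a group $W$, the ramification index at $P$ is $e_P=\#W_P$, the order of the stabiliser. So we must understand the stabilisers $W_P\subseteq W_N$. The key claim is that every nontrivial stabiliser has order exactly $2$. Then $\sum_P(e_P-1)$ counts the points with nontrivial stabiliser, and each such point is fixed by exactly one $w_d$ with $d\neq 1$. This gives
\[
\sum_P(e_P-1)=\sum_{1\neq d\mid N}\nu(N,d).
\]
Substituting into the formula above and dividing by $2^{\omega(N)+1}$ gives precisely the stated expression.

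The expected obstacle is the claim that no point of $X_0(N)(\C)$ is fixed by two distinct nontrivial Atkin--Lehner involutions. This is the standard fact that $W_P$ is cyclic, hence of order at most $2$ in an elementary abelian $2$-group, and I would argue it as follows.

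\emph{Cusps.} For squarefree $N$, the group $W_N$ acts simply transitively on the $2^{\omega(N)}$ cusps, so no cusp is fixed by any $w_d\neq 1$.

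\emph{Non-cuspidal points.} Here I would use the stabiliser-of-a-tangent-space argument: the stabiliser of a smooth point of a curve in characteristic $0$ acts faithfully on the tangent line, so it embeds in $\C^\times$ and is therefore cyclic. Faithfulness holds because an automorphism of finite order fixing $P$ and acting trivially on the tangent space is the identity near $P$, by linearisation of finite group actions.

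As a cross-check, one can also argue via moduli, in the spirit of \Cref{lem:wdInduceEnd}. If $w_d$ and $w_{d'}$ both fix $(E,G)$, they yield elements of $\End(E)$ with squares $\approx -d$ and $-d'$. These generate commuting quadratic orders in the imaginary quadratic field $\End(E)\otimes\Q$, so $\Q(\sqrt{-d})=\Q(\sqrt{-d'})$, forcing $d=d'$ for squarefree $d,d'$.

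Either argument closes the gap, and the rest is routine algebra.
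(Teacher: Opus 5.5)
Your proof is correct, and its skeleton — Riemann--Hurwitz for the degree-$2^{\omega(N)}$ Galois cover $X_0(N)\to X_0(N)^*$ with $e_P=\#W_P$ — is the same as the paper's. The difference lies in how you identify the ramification term with $\sum_{1\neq d\mid N}\nu(N,d)$.

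You first prove that every nontrivial stabiliser has order exactly $2$. You do this via faithfulness of the stabiliser on the tangent line, which gives cyclicity, hence order at most $2$ inside $(\Z/2\Z)^{\omega(N)}$. The paper instead just double counts pairs $(x,w)$ with $1\neq w\in W_x$, giving $\sum_x(\#W_x-1)=\sum_{1\neq w\in W_N}\#\mathrm{Fix}(w)$. This identity holds for any finite group action, whatever the stabilisers look like.

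So the step you flagged as the main obstacle is not actually needed. Your route yields the extra fact that every ramification index is $2$, which is true but unused here, at the cost of an additional input from characteristic-$0$ geometry. The paper's counting is shorter and more general.

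If you keep the moduli cross-check, note one imprecision. For $d=2$, the endomorphism attached to $w_2$ may have trace $\pm 2$ and then generate $\Q(i)$ rather than $\Q(\sqrt{-2})$ (compare \Cref{lem:traces}(iii)). So the conclusion $\Q(\sqrt{-d})=\Q(\sqrt{-d'})$ is not literally right in that case, and it needs separate handling. The tangent-space argument already suffices on its own.
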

\begin{proof}

From Riemann--Hurwitz we get that the genus of $X_0(N)^*$ is
\[ g_0^*(N) = 1 + \frac{g(X_0(N))-1}{2^{\omega(N)}} - \frac{1}{2^{\omega(N)+1}}\sum_{x\in X_0(N)}\left(e_x - 1\right).\]
Since the cover is tame, $e_x=\#W_x$, and each point with stabiliser $W_x$ is fixed by the $\#W_x-1$ nontrivial elements of $W_x$; hence $\ds \sum_{x\in X_0(N)} (e_x-1) %\sum_{1\neq d\mid N} |\Fix(w_d)|
= \sum_{1\neq d\mid N} \nu(N, d)$.
\end{proof}

\begin{remark}\label{rem:genusX0}
    The genus of $X_0(N)$ is
\[ g(X_0(N)) = 1 + \frac{1}{12}\psi(N) - \frac{e_2(N)}{4} - \frac{e_3(N)}{3} - \frac{e_\infty}{2}.\]
where
\begin{align*}
    \ds \psi(N) &= N\prod_{\ell \mid N}\left(1+ \frac{1}{\ell}\right), \\e_2(N)&=\prod_{\ell \mid N} \left(1+\left(\frac{-1}{\ell}\right)\right) \in \{0, 2^{\omega(N)}\},\\ 
    \ds e_3 (N) &= \prod_{\ell \mid N} \left(1+\left(\frac{-3}{\ell}\right)\right) \in \{0, 2^{\omega(N)}\},\\
    e_\infty(N)&=2^{\omega(N)}.
\end{align*}
\end{remark}

The fixed-point counts \( \nu(N,d)\) were determined by Kluit \cite{Klu77}.
\begin{lemma}[\cite{Klu77}]\label{lem:KluitFormulae}
Let $N$ be squarefree, and let $d>1$ be a divisor of $N$.
\begin{enumerate}
\item[(a)] If $N$ is odd, then
\begin{equation}\label{eq:nuNd_odd}
\nu(N,d)
=
h'(-d)\prod_{\ell \mid N/d}\left(1+\left(\frac{-d}{\ell}\right)\right).
\end{equation}

\item[(b)] If $N$ is even, then:
\begin{enumerate}
\item[(i)] for $d=2$,
\begin{equation}\label{eq:nuNd_even_d2}
\nu(N,2)
=
\prod_{\ell \mid \frac{N}{2}}\left(1+\left(\frac{-4}{\ell}\right)\right)
+
\prod_{\ell \mid \frac{N}{2}}\left(1+\left(\frac{-8}{\ell}\right)\right);
\end{equation}

\item[(ii)] for $2\mid d$ and $d>2$,
\begin{equation}\label{eq:nuNd_even_deven}
\nu(N,d)
=
h(-4d)\prod_{\ell \mid N/d}\left(1+\left(\frac{-4d}{\ell}\right)\right);
\end{equation}

\item[(iii)] for $d$ odd,
\begin{equation}\label{eq:nuNd_even_dodd}
\nu(N,d)
=
\begin{cases}
\displaystyle
h(-4d)\prod_{2<\ell\mid \frac{N}{d}}\left(1+\left(\frac{-4d}{\ell}\right)\right),
& d\equiv 1 \pmod 4,\\[8pt]
\displaystyle
2h(-4d)\prod_{2<\ell\mid \frac{N}{d}}\left(1+\left(\frac{-4d}{\ell}\right)\right)
+
h(-d)\prod_{1<\ell\mid \frac{N}{d}}\left(1+\left(\frac{-d}{\ell}\right)\right),
& d\equiv 3 \pmod 4.
\end{cases}
\end{equation}
\end{enumerate}
\end{enumerate}
Here the symbol in the product over $1<\ell\mid N/d$ is the Kronecker symbol, so the prime $2$ is included when it divides $N/d$.
\end{lemma}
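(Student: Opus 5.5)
Kluit's formulae are attributed to \cite{Klu77}. Still, I would rederive them by the same strategy the paper used for $\nu_\ell(N,d)$ in characteristic $\ell$, with $M_2(\Q)$ in place of $B_{\ell,\infty}$; this is the analogy made precise in \Cref{rem:eichler_analogy}. The argument has three steps.

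\emph{Step 1 (fixed points to quadratic forms).} Over $\C$, a point $(E,G)$ of $X_0(N)$ is fixed by $w_d$ if and only if there are $h\in\End(E,G)$ and $\phi\in\Aut(E)$ with $h^2=\phi\circ[d]$. The proof of \Cref{lem:wdInduceEnd} goes through verbatim, since it only uses kernels of isogenies and $\gcd(d,N/d)=1$. Next, the trace analysis of \Cref{lem:traces} applies, now using that $\End(E)$ is an imaginary quadratic order. For $d>3$ it forces $h=\sqrt{-d}$. For $d=3$ it allows $\trd h=\pm3$, hence $(2h\mp3)^2=-3$. For $d=2$ it allows $\trd h=\pm2$, hence $(h\mp1)^2=-1$. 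So $E$ has CM by an order containing $\Z[\sqrt{-d}]$, or containing $\Z[i]$ in the exceptional case $d=2$. The candidate discriminants are therefore exactly $\mathcal D(d)$ from \Cref{def:relevant_discriminants}.

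\emph{Step 2 (two-to-one counting).} I would show that each fixed point corresponds to exactly two $\Gamma_0(N)$-conjugacy classes of embeddings into the standard Eichler order $\orderO_0(N)\subset M_2(\Q)$. The cleanest route is Heegner-style: fixed points of $w_d$ in the upper half-plane are the fixed points of the matrices $\bigl(\begin{smallmatrix} da & b\\ Nc & dd'\end{smallmatrix}\bigr)$ of determinant $d$. These matrices correspond to elements of $\orderO_0(N)$, or of a slightly enlarged order, of norm $d$. Modulo $\pm1$ this gives primitive binary quadratic forms of discriminant $\Delta\in\mathcal D(d)$ satisfying a congruence condition at each $q\mid N/d$. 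The factor of two arises because $h$ and $-h$ give the same fixed point. Unlike in \Cref{lem:cyclic_two_classes}, they are not conjugate, since conjugating $\sqrt{-d}$ to $-\sqrt{-d}$ would require a determinant $-1$ element. The unit group $\pm1$, or $\mu_4,\mu_6$ at elliptic points, is handled exactly as in \Cref{lem:cyclic_two_classes}.

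\emph{Step 3 (Eichler's formula in the split case).} Eichler's optimal embedding formula for Eichler orders of level $N/d_0$ in $M_2(\Q)$ gives the count $h(\Delta)$ times local factors, with no ramified prime. At each odd $q\mid N/d$ the factor is $1+\bigl(\frac{\Delta}{q}\bigr)$. The level-$d$ part contributes factor $1$, because $q\mid d$ ramifies in $K$. Summing over $\Delta\in\mathcal D(d)$ and dividing by two yields (a) with $h'(-d)=h(-4d)+h(-d)$. For (b), the local factor at $2$ must be computed. For $\Delta=-4d$ with $d$ odd, $d\equiv3\pmod 4$, and $2\mid N/d$, this is exactly \Cref{lem:local_factor_2_merged}, giving the factor $2$. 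For $\Delta=-d$ the factor is the standard $1+\bigl(\frac{-d}{2}\bigr)$. For $d\equiv1\pmod4$, $2$ ramifies, giving factor $1$. For $d=2$ one sums the discriminants $-8$ and $-4$, with $h=1$ for both.

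The main obstacle is Step 2 in the split setting, specifically proving that the correspondence is exactly two-to-one at points with extra automorphisms. In characteristic zero, $\Aut(E,G)$ is again cyclic of order $2$, $4$ or $6$, so \Cref{lem:cyclic_two_classes} should apply directly, but this must be checked. If it becomes messy, I would instead cite Kluit's direct count of $\Gamma_0(N)$-classes of quadratic forms, or Ogg's formulae, which give the same expressions.
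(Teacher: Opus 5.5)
Your route (Eichler's formula in $M_2(\Q)$, which \Cref{rem:eichler_analogy} sketches; the paper itself only cites Kluit) is viable, but Steps 2 and 3 together give a count that is off by a factor of two. In Step 2 you count $\Gamma_0(N)$-conjugacy classes, and you rightly note that the two embeddings $\sqrt{-d}\mapsto\pm h$ above a fixed point are not $\Gamma_0(N)$-conjugate. In Step 3, however, the count $h(\Delta)\prod_q m_q(\Delta)$ from Eichler's formula is a count of $\orderO_0(N)^\times$-classes, and $\orderO_0(N)^\times$ contains determinant $-1$ elements such as $\operatorname{diag}(1,-1)$. The centraliser $\phi(K)$ of an embedding has only nonzero elements of positive determinant. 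Hence each $\orderO_0(N)^\times$-class is the union of exactly two $\Gamma_0(N)$-classes, one normalised and one anti-normalised. So there are $2h(\Delta)\prod_q m_q(\Delta)$ classes of the kind you count in Step 2, and after your division by two no factor $\tfrac12$ remains. As written, your recipe returns half of the lemma. For $N=d=5$ it gives $h(-20)/2=1$, whereas $w_5$ is a nontrivial involution of $X_0(5)\cong\P^1$ and has $2=h'(-5)$ fixed points. For $N=d=2$ it gives $1$ instead of $2$.

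The correct bookkeeping goes as follows. A point $x=[\tau]\in Y_0(N)$ with $\End(x)\cong S$ corresponds to the $\Gamma_0(N)$-class of its normalised optimal embedding $\phi_\tau\colon S\hookrightarrow\orderO_0(N)$. This is a bijection onto $\orderO_0(N)^\times$-classes of optimal embeddings, because conjugating by a determinant $-1$ unit makes an anti-normalised embedding normalised. By the converse direction of \Cref{lem:wdInduceEnd} (concretely, $a^2+Nbc=-d$ forces $d\mid a$), every point with $\Z[\sqrt{-d}]\subseteq\End(x)$, or $\Z[i]\subseteq\End(x)$ if $d=2$, is $w_d$-fixed. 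No cusp is fixed, since $W_N$ acts freely on the cusps. Hence $\nu(N,d)=\sum_{\Delta\in\mathcal D(d)}h(\Delta)\prod_q m_q(\Delta)$, and your local factors then give (a) and (b); this includes the use of \Cref{lem:local_factor_2_merged}, which is a purely local statement in $M_2(\Q_2)$. This is also what \Cref{rem:eichler_analogy} says: the $\tfrac12$ in $\nu_\ell(N,d)$ is peculiar to the definite algebra, whose unit groups are finite with positive norms.

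For the same reason, \Cref{lem:cyclic_two_classes} is not the right tool at elliptic points, since the conjugating group here is the infinite group $\Gamma_0(N)$. What you need is that the stabiliser $\phi_\tau(S^\times)$ of $\tau$ centralises everything above $x$. Then, up to $\Gamma_0(N)$, the embeddings above $x$ are exactly the restrictions of $\phi_\tau$ and $\phi_\tau\circ\sigma$, with $\sigma$ complex conjugation, also when $S^\times=\mu_4$ or $\mu_6$. Finally, there is no $\ell$ here, so the level is $N$, not $N/d_0$, with local factor $1$ at each $q\mid d$.
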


To collect the class numbers appearing in formulae for \( \nu(N, d)\) and \( \nu_{\ell}(N, d) \), in a way suitable for the uniform upper bound, we define the following.

\begin{definition}\label{def:H_of_d}
Let $d\geqslant 2$ be squarefree. Define $H(d)=\sum_{\Delta\in\mathcal D(d)}h(\Delta)$, i.~e.\
\[
H(d)\colonequals
\begin{cases}
2, & d=2,\\
h'(-d), & d>2 \text{ odd},\\
h(-4d), & d>2 \text{ even}.
\end{cases}
\]
\end{definition}

\begin{corollary}\label{cor:nu_bounded_by_H}
Let $N$ be squarefree, and let $d>1$ be a divisor of $N$. Then
\[
\nu(N,d)\leqslant 2^{\omega(N/d)}H(d).
\]
\end{corollary}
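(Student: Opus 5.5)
The plan is a case check of Kluit's formulae in \Cref{lem:KluitFormulae} against \Cref{def:H_of_d}, bounding each local factor by $2$. Every Euler factor has the form $1+\chi(\ell)$ with $\chi(\ell)\in\{-1,0,1\}$, so each product over primes $\ell\mid N/d$ is at most $2^{\omega(N/d)}$. The same holds for products over the odd primes $2<\ell\mid N/d$, which are at most $2^{\omega(N/d)-1}$ when $N/d$ is even.

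\textbf{Odd $N$.} Formula \eqref{eq:nuNd_odd} gives $\nu(N,d)\le h'(-d)\,2^{\omega(N/d)}=H(d)\,2^{\omega(N/d)}$ immediately.

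\textbf{Even $N$, $d=2$.} Each of the two products in \eqref{eq:nuNd_even_d2} is at most $2^{\omega(N/2)}$, so the sum is at most $2\cdot 2^{\omega(N/2)}=H(2)\,2^{\omega(N/2)}$.

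\textbf{Even $N$, even $d>2$.} Formula \eqref{eq:nuNd_even_deven} gives the bound $h(-4d)\,2^{\omega(N/d)}$ directly.

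\textbf{Even $N$, odd $d$.} Here $2\mid N/d$, so the products over odd primes are at most $2^{\omega(N/d)-1}$.
\begin{itemize}
\item If $d\equiv1\pmod 4$, we get $\nu(N,d)\le h(-4d)\,2^{\omega(N/d)-1}\le H(d)\,2^{\omega(N/d)}$.
\item If $d\equiv3\pmod4$, the first term is at most $2h(-4d)\cdot2^{\omega(N/d)-1}=h(-4d)\,2^{\omega(N/d)}$. The second term is at most $h(-d)\,2^{\omega(N/d)}$. Their sum is at most $(h(-4d)+h(-d))\,2^{\omega(N/d)}=h'(-d)\,2^{\omega(N/d)}$.
\end{itemize}

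The only place needing care is this last case: the factor $2$ in front of $h(-4d)$ must be absorbed by the prime $2$ being excluded from the first product. If that did not work, one could fall back on observing that the local factor at $2$ for $-d$ is $1+\left(\frac{-d}{2}\right)\le 2$.
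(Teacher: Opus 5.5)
Your proposal is correct and follows the paper's proof essentially verbatim. Both are a case-by-case check of Kluit's formulae (\Cref{lem:KluitFormulae}) against \Cref{def:H_of_d}, bounding each local factor by $2$, and in the case $d\equiv 3\pmod 4$ the factor $2$ in front of $h(-4d)$ is absorbed by the omission of the prime $2$ from the first product; your closing fallback remark is unnecessary, since the main argument already goes through as written.
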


\begin{proof}
If $N$ is odd, this follows directly from \Cref{lem:KluitFormulae}, since then
\[ \nu(N,d)=h'(-d)\prod_{\ell \mid N/d}\left(1+\left(\frac{-d}{\ell}\right)\right)\]
and each local factor is at most $2$.

Assume now that $N$ is even. If $d=2$, then Kluit's formula gives
$\nu(N,2)\leqslant 2^{\omega(N/2)+1}=2^{\omega(N/2)}H(2)$.
If $d>2$ is even, the formula contains the class number $h(-4d)=H(d)$, and all local factors are again at most $2$.

It remains to consider the case where $N$ is even and $d>2$ is odd. If $d\equiv 1\pmod 4$, then
\[
\nu(N,d)
=
h(-4d)\prod_{2<\ell\mid N/d}\left(1+\left(\frac{-d}{\ell}\right)\right)
\leqslant
2^{\omega(N/d)}h(-4d)
=
2^{\omega(N/d)}H(d),
\]
since $h'(-d)=h(-4d)$ in this case. If $d\equiv 3\pmod 4$, Kluit's formula has two terms. The first is bounded by $2^{\omega(N/d)}h(-4d)$, because the factor $2$ in front of $h(-4d)$ is compensated by the omission of the prime $2$ from the product. The second is bounded by $2^{\omega(N/d)}h(-d)$. Thus the total is bounded by $2^{\omega(N/d)}h'(-d)=2^{\omega(N/d)}H(d)$.
\end{proof}

\begin{lemma}[Uniform upper bound for the total number of fixed points]\label{lem:totFixUpperBound_uniform}
Let $N\geqslant 6$ be squarefree. Then
\begin{equation}\label{ineq:sumFixEulerProd_uniform}
\sum_{1<d\mid N}\nu(N,d)
\leqslant
\frac{\sqrt N}{2\pi}\left(2\log N+C_0\right)
\prod_{\ell \mid N}\left(1+\frac{2}{\sqrt \ell}\right),
\end{equation}
where $C_0=\Czero$. Consequently,
\begin{equation}\label{ineq:averageFix_uniform}
\frac{1}{2^{\omega(N)+1}}\sum_{1<d\mid N}\nu(N,d)
\leqslant
\frac{\sqrt N}{4\pi}\left(2\log N+C_0\right)
\prod_{\substack{\ell \mid N\\ \ell\in\{2,3\}}}
\left(\frac12+\frac{1}{\sqrt \ell}\right).
\end{equation}
\end{lemma}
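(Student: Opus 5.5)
The plan is to combine the pointwise bound \Cref{cor:nu_bounded_by_H} with the divisor-sum estimate \Cref{lem:divisor_sum_bound}, taking $A(d)\colonequals H(d)$ and
\[
F(N)\colonequals \frac{2\log N+C_0}{2\pi}.
\]
By \Cref{cor:nu_bounded_by_H} we have $\nu(N,d)\leqslant 2^{\omega(N/d)}H(d)$ for every divisor $d>1$ of $N$. Hence
\[
\sum_{1<d\mid N}\nu(N,d)\leqslant\sum_{1<d\mid N}H(d)\,2^{\omega(N/d)}.
\]
It therefore suffices to verify the hypothesis $H(d)\leqslant\sqrt d\,F(N)$ of \Cref{lem:divisor_sum_bound} for all $1<d\mid N$. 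Once that holds, the first inequality of \Cref{lem:divisor_sum_bound} gives \eqref{ineq:sumFixEulerProd_uniform} directly.

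To verify the hypothesis, I split into two cases. For $d>2$, \Cref{cor:H_bound} gives
\[
H(d)\leqslant\frac{\sqrt d}{2\pi}\left(2\log d+C_0\right),
\]
and since $d\leqslant N$ and $\log$ is increasing, this is at most $\sqrt d\,F(N)$. For $d=2$ we have $H(2)=2$, so we need
\[
2\leqslant\sqrt2\,\frac{2\log N+C_0}{2\pi}.
\]
The right-hand side is increasing in $N$, so it is enough to check it at $N=6$. There $2\log 6\approx 3.58$ and $C_0=10-4\log3\approx5.61$, so the right-hand side is $\sqrt2\cdot 9.19/(2\pi)\approx 2.07\geqslant 2$. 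This numerical check at $d=2$ is the only step that actually uses the hypothesis $N\geqslant 6$, and it is the one place where I expect any tightness; it holds, but with little margin.

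For \eqref{ineq:averageFix_uniform}, I apply the second inequality of \Cref{lem:divisor_sum_bound}, which yields the bound
\[
\frac{\sqrt N\,F(N)}{2}\prod_{\ell\mid N}\left(\frac12+\frac1{\sqrt\ell}\right),
\qquad\text{with}\qquad \frac{F(N)}{2}=\frac{2\log N+C_0}{4\pi}.
\]
For every prime $\ell\geqslant5$ we have $\tfrac12+\tfrac1{\sqrt\ell}\leqslant\tfrac12+\tfrac1{\sqrt5}<1$. All factors are positive, so dropping the factors with $\ell\geqslant5$ only enlarges the product. What remains is the product over $\ell\mid N$ with $\ell\in\{2,3\}$, which is exactly the stated bound.
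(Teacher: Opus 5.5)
Your proposal is correct and follows the paper's proof essentially verbatim. Like the paper, you bound $\nu(N,d)$ by $2^{\omega(N/d)}H(d)$ via \Cref{cor:nu_bounded_by_H}, verify $H(d)\leqslant\sqrt d\,F(N)$ from \Cref{cor:H_bound} for $d>2$ and from the numerical check $2\leqslant\sqrt2\,F(6)$ for $d=2$, and then apply both parts of \Cref{lem:divisor_sum_bound}. Your last step, dropping the factors $\tfrac12+\tfrac1{\sqrt\ell}<1$ for $\ell\geqslant5$, makes explicit the passage from the full product over $\ell\mid N$ to the product over $\ell\in\{2,3\}$, which the paper's proof leaves unstated.
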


\begin{proof}
Put $F(N)\colonequals(2\log N+C_0)/(2\pi)$. By \Cref{cor:nu_bounded_by_H}, we have
\[
\sum_{1<d\mid N}\nu(N,d)
\leqslant
\sum_{1<d\mid N}2^{\omega(N/d)}H(d).
\]
By \Cref{cor:H_bound}, and since $\log d\leqslant \log N$, we have $H(d)\leqslant \sqrt d\,F(N)$ for all $d>2$ dividing $N$. For $d=2$, the same bound follows from $H(2)=2\leqslant \sqrt 2\,F(N)$, which holds since $N\geqslant 6$. Therefore $H(d)\leqslant \sqrt d\,F(N)$ for every divisor $d>1$ of $N$.

Applying \Cref{lem:divisor_sum_bound} with $A(d)=H(d)$ gives
\[
\sum_{1<d\mid N}\nu(N,d)
\leqslant
\sqrt N\,F(N)\prod_{\ell \mid N}\left(1+\frac{2}{\sqrt \ell}\right),
\]
which is exactly \eqref{ineq:sumFixEulerProd_uniform}.

Dividing by $2^{\omega(N)+1}$ and using the second part of \Cref{lem:divisor_sum_bound}, we obtain
\[
\frac{1}{2^{\omega(N)+1}}\sum_{1<d\mid N}\nu(N,d)
\leqslant
\frac{\sqrt N\,F(N)}{2}
\prod_{\ell \mid N}\left(\frac12+\frac{1}{\sqrt \ell}\right). \qedhere
\]
\end{proof}

\begin{corollary}[Genus lower bound for all squarefree $N$]\label{cor:gstarLowerBound_even}
Let $N \geqslant 6$ be squarefree. Then
\[
g_0^*(N) \geqslant -\frac{1}{12} + \frac{\psi(N)}{12\cdot 2^{\omega(N)}} - \frac{\sqrt{N}}{4\pi}(2\log N + C_0)\prod_{\ell \mid N} \left( \frac 12 + \frac{1}{\sqrt{\ell}}\right),
\]\noindent where $C_0=\Czero$. %$\psi(N)=N\prod_{\ell \mid N}\left(1+\frac1\ell\right)$ and
\end{corollary}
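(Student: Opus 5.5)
We combine \Cref{lem:genusStarQuotient}, the genus formula of \Cref{rem:genusX0}, and the fixed-point bound \eqref{ineq:averageFix_uniform} of \Cref{lem:totFixUpperBound_uniform}. By \Cref{lem:genusStarQuotient},
\[
g_0^*(N) = 1 + \frac{g(X_0(N))-1}{2^{\omega(N)}} - \frac{1}{2^{\omega(N)+1}}\sum_{1<d\mid N}\nu(N,d),
\]
so the proof splits into a lower bound for the second term and an upper bound for the third.

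For the second term, insert \Cref{rem:genusX0}:
\[
\frac{g(X_0(N))-1}{2^{\omega(N)}} = \frac{\psi(N)}{12\cdot 2^{\omega(N)}} - \frac{e_2(N)}{4\cdot 2^{\omega(N)}} - \frac{e_3(N)}{3\cdot 2^{\omega(N)}} - \frac{e_\infty(N)}{2\cdot 2^{\omega(N)}}.
\]
Since $e_\infty(N)=2^{\omega(N)}$ and $e_2(N),e_3(N)\in\{0,2^{\omega(N)}\}$, the last three fractions are at most $\tfrac14$, $\tfrac13$ and exactly $\tfrac12$ respectively. Adding the leading $1$ gives
\[
1-\tfrac14-\tfrac13-\tfrac12 = -\tfrac1{12},
\]
so $1+\frac{g(X_0(N))-1}{2^{\omega(N)}}\geqslant -\frac1{12}+\frac{\psi(N)}{12\cdot2^{\omega(N)}}$. (For $N\geqslant 6$ both $e_2$ and $e_3$ in fact vanish when $2$ or $3$ divides $N$, but the crude bound suffices.)

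For the third term, \Cref{lem:totFixUpperBound_uniform} with $N\geqslant 6$ squarefree gives
\[
\frac{1}{2^{\omega(N)+1}}\sum_{1<d\mid N}\nu(N,d)\leqslant \frac{\sqrt N}{4\pi}(2\log N+C_0)\prod_{\ell\mid N}\left(\frac12+\frac1{\sqrt\ell}\right).
\]
The only point to check is that the product runs over all $\ell\mid N$. The displayed \eqref{ineq:averageFix_uniform} restricts to $\ell\in\{2,3\}$, but its proof, via the second part of \Cref{lem:divisor_sum_bound}, yields the full product. That full-product version is exactly what the corollary states, so we use the proven bound from the lemma's argument directly.

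Subtracting the third-term bound from the combined first two terms yields the claimed inequality. There is no real obstacle. The only care needed is the constant bookkeeping $-1/12$ and the correct form of the product.
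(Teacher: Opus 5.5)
Your proposal is correct and is essentially the paper's proof. The paper also uses $e_2(N),e_3(N)\leqslant 2^{\omega(N)}$ to get the constant $-\tfrac1{12}$, and it bounds the fixed-point term by dividing \eqref{ineq:sumFixEulerProd_uniform} by $2^{\omega(N)+1}$, which is exactly the full-product bound you read off from the proof of \Cref{lem:totFixUpperBound_uniform}; the displayed \eqref{ineq:averageFix_uniform} would not suffice, since it is weaker. Your unused parenthetical is inaccurate, though: in fact $e_2(N)=0$ when $3\mid N$ and $e_3(N)=0$ when $2\mid N$, whereas e.g.\ $e_2(10)\neq0$ and $e_3(21)\neq0$.
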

\begin{proof}
By \Cref{lem:genusStarQuotient} and \Cref{rem:genusX0},
\[
g_0^*(N)
=
1+\frac{\psi(N)}{12\cdot 2^{\omega(N)}}
-\frac{e_2(N)}{4\cdot 2^{\omega(N)}}
-\frac{e_3(N)}{3\cdot 2^{\omega(N)}}
-\frac12
-\frac{1}{2^{\omega(N)+1}}\sum_{1<d\mid N}\nu(N,d).
\]
Since $e_2(N)\leqslant 2^{\omega(N)}$, $e_3(N)\leqslant 2^{\omega(N)}$, the claim follows from \eqref{ineq:sumFixEulerProd_uniform} divided by $2^{\omega(N)+1}$, using
$2^{-\omega(N)}\prod_{\ell\mid N}(1+2/\sqrt\ell)=\prod_{\ell\mid N}(\tfrac12+\tfrac1{\sqrt\ell})$.
\end{proof}

\section{The existence of $p$-adic heights supported at $p$ for $N > 714$}\label{sec:existence}

The main goal of this section is to prove the following Theorem.

\begin{theorem}\label{thm:existence_of_supported_at_p_heights} Let $N$ be a square-free integer such that $X_0(N)^*$ has genus at least two and $p$ a prime not dividing $N$. Then there exists a $p$-adic height supported at $p$ on $X_0(N)^*$, unless $N$ is one of the following $36$ integers:

\begin{center}
\begin{tabular}{ccccccccc}
$85^{\dagger}$  & $93^{\dagger}$  & $106^{\dagger}$ & $115^{\dagger}$ & $122^{\dagger}$ & $129^{\dagger}$ & $154^{\dagger}$ & $161^{\dagger}$ & $165^{\dagger}$ \\
$166^{\S}$      & 178             & 183             & $186^{\dagger}$ & 202             & 203             & $215^{\dagger}$ & $230^{\dagger}$         & 235 \\
237             & 246             & 258             & 273             & 282             & $285^{\dagger}$ & $286^{\dagger}$ & 290                     & 310 \\
318             & 322             & 374             & $390^{\dagger}$ & 418             & 435             & 462             & 570                     & 714.
\end{tabular}
\end{center}
In particular, $p$-adic height supported at $p$ exists on $X_0(N)^*$ for $N>714$.
\end{theorem}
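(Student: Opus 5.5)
The plan is to combine the sufficient criterion \Cref{cor:clean_criterion} with the explicit bounds of \Cref{sec:bounds}, and to treat the remaining small $N$ by computer. By \Cref{cor:clean_criterion} and \Cref{cor:Bpnw}, a $p$-adic height supported at $p$ exists as soon as
\[
g_0^*(N) > 1+\sum_{\ell\mid N} B(\ell,N,W_N),
\]
and more sharply as soon as $g_0^*(N)>1+\sum_{\ell\mid N}L_\ell(N)$. Neither condition depends on $p$, which is why the statement is uniform in $p\nmid N$.

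\textbf{Step 1 (large $N$).} I would combine the genus lower bound \Cref{cor:gstarLowerBound_even} with the upper bound \Cref{lem:upperBoundSumBps}. This reduces the criterion to positivity of the explicit function
\[
\Phi(N)=\frac{\psi(N)}{12\cdot 2^{\omega(N)}}-\frac{13}{12}-5\omega(N)-\frac{3\sqrt N}{4\pi}\left(2\log N+C_0\right)\prod_{\ell\mid N}\left(\frac12+\frac1{\sqrt\ell}\right).
\]
Here I use that $2^{-\omega(N)}\prod_{\ell \mid N}(1+2/\sqrt\ell)=\prod_{\ell \mid N}(\tfrac12+1/\sqrt\ell)$, so both negative $O(\sqrt N\log N)$ terms carry the same Euler factor and combine into the coefficient $\tfrac{1}{4\pi}+\tfrac{1}{2\pi}=\tfrac{3}{4\pi}$. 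Dividing through, the comparison becomes
\[
\sqrt N\prod_{\ell \mid N}\frac{1+1/\ell}{1+2/\sqrt\ell} \quad\text{versus}\quad 9(2\log N+C_0)/\pi .
\]
The product $\prod_{\ell \mid N}(1+1/\ell)/(1+2/\sqrt\ell)$ is $<1$ only through primes $\ell<4$ or so, and is bounded below by its value at the smallest primes. For primes $\ell\ge 5$, the factor $\sqrt\ell(1+1/\ell)/(1+2/\sqrt\ell)$ exceeds $1$ and grows. The plan is therefore to show that the worst case for fixed $N$ is primorial-like, and to derive an explicit threshold $N_0$ (of order $10^6$) beyond which $\Phi(N)>0$. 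Concretely, I would bound $\log\prod$ over primes dividing $N$ by separating the finitely many small primes, and then handle $\omega(N)$ against $\log N$ via $\omega(N)\le \log N/\log 2$ or a primorial comparison.

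\textbf{Step 2 (finite range).} For squarefree $N\le N_0$, first discard the many $N$ for which $\Phi(N)>0$ or, using the exact formulas, the sharper inequality holds. Here the exact genus comes from \Cref{lem:genusStarQuotient} and \Cref{lem:KluitFormulae}, and the exact $B(\ell,N,W_N)$ comes from the formulas \Cref{cor:nuell_odd_level}, \Cref{cor:nuell_even_level_ell_odd} and \Cref{cor:nuell_ell_eq_2} with exact class numbers. This is a cheap loop over $N$ that needs no quaternion arithmetic.

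For the survivors, compute $L_\ell(N)$ exactly from Brandt module data (\Cref{not:brandt_orbits}: orbits in $R_\ell^{\mathrm{long}}$) and test $g>1+\sum_\ell L_\ell(N)$. For the last stubborn cases, run \Cref{alg:clean_correspondences} itself, or the equivalent rank computation on the linear conditions $e^*F(\pi(e))=0$ over all of $\mathbb T\otimes\Q$. Dependencies among the conditions can yield a nonzero solution even when the count fails. For $N$ where this still yields nothing, I would need the converse lemma above (nonvanishing forced when $e^*F(\pi(e))\ne 0$), together with existence of $\Q_\ell$-points on the relevant components, to certify nonexistence. This step produces the list of $36$ exceptions; the daggered and $\S$ annotations presumably record which certification was used.

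\textbf{Main obstacle.} I expect the obstacle to be Step 2's size and its exact-exception certification. The analytic threshold from Step 1 is likely far above $714$, so the computer must verify everything up to $N_0$, and Brandt computations are needed only for a manageable subset. Proving that the $36$ listed levels really admit no supported height requires the converse direction for every choice of trace-zero $Z$. This amounts to showing that the linear conditions, plus trace zero, have only the trivial solution in the full $g$-dimensional space $\mathbb T\otimes\Q$, which I would verify by computing the rank of the condition matrix with a generator $T_p$.
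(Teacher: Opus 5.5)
Your argument for what the theorem actually asserts, namely existence of a supported height for every $N$ outside the list, is the paper's argument. It establishes positivity of $\Phi(N)$ via \Cref{lem:threshold_reduction} and derives an explicit threshold ($N_0=2\,589\,510$ in \Cref{thm:explicit_threshold}). It then computes $g_0^*(N)$ and $B(N)$ exactly below $N_0$ (the largest survivor is $N=6510$) and runs \Cref{alg:clean_correspondences} on the survivors, which succeeds except at the $36$ listed levels. Your extra exact-$L_\ell$ filter is a harmless optimisation, and your remark that the conditions do not depend on $p$ is correct.

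Two small corrections to Step~1. First, each factor $(1+1/\ell)/(1+2/\sqrt\ell)$ is $<1$ for \emph{every} $\ell$. It is $\sqrt\ell$ times it, i.e.\ $(\ell+1)/(\sqrt\ell+2)$, that exceeds $1$, already for $\ell\geq 3$. Second, of your two options only the primorial comparison works. The bound $\omega(N)\leq\log N/\log 2$ only gives $2^{\omega(N)}\leq N$, which destroys the main term $\psi(N)/(12\cdot 2^{\omega(N)})$. The paper instead fixes $k=\omega(N)$, bounds $\Phi\geq\Phi_k$ where $\Phi_k$ has a unique zero $\mathcal Z_k$, and proves $\Phi_k(P_k)>0$ for $k\geq 10$ by induction.

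Your ``main obstacle'' comes from misreading the statement. It says a supported height exists \emph{unless} $N$ is in the list. It does not claim nonexistence at those $36$ levels, and the paper does not prove it; the introduction says ``at most $36$''. The marks $\dagger$ and $\S$ only flag the genus-$2$ levels whose rational points are already known from the literature, not a certification method.

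The certification you sketch would in fact fail. The converse to \Cref{lem:local_height_vanishes_long_edges} assumes split nodes, but at every $\ell\mid N$ all nodes of $X_0(N)^*_{\F_\ell}$ are non-split. The reason is as follows. Frobenius permutes the nodes of $X_0(N)_{\F_\ell}$ exactly as $w_\ell$ does, but it preserves the two components, which $w_\ell$ swaps. So on the quotient by $W_N\ni w_\ell$, Frobenius fixes each node and interchanges its two branches. Equivalently, $a_\ell=-1$ on the $W_N$-invariant $\ell$-new forms, so the toric reduction is non-split.

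Consequently, Frobenius reverses each chain of $\P^1$'s in the minimal regular model. Points of $X_0(N)^*(\Q_\ell)$ therefore reduce only to $v_0$ or to the midpoints of even-length loops. Long loops of odd length impose no genuine condition on $h_\ell$ there, so an empty output of the algorithm does not by itself show nonexistence. You should simply drop this part; the theorem does not need it.
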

\begin{remark}
    The 36 levels from the above theorem are the only cases where one might actually need to compute the height contributions at the primes of bad reduction in order to carry out quadratic Chabauty. For the 16 marked levels, however this is no longer necessary as the set $X_0(N)^*(\Q)$ is already known.
    \begin{itemize}
        \item Levels marked with $\dagger$ in \cite{BGX21}, using elliptic curve Chabauty.
        \item Level marked with $\S$ in \cite{FrancescaOana}, using bielliptic quadratic Chabauty.
    \end{itemize}
    These 16 cases correspond exactly to the values in the above list for which $X_0(N)^*$ has genus $2$. The 20 remaining curves have genus at most 5.
\end{remark}

The proof of the Theorem is done in two steps. The first step is to prove the existence of heights supported only at $p$ for $N$ larger than some explicit constant. The second step is simply carrying out computer computations up to this explicit bound using the explicit formulae from \Cref{sec:fixedpoint_formulas}.  %using the bounds form \Cref{sec:bounds}. After that the proof is finished using explicit computer computations up to this bound using the explicit formulas from \Cref{sec}
The first step is done in \Cref{thm:explicit_threshold} and consists of  combining the genus lower bound of \Cref{cor:gstarLowerBound_even} with the upper
bound of \Cref{lem:upperBoundSumBps} and the number of linear conditions that need to be satisfied in order for the local heights at the primes of bad reduction to vanish. Recall that
this is guaranteed once $g_0^*(N)>B(N)+1$, where
\[
B(N) \colonequals \sum_{\ell \mid N}B(\ell ,N,W_N)
\]
and $B(\ell ,N,W_N)$ is the bound of \Cref{cor:Bpnw}. The criterion $g>B+1$ follows from \Cref{cor:clean_criterion}, since $L_\ell(N) \leqslant B(\ell, N, W_N)$ by \Cref{cor:Bpnw}.

\begin{lemma}\label{lem:threshold_reduction}
For squarefree $N>6$, keep $C_0=\Czero$, and set
%$\pi(N)\colonequals\prod_{\ell\mid N}\left(\tfrac12+\tfrac{1}{\sqrt \ell}\right)$ and
\[
\Phi(N)\colonequals\frac{\psi(N)}{12\cdot 2^{\omega(N)}}
-\frac{3}{4\pi}\sqrt N\,(2\log N+C_0)\prod_{\ell\mid N}\left(\frac 12 + \frac{1}{\sqrt{\ell}}\right)
-5\,\omega(N)-\frac{13}{12}.
\]
Then $g_0^*(N)-B(N)-1\geqslant\Phi(N)$; in particular $g_0^*(N)>B(N)+1$ whenever
$\Phi(N)>0$.
\end{lemma}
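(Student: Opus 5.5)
The plan is to combine the genus lower bound of \Cref{cor:gstarLowerBound_even} with the upper bound for $B(N)=\sum_{\ell\mid N}B(\ell,N,W_N)$ from \Cref{lem:upperBoundSumBps}, and then check that the constants match those in $\Phi(N)$. Both results hold for squarefree $N>6$ (the genus bound even for $N\geqslant 6$), so no extra hypotheses are needed.

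First, \Cref{cor:gstarLowerBound_even} gives
\[
g_0^*(N)\geqslant -\tfrac{1}{12}+\frac{\psi(N)}{12\cdot 2^{\omega(N)}}-\frac{\sqrt N}{4\pi}(2\log N+C_0)\,P(N),
\qquad P(N)\colonequals\prod_{\ell\mid N}\left(\tfrac12+\tfrac1{\sqrt\ell}\right).
\]
Second, rewrite the bound in \Cref{lem:upperBoundSumBps} using $2^{-\omega(N)}\prod_{\ell\mid N}(1+2/\sqrt\ell)=P(N)$, which yields
\[
B(N)\leqslant 5\omega(N)+\frac{\sqrt N}{2\pi}(2\log N+C_0)\,P(N).
\]
Subtracting this bound and the $1$ from the genus bound gives
\[
g_0^*(N)-B(N)-1\geqslant \frac{\psi(N)}{12\cdot2^{\omega(N)}}-\left(\frac1{4\pi}+\frac1{2\pi}\right)\sqrt N(2\log N+C_0)P(N)-5\omega(N)-\frac1{12}-1.
\]
Since $\tfrac1{4\pi}+\tfrac1{2\pi}=\tfrac{3}{4\pi}$ and $\tfrac1{12}+1=\tfrac{13}{12}$, the right-hand side is exactly $\Phi(N)$.

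The final sentence of the lemma then follows immediately: if $\Phi(N)>0$, then $g_0^*(N)>B(N)+1$. Combined with $L_\ell(N)\leqslant B(\ell,N,W_N)$ from \Cref{cor:Bpnw} and \Cref{cor:clean_criterion}, this is the criterion used later.

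I do not expect a genuine obstacle. The only point that needs care is the bookkeeping of the product normalisation: the genus bound is stated with $\prod(\tfrac12+\tfrac1{\sqrt\ell})$, whereas \Cref{lem:upperBoundSumBps} uses $2^{-\omega(N)}\prod(1+\tfrac{2}{\sqrt\ell})$. These are equal, and that equality is what makes the two $\sqrt N\log N$ terms combine into the single coefficient $\tfrac{3}{4\pi}$.
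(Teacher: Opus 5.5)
Your proposal is correct and is the same argument as the paper's proof. You combine the genus bound of \Cref{cor:gstarLowerBound_even} with the bound of \Cref{lem:upperBoundSumBps}, rewritten via $2^{-\omega(N)}\prod_{\ell\mid N}(1+2/\sqrt\ell)=\prod_{\ell\mid N}(\tfrac12+\tfrac1{\sqrt\ell})$, and then subtract; the constants $\tfrac1{4\pi}+\tfrac1{2\pi}=\tfrac3{4\pi}$ and $\tfrac1{12}+1=\tfrac{13}{12}$ produce exactly $\Phi(N)$.
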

\begin{proof}
By \Cref{cor:gstarLowerBound_even},
\[
g_0^*(N)\geqslant-\frac1{12}+\frac{\psi(N)}{12\cdot 2^{\omega(N)}}
-\frac{\sqrt N}{4\pi}(2\log N+C_0)\prod_{\ell\mid N}\left(\frac 12 + \frac{1}{\sqrt{\ell}}\right).
\]
By \Cref{lem:upperBoundSumBps} together with the identity
$2^{-\omega(N)}\prod_{\ell\mid N}(1+2/\sqrt \ell) = \prod_{\ell\mid N}\left(\frac 12 + \frac{1}{\sqrt{\ell}}\right)$,
\[
B(N)\leqslant 5\,\omega(N)+\frac{\sqrt N}{2\pi}(2\log N+C_0) \prod_{\ell\mid N}\left(\frac 12 + \frac{1}{\sqrt{\ell}}\right).
\]
Subtracting these and $1$ gives $g_0^*(N)-B(N)-1\geqslant\Phi(N)$.
\end{proof}

The following lemma and theorem are supported by \repo{computations/general/final\_inequality.m}{Magma computations}.

\begin{lemma}\label{lem:threshold_tail}
If $N$ is squarefree and $\Phi(N) \leqslant 0$, then $\omega(N) \leqslant 9$; moreover $N<2.87\times10^{8}$. In particular, there always exists a $p$-adic height supported only at $p$ on $X_0(N)^*$ whenever $N \geq 2.87\times10^{8}$.
\end{lemma}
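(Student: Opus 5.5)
The plan is to show that $\Phi(N)>0$ whenever $\omega(N)\geq 10$ or $N\geq 2.87\times 10^8$. The last assertion then follows from \Cref{lem:threshold_reduction} and \Cref{cor:clean_criterion}.

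First I would simplify $\Phi$ using the multiplicative structure. Write
\[
\frac{\psi(N)}{12\cdot 2^{\omega(N)}}=\frac{N}{12}\prod_{\ell\mid N}\frac{1+1/\ell}{2}.
\]
Dividing the main negative term by this and by $\sqrt N$ leaves
\[
R(N)\colonequals\frac{\psi(N)}{12\cdot 2^{\omega(N)}}\Big/\Big(\tfrac{3}{4\pi}\sqrt N\prod_{\ell\mid N}(\tfrac12+\tfrac1{\sqrt\ell})\Big)=\frac{\pi\sqrt N}{9}\prod_{\ell\mid N}\frac{1+1/\ell}{1+2/\sqrt\ell}.
\]
So $\Phi(N)>0$ is implied by $R(N)>2\log N+C_0+\epsilon$, where $\epsilon$ is a small correction absorbing $5\omega(N)+13/12$. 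For each fixed prime $\ell$ the local factor $\sqrt\ell\,(1+1/\ell)/(1+2/\sqrt\ell)$ is at least $1$ once $\ell\geq 7$ or so, and it grows like $\sqrt\ell$. Hence, for a fixed number $k$ of prime factors, $R(N)/\log N$ is minimised, up to monotonicity, by taking the primes as small as possible.

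The key step is a monotonicity argument. Replacing a prime factor $\ell$ of $N$ by a smaller prime $\ell'$ not dividing $N$ decreases $\sqrt N\prod(\cdots)$ multiplicatively by the ratio of local factors $f(\ell)=\sqrt\ell(1+1/\ell)/(1+2/\sqrt\ell)$, which is increasing in $\ell$. It decreases $2\log N$ only additively. I would therefore bound
\[
\Phi(N)\geq \frac{3}{4\pi}\prod_{\ell\mid N}\Big(\tfrac12+\tfrac1{\sqrt\ell}\Big)\Big(\tfrac{\pi}{9}\prod_{\ell\mid N}f(\ell)-(2\log N+C_0)\Big)-5\omega(N)-\tfrac{13}{12}.
\]
Then I would show that with $\omega(N)=k\geq 10$ fixed, the bracket is positive by comparing with the primorial $P_k$. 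Concretely, $\prod f(\ell)\geq \prod_{i\leq k} f(p_i)$ and $\log N$ is controlled via $\log N\leq 2\sum\log f(\ell)+O(k)$. This gives a quantity that grows exponentially in $k$ against one that is linear in $\log N$. The outer factor $\prod(\tfrac12+1/\sqrt\ell)$ can be small, so I must keep it in. Instead I would argue with $\Phi$ directly, noting that the main term $\psi(N)/(12\cdot2^{\omega})$ dominates $5\omega+13/12$ trivially for $N$ this large.

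For $\omega(N)\leq 9$ I would handle each $k=1,\dots,9$ separately. Write $N=mL$, where $L$ is the largest prime factor. For each admissible $m$ with $\omega(m)=k-1$, the function $\Phi$ is eventually increasing in $L$, since the main term is linear in $L$ and the negative term is $O(\sqrt L\log L)$. Using the worst case, where the other primes are the smallest ones, I would find an explicit threshold on $N$ beyond which $\Phi>0$. Taking the maximum over $k$ should give $2.87\times10^8$, presumably attained near primorial-like $N$ with $k=9$.

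The main obstacle is making this worst-case reduction rigorous and finite. Because $\prod(\tfrac12+1/\sqrt\ell)$ favours small primes, the verification really requires enumerating squarefree $N$ whose prime factors are small, pruned by monotonicity in each prime. I would carry this out as a Magma search: recursively build $N$ by increasing primes, and stop a branch once a lower bound for $\Phi$ over all extensions is positive. This is the computation in the referenced script.
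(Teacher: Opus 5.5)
\textbf{Gap in the case $\omega(N)\le 9$.} Here you have not given a proof. You call the ``worst-case reduction'' the main obstacle and defer it to a branch-and-bound search. But your pruning rule (``stop a branch once a lower bound for $\Phi$ over all extensions is positive'') presupposes exactly the uniform lower bound you never supply. Without it, every node has infinitely many children and nothing guarantees termination. Likewise, ``find an explicit threshold on $N$'' is backed by no inequality that is uniform in the other prime factors.

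\textbf{The missing idea is a decoupling.} Put $k=\omega(N)$. Since $\psi(N)\ge N$ and $p\mapsto \tfrac12+\tfrac1{\sqrt p}$ is decreasing, one has $\prod_{\ell\mid N}(\tfrac12+\tfrac1{\sqrt\ell})\le\bar\pi_k\colonequals\prod_{i\le k}(\tfrac12+\tfrac1{\sqrt{p_i}})$. Hence $\Phi(N)\ge\Phi_k(N)$, where
\[
\Phi_k(x)=\frac{x}{12\cdot 2^k}-\frac{3\bar\pi_k}{4\pi}\sqrt x\,(2\log x+C_0)-5k-\frac{13}{12}
\]
depends only on the real variable $x$. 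The function $\Phi_k'$ is increasing on $[1,\infty)$, $\Phi_k(1)<0$ and $\Phi_k\to\infty$. So $\Phi_k$ has a unique zero $\mathcal Z_k$ and is positive exactly beyond it. Computing $\mathcal Z_1<\dots<\mathcal Z_9<2.87\times10^8$ settles $k\le 9$ with no enumeration at all. The enumeration in the paper belongs to \Cref{thm:explicit_threshold}, and it is made finite precisely by $N\le\mathcal Z_k$; it is not the computation behind this lemma. The same $\Phi_k$ also handles $k\ge10$: it suffices that $\Phi_k(P_k)>0$, which the paper proves by induction, bounding the ratio of growth factors by $\gamma(p_{10},A_9)<0.3$.

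\textbf{Your $k\ge 10$ argument.} Using $f(\ell)=(\ell+1)/(\sqrt\ell+2)$ is a legitimate alternative route, but your displayed factorisation is missing a factor $\sqrt N$. Correctly,
\[
\Phi(N)=\frac{3}{4\pi}\prod_{\ell\mid N}\Bigl(\frac{\sqrt\ell}{2}+1\Bigr)\Bigl(\frac{\pi}{9}\prod_{\ell\mid N}f(\ell)-2\log N-C_0\Bigr)-5\omega(N)-\frac{13}{12}.
\]
So the outer factor is at least $\tfrac{3}{4\pi}\prod_{i\le k}(\tfrac{\sqrt{p_i}}2+1)$, which is about $3400$ already for $k=10$. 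It is large, not small, and the worry that made you abandon this form is unfounded.

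Note, however, that positivity of the bracket alone does not absorb $5k+\tfrac{13}{12}$. Arguing that the main term ``dominates'' each negative term separately does not suffice either. Instead, show that the bracket is at least $1$ and the outer factor is at least $5k+2$. Both follow for $k\ge10$ from $\prod f(\ell)\ge\prod_{i\le k}f(p_i)$ together with $\log N\le 2\sum\log f(\ell)+0.97k$.
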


\begin{proof}
Write $k=\omega(N)$, let $p_1<p_2<\cdots$ be the rational primes and
$P_k=\prod_{i\leqslant k}p_i$, so $N\geqslant P_k$. Since $\psi(N)\geqslant N$ and
$p\mapsto\tfrac12+\tfrac1{\sqrt p}$ is decreasing,
$\prod_{\ell\mid N}\left(\frac 12 + \frac{1}{\sqrt{\ell}}\right)\leqslant\bar\pi_k\colonequals\prod_{i\leqslant k}(\tfrac12+\tfrac1{\sqrt{p_i}})$;
hence $\Phi(N)\geqslant\Phi_{k}(N)$, where
\[
\Phi_{k}(x)\colonequals\frac{x}{12\cdot 2^{k}}-\frac{3\,\bar\pi_k}{4\pi}\sqrt x\,(2\log x+C_0)-5k-\frac{13}{12}
\qquad(x\geqslant1).
\]
Every such function is of the form $\Theta_{a,b,e}(x)=ax-b\sqrt x(2\log x+C_0)-e$ with
$a,b,e>0$ (such that $a < e$), and for these $\Theta_{a,b,e}(1)<0$, $\Theta_{a,b,e}(x)\to+\infty$, and
$\Theta_{a,b,e}'$ is increasing on $[1,\infty)$: indeed
\[
\frac{d}{dx}\left(\sqrt x\,(2\log x+C_0)\right)
=\frac{2\log x+C_0}{2\sqrt x}+\frac{2}{\sqrt x}
=\frac{2\log x+C_0+4}{2\sqrt x}, \,\, \text{so}\]
\(\Theta_{a,b,e}'(x)=a-b\,\frac{2\log x+C_0+4}{2\sqrt x}\),
and the subtracted term is decreasing because
\[ \frac{d}{dx}\left(\frac{2\log x+C_0+4}{2\sqrt x}\right)=-\frac{2\log x+C_0}{4x^{3/2}}<0 \,\, \text{for } x\geqslant 1,\]
as $C_0>0$. So $\Theta_{a,b,e}$ decreases then increases and has a unique zero
$\mathcal{Z}_{k}\geqslant1$, with $\Phi_{k}(x)>0\iff x>\mathcal{Z}_{k}$.

A direct computation gives
$\mathcal{Z}_{1}<\cdots<\mathcal{Z}_{9}<2.87\times10^{8}$. It remains
to exclude $k>9$, for which we show $\Phi_{k}(P_k)>0$, so that $N\geqslant P_k$ gives
$\Phi(N)\geqslant\Phi_{k}(N)>0$. We prove $\Phi_{k}(P_k)>0$ by induction on $k$, analyzing how each summand in $\Phi_{k}$ changes. Put
$L_k=\frac{P_k}{12\cdot 2^{k}}$, $M_k=\frac{3\,\bar\pi_k}{4\pi}\sqrt{P_k}(2\log P_k+C_0)$
and $E_k=5k+\tfrac{13}{12}$, so $\Phi_{k}(P_k)=L_k-M_k-E_k$. Writing $p=p_{k+1}$ and
$A_k=2\log P_k+C_0$, the identities
$\bar\pi_{k+1}\sqrt{P_{k+1}}=(\tfrac{\sqrt p}{2}+1)\bar\pi_k\sqrt{P_k}$ and $A_{k+1}=A_k+2\log p$ give 
\[
L_{k+1}=\tfrac{p}{2}L_k,\quad
M_{k+1}=\gamma(p,A_k)\cdot\tfrac{p}{2}M_k,\,\, \text{where }
\gamma(p,A) = \tfrac{\sqrt{p}+2}{p}\left(1+\tfrac{2\log p}{A}\right)
\]
is the ratio of two growth factors (for $M_k$ and $L_k$). Now $\gamma$ is decreasing in $A$, and decreasing in $p$ since
$\tfrac{\partial}{\partial p}\log\gamma=\tfrac{1/2}{p+2\sqrt p}-\tfrac1p+\tfrac{2}{p(A+2\log p)}
\leqslant\tfrac{2}{pA}-\tfrac1{2p}=\tfrac1p\left(\tfrac2A-\tfrac12\right)<0$,
because $A\geqslant C_0>4$.
Hence $\gamma(p_{k+1},A_k)\leqslant\gamma(p_{10},A_{9})\leqslant0.2936<1$ for every $k\geqslant9$,
and therefore
\[
\Phi_{k+1}(P_{k+1})=\tfrac{p}{2}\left(L_k-\gamma M_k\right)-E_k-5
\;\geqslant\;\tfrac{p}{2}\left(L_k-M_k\right)-E_k-5 > 0 ,
\]
using the inductive hypothesis $L_k-M_k>E_k$. The base case
$\Phi_{10}(P_{10})>0$ is immediate.
\end{proof}

\begin{theorem}\label{thm:explicit_threshold}
Let $N_0\colonequals 2\,589\,510=2\cdot3\cdot5\cdot7\cdot11\cdot19\cdot59$. Then $N_0$ is
the largest squarefree level with $\Phi(N)\leqslant0$. In particular
$g_0^*(N)>B(N)+1$ for every squarefree $N>N_0$.
\end{theorem}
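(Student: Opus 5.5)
The proof splits into a large-$N$ regime handled by the structural bounds, and a finite computation that pins down the exact threshold.

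\textbf{Step 1: reduce to a finite search.} By \Cref{lem:threshold_tail}, every squarefree $N$ with $\Phi(N)\leqslant 0$ satisfies $\omega(N)\leqslant 9$ and $N<2.87\times 10^{8}$. For each $k\leqslant 9$ the proof of that lemma gives a unique zero $\mathcal Z_k$ of the minorant $\Phi_k$. Every squarefree $N$ with $\omega(N)=k$ and $N>\mathcal Z_k$ therefore has $\Phi(N)\geqslant \Phi_k(N)>0$. So I only need to examine squarefree $N\leqslant \mathcal Z_{\omega(N)}$, a finite set.

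\textbf{Step 2: sharpen the search.} The bound $\mathcal Z_k$ only uses the worst case $\prod_{\ell\mid N}(\tfrac12+\tfrac1{\sqrt\ell})\leqslant\bar\pi_k$. To make the search feasible I would enumerate recursively over sets of primes. Starting from a partial product $n$ with a prescribed number of further, larger prime factors, I bound $\Phi$ below on all completions. This uses the facts that $\psi(N)\geqslant N$, that the factor $\tfrac12+\tfrac1{\sqrt\ell}$ is decreasing in $\ell$, and that $\Theta_{a,b,e}$ is eventually increasing. I prune a branch once the lower bound is positive for all admissible completions. The remaining leaves are candidates $N$, at which I evaluate $\Phi(N)$ exactly. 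This is a Magma computation over squarefree $N$ below $2.87\times10^8$, organized by $\omega(N)$.

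\textbf{Step 3: read off the maximum.} The enumeration should show that the largest squarefree $N$ with $\Phi(N)\leqslant 0$ is $N_0=2\cdot3\cdot5\cdot7\cdot11\cdot19\cdot59$. This is plausible: primorial-like levels make $2^{\omega(N)}$ largest relative to $N$, so they are the worst case. I also verify $\Phi(N_0)\leqslant 0$ directly. The final claim then follows from \Cref{lem:threshold_reduction}: for $N>N_0$ squarefree we have $\Phi(N)>0$, hence $g_0^*(N)-B(N)-1\geqslant\Phi(N)>0$.

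\textbf{Main obstacle.} The hard step is making Step 2 rigorous and efficient. The pruning bound must be a valid lower bound for $\Phi$ over all completions, handling monotonicity in both $N$ and the product over primes. The floating-point evaluation of $\log$ and $\sqrt{\ }$ near $\Phi\approx 0$ must be done with certified precision, for instance interval arithmetic or exact rational comparison after squaring, so the identification of the maximum is reliable.
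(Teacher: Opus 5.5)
Your proposal is correct and follows the paper's proof: the paper uses \Cref{lem:threshold_tail} to reduce to squarefree $N$ with $\omega(N)=k\leqslant 9$ and $N\leqslant\mathcal Z_k$, evaluates $\Phi$ on that finite set to find $N_0$ (with $\Phi(N_0)=-15.05\ldots$), and deduces the final claim from \Cref{lem:threshold_reduction}. The only difference is that the paper evaluates $\Phi$ directly on every such $N$, so your branch-and-bound pruning and your certified-precision safeguards are optional refinements of the same finite computation.
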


\begin{proof}
By \Cref{lem:threshold_tail} every squarefree $N$ with $\Phi(N)\leqslant0$ satisfies
$\omega(N)=k\leqslant9$ and $N\leqslant\mathcal{Z}_{k}$ (from the proof of the same lemma); evaluating $\Phi$ on all such
$N$ gives $N_0$, with $\Phi(N_0)=-15.05\ldots$. For the final assertion, if $N>N_0$ is squarefree then $\Phi(N)>0$ by maximality of
$N_0$, hence $g_0^*(N)-B(N)-1\geqslant\Phi(N)>0$ by \Cref{lem:threshold_reduction}.
\end{proof}

\begin{remark}\label{rem:threshold_uniform}
The proof of \Cref{lem:threshold_tail} is elementary; a uniform
bound covering all $N$ at once can be obtained more directly from $\omega(N)\leqslant 1.3841\,\log N/\log\log N$ \cite[Th\'eor\`eme~11]{Robin83}, but this gives the computer-free threshold
$N\gtrsim 5.1\times10^{7}$. The exhaustive verification underlying
\Cref{thm:explicit_threshold} lowers this to $N_0=2\,589\,510$; the worst
levels are, as the proof shows, the ``primorial-like'' $N$ with many small prime factors,
for which $2^{\omega(N)}$ is largest relative to $N$.
\end{remark}

\begin{proof}[Proof of \Cref{thm:existence_of_supported_at_p_heights}]
Let $N$ be squarefree with $g_0^*(N)\geqslant 2$. If $N>N_0$, then $g_0^*(N)>B(N)+1$ by \Cref{thm:explicit_threshold}, so a $p$-adic height supported at $p$ exists, as recalled at the beginning of this section.

For the squarefree $N\leqslant N_0$ we compute $g_0^*(N)$ and $B(N)$ exactly, using \Cref{lem:genusStarQuotient}, \Cref{cor:Bpnw} and the formulae for $\nu_\ell(N,d)$ of \Cref{sec:fixedpoint_formulas}. This is done by the
\repo{computations/general/existence_of_relations.m}{Brandt module computation} for $N\leqslant N_0$, which takes class numbers computed in \cite{classGroupTabulation} (available at \href{https://www.lmfdb.org/NumberField/QuadraticImaginaryClassGroups}{this LMFDB page}).
The largest level $N$ that satisfies $2\leqslant g_0^*(N)\leqslant B(N)+1$ is $6510$ (for $N=6510$ one has $g_0^*=39$ and $B+1=40$, see \repo{logs/general/existence_of_relations.txt}{the output}). For each of them the same script runs the algorithm of \Cref{prop:correspondence_algorithm}. It finds a correspondence whose $p$-adic height is supported at $p$ for every such level except the $36$ listed in the theorem.
\end{proof}

\begin{proof}[Proof of \Cref{thm:main1.1}]
By \Cref{lem:atLeast_g_minus_Ll_heights} and \Cref{cor:Bpnw}, the number of such heights is at least $g(X_0(N)^*)-1-B(N)$, where $B(N)=\sum_{\ell\mid N}B(\ell,N,W_N)$. As $\prod_{\ell\mid N}(\frac12+\frac1{\sqrt\ell})\leqslant(\frac12+\frac1{\sqrt2})(\frac12+\frac1{\sqrt3})<\frac32$, \Cref{lem:upperBoundSumBps} gives $B(N)=O(\sqrt N\log N)$, which proves the first assertion. For the second, \Cref{cor:gstarLowerBound_even} gives $g(X_0(N)^*)\geqslant N/(12\cdot2^{\omega(N)})-O(\sqrt N\log N)$. By \cite[Th\'eor\`eme~11]{Robin83}, $\omega(N)\leqslant 1.3841\,\log N/\log\log N$, so $2^{\omega(N)}\leqslant N^{0.96/\log\log N}$, which grows slower than any fixed positive power of $N$. Hence the number of such heights is at least $N/(12\cdot2^{\omega(N)})-O(\sqrt N\log N)\geqslant N^{1-\varepsilon}$ for all sufficiently large $N$.
\end{proof}

\section{Examples}\label{sec:examples}
The goal of this section is to illustrate our methods and to prove \Cref{prop:examples}, in the hope of motivating further computational work using our code. To this end, we describe how we determined the rational points on $X_0(N)^*$ for $N\in\{187,247,319\}$. For these three levels the dimension count $g>B+1$ of \Cref{sec:criterion} fails ($g=3\leqslant 7=B+1$ for $N=187$, and $g=4\leqslant 7=B+1$ for $N=247,319$). Nevertheless, in each case the algorithm of \Cref{prop:correspondence_algorithm} produces a non-trivial correspondence whose $p$-adic height is supported at $p$, see \Cref{ex:dual_graph_one_long_loop}. In all three cases the Jacobian $J_0(N)^*$ has Mordell--Weil rank equal to the genus $g$, which is the hypothesis needed for quadratic Chabauty. All scripts and their output can be found in the directories \repo{computations}{computations} and \repo{logs}{logs} of our repository.

To run quadratic Chabauty implementation \texttt{QCModAffine} from \cite{QCMod, BDMTV2}, we need a \defn{plane model} of $X_0(N)^*$, i.e.\ a polynomial $f\in\Q[x,y]$ such that the affine curve $f(x,y)=0$ is birational to $X_0(N)^*$; \texttt{QCModAffine} moreover requires $f$ to be monic in $y$ (with $p$-integral coefficients). We find plane models with the code of \cite{AABCCKW}, following the rough strategy from that paper, but without using gonal maps, as these did not lead to plane models we could use. Briefly, we start from the canonical model $X_0(N)^*\subseteq\P^{g-1}$ and choose linear forms $L_1,\dots,L_4$ on $\P^{g-1}$ at random, with small integral coefficients. With $\tau_x\colonequals[L_1:L_2]$ and $\tau_y\colonequals[L_3:L_4]$, we consider the composite
\[
\varphi\colon X_0(N)^*\xrightarrow{\;\tau_x\times\tau_y\;}\P^1\times\P^1\xrightarrow{\;\text{Segre}\;}\P^3_{[w:x:y:z]}\xrightarrow{\;\text{projection}\;}\P^2_{[x:y:z]},
\]
where the Segre map is $([s_0:s_1],[t_0:t_1])\mapsto[s_0t_0:s_0t_1:s_1t_0:s_1t_1]$ and the projection is from the point $[1:0:0:0]$. On the chart $z\neq0$, the map $\varphi$ is given by $x=L_1/L_2$ and $y=L_3/L_4$, so if $\varphi$ is birational onto its image, then the polynomial relation $f(x,y)=0$ between these two functions is a plane model. We first make the equation monic in $y$, if necessary, as in
\cite[Section~3]{AABCCKW}. We keep such a model if, for some
prime $p\nmid N$ with $p\leqslant 70$ suitable for the
implementation, every residue disc of $X_0(N)^*(\Q_p)$ is
\defn{good} for it (in the sense of
\cite[Definition~3.1]{AABCCKW}, following
\cite[Definitions~2.8 and~2.10]{BalTui}.
Since \texttt{QCModAffine} proves completeness only in good
residue discs, this ensures that all of $X_0(N)^*(\Q)$ is
covered. The value of using the code of
\cite{AABCCKW} lies in performing preliminary model and prime
checks before running quadratic Chabauty.
The plane model we obtain in this way enables us to use \texttt{QCMod},
but that computation itself can still be time-consuming.

Every rational point we find is a cusp or a CM point, in accordance with Elkies' conjecture. For squarefree $N$ the curve $X_0(N)^*$ has exactly one cusp, since $X_0(N)$ has exactly $2^{\omega(N)}$ cusps and $W_N$ acts transitively on them. We use the code of Assaf--Hashimoto~\cite{SachiEran2026} to check that the remaining points are CM points, and to determine their discriminants.

\subsection{Dual graphs}\label{subsec:examples_dual_graphs}
By \Cref{cor:dual_graph_quotient}, the dual graph of $X_0(N)^*$ at a prime $\ell\mid N$ is a flower graph, and by \Cref{lem:local_height_vanishes_long_edges} only its long loops, i.e.\ those of length $>1$, impose conditions on the correspondence. In the figures below we draw the dual graph of the minimal regular model, which is obtained by subdividing every loop of length $l$ into $l$ edges (see subsection \ref{subsec:reduction}). Loops of length $1$ are not drawn. %correspond to nodes of the central component and are not drawn.

\begin{example}\label{ex:dual_graph_one_long_loop}
For $(N,\ell)=(187,17)$, $(247,13)$ and $(319,29)$, the flower graph of $X_0(N)^*$ at $\ell$ has exactly one long loop, of length $2$, so the dual graph of the minimal regular model is the one in \Cref{fig:187starMod17}. At the other prime dividing $N$ (namely $\ell=11$, $19$ and $11$, respectively) all loops have length $1$, so the special fibre %of the minimal regular model
is irreducible. Hence, for each of these levels, a correspondence only has to satisfy one linear condition for its $p$-adic height to be supported at $p$. This explains why the algorithm of \Cref{prop:correspondence_algorithm} succeeds although $g\leqslant B+1$: here the bound $B$ on the number of long loops is not sharp.
\end{example}

\begin{figure}[h!]
    \centering
    \begin{tikzpicture}[
  every node/.style={circle, fill, inner sep=1.6pt}
]
  \draw (0,0) circle (1);

  \node at (-1,0) {};
  \node at (1,0) {};
\end{tikzpicture}
    \caption{Dual graph of the minimal regular model of $X_0(187)^*$ at $\ell = 17$, of $X_0(247)^*$ at $\ell=13$ and of $X_0(319)^*$ at $\ell=29$}
    \label{fig:187starMod17}
\end{figure}

\begin{example}\label{ex:dual_graph_290}
For $N=290$ and $\ell=2$, the flower graph of $X_0(290)^*$ has two long loops, of lengths $2$ and $4$. Hence the dual graph of the minimal regular model has five vertices, see \Cref{fig:290starMod2}.
\end{example}

\begin{figure}[h]
    \centering
\begin{tikzpicture}[
  every node/.style={circle, fill, inner sep=1.6pt}
]
  \coordinate (a) at (0,0);
  \coordinate (b) at (2,0);
  \coordinate (c) at (3,1);
  \coordinate (d) at (4,0);
  \coordinate (e) at (3,-1);

  % Left loop
  \draw (a) arc[start angle=180, end angle=0, radius=1];
  \draw (a) arc[start angle=180, end angle=360, radius=1];

  % Right loop (4 vertices total, including the shared vertex)
  \draw (b) arc[start angle=180, end angle=90, radius=1];
  \draw (c) arc[start angle=90, end angle=0, radius=1];
  \draw (d) arc[start angle=0, end angle=-90, radius=1];
  \draw (e) arc[start angle=-90, end angle=-180, radius=1];

  % Vertices
  \node at (a) {};
  \node at (b) {};
  \node at (c) {};
  \node at (d) {};
  \node at (e) {};
\end{tikzpicture}
\caption{Dual graph of the minimal regular model of $X_0(290)^*$ at $\ell =2$}\label{fig:290starMod2}
\end{figure}

\subsection{Rational points}\label{subsec:examples_rational_points}

\begin{example}\label{ex:X0_187}
The curve $X_0(187)^*$ has genus $3$ and exactly six rational points: the cusp and five CM points, of discriminants $-8,-19,-43,-51,-187$.

Using the good correspondence $P=x^2+x$ at the good prime $p=37$ makes all local heights away from $p$ vanish (\repo{computations/X0_187star/qc_p37_mws.m}{\texttt{qc\_p31\_mws.m}}). Quadratic Chabauty at $p=37$ still leaves $54$ fake residue discs, which we remove by a Mordell--Weil sieve with modulus $M=2^5\cdot 37^3$ (the factor $37^3$ because $\Z/37^3\hookrightarrow J(\F_{4127})$) and the auxiliary primes $\leq 4127$ listed in the script. After the sieve $0$ cosets remain, so no fake point is rational and the known rational points are all of $X_0(187)^*(\Q)$. The non-cuspidal points are CM points with discriminants $-8, -19, -43, -51, -187$.

%We recall the setup of the sieve. Let $C$ be a curve over $\Q$ of genus $\geqslant 2$ and $J$ its Jacobian, and let $\ell$ be a prime of good reduction. Writing
%\[
%  J^m(\Q_\ell) = \{ D \in J(\Q_\ell) : D \equiv 0 \pmod{p^m} \},
%\]
%the filtration
%\[
%  J(\Q_\ell) \supset J^1(\Q_\ell) \supset J^2(\Q_\ell)
%    \supset J^3(\Q_\ell) \supset \cdots
%\]
%satisfies
%\[
%  J(\Q_\ell) / J^1(\Q_\ell) \cong J(\F_\ell), \qquad
%  J^m(\Q_\ell) / J^{m+1}(\Q_\ell) \cong (\Z/p\Z)^g
%  \quad \text{for } m \ge 1.
%\]
%Since six of our fake residue discs coincide with (six) rational points mod $p=31$, but not mod $31^2$, we are forced to use ``deep'' information. This affects how we choose auxiliary primes $q\in S$. We want at least one prime $q \in S$ such that $p \mid J(\F_{q'})$ (since $p$ is the only prime factor of those higher quotients). Additional primes $q' \in S$ can be useful only if $|J(\F_{q'})|$ is divisible by $p$ or $|J(\F_{q'})|$ and $|J(\F_{q})|$ share a common factor (otherwise we get a surjection into $J(\F_{q}) \times J(\F_q')$, implying that no cosets get excluded).

The differences of known small points generate a finite index subgroup $G \subseteq J(\Q)$. Since the torsion subgroup is trivial,  we compute $d = 1$ as in \cite[§\,5.1]{MaartenFilipCubic} to get $G/MG = J(\Q)/M$, which is enough for running the Mordell--Weil sieve.
See \repo{computations/X0_187star/qc_p37_mws.m}{qc\_p37\_mws.m} and \repo{logs/X0_187star/qc_p37_mws.txt}{its output}.
\end{example}

\begin{example}\label{ex:X0_247_319}
The curves $X_0(247)^*$ and $X_0(319)^*$ have genus $4$, and each has exactly five rational points: the cusp and four CM points, of discriminants $-3,-12,-27,-91$ for $N=247$, and $-7,-28,-88,-187$ for $N=319$.

For $N=247$ all residue discs are good for our plane model and $p=31$, and \texttt{QCModAffine}, with the correspondences of \Cref{prop:correspondence_algorithm}, returns exactly the five known rational points and no fake solutions, so no sieve is needed; see \repo{computations/X0_247star/model_p31.m}{model\_p31.m} and \repo{logs/X0_247star/model_p31.txt}{its output}. The same holds for $N=319$ and $p=61$ (see \repo{computations/X0_319star/model.m}{the code} and \repo{logs/X0_319star/model.txt}{its output}).
\end{example}

\appendix
\section{Measures on metric graphs}\label{app:measures}

Here we collect some statements that seem to be well known to the experts, but as we could not find a good reference for them, we decided to include them here with a self contained proof.

\begin{lemma}\label{lem:laplacian_mass}
Let $g\colon\graph \to\R$ be piecewise polynomial. Then $\lvert \nabla^2(g)\rvert=0$. Conversely, a measure $\mu$ with constant $\mu_e$ for every $e$ is of the form $\nabla^2(g)$ for some continuous edge-wise quadratic $g$ if and only if $\lvert \mu\rvert=0$, and then $g$ is unique up to an additive constant.
\end{lemma}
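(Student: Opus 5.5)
The plan has three parts: show the total mass of a Laplacian vanishes, then prove the converse by solving a linear system on the graph, then read off uniqueness from the kernel of that system. Throughout I use the sign conventions of \Cref{def:graph_laplacian}, and I assume $\graph$ is connected, as every dual graph in this paper is.

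\emph{Total mass vanishes.} Let $g$ be piecewise polynomial. By \Cref{def:graph_laplacian} and \Cref{def:pp_measure},
\[
\lvert\nabla^2(g)\rvert=-\sum_{e}\int_0^{l(e)}(g_e)''\,\mathrm dx_e-\sum_w\sigma_w(g).
\]
By the fundamental theorem of calculus, the edge integral for $e$ equals $(g_e)'(l(e))-(g_e)'(0)$. Now I regroup the vertex sum edge by edge. Each edge $e$ enters $\sigma_{s(e)}(g)$ with the term $+(g_e)'(0)$ and enters $\sigma_{t(e)}(g)$ with the term $-(g_e)'(l(e))$. For a loop these two terms go to the same vertex, which is consistent with the convention stated after \Cref{def:graph_laplacian}. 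Hence
\[
\sum_w\sigma_w(g)=\sum_e\bigl((g_e)'(0)-(g_e)'(l(e))\bigr),
\]
which is exactly $-\sum_e\int_0^{l(e)}(g_e)''\,\mathrm dx_e$. The two sums cancel, so the total mass is $0$.

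\emph{Converse: existence.} Suppose $\mu$ has constant densities $\mu_e=c_e$ and $\lvert\mu\rvert=0$. I look for $g$ with
\[
g_e(x)=a_{s(e)}+\bigl(a_{t(e)}-a_{s(e)}\bigr)\frac{x}{l(e)}-\frac{c_e}{2}\,x\,(x-l(e)),
\]
where the unknowns are the vertex values $a_w$. This $g$ is continuous and satisfies $-(g_e)''=c_e$ on every edge, so only the vertex equations $-\sigma_w(g)=\mu_w$ remain. Computing the two endpoint slopes gives
\[
(g_e)'(0)=\frac{a_t-a_s}{l}+\frac{c_el}{2},\qquad (g_e)'(l)=\frac{a_t-a_s}{l}-\frac{c_el}{2}.
\]
Substituting, the vertex condition becomes $(\Delta a)_w=\mu_w+\tfrac12\sum_{e\ni w}c_el(e)$, where $\Delta$ is the weighted combinatorial Laplacian with edge weights $1/l(e)$, $(\Delta a)_w=\sum_{e\ni w}(a_w-a_{\text{other end}})/l(e)$. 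A loop contributes $0$ to $\Delta$ but contributes $c_el(e)$ in total, once from each end.

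\emph{Converse: solvability and uniqueness.} On a connected graph, $\Delta$ is symmetric positive semidefinite with kernel the constant vectors. Its image is therefore the sum-zero vectors. The right-hand side sums to $\sum_w\mu_w+\sum_ec_el(e)=\lvert\mu\rvert=0$, so a solution $a$ exists, and $a$ is unique up to an additive constant.

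This last step also gives uniqueness of $g$. On each edge, $g$ is determined by its endpoint values once $g''$ is fixed, so any two solutions differ by a piecewise-affine continuous function $u$ with $\nabla^2u=0$. The values of $u$ at the vertices then lie in $\ker\Delta$, so they are constant, and hence $u$ is constant. The necessity of $\lvert\mu\rvert=0$ is just the first part. The main obstacle is bookkeeping: checking loop contributions and sign conventions carefully in the vertex equation.
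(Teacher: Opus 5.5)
Your proof is correct. The first part is the same telescoping argument as in the paper. For the converse you take a more constructive route than the paper.

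\emph{Your route.} You use the edge-wise ansatz with prescribed endpoint values and second derivative $-c_e$, which is exactly the formula of \Cref{lem:quadratic_along_edge}. This reduces the vertex conditions to a linear system $\Delta a=\mu_w+\tfrac12\sum_{e\ni w}c_e\,l(e)$ for the weighted combinatorial Laplacian. Solvability then follows because $\Delta$ is symmetric with kernel the constants, so its image is the sum-zero vectors.

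\emph{The paper's route.} It argues by a dimension count instead. The space $\mathcal P$ of continuous edge-wise quadratic functions and the space $\mathcal M$ of measures with constant edge densities both have dimension $\#V(\graph)+\#E(\graph)$. The kernel of $\nabla^2\colon\mathcal P\to\mathcal M$ is the constants, so the image has codimension one. By the first assertion the image lies in the codimension-one subspace $\lvert\cdot\rvert=0$, so the two coincide.

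\emph{Comparison.} The paper's argument is shorter and needs no slope computation, because it recycles the first assertion to identify the image. Yours does not use the first assertion for sufficiency, since you check directly that the right-hand side sums to $\lvert\mu\rvert$. It also makes the loop bookkeeping explicit. Moreover, it actually produces $g$ by solving a linear system on vertex values, which is closer to how $j_{\graph}$ is evaluated in practice (compare \Cref{rem:flower_heights}). Both proofs rest on the same fact: a continuous piecewise-affine function with vanishing Laplacian on a connected graph is constant. You get it from the energy identity $\sum_w a_w(\Delta a)_w=\sum_e\bigl(a_{s(e)}-a_{t(e)}\bigr)^2/l(e)$, while the paper cites Baker--Faber or a harmonicity argument.
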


\begin{proof}
For the first assertion, $\int_0^{l(e)}g''\,\mathrm{d}x_e=g'(1)-g'(0)$, while summing the vertex coefficients over $w\in V(\graph)$ counts each edge $e$ once with $+g'(0)$ (at $w=s(e)$) and once with $-g'(1)$ (at $w=t(e)$). The two sums cancel.

For the converse, let $\mathcal P$ be the space of continuous edge-wise quadratic functions $\graph_\Q\to\Q_\ell$ and $\mathcal M$ the space of measures with constant edge densities. A member of $\mathcal P$ is determined by its values at the vertices together with the leading coefficient on each edge, so $\dim\mathcal P=\#V(\graph)+\#E(\graph)=\dim\mathcal M$, and $\nabla^2$ maps $\mathcal P$ to $\mathcal M$. Its kernel consists of constants by \cite[Corollary 2(i)]{BakerFaber2006} (alternatively, if $\nabla^2(g)=0$, then $g$ is affine on each edge and its outgoing slopes sum to $0$ at every vertex, so $g$ is harmonic on the connected graph $\graph$ and hence constant).
Therefore $\nabla^2(\mathcal P)$ has codimension $1$ in $\mathcal M$; by the first assertion it is contained in the codimension-$1$ subspace $\lvert \cdot\rvert=0$, so the two coincide.
\end{proof}

The factor $\tfrac{1}{l(e)}$ and the relative sign of the two terms in \Cref{def:graph_laplacian} are both forced, as the next two lemmas show.

\begin{lemma}\label{lem:measure_subdivision}
Let $\graph'$ be obtained from $\graph$ by subdividing an edge $e$ into consecutive, consistently oriented edges $e_1,\dots,e_n$ of lengths $l_1,\dots,l_n$ with $\sum_i l_i=l(e)$, the new vertices carrying rational components, and let $F'$ be the endomorphism induced on $\graph'$. Then $\mu_{F'}=\mu_F$ under the identification $\graph'_\Q=\graph_\Q$.
\end{lemma}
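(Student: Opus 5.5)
The plan is to compare the two measures term by term: first the edge parts, then the vertex parts. By \Cref{def:corr_measure}, both $\mu_F$ and $\mu_{F'}$ consist of an edge density and vertex masses.

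\textbf{Step 1: homology and the endomorphism.} Subdivision leaves homology unchanged. The map $\Q\,E(\graph)\to\Q\,E(\graph')$ sending $e\mapsto e_1+\dots+e_n$ (and fixing all other edges) commutes with the boundary maps, since the interior vertices cancel. It therefore restricts to an isomorphism $\iota\colon \mathrm H_1(\graph,\Q)\to\mathrm H_1(\graph',\Q)$. Because $F$ and $F'$ are both defined through the same graded pieces of $V_p(X)$ (\Cref{def:corr_endomorphism}), and the subdivision comes from a blow-up that does not change $V_p$ or its weight filtration, we get $F'\circ\iota=\iota\circ F$. This identification of the weight filtrations for the two models is the one step I would justify with some care. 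I would cite the functoriality of the Grothendieck/Raynaud description: $\mathrm H_1$ of the dual graph is the toric part of the Néron model, which is independent of the chosen semi-stable model.

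\textbf{Step 2: projections.} The pairing on $\Q\,E(\graph')$ restricts along $\iota$ to the pairing on $\Q\,E(\graph)$ on homology. Indeed, $\iota(e)\cdot\iota(e)=\sum_i l_i=l(e)$. Now compute $\pi'(e_i)$. For every $c\in\mathrm H_1(\graph)$ we have $e_i\cdot\iota(c)=l_i\,e^*(c)$ and $e\cdot c=l(e)\,e^*(c)$. Hence $e_i-\tfrac{l_i}{l(e)}\iota(e)$ is orthogonal to $\iota(\mathrm H_1)$, which gives
\[
\pi'(e_i)=\tfrac{l_i}{l(e)}\,\iota(\pi(e)).
\]
For every other edge $f$ we get $\pi'(f)=\iota(\pi(f))$.

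\textbf{Step 3: edge densities.} We have $e_i^*\circ\iota=e^*$ on chains, so
\[
\tfrac{1}{l_i}e_i^*F'(\pi'(e_i))=\tfrac{1}{l(e)}e_i^*\iota F(\pi(e))=\tfrac{1}{l(e)}e^*F(\pi(e)).
\]
Thus the densities agree on each segment, and likewise on the unchanged edges.

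\textbf{Step 4: vertex masses.} The new vertices have $V_p(X_w)=0$ because their components are rational, so they carry mass $0$. The old components are unchanged, so $\operatorname{Tr}(F\mid V_p(X_w))$ is unchanged. Hence $\mu_{F'}=\mu_F$.
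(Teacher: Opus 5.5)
Your proposal is correct and follows the paper's proof step by step. You identify $\mathrm H_1(\graph,\Q)$ with $\mathrm H_1(\graph',\Q)$ via $e\mapsto e_1+\dots+e_n$, as an isometry compatible with $F$. From this you deduce $\pi'(e_i)=\tfrac{l_i}{l(e)}\,\pi(e)$, compare the edge densities, and note that the new rational components contribute no vertex mass.

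The only difference is that you sketch why the identification is compatible with $F$, using the model-independence of the toric part of the N\'eron model, whereas the paper simply asserts it.
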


\begin{proof}
Sending a cycle $\gamma=\sum_f a_f\,f$ on $\graph$ to the cycle on $\graph'$ with the coefficient $a_e$ on each of $e_1,\dots,e_n$ identifies $\mathrm H_1(\graph,\Q)$ with $\mathrm H_1(\graph',\Q)$, compatibly with $F$, and it is an isometry for the pairing of \Cref{def:cycle_space} because $\sum_i a_e^2\,l_i=a_e^2\,l(e)$. We suppress it from the notation. For $\gamma\in\mathrm H_1(\graph,\Q)$ we then have $e_i^*(\gamma)=e^*(\gamma)$ and $e_i\cdot\gamma=l_i\,e^*(\gamma)=\tfrac{l_i}{l(e)}\,(e\cdot\gamma)$, so the orthogonal projections satisfy $\pi'(e_i)=\tfrac{l_i}{l(e)}\,\pi(e)$. Hence
\[
  \frac{1}{l(e_i)}\,e_i^*F\left(\pi'(e_i)\right)=\frac{1}{l_i}\cdot\frac{l_i}{l(e)}\,e^*F\left(\pi(e)\right)=\frac{1}{l(e)}\,e^*F\left(\pi(e)\right),
\]
so the edge part of $\mu_{F'}$ has the same density along $e_1\cup\dots\cup e_n$ as the edge part of $\mu_F$ has along $e$. The new vertices carry rational components, whose Jacobians are trivial, so they contribute nothing to the vertex part, and the vertex part is unchanged at the vertices of $\graph$.
\end{proof}

\begin{lemma}\label{lem:measure_mass}
The measure $\mu_F$ of \Cref{def:corr_measure} has total mass
\[
  \lvert \mu_F\rvert=\tfrac12\operatorname{Tr}\left(F\mid \mathrm H^1_{\mathrm{dR}}(X_{\Q_\ell})\right).
\]
In particular $\lvert \mu_F\rvert=0$ if and only if the correspondence inducing $F$ has trace $0$.
\end{lemma}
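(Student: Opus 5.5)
The plan is to compute the total mass directly from \Cref{def:corr_measure} and compare the two pieces with the graded pieces of the weight filtration on $V_p(X)$. There are two parts. The edge part has mass
\[
\sum_{e} \int_0^{l(e)} \tfrac{1}{l(e)}\, e^*F(\pi(e))\,\mathrm{d}x_e=\sum_e e^*F(\pi(e)).
\]
The vertex part has mass $\tfrac12\sum_w \operatorname{Tr}(F\mid V_p(X_w))$.

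The first step is to identify the edge sum with $\operatorname{Tr}(F\mid \mathrm H_1(\graph,\Q))$. I would extend $F$ by zero on the orthogonal complement of the cycle space, i.e.\ consider the endomorphism $F\circ\pi$ of $\Q\,E(\graph)$. Its trace, computed in the basis of edges, is $\sum_e e^*F(\pi(e))$. Since $F\circ\pi$ is zero on $\mathrm H_1^\perp$ and maps into $\mathrm H_1$, this trace also equals $\operatorname{Tr}(F\mid \mathrm H_1(\graph,\Q))$, by block triangularity for the decomposition $\Q\,E=\mathrm H_1\oplus\mathrm H_1^\perp$.

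The second step is to compute $\operatorname{Tr}(F\mid V_p(X))$ using the three graded pieces: $\mathrm H_1(\graph)\otimes\Q_p$, $\bigoplus_w V_p(X_w)$, and $\mathrm H_1(\graph)^\vee\otimes\Q_p(1)$. The trace on the middle piece is $\sum_w\operatorname{Tr}(F\mid V_p(X_w))$, where we use only the diagonal blocks, as discussed after \Cref{def:corr_endomorphism}. The key point is that $F$ acts on the top graded piece through the transpose (dual) of its action on the bottom one. This holds because the monodromy pairing / Grothendieck's orthogonality identifies $W_2$ with the dual of $V_p/W_1$, compatibly with correspondences; for trace-zero Rosati-invariant elements the adjoint is $F$ itself.

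Granting that, we get
\[
\operatorname{Tr}(F\mid V_p)=2\operatorname{Tr}(F\mid\mathrm H_1(\graph))+\sum_w\operatorname{Tr}(F\mid V_p(X_w)).
\]
Hence $|\mu_F|=\tfrac12\operatorname{Tr}(F\mid V_p)$. Comparison of $V_p$ with $\mathrm H^1_{\mathrm{dR}}$ (both have dimension $2g$, and the trace of an endomorphism of $J$ is a rational number independent of the cohomology theory) gives the displayed formula, up to the harmless dualisation between $V_p$ and $\mathrm H^1$, which preserves traces. The ``in particular'' then follows immediately.

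I expect the main obstacle to be the duality claim for the action on the top graded piece. The action of $F$ there must be the transpose of its action on $\mathrm H_1(\graph)$ with respect to the natural pairing, not, say, its Rosati adjoint differing by a twist. I would handle this via Grothendieck's description of $V_p$ of a semistable Jacobian: the toric part is given by the character group, which is $\mathrm H_1(\graph)$, and the étale quotient is given by the cocharacters. Any endomorphism $\alpha$ acts on the torus via $\alpha$ and on the quotient via the dual of $\alpha^\dagger$. Since trace is invariant under transposition and $\operatorname{Tr}(\alpha^\dagger)=\operatorname{Tr}(\alpha)$ for the Rosati involution, the traces on the top and bottom pieces agree even without assuming Rosati invariance.
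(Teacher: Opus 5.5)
Your skeleton is the paper's: the edge mass equals $\operatorname{Tr}(F\mid\mathrm H_1(\graph,\Q))$ via $F\circ\pi$, and the total trace is read off from the three graded pieces. The difference is how you match the quotient $V_p/W_1$ with the toric piece $W_2$.

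The paper uses the monodromy operator $N$. It is built from the inertia action, so it commutes with every endomorphism defined over $\Q_\ell$, and it identifies $V_p/W_1$ with a Tate twist of $W_2$. Hence the two traces agree for every $F$.

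You use Grothendieck's orthogonality for the Weil pairing, which intertwines $F$ on $V_p/W_1$ with $F^\dagger$ on $W_2$. For $F$ coming from $\NS(J)\otimes\Q$, i.e.\ $F^\dagger=F$, your argument is complete; these are the only correspondences the paper uses for heights. It also has a mild advantage: you work entirely in $V_p(X)$, where the filtration is already set up, and transport only the total trace to $\mathrm H^1_{\mathrm{dR}}$. The paper instead invokes a weight filtration on $\mathrm H^1_{\mathrm{dR}}(X_{\Q_\ell})$.

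Your claim that the argument works ``even without assuming Rosati invariance'' does not follow from what you cite. Write $t(\alpha)=\operatorname{Tr}(\alpha\mid W_2)$ and $a(\alpha)$ for the trace on $W_1/W_2$. Orthogonality gives $\operatorname{Tr}(\alpha\mid V_p/W_1)=t(\alpha^\dagger)$, so $\operatorname{Tr}(\alpha\mid V_p)=t(\alpha)+a(\alpha)+t(\alpha^\dagger)$, and symmetrically $\operatorname{Tr}(\alpha^\dagger\mid V_p)=t(\alpha^\dagger)+a(\alpha^\dagger)+t(\alpha)$. The identity $\operatorname{Tr}(\alpha^\dagger\mid V_p)=\operatorname{Tr}(\alpha\mid V_p)$ therefore only gives $a(\alpha^\dagger)=a(\alpha)$. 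It says nothing about $t(\alpha)$ versus $t(\alpha^\dagger)$, which is exactly what you need.

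This matters because the lemma's ``in particular'' concerns an arbitrary correspondence, possibly of nonzero trace and not symmetric. You need one more input. Either use the paper's observation that $N$ commutes with $F$. Or use the functoriality of Grothendieck's monodromy pairing $X(T)\times X(T')\to\Z$, where $T'$ is the torus of the dual abelian variety: under this nondegenerate pairing $\alpha^*$ and $(\alpha^\vee)^*$ are adjoint, which gives $t(\alpha)=t(\alpha^\dagger)$.
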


\begin{proof}
The measure $\lvert\mathrm{d}x_e\rvert$ has total mass $l(e)$, so the edge part of $\mu_F$ contributes $\sum_{e}e^*F(\pi(e))$. The endomorphism $F\circ\pi$ of $\Q\,E(\graph)$ annihilates $\mathrm H_1(\graph,\Q)^\perp$ and restricts to $F$ on $\mathrm H_1(\graph,\Q)$, whence
\[
  \sum_{e\in E(\graph)}e^*F\left(\pi(e)\right)=\operatorname{Tr}\left(F\circ\pi\mid \Q\,E(\graph)\right)=\operatorname{Tr}\left(F\mid \mathrm H_1(\graph,\Q)\right).
\]
The vertex part contributes $\tfrac12\sum_w\operatorname{Tr}(F\mid V_p(X_w))$. On the other hand the monodromy weight filtration on $\mathrm H^1_{\mathrm{dR}}(X_{\Q_\ell})$ has graded pieces $\mathrm H_1(\graph,\Q)^\vee$, $\bigoplus_w \mathrm H^1(X_w)$ and $\mathrm H_1(\graph,\Q)(-1)$, and the monodromy operator identifies the last with the first and commutes with $F$; moreover $\operatorname{Tr}(F\mid \mathrm H^1(X_w))=\operatorname{Tr}(F\mid V_p(X_w))$ as the two spaces are dual. Therefore
\[
  \operatorname{Tr}\left(F\mid \mathrm H^1_{\mathrm{dR}}(X_{\Q_\ell})\right)=2\operatorname{Tr}\left(F\mid \mathrm H_1(\graph,\Q)\right)+\sum_{w\in V(\graph)}\operatorname{Tr}\left(F\mid V_p(X_w)\right)=2\lvert \mu_F\rvert. \qedhere
\]
\end{proof}

\printbibliography

@article {AABCCKW,
    AUTHOR = {Ad\v{z}aga, Nikola and Arul, Vishal and Beneish, Lea and Chen, Mingjie and Chidambaram, Shiva and Keller, Timo and Wen, Boya},
     TITLE = {{Quadratic Chabauty for Atkin-Lehner Quotients of Modular Curves of Prime Level and Genus 4, 5, 6}},
   JOURNAL = {Acta Arith.},
  FJOURNAL = {Acta Arithmetica},
    VOLUME = {208},
      YEAR = {2023},
    NUMBER = {1},
     PAGES = {15--49},
 SHORTHAND = {AABCCKW}
}

@incollection{BakerFaber2006,
  author    = {Baker, Matthew and Faber, Xander},
  title     = {Metrized Graphs, {L}aplacian Operators, and Electrical Networks},
  booktitle = {Quantum Graphs and Their Applications},
  editor    = {Berkolaiko, Gregory and Carlson, Robert and Fulling, Stephen A. and Kuchment, Peter},
  series    = {Contemporary Mathematics},
  volume    = {415},
  pages     = {15--33},
  year      = {2006},
  publisher = {American Mathematical Society},
  address   = {Providence, RI},
  doi       = {10.1090/conm/415/07858}
}

@article{BalakrishnanDogra1,
 author = {Balakrishnan, Jennifer S. and Dogra, Netan},
 title = {Quadratic {Chabauty} and rational points. {I}: {{\(p\)}}-adic heights},
 fjournal = {Duke Mathematical Journal},
 journal = {Duke Math. J.},
 issn = {0012-7094},
 volume = {167},
 number = {11},
 pages = {1981--2038},
 year = {2018},
 language = {English},
 doi = {10.1215/00127094-2018-0013},
 zbMATH = {6941816},
 Zbl = {1401.14123}
}

@misc{SachiEran2026,
  author        = {Assaf, Eran and Hashimoto, Sachi},
  title         = {Hyperelliptic Atkin--Lehner quotients of Shimura curves},
  year          = {2026},
  eprint        = {2606.25153},
  archivePrefix = {arXiv},
  doi           = {10.48550/arXiv.2606.25153}
}

@article{AtkinLehner1970,
  author  = {Atkin, A. O. L. and Lehner, Joseph},
  title   = {Hecke operators on {$\Gamma_0(m)$}},
  journal = {Mathematische Annalen},
  volume  = {185},
  year    = {1970},
  pages   = {134--160},
  doi     = {10.1007/BF01359701},
  mrnumber = {0268123}
}

@article{Arpin24,
  author        = {Arpin, Sarah},
  title         = {Adding level structure to supersingular elliptic curve isogeny graphs},
  journal       = {J. Th\'eor. Nombres Bordeaux},
  doi           = {10.5802/jtnb.1283},
  eprint        = {2203.03531},
  archivePrefix = {arXiv},
  year          = {2024},
  pages         = {405--443}
}

@misc{BettsDogra,
      title={The local theory of unipotent Kummer maps and refined Selmer schemes}, 
      author={L. Alexander Betts and Netan Dogra},
      year={2020},
      eprint={1909.05734},
      archivePrefix={arXiv},
      url={https://arxiv.org/abs/1909.05734}, 
}

@article{hyperellipticQC,
  author    = {Betts, L. Alexander and Duque-Rosero, Juanita and Hashimoto, Sachi and Spelier, Pim},
  title     = {Local heights on hyperelliptic curves and quadratic {C}habauty},
  journal   = {Transactions of the American Mathematical Society, Series B},
  year      = {2026},
  volume    = {13},
  pages     = {240--300},
  doi       = {10.1090/btran/242},
  publisher = {American Mathematical Society}
}

@incollection{FrancescaOana,
    AUTHOR = {Bianchi, Francesca and Padurariu, Oana},
     TITLE = {Rational points on rank 2 genus 2 bielliptic curves in the {LMFDB}},
 BOOKTITLE = {Lu{C}a{NT}: {LMFDB}, Computation, and Number Theory},
    SERIES = {Contemp. Math.},
    VOLUME = {796},
     PAGES = {215--242},
 PUBLISHER = {Amer. Math. Soc., Providence, RI},
      YEAR = {2024},
       DOI = {10.1090/conm/796/16003},
       URL = {https://doi.org/10.1090/conm/796/16003},
}

@incollection{ColemanGross,
  author    = {Coleman, Robert F. and Gross, Benedict H.},
  title     = {$p$-adic Heights on Curves},
  booktitle = {Algebraic Number Theory -- in honor of K. Iwasawa},
  publisher = {Mathematical Society of Japan},
  year      = {1989},
  volume    = {17},
  series    = {Advanced Studies in Pure Mathematics},
  pages     = {73--81},
  doi       = {10.2969/aspm/01710073},
  url       = {https://projecteuclid.org/ebooks/advanced-studies-in-pure-mathematics/Algebraic-Number-Theory--in-honor-of-K-Iwasawa/chapter/p-adic-Heights-on-Curves/10.2969/aspm/01710073}
}

@book{Cox,
  title={Primes of the Form $x^2 + ny^2$: Fermat, Class Field Theory, and Complex Multiplication},
  author={Cox, D.A.},
  isbn={9780471506546},
  lccn={89005555},
  series={Monographs and textbooks in pure and applied mathematics},
  year={1989},
  publisher={Wiley}
}

@article{classGroupTabulation,
    AUTHOR = {Mosunov, A. S. and Jacobson, Jr., M. J.},
     TITLE = {Unconditional class group tabulation of imaginary quadratic
              fields to {$|\Delta|<2^{40}$}},
   JOURNAL = {Math. Comp.},
  FJOURNAL = {Mathematics of Computation},
    VOLUME = {85},
      YEAR = {2016},
    NUMBER = {300},
     PAGES = {1983--2009},
      ISSN = {0025-5718,1088-6842},
       DOI = {10.1090/mcom3050},
       URL = {https://doi.org/10.1090/mcom3050},
}

@book{Mumford,
  author    = {Mumford, David},
  title     = {Abelian Varieties},
  series    = {Tata Institute of Fundamental Research Studies in Mathematics},
  volume    = {5},
  edition   = {2},
  publisher = {Hindustan Book Agency},
  address   = {New Delhi},
  year      = {2008},
  note      = {Corrected reprint of the 1974 edition, with appendices by C. P. Ramanujam and Yuri Manin},
  isbn      = {978-81-85931-86-9}
}

@article{MaartenFilipCubic,
  author   = {Derickx, Maarten and Najman, Filip},
  title    = {Torsion of elliptic curves over cyclic cubic fields},
  journal  = {Mathematics of Computation},
  volume   = {88},
  number   = {319},
  year     = {2019},
  pages    = {2443--2459},
  doi      = {10.1090/mcom/3408},
  mrnumber = {3957900}
}

@article{Deuring1951,
author = {Deuring, Max},
journal = {Jahresbericht der Deutschen Mathematiker-Vereinigung},
pages = {24-41},
title = {Die Anzahl der Typen von Maximalordungen einer definiten Quaternionenalgebra mit primer Grundzahl.},
url = {http://eudml.org/doc/146345},
volume = {54},
year = {1951},
}

@article{Edixhoven,
  author  = {Edixhoven, Bas},
  title   = {Minimal resolution and stable reduction of {$X_0(N)$}},
  journal = {Annales de l'Institut Fourier},
  volume  = {40},
  number  = {1},
  year    = {1990},
  pages   = {31--67},
  doi     = {10.5802/aif.1202},
  mrnumber = {92f:11080},
  zbl     = {0679.14009}
}

@Inbook{Elkies,
author="Elkies, Noam D.",
editor="Cremona, John E.
and Lario, Joan-Carles
and Quer, Jordi
and Ribet, Kenneth A.",
title="On Elliptic $K$-curves",
bookTitle="Modular Curves and Abelian Varieties",
year="2004",
publisher="Birkh{\"a}user Basel",
address="Basel",
pages="81--91"
}

@book{Fulton,
  author    = {Fulton, William},
  title     = {Intersection Theory},
  publisher = {Springer},
  year      = {1998},
  edition   = {2nd},
  isbn      = {978-1-4612-1700-8},
  doi       = {10.1007/978-1-4612-1700-8},
  url       = {https://doi.org/10.1007/978-1-4612-1700-8}
}

@article{KhareWintenberger1,
  author    = {Khare, Chandrashekhar and Wintenberger, Jean-Pierre},
  title     = {Serre's modularity conjecture (I)},
  journal   = {Inventiones Mathematicae},
  volume    = {178},
  number    = {3},
  pages     = {485--504},
  year      = {2009},
  doi       = {10.1007/s00222-009-0205-7},
  bibcode   = {2009InMat.178..485K}
}

@article{KhareWintenberger2,
  author    = {Khare, Chandrashekhar and Wintenberger, Jean-Pierre},
  title     = {Serre's modularity conjecture (II)},
  journal   = {Inventiones Mathematicae},
  volume    = {178},
  number    = {3},
  pages     = {505--586},
  year      = {2009},
  doi       = {10.1007/s00222-009-0206-6},
  bibcode   = {2009InMat.178..505K}
}

@article{ANTStar,
    AUTHOR = {Ad\v zaga, Nikola and Chidambaram, Shiva and Keller, Timo and
              Padurariu, Oana},
     TITLE = {Rational points on hyperelliptic {A}tkin-{L}ehner quotients of
              modular curves and their coverings},
   JOURNAL = {Res. Number Theory},
  FJOURNAL = {Research in Number Theory},
    VOLUME = {8},
      YEAR = {2022},
    NUMBER = {4},
     PAGES = {Paper No. 87, 24},
      ISSN = {2522-0160,2363-9555},
   MRCLASS = {11G18 (11G30 14G05 14G35 14H50)},
  MRNUMBER = {4496691},
MRREVIEWER = {Nicholas\ George\ Triantafillou},
       DOI = {10.1007/s40993-022-00388-9},
       URL = {https://doi.org/10.1007/s40993-022-00388-9},
}

@article {BalTui,
    AUTHOR = {Balakrishnan, Jennifer S. and Tuitman, Jan},
     TITLE = {Explicit {C}oleman integration for curves},
   JOURNAL = {Math. Comp.},
  FJOURNAL = {Mathematics of Computation},
    VOLUME = {89},
      YEAR = {2020},
    NUMBER = {326},
     PAGES = {2965--2984},
      ISSN = {0025-5718},
   MRCLASS = {11S80 (11Y35 11Y50 14G05)},
  MRNUMBER = {4136553},
MRREVIEWER = {Daniel Barsky},
       DOI = {10.1090/mcom/3542},
       URL = {https://doi.org/10.1090/mcom/3542},
}

@article{BDMTV2, title={Quadratic Chabauty for modular curves: algorithms and examples}, volume={159}, DOI={10.1112/S0010437X23007170}, number={6}, journal={Compositio Mathematica}, author={Balakrishnan, Jennifer S. and Dogra, Netan and Müller, J. Steffen and Tuitman, Jan and Vonk, Jan}, year={2023}, pages={1111–1152}
}

@book{SteinBook,
 author = {Stein, William},
 title = {Modular forms, a computational approach. {With} an appendix by {Paul} {E}. {Gunnells}},
 fseries = {Graduate Studies in Mathematics},
 series = {Grad. Stud. Math.},
 issn = {1065-7339},
 volume = {79},
 isbn = {0-8218-3960-8},
 year = {2007},
 publisher = {Providence, RI: American Mathematical Society (AMS)},
 language = {English},
 zbMATH = {5080870},
 Zbl = {1110.11015}
}

@unpublished{WenBeneish2026,
  author = {Beneish, Lea and Wen, Boya},
  title  = {{A}tkin--{L}ehner Quotients of Modular Curves at Primes of Bad Reduction},
  year   = {2026},
  note   = {In preparation}
}

@article{BakerHasegawa,
 author = {Baker, Matthew and Hasegawa, Yuji},
 title = {Automorphisms of {{\(X_0^*(p)\)}}.},
 fjournal = {Journal of Number Theory},
 journal = {J. Number Theory},
 issn = {0022-314X},
 volume = {100},
 number = {1},
 pages = {72--87},
 year = {2003},
 language = {English},
 doi = {10.1016/S0022-314X(02)00120-8},
 zbMATH = {1934757},
 Zbl = {1088.11049}
}

@incollection{KohelHecke,
 author = {Kohel, David R.},
 title = {Hecke module structure of quaternions},
 booktitle = {Class field theory -- its centenary and prospect. Proceedings of the 7th MSJ International Research Institute of the Mathematical Society of Japan, Tokyo, Japan, June 3--12, 1998},
 isbn = {4-931469-11-6},
 pages = {177--195},
 year = {2001},
 publisher = {Tokyo: Mathematical Society of Japan},
 language = {English},
 zbMATH = {1771886},
 Zbl = {1040.11044}
}

@misc{QCMod,
  author = {Balakrishnan, Jennifer and Best, Alex and Bianchi, Francesca and Dogra, Netan and Lawrence, Brian and Müller, Jan and Triantafillou, Nicholas and Tuitman, Jan and Vonk, Jan},
  title = {QCMod},
  year = {2020},
  publisher = {GitHub},
  journal = {GitHub repository},
  howpublished = {\url{https://github.com/steffenmueller/QCMod}},
  shorthand = {BBB+20}
}

@article{BGX21,
author={Bars, Francesc and Gonz{\'a}lez, Josep and Xarles, Xavier},
title={Hyperelliptic parametrizations of $\mathbb{Q}$-curves},
journal={The Ramanujan Journal},
year={2021},
month={10},
day={01},
volume={56},
number={1},
pages={103-120},
issn={1572-9303},
}

@book{Davenport,
  title={Multiplicative Number Theory},
  author={Davenport, H.},
  note={revised by Montgomery, H.L.},
  isbn={9781475759273},
  lccn={80026329},
  series={Graduate Texts in Mathematics},
  url={https://books.google.hr/books?id=SFztBwAAQBAJ},
  year={2013},
  publisher={Springer New York}
}

@article{EdixhovenLido,
 author = {Edixhoven, Bas and Lido, Guido},
 title = {Geometric quadratic {Chabauty}},
 fjournal = {Journal of the Institute of Mathematics of Jussieu},
 journal = {J. Inst. Math. Jussieu},
 issn = {1474-7480},
 volume = {22},
 number = {1},
 pages = {279--333},
 year = {2023},
 language = {English},
 doi = {10.1017/S1474748021000244},
 zbMATH = {7662119}
}

@article{DHS23,
 author = {Duque-Rosero, Juanita and Hashimoto, Sachi and Spelier, Pim},
 title = {Geometric quadratic {Chabauty} and {{\(p\)}}-adic heights},
 fjournal = {Expositiones Mathematicae},
 journal = {Expo. Math.},
 issn = {0723-0869},
 volume = {41},
 number = {3},
 pages = {631--674},
 year = {2023},
 language = {English},
 doi = {10.1016/j.exmath.2023.05.003},
 zbMATH = {7745041},
 Zbl = {1546.14047}
}

@article{LeFournDogra,
 author = {Dogra, Netan and Le Fourn, Samuel},
 title = {Quadratic {Chabauty} for modular curves and modular forms of rank one},
 fjournal = {Mathematische Annalen},
 journal = {Math. Ann.},
 issn = {0025-5831},
 volume = {380},
 number = {1-2},
 pages = {393--448},
 year = {2021},
 language = {English},
 doi = {10.1007/s00208-020-02112-3},
 zbMATH = {7363345},
 Zbl = {1472.11191}
}

@InProceedings{Eichler1973,
author="Eichler, M.",
editor="Kuijk, Willem",
title="The Basis Problem for Modular Forms and the Traces of the Hecke Operators",
booktitle="Modular Functions of One Variable I",
year="1973",
publisher="Springer Berlin Heidelberg",
address="Berlin, Heidelberg",
pages="75--152",
}

@article{Hijikata,
author = {Hiroaki Hijikata},
title = {{Explicit formula of the traces of Hecke operators for  $\Gamma_{0}(N)$}},
volume = {26},
journal = {Journal of the Mathematical Society of Japan},
number = {1},
publisher = {Mathematical Society of Japan},
pages = {56 -- 82},
year = {1974},
doi = {10.2969/jmsj/02610056},
URL = {https://doi.org/10.2969/jmsj/02610056}
}

@article{KimTamagawa,
  author  = {Kim, Minhyong and Tamagawa, Akio},
  title   = {The l-component of the unipotent Albanese map},
  journal = {Mathematische Annalen},
  year    = {2007},
  volume  = {340},
  pages   = {223--235},
  doi     = {10.1007/s00208-007-0151-x}
}

@InProceedings{Klu77,
author="Kluit, P. G.",
editor="Serre, Jean-Pierre
and Zagier, Don Bernard",
title="On the normalizer of $\Gamma_0(N)$",
booktitle="Modular Functions of one Variable V",
year="1977",
publisher="Springer Berlin Heidelberg",
address="Berlin, Heidelberg",
pages="239--246",
isbn="978-3-540-37291-2"
}

@Inbook{MilneAbelianVarieties,
author="Milne, J. S.",
editor="Cornell, Gary
and Silverman, Joseph H.",
title="Abelian Varieties",
bookTitle="Arithmetic Geometry",
year="1986",
publisher="Springer New York",
address="New York, NY",
pages="103--150",
url="https://doi.org/10.1007/978-1-4613-8655-1_5"
}

@article{Ramare1,
  title={Approximate formulae for $L(1, \chi)$},
  author={Ramar{\'e}, Olivier},
  journal={Acta Arithmetica},
  volume={100},
  number={3},
  pages={245--266},
  year={2001},
  publisher={THE INSTITUTE OF MATHEMATICS}
}

@article{Ramare2,
  title={Approximate formulae for $L(1, \chi)$, II},
  author={Ramar{\'e}, Olivier},
  journal={Acta Arithmetica},
  volume={112},
  number={2},
  pages={141--149},
  year={2004}
}

@article{Ribet1990,
  author  = {Ribet, Kenneth A.},
  title   = {On modular representations of $\mathrm{Gal}(\bar{\mathbb{Q}}/\mathbb{Q})$ arising from modular forms},
  journal = {Inventiones Mathematicae},
  year    = {1990},
  volume  = {100},
  number  = {2},
  pages   = {431--476},
  doi     = {10.1007/bf01231195}
}

@InCollection{Ribet2004,
  author={Ribet, Kenneth A.},
  series={Progr. Math.},
  volume={224},
  publisher={Birkh\"{a}user, Basel},
  booktitle={Modular curves and abelian varieties},
  year={2004},
  pages={241--261},
  title={{Abelian varieties over $\Q$ and modular forms}},
}

@article{xue2009minimal,
  title={Minimal resolution of Atkin--Lehner quotients of $X_0(N)$},
  author={Xue, Hui},
  journal={Journal of Number Theory},
  volume={129},
  number={9},
  pages={2072--2092},
  year={2009},
  publisher={Elsevier}
}

@InProceedings{DeRa,
author="Deligne, P.
and Rapoport, M.",
editor="Deligne, Pierre
and Kuijk, Willem",
title="Les Sch{\'e}mas de Modules de Courbes Elliptiques",
booktitle="Modular Functions of One Variable II",
year="1973",
publisher="Springer Berlin Heidelberg",
address="Berlin, Heidelberg",
pages="143--316"
}

@misc{LFHK,
      title={Rational points on $X_0(N)^*$ when $N$ is non-squarefree}, 
      author={Sachi Hashimoto and Timo Keller and Samuel {Le Fourn}},
      year={2025},
      eprint={2505.00680},
      archivePrefix={arXiv},
      url={https://arxiv.org/abs/2505.00680}, 
}

@book{Liu2002,
  author       = {Qing Liu},
  title        = {Algebraic Geometry and Arithmetic Curves},
  series       = {Oxford Graduate Texts in Mathematics},
  volume       = {6},
  publisher    = {Oxford University Press},
  address      = {Oxford},
  year         = {2002},
  pages        = {xvi+576},
  note         = {Translated from the French by Reinie Ern{\'e}},
}

@book{Voight2021,
  author    = {Voight, John},
  title     = {Quaternion Algebras},
  year      = {2021},
  publisher = {Springer Cham},
  series    = {Graduate Texts in Mathematics},
  volume    = {288},
  doi       = {10.1007/978-3-030-56694-4},
  isbn      = {978-3-030-56694-4}
}

@book{KatzMazur,
  title={Arithmetic Moduli of Elliptic Curves},
  author={Katz, N.M. and Mazur, B.},
  number={no. 108},
  isbn={9780691083520},
  lccn={lc83043079},
  series={Annals of Mathematics Studies},
  url={https://books.google.hr/books?id=M1IT0J_sPr8C},
  year={1985},
  publisher={Princeton University Press}
}

@misc{Isabel,
      title={Quadratic Chabauty for Atkin-Lehner quotients of modular curves via weakly holomorphic modular forms: Hodge Filtrations}, 
      author={Isabel Rendell},
      year={2025},
      eprint={2509.02291},
      archivePrefix={arXiv},
      primaryClass={math.NT},
      url={https://arxiv.org/abs/2509.02291}, 
}

@article{Robin83,
  title={Estimation de la fonction de Tchebychef $\theta$ sur le k-i{\`e}me nombre premier et grandes valeurs de la fonction $\omega$ (n) nombre de diviseurs premiers de n},
  author={Robin, Guy},
  journal={Acta Arithmetica},
  volume={42},
  number={4},
  pages={367--389},
  year={1983},
  publisher={Polska Akademia Nauk. Instytut Matematyczny PAN}
}

@book{Silverman1,
  author    = {Silverman, Joseph H.},
  title     = {The Arithmetic of Elliptic Curves},
  series    = {Graduate Texts in Mathematics},
  volume    = {106},
  edition   = {2},
  publisher = {Springer},
  address   = {Dordrecht},
  year      = {2009},
  doi       = {10.1007/978-0-387-09494-6},
  isbn      = {978-0-387-09493-9}
}

@article{Schoof87,
 author = {Schoof, Ren{\'e}},
 title = {Nonsingular plane cubic curves over finite fields},
 fjournal = {Journal of Combinatorial Theory. Series A},
 journal = {J. Comb. Theory, Ser. A},
 issn = {0097-3165},
 volume = {46},
 number = {1-2},
 pages = {183--211},
 year = {1987},
 language = {English},
 doi = {10.1016/0097-3165(87)90003-3},
 zbMATH = {4027654},
 Zbl = {0632.14021}
}

@article{Tate78,
 author = {Tate, John T.},
 title = {The arithmetic of elliptic curves},
 fjournal = {Inventiones Mathematicae},
 journal = {Invent. Math.},
 issn = {0020-9910},
 volume = {23},
 pages = {179--206},
 year = {1974},
 language = {English},
 doi = {10.1007/BF01389745},
 url = {https://eudml.org/doc/142261},
 zbMATH = {3463800},
 Zbl = {0296.14018}
}

@article{Deuring41,
 author = {Deuring, Max},
 title = {Die {Typen} der {Multiplikatorenringe} elliptischer {Funktionenk{\"o}rper}},
 fjournal = {Abhandlungen aus dem Mathematischen Seminar der Universit{\"a}t Hamburg},
 journal = {Abh. Math. Semin. Univ. Hamb.},
 issn = {0025-5858},
 volume = {14},
 pages = {197--272},
 year = {1941},
 language = {German},
 doi = {10.1007/BF02940746},
 zbMATH = {3039780},
 Zbl = {0025.02003}
}

\end{document}